\definecolor{blush}{rgb}{0.87, 0.36, 0.51}
\newcommand{\lucas}{\textcolor{blue}}
\newcommand\e{{\rm e}}
\def\Re{{\rm Re}} 
\def\Im{{\rm Im}}
\newcommand{\R}{\mathbb{R}}
\newcommand{\Z}{\mathbb{Z}}
\newcommand{\T}{\mathbb{T}}
\newcommand{\N}{\mathbb{N}}
\newcommand{\C}{\mathbb{C}}
\renewcommand{\H}{\mathrm{H}}
\newcommand{\eps}{\varepsilon}
\newcommand{\Ld}{\mathrm{L}}
\newcommand{\bcU}{U_s}
\newcommand{\vertiii}[1]{{\left\vert\kern-0.25ex\left\vert\kern-0.25ex\left\vert #1 
		\right\vert\kern-0.25ex\right\vert\kern-0.25ex\right\vert}}
\newtheoremstyle{mystyle}%                % Name
  {}%                                     % Space above
  {}%                                     % Space below
  {\itshape}%                                     % Body font
  {}%                                     % Indent amount
  {\bfseries}%                            % Theorem head font
  {.}%                                    % Punctuation after theorem head
  { }%                                    % Space after theorem head, ' ', or \newline
  {}%                                     % Theorem head spec (can be left empty, meaning `normal')
  \newtheoremstyle{mystyle2}%                % Name
  {}%                                     % Space above
  {}%                                     % Space below
  {}%                                     % Body font
  {}%                                     % Indent amount
  {\bfseries}%                            % Theorem head font
  {.}%                                    % Punctuation after theorem head
  { }%                                    % Space after theorem head, ' ', or \newline
  {}%                                     % Theorem head spec (can be left empty, meaning `normal')
\theoremstyle{mystyle}
\newtheorem{Thm}{Theorem}[section]
\newtheorem{Cor}[Thm]{Corollary}
\newtheorem{Prop}[Thm]{Proposition}
\newtheorem{Lem}[Thm]{Lemma}
\theoremstyle{mystyle2}
\newtheorem{Def}[Thm]{Definition}
\newtheorem{Rem}[Thm]{Remark}
\numberwithin{equation}{section}
\renewcommand\subsubsection{\@startsection{subsubsection}{3}%
\normalparindent{.5\linespacing\@plus.7\linespacing}{-.5em}
{\normalfont\bfseries}}
\def\@tocline#1#2#3#4#5#6#7{\relax
  \ifnum #1>\c@tocdepth % then omit
  \else
    \par \addpenalty\@secpenalty\addvspace{#2}%
    \begingroup \hyphenpenalty\@M
    \@ifempty{#4}{%
      \@tempdima\csname r@tocindent\number#1\endcsname\relax
    }{%
      \@tempdima#4\relax
    }%
    \parindent\z@ \leftskip#3\relax \advance\leftskip\@tempdima\relax
    \rightskip\@pnumwidth plus4em \parfillskip-\@pnumwidth
    #5\leavevmode\hskip-\@tempdima
      \ifcase #1
       \or\or \hskip 1em \or \hskip 2em \else \hskip 3em \fi%
      #6\nobreak\relax
    \dotfill\hbox to\@pnumwidth{\@tocpagenum{#7}}\par
    \nobreak
    \endgroup
  \fi}
\begin{document} 

\title[Ill-posedness and failure of hydrostatic limit]{\vspace*{-3cm}Ill-posedness of the hydrostatic Euler-Boussinesq equations\\ and failure of hydrostatic limit} 

\author[R. Bianchini]{Roberta Bianchini}
\address{IAC, Consiglio Nazionale delle Ricerche}
\email{r.bianchini@iac.cnr.it}

\author[M. Coti Zelati]{Michele Coti Zelati}
\address{Department of Mathematics, Imperial College London}
\email{m.coti-zelati@imperial.ac.uk}

\author[L. Ertzbischoff]{Lucas Ertzbischoff}
\address{Department of Mathematics, Imperial College London}
\email{l.ertzbischoff@imperial.ac.uk}

% \bigskip

%  \begin{flushleft}
% \textbf{Roberta Bianchini}\\
% IAC, Consiglio Nazionale delle Ricerche\\
%  00185 Rome, Italy \\
% \textit{r.bianchini@iac.cnr.it}
%  \end{flushleft}
 
% \bigskip 
 
%  \begin{flushleft}
% \textbf{Michele Coti Zelati}\\
% Department of Mathematics, Imperial College London\\
% London, SW7 2AZ, United-Kingdom \\
% \textit{m.coti-zelati@imperial.ac.uk}
% \end{flushleft}

% \bigskip

% \begin{flushleft}
% \textbf{Lucas Ertzbischoff}\\
% Department of Mathematics, Imperial College London\\
% London, SW7 2AZ, United-Kingdom \\
% \textit{l.ertzbischoff@imperial.ac.uk}
% \end{flushleft}

\subjclass[2020]{35Q31, 35Q35, 35R25}

\keywords{Euler-Boussinesq equations, stratified fluids, ill-posedness, hydrostatic limit}

\begin{abstract}
We investigate the hydrostatic approximation for inviscid stratified fluids, described by the two-dimensional Euler-Boussinesq equations in a periodic channel. 
Through a perturbative analysis of the hydrostatic homogeneous setting, we exhibit a stratified steady state violating the Miles-Howard criterion and generating a growing mode, both for the linearized hydrostatic and non-hydrostatic equations. By leveraging long-wave nonlinear instability for the original Euler-Boussinesq system, we demonstrate the breakdown of the hydrostatic limit around such unstable profiles. Finally, we establish the generic nonlinear ill-posedness of the limiting hydrostatic system in Sobolev spaces.
\end{abstract}

\maketitle

\tableofcontents

\medskip

\section{Introduction}\label{SectionIntro-Intro}
In geophysical fluid dynamics, oceanography and atmospheric science, the Euler-Boussinesq equations are often used to describe the behavior of buoyancy-driven incompressible, inviscid and non-homogeneous flows. For $t>0$ and in a thin periodic channel of characteristic size $\eps>0$, the system can be written in terms of suitable rescaled variables as
\begin{equation}\label{eqintro:BE-scaled}
\tag{EB$_{\eps}$}
      \begin{aligned}
      \partial_t \varrho +u \partial_x \varrho+ v \partial_z \varrho&=0, \\
\partial_t u +u \partial_x u + v \partial_z u + \partial_x p&=0,  \\
\eps^2( \partial_t v+ u \partial_x v+v\partial_z v)+\partial_z p +\varrho&=0, \\
\partial_x u + \partial_z v&=0,
\end{aligned}
\end{equation} 
with $(x,z) \in \T \times (-1,1)$ and boundary conditions $v_{\mid z=\pm1}=0$. Here, $(u(t,x,z),v(t,x,z))\in\R^2$ is the velocity field, $p(t,x,z)\in\R$  is the pressure, and $\varrho (t,x,z) \in \R^+$ is the fluid density.
% 
% 
% $(x,z)\in \T\times (-\eps,\eps)$, with $\eps>0$, the system reads  \blue{[why mathrm? also, do we actually need this system?]}
% \begin{equation}\label{eq:BE}
% \tag{EB}
%       \begin{aligned}
%       \partial_t R +\mathrm{W} \cdot \nabla R &=0, \\ 
% \partial_t \mathrm{W} +(\mathrm{W} \cdot \nabla )\mathrm{W}+ \nabla P &=-R e_2,  \\ 
% \nabla\cdot\mathrm{W}&=0,
% \end{aligned}
% \end{equation}
% where $\mathrm{W}(t,x,z)=(w_1(t,x,z),w_2(t,x,z)) \in \R^2$ is the velocity field, $P(t,x,z) \in \R$ is the pressure, and $R (t,x,z) \in \R^+$ is the fluid density. Here, $e_2:=(0,1)$ and $-R e_2$ is gravity forcing (with normalized intensity). We supplement the system with the homogeneous  boundary conditions are ${w_2}_{\mid z=\pm \eps}=0$.

% Introducing the rescaled unknowns
% \begin{align}
% \left(w_1,w_2,P,R \right) (t,x,z):=(u,\eps v ,p ,\varrho) \left(t,x,\frac{z}{\eps} \right),   
% \end{align}
% we obtain that \eqref{eq:BE} is equivalent to
% \begin{equation}\label{eqintro:BE-scaled}
% \tag{EB$_{\eps}$}
%       \begin{aligned}
%       \partial_t \varrho +u \partial_x \varrho+ v \partial_z \varrho&=0, \\[1mm]
% \partial_t u +u \partial_x u + v \partial_z u + \partial_x p&=0,  \\[1mm]
% \eps^2( \partial_t v+ u \partial_x v+v\partial_z v)+\partial_z p&=-\varrho, \\[1mm]
% \partial_x u + \partial_z v&=0,
% \end{aligned}
% \end{equation} 
% with $(x,z) \in \T \times (-1,1)$ and boundary conditions $v_{\mid z=-1,1}=0$. 

The parameter $\eps>0$ is often known as the \textit{shallow-water} or \textit{aspect ratio} parameter: it accounts for the signifi\-cant difference between typical (small) vertical and (large) horizontal length scales, and is therefore assumed to be very small. This assumption constitutes a crucial feature in understanding the large-scale dynamics of planetary oceans.
Formally, the \textit{hydrostatic} or \textit{shallow-water} limit $\eps \rightarrow 0 $ yields the so-called \textit{hydrostatic Euler-Boussinesq equations} 
\begin{equation}\label{eqintro:BHyd-E}
\tag{hEB}
      \begin{aligned}
      \partial_t \varrho +u \partial_x \varrho+ v \partial_z \varrho&=0, \\ 
\partial_t u +u \partial_x u + v \partial_z u + \partial_x p&=0,  \\ 
\partial_z p+ \varrho &=0, \\ 
\partial_x u + \partial_z v&=0,
\end{aligned}
\end{equation}
with $(x,z) \in \T \times (-1,1)$ and boundary conditions $v_{\mid z=\pm1}=0$. Here, the former vertical momentum equation has been replaced by the hydrostatic balance between density and vertical variation of pressure. The hydrostatic Euler-Boussinesq equations \eqref{eqintro:BHyd-E}, also known as the inviscid primitive equations, are classical dynamical equations commonly applied in oceanographic and climate studies \cites{KT23, majda2003, Vallis2017}.

However, the approximation $\eps \rightarrow 0$ is a singular limit procedure that is remarkably challenging to justify rigorously. Typically, two related mathematical questions naturally arise:
\begin{enumerate} [label=(Q\arabic*), ref=(Q\arabic*)]
\item\label{item:hydrolimit} Establish or refute the validity of the hydrostatic limit, involving the rigorous derivation of \eqref{eqintro:BHyd-E} from \eqref{eqintro:BE-scaled} as $\varepsilon \rightarrow 0$. As we will see, this can be reformulated as a stability problem for a specific class of solutions of \eqref{eqintro:BE-scaled}.

\item \label{item:illposed} Investigate the limiting system \eqref{eqintro:BHyd-E} in its own right, with the study of the Cauchy theory being one of the initial and fundamental steps.
\end{enumerate}
The main goal of this work is to demonstrate the general failure of the hydrostatic limit and the ill-posedness of the hydrostatic system \eqref{eqintro:BHyd-E}, answering in a negative fashion both \ref{item:hydrolimit} and \ref{item:illposed}. The breakdown of the hydrostatic approximation is attributed to violent instabilities in the dynamics around specific steady states, known as \emph{stratified shear flows}, that take the form
\begin{equation}\label{eq:equilibrium}
(\varrho, u, v, p)(x, z) = (\rho_s(z), U_s(z), 0, p_s(z)), \qquad  \text{with} \qquad  p_s'(z)=-\rho_s(z).
\end{equation}
As elaborated in Section \ref{SectionIntro-Difficulties}, the stability properties of such equilibria constitute the overarching mechanism influencing the hydrostatic approximation, encoded by the particular interplay between shearing and stratification.
Compared to previous results \cites{HKN, ILT}, the main novelty of our work is to address the entire system \eqref{eqintro:BHyd-E} without neglecting the effect of the (stable) stratification. 

\subsection{Main results}\label{SectionIntro-Results}
The classical Miles-Howard criterion \cites{miles1961stability, howard1961note} asserts that such stratified shear flows \eqref{eq:equilibrium} are spectrally stable
for \eqref{eqintro:BE-scaled} and \eqref{eqintro:BHyd-E} provided that 
\begin{equation}\label{eq:Miles-Howard}
 \frac{1}{4} \leq \dfrac{-\rho_s'(z)}{|U_s'(z)|^2}, \qquad \forall z \in [-1,1]. 
\end{equation}
The previous ratio is commonly referred to as the (local) Richardson number in the physics literature. It assesses the (de)stabilizing impact of stratification compared to the effect of the background shear flow. 

Leaning on the construction of stratified shear flows that violate the Miles-Howard criterion \eqref{eq:Miles-Howard}, our first result is concerned with the failure of the hydrostatic limit. 
\begin{Thm}\label{thm-invalidHydrostatlim}
There exists an analytic stationary profile $(\rho_s(z), U_s(z))$, with $\rho_s'<0$, which does not satisfy the Miles-Howard condition \eqref{eq:Miles-Howard}, such that the following holds. For all $p,k \in \N$, there exist $m>0$, a family of smooth solutions $(\varrho_\eps, u_\eps, v_\eps)_{\eps>0}$ of \eqref{eqintro:BE-scaled} and times $T_\eps = O(\eps \vert \log \, \eps \vert) \rightarrow 0$ such that  
\begin{align*}
\Vert (\varrho_\eps, u_\eps, v_\eps)_{\mid t=0}-(\rho_s, U_s,0) \Vert_{\mathrm{H}^p(\T \times (-1,1))} \leq \eps^k,
\end{align*}
while
\begin{align*}
\underset{t \in [0,T_\eps]}{\sup}\Vert (\varrho_\eps, u_\eps, v_\eps)(t)-(\rho_s, U_s,0) \Vert_{\Ld^{2}(\T \times (-1,1))} \geq m.
\end{align*}
%and \begin{align*}
%\Vert (\varrho_\eps, u_\eps, v_\eps)(T_\eps)-(\rho_s, U_s,0) \Vert_{\Ld^{2}(\T \times (-1,1))} \geq \sigma.
%\end{align*}
\end{Thm}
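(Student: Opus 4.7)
The proof follows the Grenier--Guo--Strauss nonlinear instability scheme, lifting a spectral instability of the linearization of \eqref{eqintro:BE-scaled} to a genuine nonlinear instability on the logarithmic escape time $T_\eps = O(\eps |\log \eps|)$. Seeking solutions of the linearization around \eqref{eq:equilibrium} of the form $(\rho, u, v, p) \propto \e^{ikx+\sigma t}$ and eliminating $u,\rho,p$ in favour of $v$, one reduces to the rescaled Taylor--Goldstein eigenvalue problem
\begin{equation*}
v''+\left(\dfrac{-\rho_s'(z)}{(U_s(z)-c)^2}-\dfrac{U_s''(z)}{U_s(z)-c}-\eps^2 k^2\right)v=0,\qquad v(\pm 1)=0,
\end{equation*}
with $c=i\sigma/k$. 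The hydrostatic system \eqref{eqintro:BHyd-E} corresponds to dropping the $\eps^2 k^2$ term; by continuous dependence of the spectrum on $K:=\eps k$, a hydrostatic growing mode at $K=0$ persists for small $K>0$, yielding a growing mode of \eqref{eqintro:BE-scaled} at integer wavenumbers $k_\eps$ with $\eps k_\eps\to K_0$ in the unstable range and growth rate $\Re(\sigma_\eps)\sim K_0\,\Im(c_0)/\eps$.

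\textbf{Step 1 (construction of the unstable profile).} Starting from an analytic shear $U_s$ on $(-1,1)$ which is Rayleigh-unstable in the classical homogeneous sense, the reduced Rayleigh equation ($\rho_s'\equiv 0$) admits an unstable eigenvalue $c_0$. Viewing $\rho_s'$ as a perturbative parameter and applying an implicit function / Evans function argument, using simplicity and isolation of $c_0$, the unstable eigenvalue persists analytically under small stratification. Choosing $\rho_s'$ analytic, strictly negative and of sufficiently small magnitude then ensures $-\rho_s'/|U_s'|^2 \ll 1/4$ pointwise, so \eqref{eq:Miles-Howard} is violated, while retaining an unstable growing mode both for \eqref{eqintro:BHyd-E} and for \eqref{eqintro:BE-scaled} at appropriate wavenumbers.

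\textbf{Step 2 (approximate unstable solution).} For each small $\eps>0$, pick $k_\eps\in\N$ with $\eps k_\eps\to K_0$, and let $\mathcal{M}_\eps(t,x,z)=\Re\bigl(\widehat{\mathcal{M}}_\eps(z)\e^{ik_\eps x+\sigma_\eps t}\bigr)$ denote the associated growing mode of \eqref{eqintro:BE-scaled}, normalized to unit $\Ld^2$-mass at $t=0$. For an integer $N=N(p,k)$ to be fixed, take the initial perturbation to be $\eps^N \mathcal{M}_\eps(0,\cdot,\cdot)$. Analyticity of $(\rho_s,U_s)$ and smoothness of the eigenfunction give $\|\eps^N\mathcal{M}_\eps(0)\|_{\H^p}\lesssim \eps^N k_\eps^p\lesssim \eps^{N-p}$, hence $\leq \eps^k$ provided $N\geq k+p$. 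At the linearized level, $\|\eps^N\mathcal{M}_\eps(t)\|_{\Ld^2}$ reaches size $O(1)$ at the Grenier time $T_\eps:=N|\log\eps|/\Re(\sigma_\eps)=O(\eps|\log\eps|)$.

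\textbf{Step 3 (nonlinear error and bootstrap).} The main obstacle is to ensure that the nonlinear remainder $R_\eps$ in the decomposition $(\varrho_\eps,u_\eps,v_\eps)=(\rho_s,U_s,0)+\eps^N \mathcal{M}_\eps+R_\eps$ stays strictly smaller than the linear part on $[0,T_\eps]$, despite the $\eps^{-1}$ linear growth rate. The equation for $R_\eps$ is the linearized Euler--Boussinesq system around the shear, forced by a quadratic term in $\eps^N \mathcal{M}_\eps+R_\eps$, with zero initial data. Local well-posedness of \eqref{eqintro:BE-scaled} in high Sobolev spaces (available for each fixed $\eps>0$ thanks to the stable stratification $\rho_s'<0$ at the nonlinear level), combined with a semigroup bound for the linearized operator and Duhamel's formula, yields a bootstrap estimate of the form $\|R_\eps(t)\|_{\Ld^2}\lesssim \eps \cdot \eps^N \e^{\Re(\sigma_\eps) t}$ on $[0,T_\eps]$. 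The remainder is thus $\eps$-times smaller than the linear part: at $t=T_\eps$ we obtain $\|R_\eps(T_\eps)\|_{\Ld^2}\lesssim \eps$ while $\|\eps^N \mathcal{M}_\eps(T_\eps)\|_{\Ld^2}\sim 1$, delivering the uniform lower bound $m>0$ after adjusting constants.
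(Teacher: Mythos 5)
Your overall strategy coincides with the paper's: a long-wave linear instability of \eqref{eqintro:BE-scaled} inherited perturbatively from the hydrostatic problem, amplified nonlinearly \`a la Grenier up to the time $T_\eps=O(\eps|\log\eps|)$ at which the perturbation reaches size $O(1)$. Working directly at wavenumbers $k_\eps\in\N$ with $\eps k_\eps\to K_0$ on the fixed torus is a legitimate repackaging of the paper's hyperbolic rescaling plus gluing onto $\T_{\eps M}$. One imprecision in Step 1: the base-point instability at $K=0$, $\rho_s'\equiv 0$ is \emph{not} classical Rayleigh instability but instability for the hydrostatic Rayleigh equation $(U_s-c)v''-U_s''v=0$, i.e.\ the existence of a root of Renardy's dispersion relation $\int_{-1}^1(U_s-c)^{-2}\,\mathrm{d}z=0$ with $\Im c>0$; an inflection point does not give this for free, and the paper devotes Appendix \ref{Appendix:ZeroFunction} to constructing such a profile by a Penrose--Nyquist winding-number argument. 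Your perturbation in $\rho_s'$ must then preserve this root (the paper does so via the Friedlander ansatz $-\rho_s'=\alpha(1-\alpha)|U_s'|^2$ and Rouch\'e's theorem), but that part is repairable.

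The genuine gap is Step 3. The first-order decomposition $(\varrho_\eps,u_\eps,v_\eps)=(\rho_s,U_s,0)+\eps^N\mathcal{M}_\eps+R_\eps$ with the claimed bound $\|R_\eps(t)\|_{\Ld^2}\lesssim\eps\cdot\eps^N\e^{\Re(\sigma_\eps)t}$ does not close as stated, for two reasons. First, the quadratic nonlinearity loses a derivative, so a pure Duhamel argument in a fixed Sobolev space is circular; one must run an energy estimate for $R_\eps$, and its Gr\"onwall constant (driven by $\|\nabla(u_{\mathrm{app}},v_{\mathrm{app}})\|_{W^{s,\infty}}$ and by the $\partial_x\partial_z^{-1}\varrho$ coupling, both of size $O(\eps^{-1})$ at frequency $k_\eps\sim\eps^{-1}$) is comparable to, and not provably dominated by, $2\Re(\sigma_\eps)$; the ratio of the two is scale-invariant, so the obstruction is identical to the one in the rescaled variables. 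This is precisely why the paper replaces the single corrector by the high-order expansion $\sum_{j=1}^N\delta^j(\varrho_j,\omega_j)$ with remainder $O(\delta^{N+1}\e^{(N+1)\Lambda t})$ and chooses $N$ so that $(N+1)\Lambda$ exceeds the energy constant. Second, even granting Duhamel, the gain at the escape time is the final amplitude $\delta_0$ (a fixed small constant), not a factor of $\eps$; taking $\delta_0\sim\eps$ would destroy the uniform lower bound $m$. Finally, the sharp semigroup bound $\|\e^{t\mathscr{B}}\|\leq M_\beta\e^{\beta t}$ for every $\beta>\Re(\Lambda)$, which your argument tacitly uses, is not automatic for this non-normal operator: the paper obtains it by decomposing the linearization as transport plus a compact perturbation and invoking Shizuta--Vidav (Proposition \ref{prop:semigroupB}). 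Without these two ingredients, Step 3 is an assertion rather than a proof.
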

This theorem precisely tells us that the hydrostatic approximation starting from \eqref{eqintro:BE-scaled} cannot be true in general, even for short times and even if one considers highly regular initial data. As a matter of fact, even if $(\varrho_\eps, u_\eps, v_\eps)_{\mid t=0} \rightarrow (\rho_s, U_s,0)$ at any polynomial speed when $\eps \rightarrow 0$ initially, the corresponding solution $(\varrho_\eps, u_\eps, v_\eps)$ will never converge back to $(\rho_s, U_s,0)$ when $\eps \rightarrow 0$, even for times going to 0. In particular, the hydrostatic limit cannot hold along this solution: no candidate in the limit of $(\varrho_\eps, u_\eps, v_\eps)$ can satisfy an equation having $(\rho_s, U_s,0)$ as a steady state, as \eqref{eqintro:BHyd-E}\footnote{The limiting equations having a \textit{unique} analytic solution starting from the analytic data $(\rho_s, U_s,0)$, see \cite{KTVZ}.}. Note that the density stratification still retains the mild stability condition $\rho_s'<0$, namely that $\rho_s$ is a decreasing function of height.

As we will prove in Section \ref{Section:HydroLimit}, the failure of the hydrostatic limit will be a consequence of a long-wave instability phenomenon that is at stake in the Euler-Boussinesq system \eqref{eqintro:BE-scaled}, for $\eps=1$. As a byproduct, we also obtain a nonlinear Lyapunov instability result for the unscaled Euler-Boussinesq system on a periodic channel (with large enough period), which is a result interesting in itself. We refer to Theorem \ref{thm:nonlin-instabilityEB} for a more precise statement.

\begin{Rem}
Our theorem extends Grenier's result \cite{Grenier-hydroderiv}*{Theorem 1.3} to the case of stratified fluids. Grenier's theorem, addressing the homogeneous $2d$ scenario (see the system \eqref{eq:Hyd-E} in Section \ref{SectionIntro-Difficulties}), is recovered as a byproduct of Theorem \ref{thm-invalidHydrostatlim} since our analysis could essentially be conducted with constant densities.
\end{Rem}

Our second main result is stated in terms of the vorticity of the fluid. In the context of two-dimensional hydrostatic equations, the appropriate scalar quantity serving as the vorticity is given by
$$\omega:=\partial_z u.$$
Applying $\partial_z$ to the equation for $u$ in \eqref{eqintro:BHyd-E}, we obtain the following system on $(\varrho, \omega)$:
\begin{equation}\label{eqintro:BHyd-E-vort}
      \begin{aligned}
      \partial_t \varrho +u \partial_x \varrho+ v \partial_z \varrho&=0, \\[1mm]
\partial_t \omega+u \partial_x \omega + v \partial_z \omega&=\partial_x \varrho,
\end{aligned}
\end{equation}
where the \textit{hydrostatic Biot-Savart law} reads in this context
\begin{align}\label{eq:BiotSavartHydrostat}
u=\partial_z \varphi, \qquad v = -\partial_x \varphi, \qquad  \text{where} \qquad\begin{cases}
        \partial_z^2 \varphi=\omega,\\
        \varphi_{\mid z=\pm 1}=0.
    \end{cases} 
\end{align}
Our result asserts that the system \eqref{eqintro:BHyd-E-vort} is generically nonlinearly ill-posed in Sobolev spaces, even for short times and for an arbitrary loss of derivatives.
\begin{Thm}\label{thm-illposednesNonlin}
There exists an analytic stationary profile $(\rho_s(z), U_s(z))$, with $\rho_s'<0$, which does not satisfy the Miles-Howard condition \eqref{eq:Miles-Howard}, such that the following holds. For any $p,k \in \N$ and $\alpha \in (0,1]$, there exists $0 < \delta_0 < 1$ such that, for any $0<\delta \le \delta_0$, 
there exists a family of solutions $(\varrho_\delta, \omega_\delta)_{\delta<\delta_0}$ to \eqref{eqintro:BHyd-E-vort},  satisfying at initial time 
\begin{align*}
    \big\Vert (\varrho_\delta, \omega_\delta)_{\mid t=0}-\left(\rho_s, U_s'\right)\big\Vert_{\mathrm{H}^p(\T \times (-1,1))} \leq \delta,
\end{align*}
such that, for times $T_\delta=\mathcal{O}(\delta \vert \log \delta \vert) \rightarrow 0$, 
\begin{align*}
\underset{\delta \rightarrow 0}{\lim} \,  \frac{\big\Vert (\varrho_\delta, \omega_\delta)-\left(\rho_s, U_s'\right) \big\Vert_{\Ld^2([0,T_{\delta}]; \H^1(O_\delta))}}{\big\Vert (\varrho_\delta, \omega_\delta)_{\mid t=0}-\left(\rho_s, U_s'\right)\big\Vert_{\mathrm{H}^p(\T \times (-1,1))}^{\alpha}}=+\infty,
\end{align*}
with $O_\delta=B(x_0, \delta^k) \times B(z_0, \delta^k)$ and  where $B(x_0, \delta^k)$ (resp. $B(z_0, \delta^k)$) is an interval centered in $x_0 \in \mathbb T$ (resp. centered in $z_0 \in (-1, 1)$).
Additionally, for any $p'>0$, there exists a universal constant $M>0$ (independent of $\delta$) such that
%Moreover, there exists another solution of the same type \blue{[why another solution?]} and a constant $M>0$ independent of $\delta$ satisfying
%
\begin{align*}
\sup_{t \in [0, T_\delta]} \|(\varrho_\delta, \omega_\delta)(t)-\left(\rho_s, U_s'\right)\|_{\H^{1+p'}(O_\delta)}>M.
\end{align*}
\end{Thm}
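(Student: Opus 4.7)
Substituting the ansatz $(R,\phi)(z)\,e^{in(x-ct)}$ into the system linearized around $(\rho_s, U_s)$, and eliminating $(R, P)$ via incompressibility, hydrostatic balance, and the density equation, yields the hydrostatic Taylor--Goldstein equation
\begin{equation*}
(U_s-c)\,\phi''-U_s''\,\phi \,=\, \frac{\rho_s'}{U_s-c}\,\phi,\qquad \phi(\pm 1)=0,
\end{equation*}
which \emph{does not involve $n$}. The eigenvalue $c_*$ with $\Im c_*>0$ produced earlier from the Miles--Howard violation therefore yields, for every integer $n\geq 1$, a growing eigenmode $\Re[\phi_*(z)\,e^{in(x-c_*t)}]$ with growth rate $\sigma_n := n\,\Im c_*$, diverging as $n\to\infty$. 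This is the Hadamard-type mechanism we plan to exploit.

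\textbf{Step 2 (calibration and nonlinear bootstrap).} Given small $\delta>0$, pick $n = n(\delta) := \lfloor \delta^{-M}\rfloor$ with $M$ chosen below, and an initial perturbation $\delta_n\,\Re[\phi_*(z)e^{inx}]$ with $\delta_n := \delta\, n^{-p}$, whose $\H^p$-norm is $\lesssim\delta$. The linear escape time at which the amplified mode reaches a fixed $\Ld^2$-threshold $\eta$ is $T_\delta = \sigma_n^{-1}\log(\eta/\delta_n) = O(n^{-1}|\log\delta|) = O(\delta^M|\log\delta|)$, which satisfies $T_\delta = O(\delta|\log\delta|)$ as soon as $M\geq 1$. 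On $[0,T_\delta]$, a Grenier-style iteration in the spirit of Theorem \ref{thm-invalidHydrostatlim}, now applied directly to the limit system \eqref{eqintro:BHyd-E-vort}, controls the nonlinear remainder against the amplified linear mode.

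\textbf{Step 3 (local lower bounds and conclusion).} Choose $z_0\in(-1,1)$ with $|\phi_*(z_0)|>0$ (available since $\phi_*$ is a nonzero analytic function) and any $x_0\in\T$. Since $e^{inx}$ has unit modulus, integration over $O_\delta$ gives, for every $p'\geq 0$,
\begin{equation*}
\bigl\|\delta_n\,e^{\sigma_n t}\,\Re[\phi_*(z)e^{inx}]\bigr\|_{\H^{1+p'}(O_\delta)}^2 \,\gtrsim\, \delta_n^2\,e^{2\sigma_n t}\,n^{2(1+p')}\,|\phi_*(z_0)|^2\,\delta^{2k}.
\end{equation*}
Time integration with $\delta_n^2\,e^{2\sigma_n T_\delta}\sim\eta^2$ then yields $\|\cdot\|_{\Ld^2([0,T_\delta];\H^1(O_\delta))}^2\gtrsim n\,\delta^{2k}$, while the pointwise bound at $t=T_\delta$ is $\gtrsim n^{2(1+p')}\,\delta^{2k}$. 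Dividing the former by $\delta^{2\alpha}$ and substituting $n=\delta^{-M}$ produces a ratio of order $\delta^{2k-2\alpha-M}$. Fixing $M > \max\{2k-2\alpha,\,k/(1+p'),\,1\}$ makes both conclusions hold as $\delta\to 0$.

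\textbf{Main obstacle.} The principal technical hurdle is the nonlinear control in Step 2 at very high frequency $n=\delta^{-M}$: the quadratic nonlinearity produces source terms of amplitude $n\,\delta_n^2\,e^{2\sigma_n t}$, which must remain dominated by the linear growth $\delta_n\,e^{\sigma_n t}$ on $[0,T_\delta]$; this requires $\delta_n\ll n^{-1}$, a condition that the calibration $\delta_n=\delta\,n^{-p}$ satisfies as soon as $p\geq 1$. The remaining subtlety is to propagate the regularity of the remainder uniformly in $n$, which is achieved by a multiscale energy estimate adapted to the active frequency, as in Grenier's framework and its later refinements, the stable stratification $\rho_s'<0$ contributing only through harmless lower-order terms.
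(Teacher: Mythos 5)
Your Steps 1 and 3 match the paper: the Hadamard mechanism is indeed the $k$-homogeneity of the hydrostatic Taylor--Goldstein problem, which makes the growth rate $n\,\Im c_*$ unbounded in the frequency $n$, and the localized lower bounds on $O_\delta$ are then read off from the amplified single mode. The genuine gap is in Step 2. You propose to control the nonlinear remainder by "a Grenier-style iteration applied directly to the limit system" with "a multiscale energy estimate adapted to the active frequency" in Sobolev spaces. This cannot work for \eqref{eqintro:BHyd-E-vort}: the hydrostatic Biot--Savart law gives $v=-\partial_x\partial_z^{-2}\omega$ and the pressure contributes $-\partial_x\partial_z^{-1}\varrho$, so any energy estimate loses a full $x$-derivative ($\Vert(\varrho,\omega)\Vert_{\H^\ell}\lesssim\Vert(\partial_x\varrho,\partial_x\omega)\Vert_{\H^\ell}$, as noted in Section \ref{SectionIntro-Difficulties}); equivalently, the very unbounded spectrum you exploit in Step 1 forbids closing a Gr\"onwall inequality in any finite-regularity norm, at any frequency calibration. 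Your size-counting condition $\delta_n\ll n^{-1}$ addresses only the amplitude of the quadratic source, not the derivative loss, and the self-interaction of the mode $e^{inx}$ populates all harmonics $jn$, each growing at rate proportional to $jn$, so the remainder is not a perturbation in Sobolev topology.

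The paper's proof (Section \ref{Section:IllPosed}) resolves exactly this point by invoking the Han--Kwan--Nguyen framework \cite{HKN}: one rescales hyperbolically, passes to the extended unknown $Y=(\varrho,\omega,\partial_\xi\varrho,\partial_\xi\omega,\partial_z\varrho,\partial_z\omega)$, and works in analytic norms $X_{\delta,\delta'}$ in which the semigroup satisfies $\Vert \e^{s\mathrm{L}}F\Vert_{\delta-\Gamma s,\delta'}\le C_\Gamma\Vert F\Vert_{\delta,\delta'}$ for any $\Gamma>1+\gamma_0$ (Proposition \ref{Prop:SemigroupEstimBigL}), i.e.\ the loss of derivatives is converted into a controlled shrinking of the analyticity radius in $x$. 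The sharp constant $\gamma_0$ from Corollary \ref{Coro:Gamma0} (the supremum of $\Im c$ over zeros of the dispersion relation) is what guarantees that the chosen unstable mode grows essentially as fast as the analyticity radius shrinks, so the iteration closes; establishing this requires the resolvent and spectral localization analysis of Propositions \ref{Prop:SemigroupEstimL_n}--\ref{Prop:SemigroupEstimBigL}, none of which is replaceable by a Sobolev energy argument. Your outline is missing this entire layer, and it is the heart of the nonlinear statement.
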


In essence, this result suggests that conventional fixed-point methods or iterative procedures fall short in constructing strong solutions to \eqref{eqintro:BHyd-E}. Specifically, (Hölder) continuity with respect to the initial data cannot be achieved. The outcome implies that, if the flow exists, it cannot exhibit Hölder continuity from $\mathrm{H}^p$ to $\H^1$ for any $s$ (involving arbitrary loss of derivatives), even for arbitrarily short times. Moreover, it cannot be continuous, for instance, from $\mathrm{H}^p$, with $p \geq 1$, to $\mathrm{H}^{1+}$. Discontinuity of the flow map is usually called \emph{mild ill-posedness}, corresponding to an ill-posedness in the sense of Hadamard. We refer to \cite{elgindi2020} for a discussion on Hadamard ill-posedness in the sense of pathological behavior of the map from the initial data to the solution at later times. A consistent amount of mathematical results in recent years have been devoted to proving several kinds of ill-posedness of hydrodynamic equations. We mention, for instance, \cites{jeong2017, jeong3, bourgain, cordoba1, BHI}, which are based on different strategies. Other weaker forms of ill-posedness exist, such as the loss of Lipschitz continuity of the flow map, see, for instance, \cites{GVD, GVN, GuoNguyen} for the ill-posedness of the Prandtl equation and failure of the boundary-layer expansion around nonmonotonic shear flows in Sobolev setting.  

Our ill-posedness result will mainly come from a linear instability feature: our particular profile of the form \eqref{eq:equilibrium} has an associated linearized operator for the hydrostatic system with an unbounded unstable spectrum, a context in which the general framework developed in \cite{HKN} is particularly fruitful to prove discontinuous dependency of the solution map on the initial data. It would be interesting to see how to make our ill-posedness statement stronger, i.e., without relying solely on this linear mechanism.

\begin{Rem}
An analogous ill-posedness result has been obtained for the $2d$ homogeneous hydrostatic system \eqref{eq:Hyd-E} by Han-Kwan and Nguyen in \cite{HKN} (see below in Section \ref{SectionIntro-Difficulties}). Building on this statement, Ibrahim, Lin, and Titi \cite{ILT} essentially extended it to the hydrostatic $2d$ Euler-Boussinesq system \eqref{eqintro:BHyd-E} but with the assumption of initial zero density, i.e., $\varrho_{\mid t=0}=0$. In this case, \eqref{eqintro:BHyd-E} reduces to the homogeneous system \eqref{eq:Hyd-E}, at least for smooth solutions.

% Here, our Theorem \ref{thm-illposednesNonlin} is complementary and addresses the entire system \eqref{eqintro:BHyd-E} without making such an assumption. It does not neglect the effect of the stable stratification. More precisely, Theorem \ref{thm-illposednesNonlin} provides a stronger (i.e., more general) result of ill-posedness for the hydrostatic 2D Euler-Boussinesq system \eqref{eqintro:BHyd-E}, \emph{in spite of the presence of a stable stratification} (noting that $\bar \rho ' (z) < 0$ in Theorem \ref{thm-illposednesNonlin}). Additionally, our result is quantitative and identifies, both linearly and nonlinearly, the mechanism inducing the irregularity of the flow map.

\end{Rem}

\begin{Rem}\label{Rmk-illposedlinear}
At the linearized level, the hydrostatic equations \eqref{eqintro:BHyd-E-vort} are indeed ill-posed in Sobolev spaces and in Gevrey spaces (of order greater than $1$). This conclusion is drawn from the linear analysis conducted in Section \ref{Subsec:UnboundedSpecHydrost}, and a precise statement can be found in Theorem \ref{thm:growingmodHydrostat}.
\end{Rem}

\begin{Rem}
In Theorems \ref{thm-invalidHydrostatlim} and \ref{thm-illposednesNonlin}, the stratified shear flow can be explicitly chosen as
\begin{equation*}
    \rho_s(z) = \alpha (1-\alpha)\left( -\beta \tanh\left(\beta z \right)+\frac{\beta}{3}\tanh^3\left(\beta z\right)\right), \qquad 
     U_s(z) = \tanh\left( \beta z\right),
\end{equation*}
with $\beta > 0$ sufficiently large and for some $\alpha < 1$ close to $1$. Notably, Theorems \ref{thm-invalidHydrostatlim} and \ref{thm-illposednesNonlin} hold \textit{in spite of the presence of a stable stratification}: it is easily observed that $\rho_s'(z) < 0$ for all $z \in [-1,1]$, but that the Miles-Howard criterion \eqref{eq:Miles-Howard} is violated.

It remains unclear whether Theorems \ref{thm-invalidHydrostatlim} and \ref{thm-illposednesNonlin} hold for any smooth profile $(\rho_s(z), U_s(z))$ that does not fulfill \eqref{eq:Miles-Howard}. This uncertainty arises because, to our knowledge, the Miles-Howard stability condition is \textit{not} a necessary and sufficient condition for spectral stability. However, our Theorem \ref{thm-invalidHydrostatlim} (resp. Theorem \ref{thm-illposednesNonlin}) does hold for any smooth stratified shear flow such that the conclusion of Theorem \ref{thm:growingmodEB-largetorus} (resp. Theorem \ref{thm:growingmodHydrostat}) is satisfied.
\end{Rem}

To conclude this section, we highlight the fact that, to our knowledge, the well-posedness of the hydrostatic equation \eqref{eqintro:BHyd-E} and the justification of the hydrostatic limit in finite regularity (under additional structural assumptions on the data) is still an open problem.

\subsection{Main difficulties and context}\label{SectionIntro-Difficulties} 
Theorems \ref{thm-invalidHydrostatlim} and \ref{thm-illposednesNonlin} assert that, in general, justifying the hydrostatic limit from \eqref{eqintro:BE-scaled} and establishing the well-posedness of the hydrostatic system \eqref{eqintro:BHyd-E} (in finite regularity) is hopeless. We highlight some of the obstructions below, and compare our results to existing ones in homogeneous fluids and kinetic theory.

% A positive answer question \ref{item:illposed}

% questions \ref{item:hydrolimit}-\ref{item:illposed}

% We now elaborate on the main challenges that arise in justifying the hydrostatic limit from \eqref{eqintro:BE-scaled} and in establishing the well-posedness of the hydrostatic system \eqref{eqintro:BHyd-E} itself. In this regard, Theorems \ref{thm-invalidHydrostatlim} and \ref{thm-illposednesNonlin} precisely assert that, in general, there is no hope of proving such statements, while obtaining positive results under additional assumptions (in finite regularity) appears to be a highly delicate open problem, which we leave for future investigations. This issue regarding stratified fluids has been explicitly raised as an open problem in \cite{BianchDuch}. As discussed later, it stands in sharp contrast with the existing literature concerning the homogeneous hydrostatic Euler equations. We conclude this section by drawing a parallel between the hydrostatic limit and the so-called quasineutral limit in plasma physics.

\subsubsection*{Cauchy theory for \eqref{eqintro:BHyd-E}}
The equation for the horizontal velocity $u$ in \eqref{eqintro:BHyd-E} exhibits a Burgers-type structure with $(\partial_t + u\partial_x) u$. Hence, without smoothing in the equation, a blow-up in finite time is naturally expected. On the other hand, the vertical velocity $v$ has no equation on its own, and it is recovered through the divergence-free condition a function of $u$ via
\begin{align*}
v(t, x, z) \sim - \partial_z^{-1} \partial_x u(t, x, z),
\end{align*}
that is
\begin{align*}
  v(t, x, z) =L[u](t, x, z):= -\int_{-1}^z \partial_x u(t, x, z') \, \mathrm{d}z'.
\end{align*}
Here, we have used the boundary conditions $v_{\mid z=-1}=0$. Thus, the equation for $u$ can be expressed as the nonlinear transport-like equation of nonlocal type
\begin{align*}
\partial_t u + u \partial_x u + L[u] \partial_z u + \partial_x p = 0.
\end{align*}
A primary challenge arises from the fact that the operator $L$ is not necessarily bounded or skew-symmetric in $\Ld^2$. This hinders the application of standard energy methods to obtain favorable \textit{a priori} estimates on the solution.This issue is similar to what is encountered in Prandtl equations for boundary layers, with distinct boundary conditions and mechanisms arising from partial dissipation (see \cites{GVD, GuoNguyen, GVN, HGV} and references therein). Considering Remark \ref{Rmk-illposedlinear} and \cite{GVD}, we observe that, at least at the linear level, the loss of derivatives is more pronounced in the hydrostatic equations.
Another structural difference with the Prandtl equations is the pressure term in \eqref{eqintro:BHyd-E}. In fact, as $\partial_z p = - \varrho$,
we expect an additional loss of derivative in the horizontal direction because of the term $-\partial_x \partial_z^{-1} \varrho$ in the equation for $u$.
%\begin{align*}
 %   p(t,x,z)=-\int_{-1}^z \varrho(t,x, z' ) \, \mathrm{d}z'-\int_{-1}^1 u(t,x, z')^2 \, \mathrm{d}z',
%\end{align*}}
%
%\footnote{why that? We have \begin{align*}
 %   P(\cdot, \cdot, z)=P(\cdot, \cdot, -1) - \int_{-1}^z \varrho (\cdot, \cdot, z') \, \mathrm{d}z'.
%\end{align*}
%Where do you see that 
%\begin{align*}
%    P(-1)=\int_{-1}^1 u(t,x, z')^2 \, \mathrm{d}z'?
%\end{align*}
%}
At the vorticity level, the hydrostatic Biot-Savart law \eqref{eq:BiotSavartHydrostat} lacks the usual elliptic regularization in $x$, and entails the formal relations  $u=\partial_z^{-1} \omega$ and $v=-\partial_x \partial_{z}^{-2} \omega$. Therefore, from \eqref{eqintro:BHyd-E-vort}, for any $\ell\geq 0$ we cannot expect better than $\Vert (\varrho, \omega) \Vert_{\H^{\ell}    } \lesssim \Vert (\partial_x \varrho, \partial_x \omega ) \Vert_{\H^{\ell}    }$ in terms of energy estimates. As a consequence, an analytic setting is natural to recover this loss of derivative, while a standard Cauchy theory in Sobolev spaces cannot generally be expected.

To the best of our knowledge, the only result regarding the well-posedness of \eqref{eqintro:BHyd-E} (as well as its tridimensional version) is contained in \cite{KTVZ}. This study establishes the existence of a unique analytic solution locally in time for analytic data, employing a Cauchy-Kovalevskaya type theorem. This approach is a standard method to counter the loss of derivatives, as highlighted above. The impact of rotation on the system has been further investigated in \cite{GILT}. Additional insights into linearizations of the inviscid primitive equations with specific boundary conditions can be found in \cites{RTT, HJT}.

Under certain assumptions on the initial data, some ill-posedness \cite{ILT} and blow-up \cites{CINT, collot2023stable} results have been derived, particularly in connection with $2d$ homogeneous dynamics (as discussed below). To overcome the analytic framework, an alternative approach involves considering regularizing viscous effects, such as partial viscosity or diffusivity. %The introduction of (partial) viscosity (respectively, diffusivity) in the equation on $u$ (respectively, on $\varrho$) appears to significantly simplify the analysis, leading to an extensive body of literature.
After the pioneering works \cites{LTW1, LTW2},  global well-posedness for strong solutions has been obtained in \cite{CT2007} (see \cite{LiTiti-review} for an in-depth exploration of these topics). For the mathematical study of a variant known as the hydrostatic  Navier-Stokes system, which mimics the vertical viscosity structure of the Prandtl equations in the hydrostatic context, see \cite{GVMV}.
\subsubsection*{Hydrostatic limit from \eqref{eqintro:BE-scaled}}
Establishing a satisfactory Cauchy theory for \eqref{eqintro:BHyd-E} is generally as challenging as justifying the hydrostatic limit from the system \eqref{eqintro:BE-scaled} when $\eps \rightarrow 0$. Roughly speaking, a difficulty arises due to a hyperbolic change of variables $$\left(t,x,z\right) \mapsto \left(\frac{t}{\eps}, \frac{x}{\eps},z\right)$$ which transforms the Euler-Boussinesq system \eqref{eqintro:BE-scaled} in the hydrostatic regime into the ``standard'' Euler-Boussinesq system \eqref{eqintro:BE-scaled} with $\eps=1$ (scaling the vertical velocity accordingly). As a consequence, obtaining estimates uniform in $\eps$ for the hydrostatic limit as $\eps \rightarrow 0$ on a timescale $\mathcal{O}(1)$ essentially requires proving similar estimates for the unscaled Euler-Boussinesq system on a timescale  $\mathcal{O}(\eps^{-1})$. This suggests that the hydrostatic limit behaves like a long-time dynamics problem for the unscaled Euler-Boussinesq system.
Interpreting the hydrostatic limit in this way implies that the stability or instability properties of the Euler-Boussinesq equations may play a crucial role. Stationary solutions like stratified shear flows, as in \eqref{eq:equilibrium}, form an important class. Besides local well-posedness in Sobolev spaces \cite{Chae}, the question of global well-posedness near stable density profiles like \eqref{eq:equilibrium} remains an open problem
(see \cites{EW, DLS, BBCZD, BHI} for partial results). This may explain the ongoing challenge in justifying the hydrostatic limit in the context of inviscid stratified flows.

Concerning the justification of the hydrostatic limit in the context of viscous fluids, relevant results can be found in \cites{azerad2001mathematical,li2019primitive}. It is worth noting that both theoretical and experimental studies of geophysical flows suggest relying on viscous effects of turbulent (or eddy) nature instead of molecular ones. In this context, to our knowledge the only work addressing the hydrostatic limit and the well-posedness of the limiting equations (in finite regularity) is presented in \cite{BianchDuch}, where suitable isopycnal diffusivity terms are considered, along with the inclusion of so-called Gent-McWilliams correctors.

The Miles-Howard stability criterion \eqref{eq:Miles-Howard} will be central to our analysis, as it appears naturally in  the context of asymptotic stability of stratified shear flows in the Euler-Boussinesq equations \cites{BCZD, BBCZD, BCZD2, CZN1}.

% As we will see, the Miles-Howard stability criterion \eqref{eq:Miles-Howard} for stratified fluids will be pivotal in our analysis. This criterion is natural in the context of stratified fluids, and numerous recent mathematical results on the Euler-Boussinesq equations around stably stratified shear flows, such as \cites{BCZD, BBCZD, BCZD2, CZN1}, crucially rely on it.

\subsubsection*{The case of homogeneous flows}
The homogeneous counterpart of \eqref{eqintro:BHyd-E}, namely the hydrostatic system with constant density,
\begin{equation}\label{eq:Hyd-E}
\begin{aligned}
\partial_t u +u \partial_x u + v \partial_z u + \partial_x p&=0, \\
\partial_x u + \partial_z v&=0, \\
\partial_z p&=0,
\end{aligned}
\end{equation}
has been extensively studied and shares some of the difficulties with our system \eqref{eqintro:BHyd-E}. The linear and nonlinear Sobolev ill-posedness results \cites{Renardy,HKN}, as well as the failure of the hydrostatic limit \cite{Grenier-hydroderiv}, rely on the existence of unstable shear flows having inflection points. On the other hand, local well-posedness in finite regularity has been established under convexity assumptions on the initial data \cites{Brenier-Hydroexist,MW2012,kukavica2014local}. The justification of the hydrostatic limit with this condition is obtained in \cite{Grenier-hydroderiv} through energy estimates, with a simplified version in \cite{Brenier-Hydroderiv}. Blow-up solutions are discussed in \cites{wong2015blowup, CINT, collot2023stable}. 

%with boundary conditions $v_{\mid z=\pm1}=0$. 
% Notably, some of the difficulties arising in the analysis of \eqref{eqintro:BHyd-E} persist in \eqref{eq:Hyd-E}. %While less relevant for large-scale flows, these equations are crucial in the analysis of hydrostatic Euler-Boussinesq equations (see Section \ref{SectionIntro-Outline} for details).
% An extensive mathematical literature is dedicated to \eqref{eq:Hyd-E}, investigating  ill-posedness in Sobolev and Gevrey spaces at the linear level \cite{Renardy}, and in Sobolev spaces at the nonlinear level in \cite{HKN}. This is mainly due to the existence of some particular unstable profiles having inflection points, around which the hydrostatic limit can also be disproved \cite{Grenier-hydroderiv}.  Local well-posedness in finite regularity has then been established under convexity assumptions for the initial data in \cite{Brenier-Hydroexist,MW2012,kukavica2014local}. The hydrostatic limit's justification is presented in \cite{Grenier-hydroderiv} through energy estimates, with a simplified version in \cite{Brenier-Hydroderiv}. Blow-up solutions are discussed in \cite{wong2015blowup}, extended to axisymmetric flow in \cite{RW}, and the Onsager conjecture is explored in \cite{BMT, BMT2}.

From all these works, it is apparent that the study of the hydrostatic limit for \eqref{eq:Hyd-E} is intimately connected to the hydrodynamic stability of shear flows without inflection points, as they are spectrally stable in view of the Rayleigh criterion \cite{drazin1966hydrodynamic}. This observation motivates the connection of the hydrostatic limit and the long-time stability of the non-hydrostatic equation, as discussed above.

\subsubsection*{Connection with the quasineutral limit} The difficulties encountered in the hydrostatic approximation are very similar to those involved in the quasineutral limit from plasma physics in kinetic theory. This analogy has been pointed out in \cites{Zak, BrenierComp, HKH}. Roughly speaking, the quasineutral regime amounts to consider a typical length of electrostatic interactions (so-called Debye length) that is very small. For electrons (in a fixed background of ions) described by the classical Vlasov-Poisson equation, this approximation formally leads to the so-called kinetic Euler equations (named after Brenier \cites{BreVP1,Bre-Genflow}).

The quasineutral approximation shares similarities with the hydrostatic limit, displaying a singular nature, with some analoguous losses of derivatives in the estimates. Stability issues related to a suitable hyperbolic scaling are governed by the Penrose stability condition \cite{Penrose}, influencing the validity of the quasineutral limit and the well-posedness of the limiting equations. Conjecturally, the Miles-Howard condition \eqref{eq:Miles-Howard} plays a similar crucial role for stratified fluids, supported by our Theorems \ref{thm-invalidHydrostatlim} and \ref{thm-illposednesNonlin}.

Research on the quasineutral limit problem is actively pursued, and a more detailed overview is available in \cite{griffin2021recent}. Rigorous justification of the quasineutral limit began among others with \cites{Gr96,B00,Mas}, primarily in an analytic or monokinetic regime (see also \cites{HKI,HKI2}). Structural assumptions of stability on the initial data, in the sense of Penrose, lead to positive results in Sobolev regularity \cite{HKR2}. Conversely, around a homogeneous profile satisfying an instability condition, the limit does not hold \cite{HKH}, and the kinetic Euler equations are ill-posed in Sobolev spaces \cites{HKN,baradat2020nonlinear}. In the ion case, analogous results are discussed in \cites{JN, HK-quasin, BB1,HKR}.

\subsection{Strategy of the proof and organization of the paper}\label{SectionIntro-Outline}
% Let us give an outline of this paper where we emphasize the main ideas of the proofs. 

In Section \ref{Section:GrowingMode}, we first investigate linearizations of the hydrostatic and non-hydrostatic Euler-Boussinesq equations, around the stratified shear flows \eqref{eq:equilibrium}.
We essentially aim at finding a suitable smooth profile such that the corresponding linearized operators (in density-vorticity variables) possesses a non-trivial unstable spectrum. 

Our linear analysis provides the leading mechanism driving the nonlinear dynamics. For our purpose, the guiding lines are the following:
\begin{itemize}
    \item in the hydrostatic case, we obtain an unbounded unstable spectrum that may grow linearly in frequency (Theorem \ref{thm:growingmodHydrostat} in  Section \ref{Subsec:UnboundedSpecHydrost}). This is related to the linear (and subsequent nonlinear) ill-posedness of the equations in Sobolev spaces.
    \item in the non-hydrostatic case and for the unscaled Euler-Boussinesq, we exhibit a maximal unstable eigenvalue related to a long-wave instability in a long periodic channel, see Theorem \ref{thm:growingmodEB-largetorus} in  Section \ref{Subsec:LongwaveInstab}. Due to some space-time scaling, this will lead to instabilities in the hydrostatic limit that may develop on a time scale $\mathcal{O}(\eps^\alpha)$, for any $\alpha \in (0,1)$.
\end{itemize}
By looking at transversal pure-modes that are solutions of the linearized dynamics around $(\rho_s(z), U_s(z),0)$, i.e. proportional to $\e^{ikx} \e^{-ikct}$ with $(k,c) \in \Z \setminus {\lbrace 0 \rbrace} \times \C$, we can see that potential instabilities are governed by the Taylor-Goldstein equation \cites{miles1961stability, howard1961note}:  
\begin{equation}\label{eq:introTG}
      \begin{aligned}
(U_s-c)^2\left(\partial_z^2-(\eps k) ^2\right) \varphi-(U_s-c)U_s'' \varphi&=\rho_s' \varphi, \\
\varphi(\pm1)&=0.
\end{aligned}
\end{equation}
This is an eigenvalue problem on the associated stream function $\varphi(z)$, having the same role as the Rayleigh equations for homogeneous flows (i.e. for $\rho_s'=0$). Seeking for instabilities corresponds to searching for a pair $(c, \varphi_c)$ with  $\mathrm{Im}(c)\neq 0$ and $\varphi_c \neq 0$ a solution to \eqref{eq:introTG}.  Here, the case $\eps=1$ brings to the original (unscaled) Euler-Boussinesq equations, while $\eps \to 0$ refers to the hydrostatic regime. 
In the hydrostatic case $\eps=0$,  \eqref{eq:introTG} does not depend on the transversal frequency $k$, so the exponential growth rate in time of solutions is proportional to $k \Im(c)$, with $c$ independent of $k$.
%
%
%Could be tempting to use a neutral mode + perturbation approach, in the spirit of Lin (see the recent simplified proof by....). However, so far, we only have access to the Couette case \cite{CZN1} on the channel, which precisely asserts that there is no neutral mode in the case where $\mathrm{Ri} \leq 1/4$...
%\item (see also \cite{HGuo} on the Rayleigh-Taylor instability for non-homogeneous Euler with gravity)
%

Following an idea of Friedlander in \cite{friedlander2001nonlinear}, we introduce a parameter $\alpha \in (1/2,1)$ and impose that \eqref{eq:equilibrium} satisfies
\begin{align}
    \dfrac{-\rho_s'(z)}{|U_s'(z)|^2}=\alpha(1-\alpha).
\end{align}
In turn, this enforces the violation of 
the Miles-Howard stability condition \eqref{eq:Miles-Howard}. The limit $\alpha \rightarrow 1$ corresponds to a degenerate homogeneous flow. By performing the change of unknowns 
\begin{align}
    \varphi=(U_s-c)^\alpha \psi_{\eps, \alpha},
\end{align}
we can transform the equation \eqref{eq:introTG} into the new eigenvalue problem
\begin{equation}\label{eq:introTGnew}
      \begin{aligned}
\partial_z \big((U_s-c)^{2\alpha} \partial_z \psi_{\eps, \alpha} \big)=(U_s-c)^{2\alpha}\left(\eps k\right)^2 \psi_{\eps, \alpha} +(\alpha-1)(U_s-c)^{2\alpha-1}U_s'' \psi_{\eps, \alpha}&=0, \\
\psi_{\eps, \alpha}(\pm1)&=0.
\end{aligned}
\end{equation}
When $(\eps,\alpha)=(0,1)$, \eqref{eq:introTGnew} drastically simplifies to 
\begin{align*}
    \partial_z \big((U_s-c)^{2} \partial_z \psi_{0,1} \big)=0.
\end{align*}
This corresponds to the eigenvalue problem associated to the homogeneous hydrostatic Euler equations \eqref{eq:Hyd-E} linearized around $U_s$, studied in \cite{Renardy}. Integrating in $z$ and taking into account the boundary condition $\psi_{0,1}(-1)=0$, it yields a suitable growing mode provided that there exists a $c \in \C$ with $\mathrm{Im}(c) \neq 0$ satisfying the dispersion relation 
\begin{align}
        \int_{-1}^1 \frac{\mathrm{d}z}{(U_s(z)-c)^2}=0.
\end{align}
This former dispersion relation, if satisfied, ensures that the boundary condition $\psi_{0,1}(1)=0$ holds.
Drawing inspiration from Penrose's complex analysis approach \cite{Penrose} for instability in Vlasov-Poisson systems, we can actually find a smooth shear-layer profile $U_s$ to solve the aforementioned equation (see Appendix \ref{Appendix:ZeroFunction}).

The full problem \eqref{eq:introTGnew} is handled through a perturbative strategy inspired by \cites{DDGM, HG} in the context of boundary-layer models. In our case, the homogeneous hydrostatic instability persists for $(\eps,\alpha)$ close to $(0,1)$. The procedure is based on Rouché theorem and essentially amounts to find a good zero $c \in \lbrace \mathrm{Im}> 0 \rbrace$ of the dispersion relation  $c \mapsto \psi_{\eps, \alpha}(1)$ (so that the boundary condition $\psi_{\eps, \alpha}(1)=0$ holds), near the zero of the function $c \rightarrow \psi_{0, 1}(1)$. It is performed thanks to explicit stability estimates in the regime $(\eps,\alpha) \rightarrow (0,1)$.

% Since the two perturbations we removed previously are of lower order, we can hope that the instability found for $(\eps,\alpha) \rightarrow (0,1)$ still persists when $(\eps,\alpha)$ converges (but does not equal) $(0,1)$. This procedure (based on Rouché's theorem) essentially amounts to find a good zero $c \in \lbrace \mathrm{Im}> 0 \rbrace$ of the dispersion relation $c \mapsto \psi_{\eps, \alpha}(1)$, near the zero of the function the function $c \rightarrow \psi_{0, 1}(1)$, It is performed thanks to explicit stability estimates in the regime $(\eps,\alpha) \rightarrow (0,1)$.}

For the sake of clarity, we first treat the case of $\eps=0$, in the limit $\alpha \rightarrow 1$ in Section \ref{Subsec:UnboundedSpecHydrost}, for the hydrostatic equation. In Section \ref{Subsec:LongwaveInstab}, and after a suitable rescaling in $\eps$ as described in Section \ref{SectionIntro-Difficulties}, we consider the case $\eps=1$ with $k$ replaced by $k/M$ (corresponding to a large box limit), in the regime $M \rightarrow + \infty$ and $\alpha \rightarrow 1$.  

\medskip

Section \ref{Section:HydroLimit} is dedicated to the proof of Theorem \ref{thm-invalidHydrostatlim}. We rely on the linear instability devised in Section \ref{Section:GrowingMode}, and take inspiration from the analogies appearing in the disproval of the derivation of the kinetic Euler equation from the Vlasov-Poisson system \cite{HKH}. Their method is itself based on the celebrated Grenier's scheme for nonlinear instability \cite{Gre-instabScheme}, here used after a rescaling in $\eps$. We crucially use the idea that the regime $\eps \rightarrow 0$ is equivalent to a large box limit, where instabilities can be found. We also deduce a nonlinear Lyapunov instability results for the unscaled Euler-Boussinesq equations.

% In Section \ref{Section:HydroLimit} we prove Theorem \ref{thm-invalidHydrostatlim}. We rely on the instabilities induced by the stratified shear flow we just built in our linear analysis. In the analogy with the quasineutral limit for plasmas (see Section \ref{SectionIntro-Difficulties}), we follow the strategy empowered by Han-Kwan and Hauray in \cite{HKH} to disprove the derivation of the kinetic Euler equation from the Vlasov-Poisson system. Their method is itself based on the celebrated Grenier's scheme for nonlinear instability \cite{Gre-instabScheme}, here used after a rescaling in $\eps$. Here, we crucially use the idea that the limit $\eps \rightarrow 0$ in the hydrostatic approximation is somehow equivalent to a large box limit, where instabilities can be found. In our context, it also implies a nonlinear Lyapunov instability results for the unscaled Euler-Boussinesq equations. 

\begin{comment}
\lucas{[Put these references later on in the Section where we use this method ?]} Grenier's method has turned to be fruitful in many different contexts such as, among others: hydrodynamics for Euler or Navier-Stokes equations \cite{HGuo,DG-rayleyghtaylor, GGR}, water-waves equations \cite{RT-waterwaves}, MHD system \cite{GR-mhd,G-mhd}, Vlasov-Poisson type systems and related questions concerning the quasineutral limit\cite{CGG, HKH, HKN3}, Vlasov-HMF equation \cite{lemou2020nonlinear}. 
\end{comment}
\medskip

In Section \ref{Section:IllPosed}, we prove Theorem \ref{thm-illposednesNonlin}. Our strategy relies on the abstract framework developed in \cite{HKN}, providing a robust setting to prove mild ill-posedness in finite regularity for some nonlocal equations having unbounded unstable spectrum. In view of the scaling invariance of \eqref{eqintro:BHyd-E-vort}, the main issue consists in studying the growth of the associated spectrum and its impact on the semigroup estimates.

\section{Construction of linear instabilities}\label{Section:GrowingMode}
We proceed to the linear analysis of the hydrostatic and non-hydrostatic Euler-Boussinesq equations around an unstable stratified shear \eqref{eq:equilibrium}. Our goal is twofold:
\begin{enumerate}
\item In Section \ref{Subsec:UnboundedSpecHydrost}, we construct a growing mode for the linearization of \eqref{eqintro:BHyd-E} for any non-zero $k \in \Z$,  which is proportional to $\e^{\lambda_k t}$, with
$\mathrm{Re}(\lambda_k)=\sigma k$ and $\sigma>0$ independent of $k$. This reveals that the linearized operator exhibits an unbounded unstable spectrum, a key point for Section \ref{Section:IllPosed}, where we delve into the ill-posedness of the system \eqref{eqintro:BHyd-E-vort}. The analysis of the eigenvalue problem is conducted through a perturbative approach, relying on explicit computations related to the homogeneous hydrostatic equations.
 
\item In Section \ref{Subsec:LongwaveInstab}, we provide a long-wave instability for the linearized unscaled system \eqref{eqintro:BE-scaled} (i.e., with $\eps=1$). Our perturbative proof follows the approach developed in Section \ref{Subsec:UnboundedSpecHydrost}. This unstable eigenvalue has a crucial role in Section \ref{Section:HydroLimit}, where we prove the failure of the hydrostatic limit and a nonlinear Lyapunov instability of the hydrostatic equations.

\end{enumerate}

\subsection{Unstable unbounded spectrum for the hydrostatic Euler-Boussinesq equations}\label{Subsec:UnboundedSpecHydrost}
We begin the (linear) analysis by linearizing system \eqref{eqintro:BHyd-E} around a stratified shear equilibrium $(\rho_s(z), \bcU(z), 0, p_s(z))$ as given in \eqref{eq:equilibrium}. To this end, we consider the ansatz  
\begin{align*}
\varrho(t,x,z)&=\rho_s(z)+\widetilde{\varrho}(t,x,z), \\
p(t,x,z)&=p_s(z)+\widetilde{p}(t,x,z), \qquad p_s'(z)=-\rho_s(z),\\
u(t,x,z)&=U_s(z)+\widetilde{u}(t,x,z), \\
v(t,x,z)&=0+\widetilde{v}(t,x,z),
\end{align*}
and, plugging it into \eqref{eqintro:BHyd-E}, we obtain the linearized system 
\begin{equation}\label{eq:hEBlinearized}
      \begin{cases}
      \partial_t \widetilde{\varrho} +U_s(z) \partial_x \widetilde{\varrho}+ \widetilde{v} \rho_s'(z)=0, \\ 
\partial_t \widetilde{u} + U_s(z) \partial_x \widetilde{u} + \widetilde{v} U_s'(z) + \partial_x \widetilde{p}=0,  \\ 
\partial_x \widetilde{u} + \partial_z \widetilde{v}=0, \\ 
\partial_z \widetilde{p}+ \widetilde{\varrho} =0.
\end{cases}
\end{equation}
From now on, we omit the tilde to simplify notations.  

As done in \cite{MW2012} for the \emph{homogeneous} hydrostatic Euler equations, we express the divergence-free velocity field in terms of a stream function $\varphi$, which is related to the ``vorticity'' $\omega=\partial_z u$ through the degenerate elliptic equation  
\begin{align}
u=\partial_z \varphi, \qquad  v = -\partial_x \varphi, \qquad  \begin{cases}
        \partial_z^2 \varphi=\omega,\\
        \varphi_{\mid z=\pm 1}=0.
    \end{cases}
\end{align}
This formulation allows to rewrite \eqref{eq:hEBlinearized} as 
\begin{equation}\label{eq:Hydrost-Linearized}
      \begin{cases}
      \partial_t \varrho +U_s(z) \partial_x \varrho - \rho_s'(z) \partial_x \varphi=0, \\ 
\partial_t \omega + U_s(z) \partial_x \omega - U_s''(z)\partial_x \varphi = \partial_x \varrho.   
\end{cases}
\end{equation}
Note that when $\rho_s'=0$ and $\varrho$ is constant, we recover the \emph{homogeneous} linearized hydrostatic equation (see \cite{Renardy}):
\begin{align}\label{eq:Hydrost-Linearized-homo}
    \partial_t \omega + U_s(z) \partial_x \omega - U_s''(z)\partial_x \varphi=0.
\end{align}
The system \eqref{eq:Hydrost-Linearized} above can be compactly expressed as 
\begin{align*}
\partial_t \begin{pmatrix}
\varrho\\
\omega
\end{pmatrix}
=\mathscr{L}_{\rho_s, U_s}\begin{pmatrix}
\varrho\\
\omega
\end{pmatrix},
\end{align*}
where the linearized hydrostatic operator
 $\mathscr{L}_{\rho_s, U_s}$ is defined as
\begin{align}\label{def:LinOp-Hydrostat}
\mathscr{L}_{\rho_s, U_s}\begin{pmatrix}
\varrho\\
\omega
\end{pmatrix}=\begin{pmatrix}
-U_s(z) \partial_x \varrho + \rho_s'(z)\partial_x \varphi\\ -U_s(z) \partial_x \omega + U_s''(z)\partial_x \varphi+ \partial_x \varrho
\end{pmatrix}, \qquad \text{with} \qquad  \begin{cases}
        \partial_z^2 \varphi=\omega, \\ 
        \varphi_{\mid z=\pm 1}=0.
    \end{cases}
\end{align}
Our \emph{linear} ill-posedness result is provided below.
\begin{Thm}\label{thm:growingmodHydrostat}
There exists an analytic stationary profile $(\rho_s(z), U_s(z))$, with $\rho_s'(z)<0$, satisfying
\begin{align*}
0< \frac{-\rho_s'(z)}{|U_s'(z)|^2} < \frac{1}{4},
\end{align*}
and such that the following holds. There exists $c \in \lbrace \mathrm{Im}>0 \rbrace$ such that for any $k \in \Z {\setminus \lbrace 0 \rbrace}$, the hydrostatic linearized equations \eqref{eq:Hydrost-Linearized} exhibit a real-valued growing mode of the form 
%\begin{align}
%\varrho(t,x,z)=\widehat{\varrho}(z)\e^{ik(x-ct)}, \qquad  \omega(t,x,z)=\widehat{\omega}(z)\e^{ik(x-ct)}, \quad k \neq 0, \quad \mathrm{Im}(c) \neq 0,\\
%\end{align}
\begin{equation}\label{eq:growingmode}
 \begin{aligned}
\varrho^{\mathrm{mod}}(t,x,z)=r(z)  \e^{k \Im (c) t} \cos(k(x-\Re(c) t)-\theta_1(z)), \\
\omega^{\mathrm{mod}}(t,x,z)=w(z) \e^{k \Im(c) t} \cos(k(x-\Re(c) )t-\theta_2(z)),
 \end{aligned}
\end{equation}
%where
%\begin{align}
%    k \neq 0, \qquad \mathrm{Im}(c) >0,
%\end{align}
where $r, w, \theta_1, \theta_2$ are smooth and real-valued nonzero functions.  

Moreover, the unstable spectrum of $\mathscr{L}_{\rho_s, U_s}$ is unbounded and contains  all the eigenvalues $\lambda=-ikc$ with $k \in \Z$ such that $\mathrm{Re}(\lambda)=k \mathrm{Im}(c)>0$, associated to some smooth eigenmodes of the form $\e^{ikx}(\widehat{\varrho}(z), \widehat{\omega}(z))$ for some non-zero regular functions $(\widehat{r}(z), \widehat{w}(z))$.
%\begin{align*}
%(\widetilde{\varrho}(x,z), \widetilde{\omega}(x,z))=\e^{ikx}(\widehat{\varrho}(z), \widehat{\omega}(z)).
%\end{align*}
%\begin{align*}
%(\widetilde{\varrho}, \widetilde{\omega})=\e^{ikx}(\widehat{\varrho}, \widehat{\omega}).
%\end{align*}
\end{Thm}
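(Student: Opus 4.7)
The plan is to cast the search for a growing mode of \eqref{eq:Hydrost-Linearized} as a boundary-value problem on a single scalar ODE in $z$, and then to perturb off an explicit situation where the dispersion relation can be solved by hand. Plugging the pure-mode ansatz $(\varrho,\omega)(t,x,z)=\e^{ikx}\e^{-ikct}(\widehat\varrho(z),\widehat\omega(z))$ with $k\in\Z\setminus\{0\}$ and $c\in\C$ into \eqref{eq:Hydrost-Linearized} and introducing the stream function $\varphi(z)$ solving $\partial_z^2\varphi=\widehat\omega$, $\varphi(\pm1)=0$, standard manipulations reduce the system to the hydrostatic Taylor--Goldstein equation \eqref{eq:introTG} with $\eps=0$:
\begin{equation*}
(U_s-c)^2\partial_z^2\varphi-(U_s-c)U_s''\varphi=\rho_s'\varphi,\qquad \varphi(\pm1)=0.
\end{equation*}
The key observation is that this ODE is \emph{independent} of $k$, so a single unstable $c\in\{\Im>0\}$ produces a growing mode of rate $k\Im(c)$ for every $k\in\Z\setminus\{0\}$, which at once yields the unboundedness of the unstable spectrum claimed in the theorem.

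Following \cite{friedlander2001nonlinear}, I enforce the Friedlander ansatz $-\rho_s'(z)=\alpha(1-\alpha)|U_s'(z)|^2$ for $\alpha\in(1/2,1)$ close to $1$: this constantly pins the local Richardson number at $\alpha(1-\alpha)<1/4$, thereby violating \eqref{eq:Miles-Howard} while preserving $\rho_s'<0$. The substitution $\varphi=(U_s-c)^\alpha\psi_\alpha$ then recasts the previous ODE into the divergence-form equation \eqref{eq:introTGnew} with $\eps=0$:
\begin{equation*}
\partial_z\!\left((U_s-c)^{2\alpha}\partial_z\psi_\alpha\right)+(\alpha-1)(U_s-c)^{2\alpha-1}U_s''\psi_\alpha=0,\qquad \psi_\alpha(\pm1)=0.
\end{equation*}
At the endpoint $\alpha=1$ the lower-order term disappears, the equation integrates explicitly, and imposing both boundary conditions together with a normalization collapses the problem to the scalar dispersion relation
\begin{equation*}
\Phi_1(c):=\int_{-1}^{1}\frac{\dd z}{(U_s(z)-c)^2}=0.
\end{equation*}

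For an odd, real-analytic shear-layer profile of the form $U_s(z)=\tanh(\beta z)$ with $\beta$ sufficiently large, a Penrose-type contour argument (as used for \eqref{eq:Hyd-E} in \cite{Renardy} and carried out in Appendix \ref{Appendix:ZeroFunction}) produces a nondegenerate root $c_0\in\C$ of $\Phi_1$ with $\Im(c_0)>0$ and $\Phi_1'(c_0)\neq 0$. To lift this to $\alpha<1$, I define the full dispersion functional $\Phi_\alpha(c):=\psi_\alpha(1;c)$, where $\psi_\alpha(\,\cdot\,;c)$ is the unique solution of the ODE above with $\psi_\alpha(-1;c)=0$ and a fixed normalization of $\partial_z\psi_\alpha(-1;c)$. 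For $c$ in a small disk $D_\delta(c_0)$ entirely contained in $\{\Im>0\}$, the quantity $U_s-c$ is uniformly bounded away from zero on $[-1,1]$, so the coefficients $(U_s-c)^{2\alpha}$ and $(U_s-c)^{2\alpha-1}$ are smooth in $z$, holomorphic in $c$, and continuous in $\alpha$, uniformly. Standard ODE stability estimates then yield $\Phi_\alpha\to\Phi_1$ uniformly on $\partial D_\delta(c_0)$ as $\alpha\to 1^-$, and Rouch\'e's theorem provides, for $\alpha$ close enough to $1$, a unique root $c_\alpha\in D_\delta(c_0)$ with $\Im(c_\alpha)>0$.

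Finally, the eigenmode is reconstructed by setting $\varphi_\alpha=(U_s-c_\alpha)^\alpha\psi_\alpha(\,\cdot\,;c_\alpha)$, $\widehat\omega=\partial_z^2\varphi_\alpha$, and recovering the density part through $\widehat\varrho=\rho_s'\varphi_\alpha/(U_s-c_\alpha)$, which is well-defined since $U_s-c_\alpha$ does not vanish on $[-1,1]$. Writing $\widehat\varrho(z)=r(z)\e^{-i\theta_1(z)}$ and $\widehat\omega(z)=w(z)\e^{-i\theta_2(z)}$ and taking real parts of $\e^{ikx}\e^{-ikc_\alpha t}(\widehat\varrho,\widehat\omega)$ produces the real-valued growing mode in the form \eqref{eq:growingmode}, which is nontrivial because $\psi_\alpha$ (and hence $\widehat\varrho,\widehat\omega$) is nontrivial. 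The main technical obstacle is the perturbation step: one has to argue that $\Phi_\alpha$ is a genuine uniform holomorphic perturbation of $\Phi_1$ despite the apparent singularity of the factor $(U_s-c)^{2\alpha-1}$ and the need to choose a consistent branch of the fractional power. Restricting $c$ to a compact disk $D_\delta(c_0)$ strictly inside $\{\Im>0\}$ confines $U_s-c$ to a fixed compact subset of $\C\setminus(-\infty,0]$, which legitimizes the principal branch and turns all suspect factors into smooth functions of $z$ with uniform $\alpha$-independent bounds; this is precisely the input needed to close the Rouch\'e argument.
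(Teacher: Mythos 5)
Your proposal is correct and follows the same overall architecture as the paper: normal-mode reduction to the hydrostatic Taylor--Goldstein equation, the Friedlander ansatz $-\rho_s'=\alpha(1-\alpha)|U_s'|^2$ with the substitution $\varphi=(U_s-c)^\alpha\psi$, the explicit dispersion relation $\int_{-1}^1(U_s-c)^{-2}\,\dd z=0$ at $\alpha=1$ solved for $U_s=\tanh(\beta z)$ via the Penrose-type winding argument, a Rouch\'e perturbation to $\alpha<1$, and the observation that $k$-homogeneity of the hydrostatic problem turns a single unstable $c$ into an unbounded unstable spectrum. The one place you diverge is the technical core of the perturbation step: you treat \eqref{ODE:tosolve} as a shooting problem and invoke soft holomorphic/continuous dependence of ODE solutions on parameters to get $\Phi_\alpha\to\Phi_1$ uniformly on the Rouch\'e contour, whereas the paper integrates the equation into a fixed-point problem $\psi=D_\alpha^z(c)+(1-\alpha)\mathrm{S}_{\alpha,c}[\psi]$ and inverts $\mathrm{I}-(1-\alpha)\mathrm{S}_{\alpha,c}$ by a Neumann series with an explicit operator bound (Lemma \ref{lem-opnormIntegOp}), from which both holomorphy of $\widetilde D_\alpha$ and the quantitative estimate $|\widetilde D_\alpha(c)-D_1^1(c)|\leq M(\delta)(1-\alpha)$ follow. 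Your softer argument is perfectly adequate for this theorem (the leading coefficient $(U_s-c)^{2\alpha}$ is uniformly elliptic on a compact disk in $\{\mathrm{Im}>0\}$, and your branch-of-power discussion is correct), but the paper's explicit bounds are not gratuitous: they are reused verbatim to control the extra operator $\widetilde{\mathrm{S}}_{2\alpha,2\alpha}$ in the non-hydrostatic long-wave analysis of Section \ref{Subsec:LongwaveInstab} and to localize the unstable zeros in Corollary \ref{Coro:Gamma0}, which feeds the definition of $\gamma_0$ in Section \ref{Section:IllPosed}. Two cosmetic remarks: the nondegeneracy $\Phi_1'(c_0)\neq0$ you assert is not needed for Rouch\'e (isolation of the zero suffices, which is what the paper uses), and, like the paper, you pass silently over the fact that writing $\widehat\varrho=r\e^{-i\theta_1}$ with smooth $r,\theta_1$ requires $\widehat\varrho$ to be non-vanishing; neither issue affects the validity of the argument.
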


Let us begin the proof, which unfolds in several steps.

\subsubsection*{ Step 1 - Towards the hydrostatic Taylor-Goldstein equation}
We perform a standard normal mode analysis looking for a solution of the form
\begin{align}\label{eq:mode}
\varrho^{\mathrm{mod}}(t,x,z)=r(z) \e^{ik(x-ct)}, \qquad  \varphi^{\mathrm{mod}}(t,x,z)=\varphi(z) \e^{ik(x-ct)},
\end{align}
with $k \in \Z {\setminus \lbrace 0 \rbrace}$, $t >0$, $c \in \C$ and $r,\varphi \in \C$ (non zero), with $\mathrm{Im}(c) \neq 0$. Here, $\varphi$ is the associated hydrotatic stream function related to $w$ in \eqref{eq:growingmode} through \eqref{def:LinOp-Hydrostat}. We will provide a real growing mode by taking the real part of \eqref{eq:mode}.

\begin{Rem}
Note that if such a solution exists, it immediately yields an unstable eigenfunction for the linearized hydrostatic operator $\mathscr{L}_{\rho_s, U_s}$ in \eqref{def:LinOp-Hydrostat}, associated to the eigenvalue $\sigma_k:=-ikc$.
\begin{comment}
In fact, let us write the solution 
$$(\varrho(t,x,z), \omega(t,x,z))=\e^{-ick t} (\widetilde{\varrho}(x,z), \widetilde{\omega}(x,z)),$$
where 
$$
(\widetilde{\varrho}(x,z), \widetilde{\omega}(x,z)):=\e^{ikx}(\varrho(z), \omega(z)).$$

%Indeed, if one defines (with the previous  $k,c, \cdots$)
%\begin{align*}
%(\widetilde{\varrho}, \widetilde{\omega}):=\e^{ikx}(\widehat{\varrho}, \widehat{\omega}),
%\end{align*}
As $(\varrho^{\mathrm{mod}}, \omega^{\mathrm{mod}})$ solves 
\begin{align*}
(\partial_t -\mathscr{L}_{\rho_s, U_s} )\begin{pmatrix}
\varrho\\
\omega
\end{pmatrix}=0,
\end{align*}
%is equivalent to
%\begin{align*}
%\Leftrightarrow -ikc\e^{-ikct}\begin{pmatrix}
%\widetilde{\varrho}\\
%\widetilde{\omega}
%\end{pmatrix}-\e^{-ikct}\mathscr{L}_{\rho_s, U_s}\begin{pmatrix}
%\widetilde{\varrho}\\
%\widetilde{\omega}
%\end{pmatrix}=0,
%\end{align*}
then $(\widetilde \varrho, \widetilde \omega)$ satisfies the eigenvalue problem
\begin{align*}
\mathscr{L}_{\rho_s, U_s}\begin{pmatrix}
\widetilde{\varrho}\\
\widetilde{\omega}
\end{pmatrix}=-ikc \begin{pmatrix}
\widetilde{\varrho}\\
\widetilde{\omega}
\end{pmatrix}.
\end{align*}
\end{comment}
\end{Rem}

Plugging \eqref{eq:mode} into \eqref{eq:Hydrost-Linearized} yields
\begin{equation*}
      \begin{aligned}
      (-ikc+ikU_s) r-ik  \rho_s'(z) \varphi &=0, \\[1mm]
(-ikc+ikU_s)\partial_z^2 \varphi-ik\varphi U_s''&=ik r.\end{aligned}
\end{equation*}
Note that the equations are \emph{homogeneous} in $k$, reflecting the (hyperbolic) space-time scaling of the hydrostatic equations, which is in contrast with the Euler-Boussinesq system \eqref{eqintro:BE-scaled}. Simplifying the $k$'s,
%Here, we readily observe an important homogeneity in $k$ that is reminiscent of the hyperbolic homogeneity (time-space) of the hydrostatic equations. It contrasts with the Euler-Boussinesq system. We obtain
%\begin{equation*}
%\left\{
%      \begin{aligned}
%     (U_s-c) \widehat{\rho}-\widehat{\varphi} \rho_s'(z)&=0 \\[1mm]
%(U_s-c)\partial_z^2\widehat{\varphi}-U_s'' \widehat{\varphi}&= \widehat{\rho},
%\end{aligned}
%    \right.
%\end{equation*}
%hence
\begin{equation*}
      r=\frac{\rho_s'(z) \varphi}{U_s-c}, \qquad
(U_s-c)\partial_z^2 \varphi-U_s'' \varphi= \frac{1}{U_s-c}\rho_s'(z) \varphi.
\end{equation*}
Multiplying by $\bcU-c$ in the system above, we obtain the hydrostatic Taylor-Goldstein equation
\begin{equation}\label{eq:TaylorGoldstein}
(U_s-c)^2\partial_z^2 \varphi-(U_s-c)U_s'' \varphi= \rho_s'(z) \varphi,\\
\qquad  \varphi(\pm 1)=0.
\end{equation}
Note that solutions to this one-dimensional problem are regular under the condition $\mathrm{Im}(c)>0$, by standard elliptic regularity. 
Moreover, a crucial observation is that when $\rho_s'(z)=0$ (constant density profile), the latter gives the standard (hydrostatic) Rayleigh equation
\begin{align}\label{eq:RayleighHydrostatic}
          (U_s-c) \partial^2_z \varphi-U_s'' \varphi=0, 
\end{align}
which has been studied in \cite{Renardy}.
 %
%The second equation, in the case $\rho_s'(z)=0$, is the standard (hydrostatic) Rayleigh equation for the linearization for the stream function, around a shear $\bcU$. 
In this homogeneous case, the existence of a growing mode for the linearization near $(\varrho, u,v)=(1,U_s(z),0)$ is known, at least for a specific unstable shear flow $\bcU (z)$, see \cites{Renardy,HKN}.
\begin{comment}
Furthermore,  the case $\rho_s'(z)=0$ (constant density) puts us in the standard situation for Rayleigh equation: it produces a growing mode associated to $(\varrho, u,v)=(1,U_s(z),0)$, with $\bcU$ being a particular linearly unstable shear -  see \cite{Renardy,HKN}.
\end{comment}
%\begin{Rem}
%The non-hydrostatic case would read
%\begin{align*}
%(U_s-c)^2\big(\partial_z^2-k^2 \big)\widehat{\varphi}-(U_s-c)U_s'' \widehat{\varphi}&=\rho_s'(z) \widehat{\varphi}.
%\end{align*}
%This is the usual Taylor-Golstein equation! A continuity argument \textit{à la Nyquist-Penrose} will imply that an unstable mode for $k=0$ provides the existence for $k$ small - see later on. This mode will be the one used in Section....
%\end{Rem}
%

A necessary condition to identify a growing mode for the linearized hydrostatic equation \eqref{eq:Hydrost-Linearized} is given below.
\begin{Lem}\label{lem:necessCond-growingmod}
The following holds:
\begin{enumerate} [label=(G\arabic*), ref=(G\arabic*)]
\item\label{item:G1} If a growing mode as \eqref{eq:growingmode} for the  linearized hydrostatic equations \eqref{eq:Hydrost-Linearized} exists, then $\mathrm{Re}(c) \in U_s([-1,1])$.

\item \label{item:G2} If a growing mode for the homogeneous  linearized hydrostatic equations \eqref{eq:Hydrost-Linearized-homo} exists, then $\mathrm{Re}(c) \in U_s([-1,1])$.

\item \label{item:G3} If the profile $(\rho_s(z), U_s(z))$ is stable in the sense of Miles-Howard as in \eqref{eq:Miles-Howard}, then the  linearized hydrostatic equations \eqref{eq:Hydrost-Linearized} do not have any growing mode.
\end{enumerate}
% \begin{enumerate}
%     \item If a growing mode as \eqref{eq:growingmode} for the  linearized hydrostatic equations \eqref{eq:Hydrost-Linearized} exists, then $\mathrm{Re}(c) \in U_s([-1,1])$.
%     \item If a growing mode for the homogeneous  linearized hydrostatic equations \eqref{eq:Hydrost-Linearized-homo} exists, then $\mathrm{Re}(c) \in U_s([-1,1])$.
%     \item If the profile $(\rho_s(z), U_s(z))$ is stable in the sense of Miles-Howard as in \eqref{eq:Miles-Howard}, then the  linearized hydrostatic equations \eqref{eq:Hydrost-Linearized} do not have any growing mode.
% \end{enumerate}
\end{Lem}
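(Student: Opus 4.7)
For items \ref{item:G1} and \ref{item:G2}, the plan is to exploit the substitution $g = \varphi/(U_s - c)$, which is well-defined and smooth on $[-1,1]$ because $\mathrm{Im}(c) \neq 0$ keeps $U_s - c$ away from zero. Expanding $\partial_z^2 \varphi = U_s'' g + 2 U_s' g' + (U_s - c) g''$ and inserting into \eqref{eq:TaylorGoldstein}, a quick computation recasts the Taylor-Goldstein equation in the divergence form
\begin{equation*}
\partial_z\bigl((U_s-c)^2 g'\bigr) = \rho_s' g, \qquad g(\pm 1) = 0.
\end{equation*}
Testing against $\bar g$ and integrating by parts (the boundary contributions vanish), I obtain the identity $\int_{-1}^1 (U_s - c)^2 |g'|^2 \, \mathrm{d}z + \int_{-1}^1 \rho_s' |g|^2 \, \mathrm{d}z = 0$. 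Since $\rho_s'$ is real, the imaginary part reduces to $-2 \mathrm{Im}(c) \int_{-1}^1 (U_s - \mathrm{Re}(c)) |g'|^2 \, \mathrm{d}z = 0$. As $\mathrm{Im}(c) \neq 0$ and $\varphi \not\equiv 0$ forces $g' \not\equiv 0$ (otherwise $g$ would be constant and then vanish by the boundary condition), the factor $U_s - \mathrm{Re}(c)$ must change sign on $[-1,1]$; by the intermediate value theorem $\mathrm{Re}(c) \in U_s([-1,1])$, which proves \ref{item:G1}. Setting $\rho_s' \equiv 0$ in the same calculation, starting from \eqref{eq:RayleighHydrostatic}, yields \ref{item:G2}.

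For \ref{item:G3}, I plan to use the classical Miles substitution $\varphi = (U_s - c)^{1/2} H$, fixing a holomorphic branch of the square root (legitimate because $\mathrm{Im}(c) \neq 0$ confines $U_s - c$ to a complex half-plane). Plugging this into \eqref{eq:TaylorGoldstein} and grouping the terms, a short calculation produces
\begin{equation*}
(U_s - c) \partial_z\bigl((U_s - c) H'\bigr) - \tfrac{1}{2}(U_s - c) U_s'' H - \tfrac{1}{4}(U_s')^2 H = \rho_s' H, \qquad H(\pm 1) = 0.
\end{equation*}
Multiplying by $\bar H/(U_s - c)$, integrating over $[-1,1]$ (with no boundary terms), taking the imaginary part and dividing by $\mathrm{Im}(c) \neq 0$, I arrive at
\begin{equation*}
\int_{-1}^1 |H'|^2 \, \mathrm{d}z = \int_{-1}^1 \frac{\rho_s' + \tfrac{1}{4}(U_s')^2}{|U_s - c|^2} |H|^2 \, \mathrm{d}z.
\end{equation*}
Under the Miles-Howard condition \eqref{eq:Miles-Howard}, the right-hand integrand is pointwise non-positive whereas the left-hand side is non-negative; both must therefore vanish. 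Consequently $H' \equiv 0$, and then $H \equiv 0$ by the boundary conditions, contradicting $\varphi \not\equiv 0$.

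The main delicate point, and the only one beyond routine manipulation, is the Miles substitution in \ref{item:G3}: it requires a single-valued analytic branch of $(U_s - c)^{1/2}$ over $[-1, 1]$, which is exactly what the instability assumption $\mathrm{Im}(c) \neq 0$ guarantees by confining $U_s - c$ to a complex half-plane. The divergence-form rewriting used for \ref{item:G1} and \ref{item:G2} is algebraically similar in spirit but avoids this branch issue entirely.
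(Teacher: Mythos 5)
Your proposal is correct and follows essentially the same route as the paper: the substitution $\varphi=(U_s-c)\phi$ and the resulting divergence-form energy identity for \ref{item:G1}--\ref{item:G2}, and the Miles substitution $\varphi=(U_s-c)^{1/2}\psi$ with the imaginary part of the quadratic form for \ref{item:G3}. Your signs are in fact the careful ones (the paper's intermediate displays contain harmless typos), and your explicit remarks on $g'\not\equiv 0$ and on choosing a holomorphic branch of the square root are welcome details that the paper leaves implicit.
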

\begin{proof}
Let us assume the existence of a growing mode, namely a non-trivial solution of the hydrostatic Taylor-Goldstein equation \eqref{eq:TaylorGoldstein} with $\mathrm{Im}(c) \neq 0$. To obtain \ref{item:G1}, we first observe that the new unknown $\phi$ defined by 
\begin{align*}
\varphi = (U_s-c) \phi
\end{align*}
satisfies the equation
\begin{align*}
    (U_s-c)^2 \partial_z^2 \phi+2 (U_s-c)U_s' \partial_z \phi=\rho_s\phi,
\end{align*}
which rewrites as
\begin{align*}
    \partial_z\left[(U_s-c)^2 \partial_z \phi \right]=\rho_s\phi. 
\end{align*}
Testing against $\overline{\phi}^*$,  integrating by parts and using the boundary condition gives
\begin{align*}
    \int_{-1}^1  (U_s-c)^2 \vert \partial_z \phi \vert^2=-\int_{-1}^1 \rho_s\vert \phi \vert^2.
    \end{align*}
Taking the imaginary part of the previous identity,
\begin{align*}
    2 \mathrm{Im}(c)\int_{-1}^1  (U_s-\mathrm{Re}(c)) \vert \partial_z \phi \vert^2=0,
\end{align*}
which implies the first statement. The proof of \ref{item:G2} is identical. To prove \ref{item:G3}, we rely on the classical trick of Miles, see \cites{miles1961stability, howard1961note, drazin1966hydrodynamic, BCZD2}, which consists in defining a new unknown $\psi$ through
\begin{align*}
\varphi = (U_s-c)^\frac12 \psi.
\end{align*}
%Since $\Im(c)>0$, ... \lucas{say a word about how we define the square-root ?}.
The new stream-function $\psi$ satisfies the equation %\begin{align*}
%(U_s-c) \partial_z^2 \psi+U_s' \partial_z \psi+\left[\frac{1}{2}U_s''- \frac{1}{U_s-c}\left( \frac{|U_s'(z)|^2}{4}+\rho_s' \right)\right]\psi=0,
%\end{align*}
%that we can directly rewrite as 
\begin{align*}
\partial_z\left[(U_s-c) \partial_z \psi \right]+\left[\frac{1}{2}U_s''- \frac{1}{U_s-c}\left( \frac{|U_s'|^2}{4}+ \rho_s' \right)\right]\psi=0.
\end{align*}
Testing against the complex conjugate $\overline{\psi}^*$ in the interval $(-1,1)$, integrating by parts and using the boundary condition on $\psi$ gives
\begin{align*}
    \int_{-1}^1 (U_s-c) \vert \partial_z \psi \vert^2 =\int_{-1}^1 \left[\frac{1}{2}U_s''- \frac{1}{U_s-c}\left( \frac{|U_s'|^2}{4}+ \rho_s' \right)\right] \vert \psi \vert^2.
\end{align*}
Let us now take the imaginary part of the previous identity, which yields
\begin{align*}
    -\mathrm{Im}(c) \int_{-1}^1  \vert \partial_z \psi \vert^2 =-\mathrm{Im(c)}\int_{-1}^1 \frac{1}{\vert U_s-c \vert^2 }\left( \frac{|U_s'|^2}{4}+\rho_s' \right) \vert \psi \vert^2.
\end{align*}
Owing to the Miles-Howard stability condition \eqref{eq:Miles-Howard}, note that the parenthesis in the right-hand side is non-positive. As $\mathrm{Im}(c)\neq 0$, then $\partial_z \psi=0$ and  $\psi=0$ thanks to the boundary condition. We obtain a contradiction, which ends the proof.
\end{proof}
As a consequence, to identify a growing mode, we must necessarily consider a profile $(\rho_s, U_s)$ that does not satisfy the Miles-Howard stability condition.

\subsubsection*{Step 2 - Change of unknown}
Following Friedlander \cite{friedlander2001nonlinear}, let us now assume that our profile $(\rho_s, U_s)$ satisfies 
\begin{align}\label{def-profilInstab}
-\rho_s'(z)=\alpha (1-\alpha) |U_s'(z)|^2, \qquad \alpha \in (0,1), \quad \alpha \neq 1/2.
\end{align}
This enforces the failure of the Miles-Howard stability criterion \eqref{eq:Miles-Howard}. 
% In fact, for all $z \in (-1,1)$
% \begin{align*}
% 0<\frac{-\rho_s'(z)}{|U_s'(z)} <\frac{1}{4}.
% \end{align*}
Given the solution $\varphi$ to \eqref{eq:TaylorGoldstein}, follo\-wing \cites{howard1961note, friedlander2001nonlinear}, we set
\begin{align*}
\varphi = (U_s-c)^{\alpha} \psi.
\end{align*}
The new unknown $\psi$ satisfies
\begin{align*}
(U_s-c) \partial_z^2 \psi+2\alpha U_s' \partial_z \psi+\left[(\alpha-1)U_s''+ \frac{\alpha(\alpha-1) |U_s'|^2}{U_s-c}- \frac{\rho_s'}{U_s-c}  \right]\psi=0,
\end{align*}
which, using \eqref{def-profilInstab}, simplifies as
\begin{align*}
(U_s-c) \partial_z^2 \psi+ 2 \alpha U_s' \partial_z \psi +(\alpha-1)U_s'' \psi=0,
\end{align*}
%\rob{In the non-hydrostatic case, the above equation becomes
%\begin{align*}
%(U_s-c) (\partial_z^2-k^2)\psi+ 2 \alpha U_s' \partial_z \psi +(\alpha-1)U_s'' \psi=0
%\end{align*}}
or, equivalently, 
\begin{align}\label{ODE:tosolve}
\partial_z \big((U_s-c)^{2\alpha} \partial_z \psi \big)+(\alpha-1)(U_s-c)^{2\alpha-1}U_s'' \psi=0, 
     \qquad \psi(\pm1)=0.
\end{align}
%
%\rob{In the non-hydrostatic case, the above equation becomes
%\begin{align}\label{ODE:tosolve-nonhydro}
%\partial_z \big((U_s-c)^{2\alpha} \partial_z \psi \big)-(U_s-c)^{2\alpha}k^2 \psi +(\alpha-1)(U_s-c)^{2\alpha-1}U_s'' \psi=0.
%\end{align}
%}

\begin{Rem}\label{Rmk-BacktoHomogeneous}
The case $\alpha=1$ precisely corresponds to $\rho_s'(z)=0$ and leads to the Rayleigh equation \eqref{eq:RayleighHydrostatic}, which can be expressed as
\begin{align}
\partial_z \big((U_s-c)^{2} \partial_z \psi \big)=0,
\qquad \psi(\pm1)=0.
\end{align}
This equation arises for the same reasons within the context of the homogeneous hydrostatic Euler equations \eqref{eq:Hyd-E} (see \cites{Renardy,HKN}). Choosing a constant density profile, i.e., $\rho_s'(z)=0$, leads us to the homogeneous case as a particular instance of the non-homogeneous one. Notably, the subsequent analysis fully covers the case $\alpha=1$ (with the appropriate choice of the shear flow $U_s$), and thus, we recover the result of \cite{Renardy}.
\end{Rem}

%\textcolor{red}{** Is the $\arctanh (y/d)$ a suitable profile?
%\textcolor{blue}{It's $\tanh(y/d)$ with $d$ large enough}
%\begin{equation}
%-\dfrac{2\left(cd\cdot\left(\left(c-1\right)\mathrm{e}^\frac{2}{d}+c+1\right)\left(\left(c+1\right)\mathrm{e}^\frac{2}{d}+c-1\right)\ln\left(\left(c+1\right)\mathrm{e}^\frac{2}{d}+c-1\right)-cd\cdot\left(\left(c-1\right)\mathrm{e}^\frac{2}{d}+c+1\right)\left(\left(c+1\right)\mathrm{e}^\frac{2}{d}+c-1\right)\ln\left(\left(c-1\right)\mathrm{e}^\frac{2}{d}+c+1\right)+\left(c-1\right)\left(c+1\right)\left(d-c^2-1\right)\mathrm{e}^\frac{4}{d}-2\left(c^2+1\right)^2\mathrm{e}^\frac{2}{d}-\left(c-1\right)\left(c+1\right)d-c^4+1\right)}{\left(c-1\right)^2\left(c+1\right)^2\left(\left(c-1\right)\mathrm{e}^\frac{2}{d}+c+1\right)\left(\left(c+1\right)\mathrm{e}^\frac{2}{d}+c-1\right)}
%\end{equation}
%}

To establish the existence of a \textit{genuinely} stratified unstable profile (i.e., with $\rho_s' \neq 0$), we employ a perturbative approach as $\alpha \rightarrow 1$, viewing non-homogeneity as a perturbation of homogeneous fluids. This draws inspiration from \cites{DDGM, HG}, where the authors rely on a similar perturbative method to identify unstable modes in boundary layer models (such as interactive boundary layers and triple decks) by treating viscous modes as a perturbation of inviscid ones.

\subsubsection*{Step 3 - Integral formulation}
We want to find a pair $(c,\psi)$ with $\mathrm{Im}(c) \neq0$ satisfying \eqref{ODE:tosolve}.  Integrating and using the boundary condition $\psi(-1)=0$, we look for $(c,\psi)$ with $\mathrm{Im}(c) \neq0$ such that 
\begin{align}\label{eq:IntegroDiffTosolve}  
\psi(z)&=\int_{-1}^z\frac{\mathrm{d}r}{(U_s(r)-c)^{2\alpha}}+(1-\alpha)\int_{-1}^z\frac{\mathrm{d}r}{(U_s(r)-c)^{2\alpha}}\left\lbrace \int_{-1}^r (U_s(q)-c)^{2\alpha-1}U_s''(q)\psi(q)  \mathrm{d}q\right\rbrace,
\end{align}
with
\begin{align}\label{eq:DispersionRelTosolve}
\psi(1)=0.
\end{align}
%\begin{equation}\label{eq:IntegroDiffTosolve}
%\left\{
%      \begin{aligned}    
%\psi_{\alpha}(z)&=\mathrm{A}\int_{-1}^z\frac{\mathrm{d}r}{(U_s(r)-c)^{2\alpha}}+(1-\alpha)\int_{-1}^z\frac{\mathrm{d}r}{(U_s(r)-c)^{2\alpha}}\left\lbrace \int_{-1}^r (U_s(q)-c)^{2\alpha-1}U_s''(q)\psi_{\alpha}(q)  \mathrm{d}q\right\rbrace \\
%\psi_{\alpha}(1)&=0.\end{aligned}
 %   \right.
%\end{equation}
%for some $A \in \R$ (that is necessarily given by $A= (\overline{\mathcal U}(-1)-c)^{2\alpha} \partial_z \Psi (-1)$). 
It is important to note that if we identify such a function $\psi$, it cannot be the zero function. The reason being that this would imply that
\begin{align*}
\forall z \in (-1,1), \qquad \int_{-1}^z\frac{\mathrm{d}r}{(U_s(r)-c)^{2\alpha}}=0, \quad \text{and}\quad \frac{1}{(U_s(r)-c)^{2\alpha}}=0 \quad \forall r \in (-1,1),
\end{align*}
which is not possible.

Our main idea is as follows: when $\alpha=1$, the integral equation \eqref{eq:IntegroDiffTosolve} simplifies to the form
\begin{align}
    \psi(z)=\int_{-1}^z\frac{\mathrm{d}r}{(U_s(r)-c)^{2}}, \quad \text{with} \quad \psi(1)=0.
\end{align}
This problem admits a solution provided that the relation
\begin{align}
    \int_{-1}^1 \frac{\mathrm{d}r}{(U_s(r)-c)^{2}}=0
\end{align}
holds, and this is precisely the dispersion relation obtained by Renardy in \cite{Renardy} for the case of hydrostatic homogeneous flows (see Remark \ref{Rmk-BacktoHomogeneous}). In Appendix \ref{Appendix:ZeroFunction}, we briefly revisit an example of a smooth shear flow satisfying the given dispersion relation (distinct from the non-smooth ones provided by \cite{Renardy}), although the exact shape is not crucial for our analysis. Leveraging this profile, we can solve the integral equation \eqref{eq:IntegroDiffTosolve} for $\alpha$ close to $1$.

Let us now set our perturbative argument. For $\alpha \in (1/2,1)$ and $c \in \lbrace \mathrm{Im}>0 \rbrace$, we define the operator
\begin{align}\label{def:Op-Salpha}
\mathrm{S}_{\alpha,c}[\Phi](z):=\int_{-1}^z\frac{\mathrm{d}r}{(U_s(r)-c)^{2\alpha}}\left\lbrace \int_{-1}^r (U_s(q)-c)^{2\alpha-1}U_s''(q)\Phi(q)  \mathrm{d}q\right\rbrace,
\end{align}
and the function
\begin{align}\label{def:fctDalpha}
D_{\alpha}^z(c): z \mapsto D_{\alpha}^{z}(c):=\int_{-1}^z\frac{\mathrm{d}r}{(U_s(r)-c)^{2\alpha}}, \qquad z \in (-1,1).
\end{align}
In view of \eqref{eq:IntegroDiffTosolve}, we look for a function $\Phi=\Phi_{\alpha,c}$, vanishing at $z=1$ and satisfying
\begin{align*}
\Phi(z)=\int_{-1}^z\frac{\mathrm{d}r}{(U_s(r)-c)^{2\alpha}}+(1-\alpha)\mathrm{S}_{\alpha,c}[\Phi](z), \qquad  \text{with} \qquad \Phi(1)=0,
\end{align*}
at least for $\alpha$ close to $1$. We would like to formally expand the former equation as
\begin{align*}
\Phi(z)&=\big( \mathrm{I}-(1-\alpha)\mathrm{S}_{\alpha,c} \big)^{-1}[D^z_{\alpha}(c)]=D_{\alpha}^z(c)+\sum_{n \geq 1}(1-\alpha)^n\mathrm{S}_{\alpha,c}^n[D^z_{\alpha}(c)], \qquad 
\text{with }\ \Phi(1)=0.
%\mathrm{with} \qquad \int_{-1}^1\frac{\mathrm{d}r}{(U_s(r)-c)^{2\alpha}}\left\lbrace \int_{-1}^r (U_s(q)-c)^{2\alpha-1}U_s''(q)\Phi(q)  \mathrm{d}q\right\rbrace=0
\end{align*}
To prove this, we must study the operator $\mathrm{S}_{\alpha,c}$. In what follows, the complex number $c \in \lbrace \mathrm{Im}>0 \rbrace$ will be treated as a parameter.  We have the following statement.
\begin{Lem}\label{lem-opnormIntegOp}
Assume that the profile $U_s$ is smooth and strictly monotone. For all $\alpha \in (1/2,1)$ and $c \in \lbrace \mathrm{Im}>0 \rbrace \cap \lbrace \mathrm{Re} \in U_s([-1,1]) \rbrace$, the operator $\mathrm{S}_{\alpha,c}$ is bounded on the Banach space$$Y=\lbrace f \in \mathrm{W}^{1,\infty}(-1,1) \mid f(-1)=0 \rbrace$$ endowed with the Lipschitz norm, with the estimate
\begin{align}\label{ineq:BoundOpT}
\Vert \mathrm{S}_{\alpha,c} \Vert_{Y \rightarrow Y} \leq C  \left( 1+ \frac{4\| U_s\|_{\Ld^\infty}^2}{\vert \mathrm{Im}(c) \vert^{2}}\right)^{\alpha} \left\Vert\frac{U_s''}{U_s'} \right\Vert_{\mathrm{W}^{1,\infty}},
\end{align}
for some universal constant $C>0$ independent of $\alpha$ and $c$.
\end{Lem}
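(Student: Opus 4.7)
The strategy is to reduce the boundedness estimate to controlling the derivative $\partial_z\mathrm{S}_{\alpha,c}[\Phi]$ in $\Ld^\infty$. Since $\mathrm{S}_{\alpha,c}[\Phi](-1)=0$ (the outer integral vanishes at $z=-1$), the operator maps into $Y$. Moreover, using $\Phi(-1)=0$ for $\Phi\in Y$, we have the elementary equivalence $\|\Phi\|_{\Ld^\infty}\le 2\|\Phi'\|_{\Ld^\infty}$, so the $Y$-norm is equivalent to the seminorm $\|\partial_z(\,\cdot\,)\|_{\Ld^\infty}$ on functions vanishing at $-1$. It therefore suffices to estimate
\[
\partial_z\mathrm{S}_{\alpha,c}[\Phi](z)\;=\;\frac{1}{(U_s(z)-c)^{2\alpha}}\,g(z),\qquad g(z):=\int_{-1}^{z}(U_s(q)-c)^{2\alpha-1}U_s''(q)\,\Phi(q)\,\dd q,
\]
uniformly in $z\in(-1,1)$. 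Here the complex powers are defined via the principal branch, which is harmless because $U_s(q)-c$ lies in the lower half-plane (as $\mathrm{Im}(c)>0$) and thus stays away from the cut, so in particular $|(U_s(q)-c)^{2\alpha}|=|U_s(q)-c|^{2\alpha}$.

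The core step is an integration by parts, exploiting the strict monotonicity of $U_s$ (so that $U_s'$ never vanishes) through the identity
\[
(U_s(q)-c)^{2\alpha-1}\;=\;\frac{1}{2\alpha\,U_s'(q)}\,\frac{\dd}{\dd q}(U_s(q)-c)^{2\alpha}.
\]
Inserting this into $g(z)$ and integrating by parts — the boundary contribution at $q=-1$ vanishes since $\Phi(-1)=0$ — yields
\[
g(z)\;=\;\frac{1}{2\alpha}\,\frac{U_s''(z)}{U_s'(z)}\Phi(z)(U_s(z)-c)^{2\alpha}\;-\;\frac{1}{2\alpha}\int_{-1}^{z}(U_s(q)-c)^{2\alpha}\,\frac{\dd}{\dd q}\!\left(\frac{U_s''(q)}{U_s'(q)}\Phi(q)\right)\dd q.
\]
Dividing by $(U_s(z)-c)^{2\alpha}$, the first term contributes a harmless factor $\tfrac{1}{2\alpha}\|U_s''/U_s'\|_{\Ld^\infty}\|\Phi\|_{\Ld^\infty}$; the second is a ratio of complex powers integrated against a quantity controlled by $\|U_s''/U_s'\|_{\mathrm{W}^{1,\infty}}\|\Phi\|_{\mathrm{W}^{1,\infty}}$.

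The remaining task is to bound this ratio of powers. Using $\mathrm{Im}(c)>0$ we get the lower bound $|U_s(z)-c|\ge|\mathrm{Im}(c)|$, and using the hypothesis $\mathrm{Re}(c)\in U_s([-1,1])$ together with the trivial bound on the range of $U_s$ we obtain $|U_s(q)-\mathrm{Re}(c)|\le 2\|U_s\|_{\Ld^\infty}$, hence $|U_s(q)-c|^2\le 4\|U_s\|_{\Ld^\infty}^2+|\mathrm{Im}(c)|^2$. Therefore
\[
\frac{|U_s(q)-c|^{2\alpha}}{|U_s(z)-c|^{2\alpha}}\;\le\;\left(1+\frac{4\|U_s\|_{\Ld^\infty}^2}{|\mathrm{Im}(c)|^2}\right)^{\alpha},
\]
uniformly in $z,q\in[-1,1]$. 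Combining this with the length factor $|z-(-1)|\le 2$, the bound $1/\alpha\le 2$ (valid since $\alpha>1/2$), and the equivalence $\|\Phi\|_{\mathrm{W}^{1,\infty}}\lesssim \|\Phi\|_{Y}$, we obtain \eqref{ineq:BoundOpT} with a universal constant $C$. The main obstacle is purely computational, namely carrying out the integration by parts cleanly while keeping track of the principal branch of the complex power, but the monotonicity of $U_s$ is precisely what allows it to go through and produce the factor $\|U_s''/U_s'\|_{\mathrm{W}^{1,\infty}}$ in the final bound.
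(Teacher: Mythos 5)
Your proposal is correct and follows essentially the same route as the paper: reduce to an $\Ld^\infty$ bound on $\partial_z\mathrm{S}_{\alpha,c}[\Phi]$, integrate by parts via the identity $(U_s-c)^{2\alpha-1}U_s'=\tfrac{1}{2\alpha}\partial_z\big((U_s-c)^{2\alpha}\big)$ (the boundary term at $-1$ vanishing since $\Phi(-1)=0$), and control the resulting ratio of complex powers by $\big(1+4\|U_s\|_{\Ld^\infty}^2/|\mathrm{Im}(c)|^2\big)^{\alpha}$ using $|U_s(z)-c|\ge|\mathrm{Im}(c)|$ and $\mathrm{Re}(c)\in U_s([-1,1])$. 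The only cosmetic difference is that you keep the product $\tfrac{d}{dq}\big(\tfrac{U_s''}{U_s'}\Phi\big)$ in one piece where the paper splits it into two terms, and you add a (welcome) remark on the principal branch that the paper leaves implicit.
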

\begin{proof}
Following \cite{zhao2018}*{Definition 4.3}, we can write the above $\mathrm{S}_{\alpha,c}$ as follows
\begin{equation}
\mathrm{S}_{\alpha,c} =T_0 \circ T_{2\alpha, 2\alpha-1},
\end{equation}
where 
\begin{align}
T_0 f(y):= \int_{-1}^y f(z) \, \mathrm{d}z , \qquad T_{m, n}f(y):= \frac{1}{(U_s(y)-c)^{m}}  \int_{-1}^{y} f(z) (U_s(z)-c)^{n} U_s''(z) \, \mathrm{d}z.
\end{align}
Note that $\partial_z \mathrm{S}_{\alpha,c} =T_{2\alpha, 2\alpha-1}$. For $T_0$ above, we directly obtain 
\begin{align}
\Vert T_0 f\Vert_{Y} \le \Vert f\Vert_{Y},
\end{align}
for any $f \in Y$. %In order to have a bound for $\mathrm{S}_{\alpha,c}=T_0 \circ T_{2\alpha, 2\alpha-1}$, it is enough to estimate $T_{2\alpha, 2\alpha-1}$ as 
Next, for any $f \in Y$, we have
\begin{align}
\Vert \mathrm{S}_{\alpha,c} f\Vert_{Y}&=\Vert (T_0\circ T_{2\alpha, 2\alpha-1}) f\Vert_{Y} \le  2\Vert T_{2\alpha, 2\alpha-1} f\Vert_{\infty}.
\end{align}
We observe that 
\begin{align*}
T_{2\alpha, 2\alpha-1} f(y) &=  \frac{1}{(U_s(y)-c)^{2\alpha}}  \int_{-1}^{y} f(z) (U_s(z)-c)^{2\alpha-1} U_s''(z) \, \mathrm{d}z \\
&=   \frac{1}{(U_s(y)-c)^{2\alpha}}  \int_{-1}^{y} f(z)  \frac{(U_s(z)-c)^{2\alpha-1} U_s'(z)}{U_s'(z)} U_s''(z) \, \mathrm{d}z \\
&=   \frac{(2\alpha)^{-1}}{(U_s(y)-c)^{2\alpha}}  \int_{-1}^{y} f(z) {\partial_z \big((U_s(z)-c)^{2\alpha} \big)} \frac{U_s''(z)}{U_s'(z)} \, \mathrm{d}z,
\end{align*}
hence by integration by parts
\begin{align*}
T_{2\alpha, 2\alpha-1} f(y) &=  \frac{(2\alpha)^{-1}}{(U_s(y)-c)^{2\alpha}}  \left[ f(z) (U_s(z)-c)^{2\alpha} \frac{U_s''(z)}{U_s'(z)}\right]_{z=-1}^{z=y} \\
& \quad -  \frac{(2\alpha)^{-1}}{(U_s(y)-c)^{2\alpha}} \int_{-1}^{y} f'(z) (U_s(z)-c)^{2\alpha} \frac{U_s''(z)}{U_s'(z)}  \, \mathrm{d}z  \\
& \quad - \frac{(2\alpha)^{-1}}{(U_s(y)-c)^{2\alpha}}  \int_{-1}^{y} f(z)  (U_s(z)-c)^{2\alpha} \partial_z \left(\frac{U_s''(z)}{U_s'(z)}\right) \, \mathrm{d}z   \\
&:= \text{(I)}+\text{(II)}+\text{(III)}.
\end{align*}
Note that as $ \alpha \in (1/2,1)$, then $(2\alpha)^{-1} \leq 1$. Let us now treat each term separately.
For \text{(I)}, we use the condition $ f(-1)=0$ %(\textbf{this should actually be included in the space on which the operators act} - \textcolor{red}{or not: we can treat this term without the vanishing assumption, by paying the non-uniform term $\mathrm{Im}(c)^{-1}$}) 
to find
\begin{align*}
\text{(I)}&=-\frac{(2\alpha)^{-1}}{(U_s(y)-c)^{2\alpha}}   f(y) (U_s(y)-c)^{2\alpha} \frac{U_s''(y)}{U_s'(y)},
\end{align*}
which implies
\begin{align*}
\vert \text{(I)} \vert  \leq \sup_{y \in [-1,1]}\left|\frac{U_s''(y)}{U_s'(y)} \right| \Vert f \Vert_{\Ld^\infty}.
\end{align*}
The terms arising from \text{(II)} and \text{(III)} are analogous and will be treated in a similar manner. Let us focus on \text{(II)}. We have
\begin{align*}
\vert \text{(II)} \vert & \leq   \frac{1}{\vert U_s(y)-c \vert ^{2\alpha}}\left\vert \int_{-1}^{y}  f'(z)(U_s(z)-c)^{2\alpha} \frac{U_s''(z)}{U_s'(z)}  \, \mathrm{d}z\right\vert \\
& \leq \Vert f \Vert_Y \sup_{y \in [-1,1]}\left|\frac{U_s''(y)}{U_s'(y)} \right| \frac{1}{\vert U_s(y)-c \vert ^{2\alpha}} \int_{-1}^{y}  \vert U_s(z)-c \vert^{2\alpha}  \, \mathrm{d}z  \\
& \leq \Vert f \Vert_Y \sup_{y \in [-1,1]}\left|\frac{U_s''(y)}{U_s'(y)} \right| \frac{\sup_{z \in [-1,y]} \vert U_s(z)-c \vert ^{2\alpha}}{\vert U_s(y)-c \vert ^{2\alpha}}.
%&\textcolor{blue}{\le} 2^\alpha \Vert f \Vert_Y \sup_{y \in [-1,1]}\left|\frac{U_s''(y)}{U_s'(y)} \right|  \frac{\vert (\| U_s \|_{\Ld^\infty}^2+\vert c \vert^2)^{\alpha}}{\vert \mathrm{Im}(c) \vert ^{2\alpha}},
\end{align*}
Now note that, as $\mathrm{Re}(c) \in \left[-\| U_s \|_{\Ld^\infty}, \| U_s \|_{\Ld^\infty} \right]$ by Lemma \ref{lem:necessCond-growingmod}, then
\begin{align*}
    |\bcU(z) - c|^2& \le |\bcU(z) - \mathrm{Re}(c)|^2+|\mathrm{Im}(c)|^2 \le 2|\bcU(z)|^2 + 2|\mathrm{Re}(c)|^2+|\mathrm{Im}(c)|^2 \le 4\|\bcU\|_{\Ld^\infty}^2 + |\mathrm{Im}(c)|^2.
\end{align*}
This gives
\begin{align*}
\vert \text{(II)} \vert & \leq  \Vert f \Vert_Y \sup_{y \in [-1,1]}\left|\frac{U_s''(y)}{U_s'(y)} \right| \frac{ (4\| U_s \|_{\Ld^\infty}^2+\vert \mathrm{Im}(c) \vert^2)^{\alpha}}{\vert \mathrm{Im}(c) \vert ^{2\alpha}}  \\
&\leq \Vert f \Vert_Y \sup_{y \in [-1,1]}\left|\frac{U_s''(y)}{U_s'(y)} \right|  \left(1+\frac{4\| U_s \|_{\Ld^\infty}^2}{\vert \mathrm{Im}(c) \vert^2} \right)^{\alpha}.
\end{align*}
For all $\alpha \in (1/2,1)$, $c \in \lbrace \mathrm{Im}>0 \rbrace$ and for $f \in Y$, we have
\begin{align*}
\Vert \mathrm{S}_{\alpha,c} f \Vert_{Y}& \leq 2 \Vert T_{2\alpha, 2\alpha-1} f \Vert_{\infty} \\
& \le C_0  \left(\left\Vert\frac{U_s''}{U_s'} \right\Vert_{\Ld^{\infty}} \Vert f \Vert_X+ \left\Vert\frac{U_s''}{U_s'} \right\Vert_{\mathrm{W}^{1,\infty}} \Vert f \Vert_Y \left(1+\frac{4\| U_s \|_{\Ld^\infty}^2}{\vert \mathrm{Im}(c) \vert^2} \right)^{\alpha} \right)\\
& \leq C  \left( 1+ \frac{4\| U_s \|_{\Ld^\infty}^2}{\vert \mathrm{Im}(c) \vert^{2}}\right)^{\alpha} \left\Vert\frac{U_s''}{U_s'} \right\Vert_{\mathrm{W}^{1,\infty}}\Vert f \Vert_Y,
\end{align*}
for some universal constants $C_0, C>0$ independent of $\alpha$ and $c$. This concludes the proof.
\end{proof}
We can now solve \eqref{eq:IntegroDiffTosolve}.

\begin{Cor}\label{Coro-existsolODE}
Assume that the profile $U_s$ is smooth and strictly monotone. 
%Let $c \in \lbrace \mathrm{Im}>0 \rbrace$. 
Let $r>0$ be fixed. Then there exists $\alpha_r \in (1/2,1)$ such that for any $c \in \lbrace \mathrm{Im}>r\rbrace \cap \lbrace \mathrm{Re} \in U_s([-1,1]) \rbrace$ and any $\alpha \in (\alpha_r, 1)$, the equation \eqref{eq:IntegroDiffTosolve} has  a unique solution $\Phi_{\alpha,c} \in Y$, given by
\begin{align}
    \forall z \in [-1,1], \qquad \Phi_{\alpha,c}(z)=D_{\alpha}^z(c)+\sum_{n \geq 1}(1-\alpha)^n\mathrm{S}_{\alpha,c}^n[D_{\alpha}(c)],
\end{align}
and where $z \mapsto D_{\alpha}^z(c)$ is given by \eqref{def:fctDalpha}.
\end{Cor}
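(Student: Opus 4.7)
The plan is to invert the operator $\mathrm{I} - (1-\alpha)\mathrm{S}_{\alpha,c}$ on the Banach space $Y$ via a Neumann series, relying entirely on the uniform operator bound supplied by Lemma \ref{lem-opnormIntegOp}. The argument splits into three very short steps: verifying that the source term lies in $Y$, turning the bound of Lemma \ref{lem-opnormIntegOp} into a uniform bound over the parameter region $\{\Im(c) > r, \, \Re(c) \in U_s([-1,1])\} \times (\alpha_r, 1)$, and finally picking $\alpha_r$ so that the Neumann series converges.

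First I would check that the inhomogeneous term $z \mapsto D_\alpha^z(c)$ defined in \eqref{def:fctDalpha} belongs to $Y$. By construction $D_\alpha^{-1}(c) = 0$, and since $|\Im(c)| \geq r > 0$, the integrand $r \mapsto (U_s(r)-c)^{-2\alpha}$ is continuous on $[-1,1]$ and bounded by $|\Im(c)|^{-2\alpha} \leq \max(1, r^{-2})$ uniformly for $\alpha \in (1/2, 1)$. Hence $D_\alpha^{\cdot}(c)$ is Lipschitz with a norm depending only on $r$, and $\|D_\alpha^{\cdot}(c)\|_Y \leq K_r$ for some constant $K_r$ independent of $\alpha \in (1/2, 1)$ and of $c$ in the prescribed region.

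Next, I would apply Lemma \ref{lem-opnormIntegOp}. Since $|\Im(c)| \geq r$ and $\alpha \in (1/2, 1)$, and since for any $x \geq 1$ and $\alpha \in (0,1]$ one has $x^\alpha \leq x$, the quantity in parentheses in \eqref{ineq:BoundOpT} is at least $1$ and we obtain
\begin{equation*}
\|\mathrm{S}_{\alpha,c}\|_{Y \to Y} \leq C \left(1 + \frac{4\|U_s\|_{\Ld^\infty}^2}{r^2}\right) \left\|\frac{U_s''}{U_s'}\right\|_{\mathrm{W}^{1,\infty}} =: M_r,
\end{equation*}
with $M_r$ depending only on $r$ and $U_s$, and in particular uniform in $\alpha \in (1/2,1)$ and in $c$ in the prescribed region. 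Now choose $\alpha_r \in (1/2, 1)$ such that $(1-\alpha_r) M_r < 1/2$. For every $\alpha \in (\alpha_r, 1)$ and every $c$ in the prescribed region, the operator $(1-\alpha)\mathrm{S}_{\alpha,c}$ has norm strictly less than $1/2$ on $Y$, hence $\mathrm{I} - (1-\alpha)\mathrm{S}_{\alpha,c}$ is invertible on $Y$ and its inverse is given by the absolutely convergent Neumann series $\sum_{n \geq 0} (1-\alpha)^n \mathrm{S}_{\alpha,c}^n$.

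Applying this inverse to $D_\alpha^{\cdot}(c) \in Y$ yields the unique solution of \eqref{eq:IntegroDiffTosolve} in $Y$:
\begin{equation*}
\Phi_{\alpha,c} = \big(\mathrm{I} - (1-\alpha)\mathrm{S}_{\alpha,c}\big)^{-1} [D_\alpha^{\cdot}(c)] = D_\alpha^{\cdot}(c) + \sum_{n \geq 1} (1-\alpha)^n \mathrm{S}_{\alpha,c}^n[D_\alpha^{\cdot}(c)],
\end{equation*}
which is exactly the formula stated in the corollary. There is no genuine obstacle here: once the structural work of Lemma \ref{lem-opnormIntegOp} is in place, the corollary is a routine Neumann-series inversion, the only point requiring care being that the operator norm bound is truly uniform over the chosen parameter region, which follows directly from the restriction $\Im(c) > r$.
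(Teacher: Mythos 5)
Your proof is correct and follows essentially the same route as the paper: verify that $D_\alpha^{\cdot}(c)\in Y$, use the uniform bound of Lemma \ref{lem-opnormIntegOp} on the region $\{\Im(c)>r\}$ to choose $\alpha_r$ with $(1-\alpha)\Vert \mathrm{S}_{\alpha,c}\Vert_{Y\to Y}<1$, and invert $\mathrm{I}-(1-\alpha)\mathrm{S}_{\alpha,c}$ by the Neumann series. Your extra care in making the bound explicitly uniform in $\alpha$ (via $x^\alpha\le x$ for $x\ge 1$) is a minor refinement of the same argument.
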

\begin{proof}
Since $z \mapsto D_{\alpha}^z(c)$ belongs to the space $Y$, we only need to invert the operator $\mathrm{I}+(1-\alpha) \mathrm{S}_{\alpha,c}$ on $Y$. Using the bound \eqref{ineq:BoundOpT} on $\mathrm{S}_{\alpha,c}$ from Lemma \ref{lem-opnormIntegOp}, we can choose $\alpha_r$ with $\alpha>\alpha_r$ sufficiently close to $1$, ensuring that
\begin{align*}
C(1-\alpha) \left( 1+ \frac{4\| U_s\|_{\Ld^\infty}^2}{r^{2}}\right)^{\alpha} \left\Vert\frac{U_s''}{U_s'} \right\Vert_{\mathrm{W}^{1,\infty}}<1,
\end{align*}
so that the operator $\mathrm{I}+(1-\alpha) \mathrm{S}_{\alpha,c}$ is invertible on $Y$. Setting
\begin{align*}
    \Phi_{\alpha,c}(z):=\big( \mathrm{I}-(1-\alpha)\mathrm{S}_{\alpha,c} \big)^{-1}[D_{\alpha}^z(c)],
\end{align*}
we obtain a solution $z \mapsto \Phi_{\alpha,c}(z)$ of \eqref{eq:IntegroDiffTosolve} satisfying  $\Phi_{\alpha,c}(-1)=0$, for $\alpha$ close to $1$. The proof follows by Neumann inversion formula.
\end{proof}

\subsubsection*{Step 4 - Eigenvalue problem}

So far, we have only solved the differential equation \eqref{eq:IntegroDiffTosolve} for $\alpha$ close to $1$ and for any $c \in \lbrace \mathrm{Im}>0 \rbrace$. Nevertheless, there is no guarantee that the general solution $\Phi_{\alpha,c}$ satisfies the prescribed right boundary condition \eqref{eq:DispersionRelTosolve} at $z=1$.

\medskip

%Let us introduce the dispersion relation 
%\begin{align*}
%\widetilde{D}_{\alpha}(c):=\Phi_{\alpha,c}(1), \qquad \alpha \in  (1/2,1).
%\end{align*}

To establish Theorem \ref{thm:growingmodHydrostat}, the essential step is to solve the eigenvalue problem
\begin{align*}
\widetilde{D}_{\alpha}(c):=\Phi_{\alpha,c}(1)=0, %\qquad \alpha \in  (1/2,1).
\end{align*}
for $\alpha$ close to $1$. Any solution $c$ of this equation in the upper half-space $\lbrace \mathrm{Im}>0 \rbrace$ will yield a desired eigenvalue for the linearized hydrostatic operator \eqref{def:LinOp-Hydrostat}. Therefore, the main goal is to prove the following.
\begin{Prop}\label{Prop:existZERO}
  There exists an analytic strictly monotone profile $U_s(z)$ such that the following holds. There exists $\alpha^{\star} \in (\frac{1}{2},1)$ such that for all $\alpha \in (\alpha^{\star},1)$, there exists $c^{\star}$ with $\mathrm{Im}(c^{\star})>0$ satisfying
\begin{align}
\widetilde{D}_{\alpha}(c^{\star})=0.
\end{align}
\end{Prop}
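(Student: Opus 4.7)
The plan is to prove Proposition \ref{Prop:existZERO} by a Rouché-type perturbative argument, treating $\alpha \to 1^{-}$ as a regular perturbation of the homogeneous case $\alpha = 1$, and leveraging the analysis of the hydrostatic Rayleigh dispersion relation from Appendix \ref{Appendix:ZeroFunction}.

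The first step is to fix an analytic, strictly monotone shear profile $U_s$ provided by Appendix \ref{Appendix:ZeroFunction} for which the classical hydrostatic dispersion relation
\begin{equation*}
\widetilde{D}_{1}(c) \;=\; D_{1}^{1}(c) \;=\; \int_{-1}^{1} \frac{\dd z}{(U_s(z)-c)^{2}}
\end{equation*}
admits a zero $c_0 \in \lbrace \mathrm{Im} > 0\rbrace$ with $\mathrm{Re}(c_0) \in U_s([-1,1])$. Since $c\mapsto D_{1}^{1}(c)$ is holomorphic on the upper half-plane, the zero $c_0$ is isolated and one can choose a small closed disc $\overline{B}(c_0,\eta) \subset \lbrace \mathrm{Im} > 0\rbrace$ such that $D_{1}^{1}$ does not vanish on $\partial B(c_0,\eta)$. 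Let $\mu := \inf_{c \in \partial B(c_0,\eta)} |D_{1}^{1}(c)| > 0$ and fix $r \in (0, \mathrm{Im}(c_0)-\eta)$ so that $\overline{B}(c_0,\eta) \subset \lbrace \mathrm{Im} > r\rbrace$.

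The second step is to show that $\widetilde{D}_{\alpha}$ is holomorphic in $c$ on a neighborhood of $\overline{B}(c_0,\eta)$ and converges uniformly to $\widetilde{D}_{1}$ on $\partial B(c_0,\eta)$ as $\alpha \to 1^{-}$. Using Corollary \ref{Coro-existsolODE}, for $\alpha$ sufficiently close to $1$ we have the Neumann series expansion
\begin{equation*}
\Phi_{\alpha,c}(1) \;=\; D_{\alpha}^{1}(c) + \sum_{n\geq 1}(1-\alpha)^{n}\,\mathrm{S}_{\alpha,c}^{n}[D_{\alpha}^{\cdot}(c)](1).
\end{equation*}
Each map $c \mapsto D_{\alpha}^{z}(c)$ is holomorphic on $\lbrace \mathrm{Im} > 0\rbrace$, and the same holds for $c \mapsto \mathrm{S}_{\alpha,c}[f](z)$ for fixed $f$ (the integrands are holomorphic in $c$ and the contour is real), so the Neumann series defines a holomorphic function in $c$ for $\alpha$ close enough to $1$. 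For the convergence, one first checks that $D_{\alpha}^{1}(c) \to D_{1}^{1}(c)$ as $\alpha \to 1$ uniformly on $\partial B(c_0,\eta)$, by dominated convergence applied to $(U_s(r)-c)^{-2\alpha} \to (U_s(r)-c)^{-2}$ (the integrands are uniformly bounded by $|\mathrm{Im}(c)|^{-2\alpha} \leq r^{-2}$ up to a harmless factor). The remainder is bounded using Lemma \ref{lem-opnormIntegOp}: for $c \in \overline{B}(c_0,\eta)$,
\begin{equation*}
\left| \sum_{n\geq 1}(1-\alpha)^{n}\mathrm{S}_{\alpha,c}^{n}[D_{\alpha}^{\cdot}(c)](1) \right| \;\leq\; \|D_{\alpha}^{\cdot}(c)\|_{Y}\,\sum_{n\geq 1}\bigl((1-\alpha)\,\|\mathrm{S}_{\alpha,c}\|_{Y\to Y}\bigr)^{n},
\end{equation*}
which tends to $0$ as $\alpha \to 1^{-}$ because $\|\mathrm{S}_{\alpha,c}\|_{Y\to Y}$ is bounded uniformly on $\overline{B}(c_0,\eta)$ thanks to the lower bound $\mathrm{Im}(c) \geq r$, while $(1-\alpha)\to 0$.

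The final step is Rouché's theorem. Choose $\alpha^{\star} \in (\frac{1}{2},1)$ close enough to $1$ so that $\sup_{c \in \partial B(c_0,\eta)}\lvert \widetilde{D}_{\alpha}(c) - \widetilde{D}_{1}(c)\rvert < \mu$ for all $\alpha \in (\alpha^{\star},1)$. Since $\widetilde{D}_{1}$ and $\widetilde{D}_{\alpha}$ are holomorphic on a neighborhood of $\overline{B}(c_0,\eta)$, Rouché's theorem yields the existence of a zero $c^{\star} = c^{\star}(\alpha) \in B(c_0,\eta) \subset \lbrace \mathrm{Im} > 0\rbrace$ of $\widetilde{D}_{\alpha}$, which is the desired conclusion. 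The main obstacle in this plan is the uniform (in $c$) control of $\|\mathrm{S}_{\alpha,c}\|_{Y\to Y}$ and of $D_{\alpha}^{z}(c)$ together with the verification that the $\alpha=1$ limit of the expansion really reduces to $D_{1}^{1}(c)$, i.e.\ that the remainder series vanishes in the limit; Lemma \ref{lem-opnormIntegOp} provides the former, while the explicit $(1-\alpha)^{n}$ prefactor in Corollary \ref{Coro-existsolODE} takes care of the latter.
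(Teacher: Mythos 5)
Your proposal is correct and follows essentially the same route as the paper: fix the profile from Appendix \ref{Appendix:ZeroFunction} with a zero $c_1$ of $D_1^1$ in the upper half-plane, establish holomorphy of $\widetilde{D}_{\alpha}$ via the Neumann series (the paper's Lemma \ref{Prop:HolomorphDgamma}), show $\widetilde{D}_{\alpha}\to D_1^1$ uniformly on a small circle around $c_1$ as $\alpha\to 1^-$ (the paper's Lemma \ref{Prop:DiffDgamma}, which gets a quantitative $O(1-\alpha)$ bound via the Mean Value Theorem where you use dominated convergence — an inessential difference), and conclude by Rouché. No gaps.
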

We need two preliminary results.
\begin{Lem}\label{Prop:HolomorphDgamma}
There exists $\alpha^{\star}_1 \in (\frac{1}{2},1)$ such that if $c_1$ is a zero of $D_1^1(c)=D_{\alpha=1}^{z=1}(c)$ as in \eqref{def:fctDalpha} and $r_1:=\mathrm{Im}(c_1)/2>0$, then the function $\widetilde{D}_{\alpha}$ is analytic on $\mathrm{B}(c_1, r_1)$ for all $\alpha \in (\alpha^{\star}_1,1)$.
\end{Lem}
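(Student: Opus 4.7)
The strategy is to view $\widetilde{D}_\alpha(c) = \Phi_{\alpha,c}(1)$ as the sum of the Neumann series produced in Corollary~\ref{Coro-existsolODE} and to show that this series converges uniformly on $\mathrm{B}(c_1,r_1)$ to a holomorphic function, so that Weierstrass' theorem on uniform limits of holomorphic maps yields the analyticity.

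First I would record the elementary observation that, for every $c \in \mathrm{B}(c_1,r_1)$, one has $\mathrm{Im}(c) \geq \mathrm{Im}(c_1) - r_1 = r_1 > 0$. Since $U_s(r) \in \R$ for all $r \in [-1,1]$, the quantity $U_s(r)-c$ stays uniformly in the open lower half-plane $\{\mathrm{Im}\leq -r_1\}$, hence avoids the cut of the principal logarithm. This lets us define $(U_s(r)-c)^{2\alpha}$ and $(U_s(r)-c)^{2\alpha-1}$ unambiguously and check, by direct differentiation under the integral sign, that for each $z \in [-1,1]$ the map
\begin{align*}
c \longmapsto D_\alpha^z(c) = \int_{-1}^z \frac{\mathrm{d}r}{(U_s(r)-c)^{2\alpha}}
\end{align*}
is holomorphic on $\mathrm{B}(c_1,r_1)$, with bounds that are uniform in $z$. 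The same computation shows that for every fixed $f \in Y$ and $z \in [-1,1]$, the scalar map $c \mapsto \mathrm{S}_{\alpha,c}[f](z)$ defined through \eqref{def:Op-Salpha} is holomorphic on $\mathrm{B}(c_1,r_1)$, and this passes to iterates $\mathrm{S}_{\alpha,c}^n$ by induction.

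Next I would apply Lemma~\ref{lem-opnormIntegOp} with the uniform lower bound $\mathrm{Im}(c) \geq r_1$: for $c \in \mathrm{B}(c_1,r_1)$ and $\alpha \in (1/2,1)$,
\begin{align*}
\Vert \mathrm{S}_{\alpha,c} \Vert_{Y \to Y} \leq C\left( 1+\frac{4\|U_s\|_{\Ld^\infty}^2}{r_1^2}\right)^{\alpha}\left\Vert \frac{U_s''}{U_s'}\right\Vert_{\mathrm{W}^{1,\infty}}.
\end{align*}
Choosing $\alpha^{\star}_1 \in (1/2,1)$ close enough to $1$ so that
\begin{align*}
(1-\alpha)\,C\left( 1+\frac{4\|U_s\|_{\Ld^\infty}^2}{r_1^2}\right)^{\alpha}\left\Vert \frac{U_s''}{U_s'}\right\Vert_{\mathrm{W}^{1,\infty}} \leq \frac{1}{2}
\end{align*}
for all $\alpha \in (\alpha^{\star}_1,1)$ makes the Neumann series converge geometrically in $Y$, uniformly over $c \in \mathrm{B}(c_1,r_1)$. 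Using that $\|D_\alpha^{\cdot}(c)\|_Y$ is also uniformly bounded on this ball, the partial sums $\sum_{n=0}^N (1-\alpha)^n \mathrm{S}_{\alpha,c}^n[D_\alpha^{\cdot}(c)](1)$ converge uniformly in $c \in \mathrm{B}(c_1,r_1)$ to $\widetilde{D}_\alpha(c)$. Each partial sum is holomorphic by the previous step, so Weierstrass' theorem delivers the announced analyticity.

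The main technical point is the uniformity (in $c$) of the operator bound for $\mathrm{S}_{\alpha,c}$ and the corresponding bound on $D_\alpha^{\cdot}(c)$ on $\mathrm{B}(c_1,r_1)$; both are immediate once one observes that $\mathrm{Im}(c) \geq r_1$ on that ball, so the proof reduces to verifying holomorphic dependence of the building blocks and invoking Weierstrass. A subtler point, worth highlighting but not hard, is the consistent choice of the branch of $(\cdot)^{2\alpha}$ and $(\cdot)^{2\alpha-1}$, which is possible precisely because $U_s(r)-c$ remains in the lower half-plane throughout.
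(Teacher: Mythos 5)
Your proposal is correct and follows essentially the same route as the paper: both express $\widetilde{D}_{\alpha}(c)$ via the Neumann series from Corollary \ref{Coro-existsolODE}, note that each term is holomorphic in $c$, and obtain uniform convergence on $\mathrm{B}(c_1,r_1)$ from the operator bound of Lemma \ref{lem-opnormIntegOp} together with the lower bound $\mathrm{Im}(c)\geq r_1$, choosing $\alpha$ close enough to $1$ to make the series geometric. Your explicit attention to the branch of $(U_s(r)-c)^{2\alpha}$ is a welcome detail the paper leaves implicit, but it does not change the argument.
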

\begin{proof}
    By Corollary \ref{Coro-existsolODE}, we can express 
\begin{align*}
\widetilde{D}_{\alpha}(c)&=\Phi_{\alpha,c}(1)=D_{\alpha}^1(c)+\sum_{n \geq 1}(1-\alpha)^n\mathrm{S}_{\alpha,c}^n[D_{\alpha}(c)]_{\mid z=1},
\end{align*}
where $D_{\alpha}^z(c)$ is defined in \eqref{def:fctDalpha}.
Since every term in the preceding series is a holomorphic function of the variable $c$, it suffices to demonstrate that the series converges uniformly on any compact set in $\mathrm{B}(c_1, r_1)$, implying the uniform convergence of the series of functions
$$g_n(c)=(1-\alpha)^n\big[\mathrm{S}_{\alpha,c}^n[D_{\alpha}(c)]\big]_{\mid z=1}, \qquad n \geq 1.$$
First,
\begin{align*}
\vert g_n(c) \vert & \leq (1-\alpha)^n \underset{y \in [-1,1]}{\sup} \left\vert  \big[\mathrm{S}_{\alpha,c}^n[D_{\alpha}(c)]\big]_{\mid z=y} \right\vert  \\
%\leq (1-\alpha)^n ||| \mathrm{S}_{\alpha,c}^n |||_{\infty}\underset{y \in [-1,1]}{\sup} \vert  D_{\alpha}^y(c) \vert\\
& \leq (1-\alpha)^n \| \mathrm{S}_{\alpha,c} \|_{Y \rightarrow Y}^n \left(\underset{y \in [-1,1]}{\sup} \vert  D_{\alpha}^y(c) \vert + \underset{y \in [-1,1]}{\sup} \vert  \partial_y D_{\alpha}^y(c) \vert \right).
\end{align*}
Appealing to \eqref{ineq:BoundOpT} yields the upper bound
\begin{align*}
\| \mathrm{S}_{\alpha,c} \|_{Y \rightarrow Y}^n \leq (C_{U_s})^n \left(1+\frac{1}{\mathrm{Im}(c)} \right)^{\alpha n},
\end{align*}
for some constant $C_{U_s}>0$ depending on $\|\bcU\|_{\Ld^\infty}$, and 
\begin{align*}
\underset{y \in [-1,1]}{\sup} \vert  D_{\alpha}^y(c) \vert + \underset{y \in [-1,1]}{\sup} \vert  \partial_y D_{\alpha}^y(c) \vert \leq  \frac{3}{\vert \mathrm{Im}(c) \vert^{2\alpha}}.
\end{align*}
As $c \in \mathrm{B}(c_1, r_1)$ with $r_1=\mathrm{Im}(c_1)/2$, then $\mathrm{Im}(c) \geq r_1$, so that 
\begin{align*}
\vert g_n(c) \vert & \leq  (1-\alpha)^n (C_{U_s})^n \left(1+\frac{1}{\mathrm{Im}(c)} \right)^{\alpha n}\frac{1}{\vert \mathrm{Im}(c) \vert^{2\alpha}} \leq (1-\alpha)^n (C_{U_s})^n \left(1+\frac{1}{r_1} \right)^{\alpha n}\frac{1}{r_1^{2\alpha}}.
\end{align*}
Choosing $\alpha$ close $1^-$ so that 
\begin{align*}
(1-\alpha) C_{U_s}\left(1+\frac{1}{r_1} \right)^{\alpha}<1
\end{align*}
allows to conclude.
\end{proof}
\begin{Lem}\label{Prop:DiffDgamma} Under the assumptions of Lemma \ref{Prop:HolomorphDgamma}, there exist $\delta \in (0, \min\{1, r_1\})$, $\alpha^{\star}_2(\delta) \in (\frac{1}{2},1)$ and $M(\delta)>0$ such that 
\begin{align}
\forall c \in  \mathrm{B}(c_1, \delta), \qquad  \vert \widetilde{D}_{\alpha}(c)-D_{1}^{1}(c)  \vert \leq M(\delta) (1-\alpha ),
\end{align}
for all $\alpha \in (\alpha^{\star}_2,1)$.
\end{Lem}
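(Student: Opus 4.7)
The plan is to split the difference $\widetilde{D}_\alpha(c) - D_1^1(c)$ into two pieces according to the Neumann series representation obtained in Corollary \ref{Coro-existsolODE}:
\begin{align*}
\widetilde{D}_\alpha(c) - D_1^1(c) = \bigl[D_\alpha^1(c) - D_1^1(c)\bigr] + \sum_{n\geq 1}(1-\alpha)^n\bigl[\mathrm{S}_{\alpha,c}^n[D_\alpha(c)]\bigr]_{\mid z = 1}.
\end{align*}
The leading term encodes the shift in the exponent $2\alpha$ versus $2$, while the tail carries an explicit $(1-\alpha)$ factor.

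For the leading term, first I would fix $\delta \in (0, r_1)$ so that $\mathrm{B}(c_1,\delta) \Subset \mathrm{B}(c_1, r_1)$; this forces $\mathrm{Im}(c) \geq r_1$ for every $c$ in the ball, hence $|U_s(r) - c|$ remains bounded both above (by $\|U_s\|_{\Ld^\infty} + |c_1| + r_1$) and away from zero (by $r_1$) uniformly in $r \in [-1,1]$ and $c \in \mathrm{B}(c_1,\delta)$. Choosing a principal branch of the complex logarithm on this avoided sector, the map
\begin{align*}
\alpha \in [1/2, 1] \longmapsto (U_s(r) - c)^{-2\alpha} = \exp\bigl(-2\alpha \log(U_s(r)-c)\bigr)
\end{align*}
is $C^1$ in $\alpha$ with a derivative bounded, uniformly in $(r,c)$, by some constant $K(\delta)$. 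The mean value inequality and integration over $r\in[-1,1]$ then yield $|D_\alpha^1(c) - D_1^1(c)| \leq 2 K(\delta)(1-\alpha)$.

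For the tail, I reuse the estimate derived in the proof of Lemma \ref{Prop:HolomorphDgamma}: for $c \in \mathrm{B}(c_1,\delta)$, each term satisfies
\begin{align*}
\bigl|(1-\alpha)^n\bigl[\mathrm{S}_{\alpha,c}^n[D_\alpha(c)]\bigr]_{\mid z=1}\bigr| \leq \frac{1}{r_1^{2\alpha}}\Bigl[(1-\alpha)\, C_{U_s}\bigl(1+\tfrac{1}{r_1}\bigr)^\alpha\Bigr]^n.
\end{align*}
Choosing $\alpha^\star_2(\delta) \in (1/2,1)$ close enough to $1$ so that $q(\alpha) := (1-\alpha) C_{U_s}(1+1/r_1)^\alpha \leq 1/2$ for all $\alpha \in (\alpha^\star_2, 1)$, the remainder of the geometric series can be summed and bounded by
\begin{align*}
\sum_{n\geq 1}\bigl|(1-\alpha)^n\bigl[\mathrm{S}_{\alpha,c}^n[D_\alpha(c)]\bigr]_{\mid z=1}\bigr| \leq \frac{1}{r_1^{2\alpha}}\cdot\frac{q(\alpha)}{1-q(\alpha)} \leq \frac{2 C_{U_s}(1+1/r_1)}{r_1^2}\,(1-\alpha).
\end{align*}
Combining the two bounds produces the constant $M(\delta)$ in the statement. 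The mild technical point is choosing $\delta$ strictly smaller than $r_1$ (not merely inside $\mathrm{B}(c_1,r_1)$) so that the operator norm estimate \eqref{ineq:BoundOpT} remains uniform on the closure, but this is harmless since we only need the conclusion for some such $\delta$.
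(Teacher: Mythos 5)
Your proof is correct and follows essentially the same route as the paper: the identical decomposition into the leading term $D_\alpha^1(c)-D_1^1(c)$ plus the Neumann tail, a geometric-series bound on the tail via the operator norm estimate \eqref{ineq:BoundOpT}, and a mean value argument in $\alpha$ for the leading term. The only (harmless) differences are that you lower-bound $\vert U_s(r)-c\vert$ by $r_1$ rather than by $\delta$ and that you spell out the complex logarithm in the mean value step, which if anything makes the leading-term estimate slightly cleaner than the paper's.
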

\begin{proof}
    By Corollary \ref{Coro-existsolODE}, it follows that
\begin{align*}
 \widetilde{D}_{\alpha}(c)-D_{1}^{1}(c) =D_{\alpha}^{1}(c)-D_{1}^{1}(c)+\sum_{n \geq 1}(1-\alpha)^n \big[\mathrm{S}_{\alpha,c}^n[D_{\alpha}(c)] \big]_{\mid z=1},
\end{align*}
where
\begin{align*}
D_{\alpha}^{1}(c)=\int_{-1}^1\frac{\mathrm{d}r}{(U_s(r)-c)^{2\alpha}}.
\end{align*}
Arguing as in the proof of Lemma \ref{Prop:HolomorphDgamma}, we obtain
\begin{align*}
\vert \widetilde{D}_{\alpha}(c)-D_{1}^{1}(c) \vert &\leq \vert D_{\alpha}^{1}(c)-D_{1}^{1}(c) \vert +  \frac{1}{\delta^{2\alpha}}\sum_{n \geq 1}\left[(1-\alpha)C_{U_s}\left(1+\frac{1}{\delta} \right)^{\alpha} \right]^n.
%& \frac{1}{\delta^{2\alpha}}\sum_{n \geq 1}\left[(1-\alpha)C_{U_s}\left(1+\frac{1}{\delta} \right) \right]^2
\end{align*}
Being $\delta <1$ fixed, we can choose $\alpha \in (1/2, 1)$ so that $0<C_{U_s}(1-\alpha)\left(1+\frac{1}{\delta} \right)<3/4$, obtaining 
\begin{align*}
\left\vert \frac{1}{\delta^{2\alpha}}\sum_{n \geq 1}\left[C_{U_s}(1-\alpha)\left(1+\frac{1}{\delta} \right)^{\alpha} \right]^n \right\vert & \leq \frac{1}{\delta^{2\alpha}} \frac{C_{U_s}(1-\alpha)\left(1+\frac{1}{\delta} \right)}{1-C_{U_s}(1-\alpha)\left(1+\frac{1}{\delta} \right)} \\
&\leq \frac{1}{\delta^{2}}\frac{4}{3} (1-\alpha)C_{U_s}\left(1+\frac{1}{\delta} \right).
\end{align*}
Next, we apply the Mean-Value Theorem to the function $x \mapsto (U_s(r)-c)^{-2x}$ on $(\alpha, 1)$ and we obtain
\begin{align*}
\vert D_{\alpha}^{1}(c)-D_{1}^{1}(c) \vert \leq 2( 1-\alpha ) \int_{-1}^1 \underset{x \in [\alpha,1]}{\sup}\dfrac{\mathrm{d}r}{\vert U_s(r)-c \vert^{2x+1}} \leq 2( 1-\alpha ) \underset{x \in [\alpha,1]}{\sup}\dfrac{1}{\delta^{2x+1}}  \leq  \dfrac{2( 1-\alpha )}{\delta^{3}}.
\end{align*}
This concludes the proof.
\end{proof}

\begin{proof}[Proof of Proposition \ref{Prop:existZERO}]
By Proposition \ref{Prop-Appendix} in Appendix \ref{Appendix:ZeroFunction}, for $\alpha=1$ (homogeneous hydrostatic equations), there exists an analytic strictly increasing profile $U_s(z)$ and a $c_1$ with $\mathrm{Im}(c_1)>0$ such that
\begin{align}\label{eq:integHydrostaticHomog-profile}
D_{1}^{1}(c_1)=\int_{-1}^1\frac{\mathrm{d}r}{(U_s(r)-c_1)^{2}}=0.
\end{align}
We can also find an associated solution to the equation \eqref{eq:RayleighHydrostatic} and, appealing to $(2)$ in Lemma \ref{lem:necessCond-growingmod}, deduce that $\mathrm{Re}(c_1) \in U_s([-1,1])$.
Let us introduce $r_1:=\mathrm{Im}(c_1)/2$. First, observe that the function $c \mapsto D_{1}^{1}(c)$ is analytic on $\lbrace \mathrm{Im}(c)>0 \rbrace$, by standard preservation of holomorphy under the integral. 
%It's enough to do it for any open ball of the domain $\lbrace \mathrm{Im} >0 \rbrace$, hence we can work on some $\lbrace \mathrm{Im} >\eta \rbrace$ and just bound the integrand as
In fact, for all $\eta>0$, $c \in \lbrace \mathrm{Im}(c) >\eta \rbrace$,
\begin{align*}
\left\vert \frac{1}{(U_s(r)-c)^{2}} \right\vert  \leq \frac{1}{|\mathrm{Im}(c)|^2} \leq \frac{1}{\eta^2} \in \Ld^1(-1,1).
\end{align*} 
It follows that the zeros of $D_1^1$ are isolated and being $c_1$ a zero, there exists $\delta \in (0, \min\{1, r_1\})$ such that
$$ \epsilon:=\underset{c \in \mathrm{B}(c_1, \delta) \setminus \lbrace c_1 \rbrace}{\inf} \vert D_{1}^{1}(c) \vert > 0.$$
%\qquad \underset{\vert c-c_1 \vert =\delta}{\inf} \vert D_{1}^{1}(c) \vert > 0. $$ 
\begin{comment}
\textcolor{red}{Now assume that $\delta<1$ and \textcolor{blue}{$\mathrm{Re}(c)  \in U_s([-1,1])$ for $c \in  B(c_1, \delta)$.} In what follows, everything actually takes place in $B(c_1, \delta) \subset \lbrace c\,|\, \mathrm{Im}(c)>r_1 \rbrace$, therefore combining Corollary \ref{Coro-existsolODE} and Proposition \ref{Prop:existZERO} yields a solution $(c^\star, \Phi_{c^\star})$ satisfying \eqref{eq:IntegroDiffTosolve}--\eqref{eq:DispersionRelTosolve} with $\mathrm{Im}(c^\star)>0$.}
\end{comment}
Now, setting $\alpha^{\star}=\max(\alpha^{\star}_1,\alpha^{\star}_2, \alpha_{r_1})$, where $\alpha_{r_1}$ is given by Corollary \ref{Coro-existsolODE}, we can conclude by appealing to Lemma \ref{Prop:HolomorphDgamma} and \ref{Prop:DiffDgamma}. In fact, for all $\alpha>\alpha^{\star}$ sufficiently close to $1$ so that $M(\delta) (1-\alpha ) \leq \epsilon/2$, we can apply Rouché's theorem to the family of holomorphic functions $(\widetilde{D}_{\alpha})_{\alpha \in (\alpha^\star,1)}$ to deduce that $\widetilde{D}_{\alpha}$ has a zero in $\mathrm{B}(c_1, \delta)$. 
\end{proof}
Our strategy also provides some information on the localization of the zeros of the dispersion relation.
\begin{Cor}\label{Coro:Gamma0}
 Let $\alpha \in (\alpha^{\star},1)$. The quantity
\begin{align}\label{def:Gamma0}
\gamma_0:= \max \,  \lbrace \mathrm{Im}(c) \mid \widetilde{D}_{\alpha}(c) =0 \ \text{and} \  \mathrm{Im}(c) > 0  \rbrace
\end{align}
is attained and positive.
\end{Cor}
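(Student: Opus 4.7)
The plan is to identify the set
$$Z_\alpha := \{c \in \C : \widetilde{D}_\alpha(c) = 0,\ \mathrm{Im}(c) > 0\}$$
as a nonempty subset of a compact region inside the domain of holomorphy of $\widetilde{D}_\alpha$, so that the supremum defining $\gamma_0$ is in fact a maximum and is strictly positive.

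Nonemptiness and strict positivity are immediate from Proposition \ref{Prop:existZERO}, which for every $\alpha \in (\alpha^\star, 1)$ produces a zero $c^\star \in \mathrm{B}(c_1, \delta)$ of $\widetilde{D}_\alpha$. Since in that proposition the radius $\delta$ is chosen smaller than $r_1 := \mathrm{Im}(c_1)/2$, the triangle inequality gives $\mathrm{Im}(c^\star) \geq \mathrm{Im}(c_1) - \delta > r_1 > 0$, so $Z_\alpha \neq \emptyset$ and $\gamma_0 \geq \mathrm{Im}(c^\star) > r_1$.

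The main step is an a priori upper bound on $\mathrm{Im}(c)$ for $c \in Z_\alpha$. Lemma \ref{lem:necessCond-growingmod} already confines $\mathrm{Re}(c)$ to $U_s([-1,1])$, hence $|\mathrm{Re}(c)| \leq \|U_s\|_{\Ld^\infty}$. For the vertical direction, I would start from the Neumann-series expansion
$$\widetilde{D}_\alpha(c) = D_\alpha^1(c) + \sum_{n \geq 1}(1-\alpha)^n \big[\mathrm{S}_{\alpha,c}^n D_\alpha^{\,\cdot}(c)\big]_{z=1}$$
supplied by Corollary \ref{Coro-existsolODE}. Expanding $(U_s(r) - c)^{-2\alpha} = (-c)^{-2\alpha}\big(1 + O(1/c)\big)$ for $|c| \to \infty$ gives $D_\alpha^1(c) = 2(-c)^{-2\alpha}(1 + o(1))$; combined with $|c|^{2\alpha} \leq (\|U_s\|_{\Ld^\infty}^2 + \mathrm{Im}(c)^2)^\alpha$, this yields $|D_\alpha^1(c)| \geq c_0\, \mathrm{Im}(c)^{-2\alpha}$ with $c_0 \to 2$ as $\mathrm{Im}(c) \to \infty$. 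Meanwhile, $\|D_\alpha^{\,\cdot}(c)\|_Y \lesssim \mathrm{Im}(c)^{-2\alpha}$ together with the uniform control of $\|\mathrm{S}_{\alpha,c}\|_{Y\to Y}$ from Lemma \ref{lem-opnormIntegOp} yield a correction bounded by $C(1-\alpha)\,\mathrm{Im}(c)^{-2\alpha}$. Enlarging $\alpha^\star$ if necessary so that $C(1-\alpha) < 1$ for $\alpha \in (\alpha^\star, 1)$, the main term dominates for $\mathrm{Im}(c) \geq R_\alpha$ sufficiently large, and therefore $\gamma_0 \leq R_\alpha$.

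Combining the two bounds, $Z_\alpha$ lies in the compact rectangle $K_\alpha := \{|\mathrm{Re}(c)| \leq \|U_s\|_{\Ld^\infty},\ r_1 < \mathrm{Im}(c) \leq R_\alpha\}$, which sits inside the domain of holomorphy of $\widetilde{D}_\alpha$ guaranteed by Corollary \ref{Coro-existsolODE} and Lemma \ref{Prop:HolomorphDgamma}. Picking a sequence $(c_n) \subset Z_\alpha$ with $\mathrm{Im}(c_n) \to \gamma_0$, Bolzano-Weierstrass furnishes a subsequential limit $c_\infty \in K_\alpha$; by continuity of $\widetilde{D}_\alpha$ we obtain $\widetilde{D}_\alpha(c_\infty) = 0$, so $c_\infty \in Z_\alpha$ and $\mathrm{Im}(c_\infty) = \gamma_0$ realizes the maximum. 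The delicate point is precisely the upper bound on $\mathrm{Im}(c)$: the principal term $D_\alpha^1(c)$ and the Neumann correction decay at the same rate $\mathrm{Im}(c)^{-2\alpha}$, so one must exploit the smallness of $(1-\alpha)$ against the fixed asymptotic coefficient of $D_\alpha^1$, in the same perturbative spirit already used in Proposition \ref{Prop:existZERO}. The remaining steps are soft and only require compactness of bounded subsets of $\C$ together with holomorphy of $\widetilde{D}_\alpha$.
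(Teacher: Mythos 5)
Your proof is correct, and the overall skeleton (nonemptiness from Proposition \ref{Prop:existZERO}, then boundedness of the zero set plus continuity/compactness) matches the paper's. Where you differ is in how the boundedness is obtained. The paper first shows, via Lemma \ref{Lem:IntegralFPositive}, that the zeros of the \emph{homogeneous} dispersion function $D_1^1$ lie in a bounded region, and then transfers this to $\widetilde{D}_\alpha$ by invoking the perturbation statements of Lemmas \ref{Prop:HolomorphDgamma}--\ref{Prop:DiffDgamma}. You instead work directly with $\widetilde{D}_\alpha$: you expand $D_\alpha^1(c)=2(-c)^{-2\alpha}(1+o(1))$ as $\mathrm{Im}(c)\to\infty$ in the strip $\vert\mathrm{Re}(c)\vert\le\Vert U_s\Vert_{\Ld^\infty}$, note that the Neumann tail is bounded by $C(1-\alpha)\,\mathrm{Im}(c)^{-2\alpha}$ via \eqref{ineq:BoundOpT}, and beat it with the leading coefficient $2$ once $1-\alpha$ is small. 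This is essentially the computation of Lemma \ref{Lem:IntegralFPositive} redone at the level of $D_\alpha^1$, plus the uniform operator bound; it buys you a self-contained and arguably tighter argument, since the paper's comparison estimate in Lemma \ref{Prop:DiffDgamma} is only stated on a small ball $\mathrm{B}(c_1,\delta)$ and thus does not, as written, give the uniform "no zeros of $\widetilde{D}_\alpha$ far away" statement that the boundedness really requires. Two minor caveats: your step may require enlarging $\alpha^\star$ (harmless, and consistent with the paper's conventions), and the final compactness step needs $\widetilde{D}_\alpha$ to be defined and continuous near the limit point $c_\infty$ — which holds because $\gamma_0\ge\mathrm{Im}(c^\star)>r_1$ keeps you safely inside the region covered by Corollary \ref{Coro-existsolODE}, a point worth stating explicitly since the paper itself glosses over it.
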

\begin{proof}
By Proposition \ref{Prop:existZERO}, the considered set is non empty. As $\widetilde{D}_{\alpha}$ is continuous on $\lbrace \mathrm{Im}(c)> 0 \rbrace$, it is enough to prove that the set of zeros  $\lbrace c \mid \widetilde{D}_{\alpha}(c) =0 \rbrace $ is bounded. 
%closed and bounded in $\lbrace \mathrm{Im}> 0 \rbrace$
By Lemma \ref{Lem:IntegralFPositive} from the Appendix \ref{Appendix:ZeroFunction}, there exists $\overline{R}>0$ such that
\begin{align*}
\forall R \geq \overline{R}, \ \forall c \in \lbrace \vert z \vert =R \rbrace \cap \lbrace \mathrm{Im}(c)\neq 0 \rbrace, \qquad \vert D_{1}^{1}(c) \vert >0.
\end{align*}
Therefore the set of zeros of the function $\vert D_{1}^{1} \vert$ in $\lbrace \mathrm{Im}\neq 0 \rbrace$ is bounded. It means that the hydrostatic Rayleigh equation \eqref{eq:RayleighHydrostatic} has no solution for $\vert c \vert$ large enough. From Propositions \ref{Prop:HolomorphDgamma}--\ref{Prop:DiffDgamma}, we also know that in any sufficiently small neighborhood of a zero of $\vert D_{1}^{1} \vert$, there is a unique zero of $\widetilde{D}_{\alpha}$. This proves that the set of zeros of the function $\widetilde{D}_{\alpha}$ in $\lbrace \mathrm{Im}\neq 0 \rbrace$ is bounded as well, and thus concludes the proof.
\end{proof}

%%%%%

\subsection{Long-wave instability for the Euler-Boussinesq system}\label{Subsec:LongwaveInstab}
Our objective here is to identify an unstable eigenvalue for the linearization of the Euler-Boussinesq system \eqref{eqintro:BE-scaled} with $\eps=1$. Building on previous ideas, we approach this problem as a perturbation in low frequency (i.e. long-wave) of the linearized hydrostatic equations introduced in Section \ref{Subsec:UnboundedSpecHydrost}.

For $M>0$, we recall the notation $$\T_{M}:= \R / (2 \pi M \Z) \simeq [0,2 \pi M).$$ 
For $\eps=1$, we consider \eqref{eqintro:BE-scaled} for $(x,z) \in \T_M \times (-1,1)$. As before, the linearization around perturbations of a shear-stratified equilibrium
% \begin{align*}
% \varrho(t,x,z)&=\rho_s(z)+\widetilde{\varrho}(t,x,z), \\
% p(t,x,z)&=p_s(z)+\widetilde{p}(t,x,z), \qquad p_s'(z)=-\rho_s(z),\\
% u(t,x,z)&=U_s(z)+\widetilde{u}(t,x,z), \\
% v(t,x,z)&=0+\widetilde{v}(t,x,z),
% \end{align*}
gives the system
\begin{equation}
      \begin{cases}
      \partial_t \varrho +U_s(z) \partial_x \varrho -\rho_s'(z) \partial_x \varphi =0, \\
\partial_t \omega + U_s(z) \partial_x \omega- U_s''(z)\partial_x \varphi= \partial_x \varrho,
\end{cases}
\end{equation}
where the streamfunction now satisfies the standard Poisson problem
\begin{align*}
\begin{cases}
        \Delta \varphi=\omega,\\
        \varphi_{\mid z=\pm 1}=0.
    \end{cases}
\end{align*}
% 
% \begin{equation}
%       \begin{cases}
%       \partial_t \varrho +U_s(z) \partial_x \varrho+   \rho_s'(z) v =0, \\ 
% \partial_t \widetilde{\mathrm{W}} + U_s(z) \partial_x \widetilde{\mathrm{W}} +  U_s'(z)(\widetilde{v},0) + \nabla \widetilde{p}=-\widetilde{\varrho}e_2,  \\ 
% \mathrm{div} \, \widetilde{\mathrm{W}}=0, \\ 
% \end{cases}
% \end{equation}
% where $\widetilde{\mathrm{W}}=(\widetilde{u},\widetilde{v})$. Dropping the tilde and relying on the vorticity-stream formulation below,
% \begin{align*}
% u=\partial_z \varphi, \qquad v = -\partial_x \varphi, \qquad \left\{ \begin{array}{ll}
%         \Delta \varphi=\omega:=\partial_z u-\partial_x v\\
%         \varphi_{\mid z=\pm 1}=0
%     \end{array}, \right. 
% \end{align*}
% we obtain the linearized Euler-Boussinesq system around a stratified shear flow:
% \begin{equation}
% \left\{
%       \begin{aligned}
%       \partial_t \varrho +U_s(z) \partial_x \varrho -\partial_x \varphi \rho_s'(z)&=0, \\[1mm]
% \partial_t \omega + U_s(z) \partial_x \omega- U_s''(z)\partial_x \varphi&= \partial_x \varrho,  \\[1mm]
% \Delta \varphi&=\omega.
% \end{aligned}
%     \right.
% \end{equation}
In compact form, it reads
\begin{align}\label{eq:LinBoussinesq}
\partial_t \begin{pmatrix}
\varrho\\
\omega
\end{pmatrix}
=\mathscr{B}_{\rho_s, U_s}\begin{pmatrix}
\varrho\\
\omega
\end{pmatrix},
\end{align}
where $\mathscr{B}_{\rho_s, U_s}$ is the linearized operator  around $(\rho_s, U_s)$ defined by
\begin{align}\label{def:LinOp-Boussinesq}
\mathscr{B}_{\rho_s, U_s}\begin{pmatrix}
\varrho\\
\omega
\end{pmatrix}:=\begin{pmatrix}
-U_s(z) \partial_x \varrho + \rho_s'(z)\partial_x \varphi \\[1mm]
-U_s(z) \partial_x \omega +U_s''(z)\partial_x \varphi + \partial_x \varrho
\end{pmatrix}.
\end{align}
The existence of an unstable eigenvalue for the linearized operator $\mathscr{B}_{\rho_s, U_s}$ (around a suitably chosen steady state) is then demonstrated in the limit of a large box $M\to\infty$ in the horizontal direction. We refer to this as a linear long-wave transverse instability, which is described as follows.

\begin{Thm}\label{thm:growingmodEB-largetorus}
There exists an analytic stationary profile $(\rho_s(z), U_s(z))$, with $\rho_s'(z)<0$, satisfying
\begin{align*}
0< \frac{\rho_s'(z)}{|U_s'(z)|^2} < \frac{1}{4},
\end{align*}
and such that the following holds. There exists $\overline{M}>0$ such that for any $M>\overline{M}$, the linearized operator $\mathscr{B}_{\rho_s, U_s}$ on $\T_M \times (-1,1)$ has an eigenvalue $\lambda \in \C$ with $\mathrm{Re}(\lambda)>0$. Moreover, there exists an associated eigenfunction of the form
\begin{align*}
    \varrho(x,z)=\widehat{\varrho}(z) \e^{i\frac{k}{M}x}, \qquad  \omega(x,z)=\widehat{\omega}(z) \e^{i\frac{k}{M}x}
\end{align*}
for some smooth nonzero functions $\widehat{\varrho}(z),\widehat{\omega}(z)$ and some $k \in \Z \setminus \lbrace 0 \rbrace$.
%There exists $K>0$ such that for any $k \in [-K, K]$, the linearized equation \eqref{eq:LinBoussinesq} possesses a growing mode of the form 
%\begin{align*}
%\varrho(t,x,z)=\widehat{\varrho}_k(z)\e^{ik x}\e^{\Lambda(k)t}, \qquad  \omega(t,x,z)=\widehat{\omega}_k(z)\e^{ikx}\e^{\Lambda(k)t},
%\end{align*}
%where
%\begin{align}
%     \Lambda[k]= -ik  c[k]
%\end{align}
%where $\mathrm{Im} \, c[k] >0$ (hence $\mathrm{Re} \, \Lambda[k]= \mathrm{Im} \, c[k]$). 
%Furthermore, the function $$\sigma : k \mapsto \mathrm{Re} \, \Lambda[k]$$ admits a maximum on $[-K,K]$ at $k=k_0$ with $k_0 \neq 0$ and $\Lambda$ is analytic around $k_0$.
\end{Thm}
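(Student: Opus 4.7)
The plan is to adapt the perturbative strategy developed in Section \ref{Subsec:UnboundedSpecHydrost} by treating the non-hydrostatic correction as a low-frequency (long-wave) perturbation of the hydrostatic Taylor-Goldstein problem. I would use the same analytic stratified profile $(\rho_s, U_s)$ constructed in Proposition \ref{Prop:existZERO}, for which \eqref{def-profilInstab} holds with some fixed $\alpha \in (\alpha^\star, 1)$.

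First, I would perform the normal mode ansatz $\varrho(t,x,z) = \widehat\varrho(z)\, e^{i\kappa(x-ct)}$ and $\omega(t,x,z) = \widehat\omega(z)\, e^{i\kappa(x-ct)}$ with $\kappa := k/M$, $k \in \Z \setminus \{0\}$, and introduce the stream function $\varphi$ via $\Delta \varphi = \widehat\omega$, $\varphi(\pm 1) = 0$. This leads to the non-hydrostatic Taylor-Goldstein equation
\begin{equation}
(U_s-c)^2(\partial_z^2 - \kappa^2)\varphi - (U_s-c) U_s'' \varphi = \rho_s' \varphi, \qquad \varphi(\pm 1) = 0,
\end{equation}
already encountered in \eqref{eq:introTG}. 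Applying Friedlander's change $\varphi = (U_s-c)^\alpha \psi$ and using \eqref{def-profilInstab}, this reduces to
\begin{equation}
\partial_z\bigl((U_s-c)^{2\alpha} \partial_z \psi\bigr) = \kappa^2 (U_s-c)^{2\alpha} \psi + (\alpha-1)(U_s-c)^{2\alpha-1} U_s'' \psi, \qquad \psi(\pm 1) = 0,
\end{equation}
so that, compared with \eqref{ODE:tosolve}, only the extra term $\kappa^2 (U_s-c)^{2\alpha}\psi$ is new.

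Next, I would integrate once using $\psi(-1) = 0$ and normalize the free constant so as to recast the problem as
\begin{equation}
\psi(z) = D_\alpha^z(c) + (1-\alpha)\, S_{\alpha, c}[\psi](z) + \kappa^2\, T_{\alpha, c}[\psi](z), \qquad \psi(1) = 0,
\end{equation}
where $S_{\alpha, c}$ and $D_\alpha^z$ are as in \eqref{def:Op-Salpha}--\eqref{def:fctDalpha}, and
\begin{align*}
T_{\alpha, c}[\psi](z) := \int_{-1}^z \frac{\mathrm{d}r}{(U_s(r)-c)^{2\alpha}} \int_{-1}^r (U_s(q)-c)^{2\alpha}\, \psi(q)\, \mathrm{d}q
\end{align*}
is a new double integral operator. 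Since the factor $(U_s-c)^{2\alpha}$ appearing in the inner integrand stays uniformly bounded on any set $\{\mathrm{Im}(c) > r_1\}$, a direct adaptation of Lemma \ref{lem-opnormIntegOp} gives a Lipschitz bound of the form $\|T_{\alpha, c}\|_{Y \to Y} \leq C_{U_s}/|\mathrm{Im}(c)|^{2\alpha}$. Fixing $\alpha \in (\alpha^\star, 1)$ from Proposition \ref{Prop:existZERO} and working in $\{\mathrm{Im}(c) > r_1\}$, Neumann inversion of $\mathrm{I} - (1-\alpha)S_{\alpha,c} - \kappa^2 T_{\alpha, c}$ produces, for all $\kappa$ small enough, a unique solution $\psi_{\alpha, c, \kappa} \in Y$ depending holomorphically on $c$.

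Finally, I would analyze the dispersion function $\widetilde D_{\alpha, \kappa}(c) := \psi_{\alpha, c, \kappa}(1)$. By construction $\widetilde D_{\alpha, 0}$ equals the hydrostatic dispersion function $\widetilde D_\alpha$ of Section \ref{Subsec:UnboundedSpecHydrost} and, by Proposition \ref{Prop:existZERO}, vanishes at some $c^\star$ with $\mathrm{Im}(c^\star) > 0$. Isolatedness of this zero provides $\delta \in (0, r_1)$ with $m(\delta) := \inf_{\partial B(c^\star, \delta)} |\widetilde D_{\alpha, 0}| > 0$. A term-by-term estimate of the Neumann series, mimicking Lemma \ref{Prop:DiffDgamma}, then yields
\begin{equation}
\sup_{c \in B(c^\star, \delta)} \bigl|\widetilde D_{\alpha, \kappa}(c) - \widetilde D_{\alpha, 0}(c)\bigr| \leq C(\delta, \alpha)\, \kappa^2.
\end{equation}
Taking $k = 1$ and choosing $\overline M$ so large that $C(\delta, \alpha)/\overline M^2 < m(\delta)$, Rouché's theorem applied on $\partial B(c^\star, \delta)$ furnishes, for every $M > \overline M$, a zero $c_M \in B(c^\star, \delta)$ of $\widetilde D_{\alpha, \kappa}$ with $\mathrm{Im}(c_M) > 0$. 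The number $\lambda := -i\kappa c_M = -i c_M / M$ is then an eigenvalue of $\mathscr B_{\rho_s, U_s}$ with $\mathrm{Re}(\lambda) = \mathrm{Im}(c_M)/M > 0$; non-triviality of $\widehat\varrho, \widehat\omega$ is inherited from $\psi_{\alpha, c_M, \kappa} \not\equiv 0$, and their smoothness follows from elliptic regularity for the Poisson problem defining $\varphi$. The main technical obstacle, entirely parallel to the one in Section \ref{Subsec:UnboundedSpecHydrost}, is to keep all bounds uniform in $\kappa$ on the fixed disk $B(c^\star, \delta)$; this is guaranteed because on that disk $\mathrm{Im}(c) > r_1 > 0$, keeping all denominators $(U_s - c)^{2\alpha}$ away from zero independently of $M$.
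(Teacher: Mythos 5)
Your proposal is correct and follows essentially the same route as the paper: recast the modified Taylor--Goldstein problem as an integral equation with the extra operator coming from the $(k/M)^2$ term, bound that operator on the Lipschitz space $Y$ uniformly on $\{\mathrm{Im}(c)>r_1\}$, invert by Neumann series, and locate a zero of the perturbed dispersion function via Rouch\'e's theorem. The only (harmless) organizational difference is that you first fix $\alpha\in(\alpha^\star,1)$ and perturb solely in $\kappa=k/M$ around the stratified hydrostatic zero $c^\star$ from Proposition \ref{Prop:existZERO}, whereas the paper runs the Rouch\'e argument in the joint regime $\alpha\to1$, $M\to\infty$ around the homogeneous hydrostatic zero; both yield the claimed eigenvalue $\lambda=-i(k/M)c_M$ with $\mathrm{Re}(\lambda)>0$.
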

%\begin{Rem}
 %   By analyticity, there exists an integer $m \geq 2$ such that
  %  \begin{align*}
   %     \Lambda'[k_0]=\cdots =\Lambda^{(m-1)}[k_0]=0, \qquad \Lambda^{(m)}[k_0] \neq 0.
   % \end{align*}
%\end{Rem}
%\lucas{Question: for $k$ large, can we prevent the existence of an unstable mode? By an energy estimate maybe ? Or divide by $k^2$ and do $k \rightarrow + \infty$ in Taylor-Goldstein, assuming that there is a solution along a sequence $k_j$ ? But be careful since the solutions depends also on $k$... }
%\begin{Rem}
%\begin{align}
%\mathrm{B}_{\rho_s, U_s}[k]\begin{pmatrix}
%\varrho\\
%\omega
%\end{pmatrix}=ik\begin{pmatrix}
%- U_s(z)  \varrho + \rho_s'(z) \varphi \\[1mm]
%- U_s(z)  \omega +U_s''(z) \varphi + \varrho
%\end{pmatrix}, \qquad \text{with} \qquad  \left\{ \begin{array}{ll}
 %       (\partial_z^2-k^2) \varphi=\omega \\ 
  %      \varphi_{\mid z=\pm 1}=0
   % \end{array} \right.
%\end{align}
%Note that $\overline{\mathrm{B}_{\rho_s, U_s}[k]}=\mathrm{AB_{\rho_s, U_s}[-k]$.  By Weyl's theorem, we also have
%\begin{align*}
 %   \mathrm{Sp}_{\mathrm{ess}}(\mathrm{A}_{\rho_s, U_s}[k]) \subset i \R.
%\end{align*}
%\end{Rem}

%\medskip

\begin{Rem}
Considering the same linearized operator $\mathscr{B}_{\rho_s, U_s}$, but now on $\R \times (-1,1)$, we can establish the existence of an unstable eigenvalue in a low-frequency regime. Due to conciseness, we opt not to delve into this direction, as the analysis from Section \ref{Section:HydroLimit} will not be pursued in this context.
\end{Rem}
%\lucas{Note that we have to explain precisely what we mean by $\Delta^{-1}$, that is, what is the Biot-Savart law on the infinite strip $\R \times (-1,1)$. It is actually done  in \cite{nersisyan2013stabilization}, with a local well-posedness theory for incompressible Euler in $\R \times (-1,1)$. We can combine this with the local well-posedness theory for Euler-Boussinesq performed in \cite{chae1997local}. }

Our analysis follows the perturbative approach of Section \ref{Subsec:UnboundedSpecHydrost}. We look for a solution $(\varrho, \omega)$ to \eqref{eq:LinBoussinesq} of the form
\begin{align}\label{eq:mode-low}
\varrho(t,x,z)=\widehat{\varrho}(z) \e^{i\frac{k}{M}(x-ct)}, \qquad  \varphi(t,x,z)=\widehat{\varphi}(z) \e^{i\frac{k}{M}(x-ct)}, \qquad \Delta \varphi=\omega,
\end{align}
with 
%$k \in \R$ 
$k \in \mathbb{Z}$
($k \neq 0$), $t >0$, $c \in \C$ and $\widehat{\varrho}, \widehat{\varphi} \in \C$, with $\mathrm{Im}(c) \neq 0$. As we will see, $c=c[k]$ and $(\widehat{\varrho}, \widehat{\varphi} )=(\widehat{\varrho}_k, \widehat{\varphi}_k)$ depend on the horizontal frequency $k \in \mathbb{Z}$. Hence, if such a solution exists, then 
\begin{align}
    \label{eq:lambdak}
    \lambda=\lambda[k]=-ik c[k]
\end{align}
is an eigenvalue of the operator $\mathscr{B}_{\rho_s, U_s}$ with
\begin{align*}
\mathrm{Re} \, \lambda= k \mathrm{Im} \, c[k],
\end{align*}
and this proves Theorem \ref{thm:growingmodEB-largetorus}.

\medskip

Plugging the ansatz \eqref{eq:mode-low} into the linearized equations \eqref{eq:LinBoussinesq} and arguing as in Section \ref{Subsec:UnboundedSpecHydrost}, we obtain the modified Taylor-Goldstein equation
\begin{align}\label{eq:TGmodified}
(U_s-c)^2\left(\partial_z^2-\left(\frac{k}{M} \right)^2\right) \widehat{\varphi}-(U_s-c)U_s'' \widehat{\varphi}&=\rho_s'(z) \widehat{\varphi},
\qquad \widehat{\varphi}(\pm1)=0.
\end{align}
\begin{Rem}
Note that \eqref{eq:TGmodified} reduces to \eqref{eq:TaylorGoldstein} when $k=0$. However, unlike \eqref{eq:TaylorGoldstein}, \eqref{eq:TGmodified} is not homogeneous in $k$, suggesting that the solution $c=c[k]$ to \eqref{eq:TGmodified} may depend on $k$. Consequently, $\lambda$ in \eqref{eq:lambdak} is not necessarily unbounded for large $k$ - in contrast to the eigenvalue $\lambda$ from Theorem \ref{thm:growingmodHydrostat} - making the instability in the Euler-Boussinesq system milder, and in particular, it does not imply any ill-posedness.
\end{Rem}

Let us now consider the profile $(\rho_s, U_s)$ given by Theorem \ref{thm:growingmodHydrostat}. By Section \ref{Subsec:UnboundedSpecHydrost}, it satisfies
\begin{align*}
-\rho_s'(z)=\alpha (1-\alpha) |U_s'(z)|^2, \qquad \alpha \in (0,1), \ \alpha \neq 1/2.
\end{align*}
The new unknown $\psi(z)$ defined by
\begin{align*}
\widehat{\varphi} = (U_s-c)^{\alpha} \psi
\end{align*}
solves
%\begin{align*}
%(U_s-c) (\partial_z^2-\left(\frac{2 \pi k}{M} \right)^2)\psi+ 2 \alpha U_s' \partial_z \psi +(\alpha-1)U_s'' \psi=0
%\end{align*}
%which also reads as
\begin{align}\label{ODE:tosolve-nonhydro}
\partial_z \big((U_s-c)^{2\alpha} \partial_z \psi \big)-(U_s-c)^{2\alpha}\left(\frac{k}{M} \right)^2 \psi +(\alpha-1)(U_s-c)^{2\alpha-1}U_s'' \psi=0.
\end{align}
Arguing as in Section \ref{Subsec:UnboundedSpecHydrost}, the goal is to find a pair $(c, \psi)$ with $\mathrm{Im}(c) \neq 0$ such that 
\begin{align}\label{eq:IntegroDiffTosolveNONHYDROS}
    \psi(z)&=D_{\alpha}^z(c)+(1-\alpha)\mathrm{S}_{\alpha,c}[\psi](z)+\left(\frac{k}{M} \right)^2  \widetilde{\mathrm{S}}_{2\alpha, 2\alpha}[\psi](z), \\
\label{eq:DispersionRelTosolveNONHYDROS}
   \psi(1)&=0, 
\end{align}
where $\mathrm{S}_{\alpha,c}$ and $D_{\alpha}(c)$ are in \eqref{def:Op-Salpha}--\eqref{def:fctDalpha} and  
\begin{align*}
    \widetilde{\mathrm{S}}_{2\alpha, 2\alpha}[\psi](z):= \frac{1}{(U_s(y)-c)^{2\alpha}}  \int_{-1}^{y} f(z) (U_s(z)-c)^{2 \alpha}  \, \mathrm{d}z.
\end{align*}
To construct a solution, we formally write
$$\psi(z)=\big( \mathrm{I}-\mathbf{S}_{\alpha,c,M}\big)^{-1}[D_{\alpha}^z(c)],$$
where
\begin{align}
    \mathbf{S}_{\alpha,c,M}:= (1-\alpha)\mathrm{S}_{\alpha,c}+\left(\frac{k}{M} \right)^2  \widetilde{\mathrm{S}}_{2\alpha, 2\alpha}.
\end{align}
Our approach essentially mirrors the proof of Corollary \ref{Coro-existsolODE}, but now in the regime where $\alpha \rightarrow 1$ and $M \rightarrow + \infty$, with the discrete frequency $k$ fixed. To handle the new component involving the operator $\widetilde{\mathrm{S}}_{2\alpha, 2\alpha}$, we employ an approach similar to what was used for the terms $\mathrm{(II)}$ and $\mathrm{(III)}$ in the proof of Proposition \ref{lem-opnormIntegOp}. Ensuring that the solution $\psi$ satisfies the dispersion relation $\psi(1)=0$ is achieved by proceeding akin to the proof of Proposition \ref{Prop:existZERO}, relying on Rouché's theorem when $\alpha \rightarrow 1$ and $M \rightarrow + \infty$.

In summary, employing the perturbative approach outlined in Section \ref{Subsec:UnboundedSpecHydrost}, we arrive at the following result.

\begin{Prop}\label{Prop:existLongwave}
There exists an analytic strictly monotone profile $U_s(z)$ such that the following holds. There exists $\alpha^{\star} \in (\frac{1}{2},1)$ and $\overline{M}>0$ such that for all $\alpha \in (\alpha^{\star},1)$ and $M> \overline{M}$, there exists a solution  $(c^{\star}, \psi^\star)$ with $\mathrm{Im}(c^{\star}) \neq 0$ satisfying \eqref{eq:IntegroDiffTosolveNONHYDROS}--\eqref{eq:DispersionRelTosolveNONHYDROS}.
\end{Prop}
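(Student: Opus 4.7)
The plan is to follow the perturbative scheme of Section \ref{Subsec:UnboundedSpecHydrost} verbatim, now with \emph{two} small parameters: the factor $(1-\alpha)$ already present in the hydrostatic analysis, together with the new low-frequency factor $(k/M)^2$. For a fixed nonzero $k\in\mathbb Z$, the idea is to first invert $\mathrm{I}-\mathbf{S}_{\alpha,c,M}$ on the Banach space $Y=\{f\in \mathrm W^{1,\infty}(-1,1)\mid f(-1)=0\}$ by a Neumann series, producing a solution $\psi_{\alpha,c,M}$ of \eqref{eq:IntegroDiffTosolveNONHYDROS} with $\psi_{\alpha,c,M}(-1)=0$, and then to verify the eigenvalue condition $\widetilde D_{\alpha,M}(c):=\psi_{\alpha,c,M}(1)=0$ by a Rouché argument in a small disk centered at the zero $c_1$ of the purely hydrostatic dispersion relation $c\mapsto D_1^1(c)$ supplied by Proposition \ref{Prop:existZERO} (and Proposition \ref{Prop-Appendix}).

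\textbf{Operator estimate and Neumann inversion.} The only genuine new ingredient is an analog of Lemma \ref{lem-opnormIntegOp} for $\widetilde{\mathrm{S}}_{2\alpha,2\alpha}$ on $Y$. Because the kernel of $\widetilde{\mathrm{S}}_{2\alpha,2\alpha}$ carries no second derivative of $U_s$, no integration by parts is needed: the same ratio estimate $|U_s(z)-c|^{2\alpha}/|U_s(y)-c|^{2\alpha}\le (1+4\|U_s\|_{\Ld^\infty}^2/|\mathrm{Im}(c)|^2)^{\alpha}$ that controls term $(\mathrm{II})$ in the proof of Lemma \ref{lem-opnormIntegOp}, applied both to the pointwise operator itself and to its $y$-derivative (which contributes an extra factor of $\|U_s'\|_{\Ld^\infty}/|\mathrm{Im}(c)|$), yields a bound of the form
\[
\|\widetilde{\mathrm{S}}_{2\alpha,2\alpha}\|_{Y\to Y}\le C(U_s,\mathrm{Im}(c)),
\]
where $C$ depends only on the profile and on a lower bound for $|\mathrm{Im}(c)|$. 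Combined with \eqref{ineq:BoundOpT}, the operator norm of $\mathbf{S}_{\alpha,c,M}$ is at most $C(U_s,\mathrm{Im}(c))\cdot[(1-\alpha)+(k/M)^2]$. For any fixed $k$, choosing $\alpha$ sufficiently close to $1$ and $M$ sufficiently large so that this product is strictly less than one, Neumann inversion delivers $\psi_{\alpha,c,M}=(\mathrm{I}-\mathbf{S}_{\alpha,c,M})^{-1}[D_\alpha^z(c)]$, exactly as in Corollary \ref{Coro-existsolODE}.

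\textbf{Dispersion relation and Rouché.} Let $c_1$ be a zero of $D_1^1$ with $\mathrm{Im}(c_1)>0$ and set $r_1:=\mathrm{Im}(c_1)/2$. The uniform convergence of the Neumann series on compact subsets of $\{\mathrm{Im}(c)>r_1\}$, together with the holomorphy in $c$ of each iterate, shows, as in Lemma \ref{Prop:HolomorphDgamma}, that $c\mapsto \widetilde D_{\alpha,M}(c)$ is holomorphic on $\mathrm B(c_1,r_1)$ once $(1-\alpha)+(k/M)^2$ is small enough. A term-by-term comparison of the Neumann series, in the spirit of Lemma \ref{Prop:DiffDgamma}, then yields
\[
|\widetilde D_{\alpha,M}(c)-D_1^1(c)|\le M(\delta)\bigl[(1-\alpha)+(k/M)^2\bigr], \qquad c\in \mathrm B(c_1,\delta),
\]
for any fixed $\delta\in(0,r_1)$, the contribution of the new piece $(k/M)^2\widetilde{\mathrm{S}}_{2\alpha,2\alpha}$ being controlled in the exact same way as the $(1-\alpha)\mathrm{S}_{\alpha,c}$ piece. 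Since $c_1$ is an isolated zero of the nontrivial holomorphic function $D_1^1$, Rouché's theorem applied as in Proposition \ref{Prop:existZERO} produces, for $\alpha>\alpha^\star$ and $M>\overline M$, a zero $c^\star\in\mathrm B(c_1,\delta)$ of $\widetilde D_{\alpha,M}$ with $\mathrm{Im}(c^\star)>0$, and $\psi^\star:=\psi_{\alpha,c^\star,M}$ then solves \eqref{eq:IntegroDiffTosolveNONHYDROS}--\eqref{eq:DispersionRelTosolveNONHYDROS}. The only point requiring care is that the thresholds $\alpha^\star$ and $\overline M$ must be chosen uniformly in $c\in\mathrm B(c_1,\delta)$; this is harmless once $\delta$ is fixed first, since then $|\mathrm{Im}(c)|\ge r_1$ throughout the disk makes all the operator estimates uniform.
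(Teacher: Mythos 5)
Your proposal is correct and follows exactly the route the paper itself prescribes for this proposition (which is only sketched there): treat $(1-\alpha)$ and $(k/M)^2$ as two small parameters, bound $\widetilde{\mathrm{S}}_{2\alpha,2\alpha}$ on $Y$ via the same ratio estimate used for terms (II)--(III) of Lemma \ref{lem-opnormIntegOp}, invert by Neumann series as in Corollary \ref{Coro-existsolODE}, and close with Rouché around the zero $c_1$ of $D_1^1$ as in Proposition \ref{Prop:existZERO}. The uniformity remark at the end (fixing $\delta$ first so that $\mathrm{Im}(c)\ge r_1$ throughout the disk) is exactly the right point of care.
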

This allows to conclude the proof of Theorem \ref{thm:growingmodEB-largetorus}.

\section{Failure of the hydrostatic limit}\label{Section:HydroLimit}

In this section, we present the proof of Theorem \ref{thm-invalidHydrostatlim}. As outlined in the introduction, we aim to demonstrate the failure of the formal limit from \eqref{eqintro:BE-scaled} to \eqref{eqintro:BHyd-E} around certain unstable stationary profiles in the sense of Miles-Howard. Through a suitable rescaling, we establish a connection between the hydrostatic limit and the large-time/large-box limit of the Euler-Boussinesq system. This link allows us to leverage the insights gained from the analysis of linear long-wave instabilities in Section \ref{Subsec:LongwaveInstab}.

\medskip

Let us consider $M>0$ large enough so that Theorem \ref{thm:growingmodEB-largetorus} holds, with the fixed profile $(\rho_s(z), U_s(z))$ given by this theorem. In what follows, and apart from \textit{Step 4}, all the functional spaces are implicitly set on $\T_M \times (-1,1)$.

\subsubsection*{Step 1: Oscillatory problems}
First, we look at the Euler-Boussinesq system of unknowns $(\varrho_\eps', u_\eps',v_\eps',p_\eps')$
\begin{equation}
      \begin{cases}
      \partial_t \varrho_\eps' +u_\eps' \partial_x \varrho_\eps'+ v_\eps' \partial_z \varrho_\eps'=0, \\[1mm] 
\partial_t u_\eps' +u_\eps' \partial_x u_\eps'+ v_\eps' \partial_z u_\eps' =-\partial_x p_\eps',  \\[1mm]
\eps^2 \big( \partial_t v_\eps'+ u_\eps' \partial_x v_\eps'+v_\eps' \partial_z v_\eps' \big)=-\partial_z p_\eps'-\varrho_\eps' , \\[1mm]
\partial_x u_\eps' + \partial_z v_\eps'=0,
\end{cases}
\end{equation} 
with ${v'_{\eps \, \mid z=\pm1}}=0$, and set on $\T_{\eps M}\times (-1,1)$. We actually consider a sequence $\eps_\ell=\frac{1}{\ell M}$ of solutions, with $\ell \in \N \setminus \lbrace 0 \rbrace$ (keeping the subscript $\eps$ for the sake of readability). 
By gluing $(\eps M)^{-1}$ copies of $Y'_{\eps}=(\varrho_\eps', u_\eps',v_\eps')$, we can obtain a solution $Y_{\mathrm{true},\eps}=(\varrho_{\mathrm{true},\eps}, u_{\mathrm{true},\eps},v_{\mathrm{true},\eps})$ to the original system \eqref{eqintro:BE-scaled} on $\T \times (-1,1)$ by writing
\begin{align}\label{eq:gluing}
Y_{\mathrm{true},\eps}(t,x,z)=Y'_{\eps}(t,x-j\eps M,z),
\end{align}
 with
 $$t>0, \qquad  x \in [j\eps M, (j+1)\eps M ) \ \text{for}\  j=0, \cdots, \ell-1, \qquad   z \in (-1,1).$$

\subsubsection*{Step 2: Hyperbolic rescaling and sharp semigroup estimates}
As argued by Grenier in \cite{Grenier-hydroderiv} and in the spirit of \cite{HKH} for the quasineutral limit, we perform a hyperbolic rescaling and look for the equation satisfied by the function $(\varrho, u,v,p)$ defined on $\R^+ \times \T_M \times (-1,1)$ by 
\begin{align}\label{eq:rescaling}
    (\varrho_\eps', u_\eps',v_\eps', p_\eps')(t',x',z)= (\varrho, u, \eps^{-1}v, p)\left(\frac{t'}{\eps}, \frac{x'}{\eps},z \right), \qquad (t',x',z) \in \R^+ \times  \T_{\eps M}  \times (-1,1).
\end{align}
%Note the extra factor $\eps^{-1}$ in front of the vertical velocity. 
It is a solution of the following system, \textit{independent of $\eps$},
\begin{equation}\label{eq:EB-scaled}
      \begin{cases}
      \partial_t \varrho +u \partial_x \varrho+ v \partial_z \varrho=0, \\ 
\partial_t u +u \partial_x u + v \partial_z u  =-\partial_x p,  \\ 
 \partial_t v+ u \partial_x v+v\partial_z v =-\partial_z p-\varrho , \\ 
\partial_x u + \partial_z v=0,
\end{cases} 
\end{equation} 
set on $\R^+ \times \T_M \times (-1,1)$.
This is precisely the Euler-Boussinesq system, and its linearization around any stratified profile 
\begin{align*}
(\rho_s(z), U_s(z), 0, p_s(z)), \qquad p_s'(z):=-\rho_s(z)
\end{align*}
has been studied in Section \ref{Subsec:LongwaveInstab} and reads in vorticity form
\begin{align}
\partial_t \begin{pmatrix}
\varrho\\
\omega
\end{pmatrix}
=\mathscr{B}_{\rho_s, U_s}\begin{pmatrix}
\varrho\\
\omega
\end{pmatrix},
\end{align}
where $\mathscr{B}_{\rho_s, U_s}$ is defined in \eqref{def:LinOp-Boussinesq}. Note that the full nonlinear system solved by the perturbation can be written as
\begin{align*}
(\partial_t-\mathscr{B}_{\rho_s, U_s})\begin{pmatrix}
\varrho\\
\omega
\end{pmatrix}=- \begin{pmatrix}
\partial_z \Delta^{-1} \omega \partial_x \varrho - \partial_x \Delta^{-1} \omega \partial_z \varrho \\[1mm]
\partial_z \Delta^{-1} \omega \partial_x \omega - \partial_x \Delta^{-1} \omega \partial_z \omega
\end{pmatrix}.
\end{align*}
Before going further, we need to study the semigroup generated by $\mathscr{B}_{\rho_s, U_s}$. First, we decompose the previous operator as
\begin{align*}
\mathscr{B}_{\rho_s, U_s}=B_0+ K
\end{align*}
where 
\begin{align*}
B_0\begin{pmatrix}
\varrho\\
\omega
\end{pmatrix}:=\begin{pmatrix}
-U_s \partial_x \varrho\\[1mm]
-U_s \partial_x \omega + \partial_x \varrho
\end{pmatrix},\qquad 
K\begin{pmatrix}
\varrho\\
\omega
\end{pmatrix}:=\begin{pmatrix}
\rho_s' \partial_x \Delta^{-1} \omega \\[1mm]
 U_s'' \partial_x \Delta^{-1} \omega
\end{pmatrix}.
\end{align*}
Since the profile $(\rho_s,U_s)$ is smooth on $(-1,1)$, and using Rellich's theorem and elliptic regularity, we easily deduce that the operator $K$ is compact on $\mathrm{W}^{k+1,p}(\T_M \times (-1,1)) \times \mathrm{W}^{k,p}(\T_M \times (-1,1))$ for any $k>0$. Furthermore, we have the following property.
\begin{Lem}\label{Lem:PropSemigroupB0}
For all $p \in [1,\infty]$ and $k\geq 0$, the operator $B_0$ generates a strongly continuous semigroup $\e^{t B_0}$ on $\mathrm{W}^{k+1,p} \times \mathrm{W}^{k,p}$ with
for all $(\varrho,\omega) \in \mathrm{W}^{k+1,p} \times \mathrm{W}^{k,p}$, acting as
\begin{align*}
\e^{t B_0}\begin{pmatrix}
\varrho\\
\omega
\end{pmatrix}(x,z)= \begin{pmatrix}
\varrho(x-tU_s(z),z)\\[2mm]
\omega(x-tU_s(z),z)+t \partial_x \varrho(x-tU_s(z),z)
\end{pmatrix}.
\end{align*}
Furthermore, for all $\beta >0$, $p \in [1,\infty]$ and $k\geq 0$, there exists $C_{k,\beta}>0$ such that 
\begin{align}
\Vert \e^{t B_0}(\varrho,\omega)\Vert_{\mathrm{W}^{k+1,p} \times \mathrm{W}^{k,p}} \leq C_{k,\beta}\, \e^{\beta t} \Vert  (\varrho,\omega)\Vert_{\mathrm{W}^{k+1,p} \times \mathrm{W}^{k,p}}.
\end{align}
\end{Lem}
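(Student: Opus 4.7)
The plan is to verify the explicit representation formula by direct calculation, check that it defines a strongly continuous semigroup, and then estimate the growth of derivatives, where the key observation is that $k$ vertical derivatives produce at worst polynomial prefactors in $t$ that are absorbed into $\e^{\beta t}$ for any $\beta>0$.

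First, I would check that the formula solves the evolution problem. Define
\[
\tilde\varrho(t,x,z):=\varrho(x-tU_s(z),z),\qquad \tilde\omega(t,x,z):=\omega(x-tU_s(z),z)+t\,\partial_x\varrho(x-tU_s(z),z).
\]
A direct computation gives $\partial_t\tilde\varrho=-U_s(z)\partial_x\tilde\varrho$, while
\[
\partial_t\tilde\omega = -U_s(z)\partial_x\omega + \partial_x\varrho - tU_s(z)\partial_x^2\varrho\big|_{(x-tU_s(z),z)} = -U_s(z)\partial_x\tilde\omega+\partial_x\tilde\varrho,
\]
so $(\tilde\varrho,\tilde\omega)$ solves $\partial_t Y=B_0 Y$ with $Y(0)=(\varrho,\omega)$. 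Uniqueness along characteristics yields the representation.

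Second, strong continuity on $\mathrm{W}^{k+1,p}\times \mathrm{W}^{k,p}$ for $p\in[1,\infty)$ follows from the continuity of the shear translation $(x,z)\mapsto(x-tU_s(z),z)$ as $t\to 0$, combined with the density of smooth functions and the fact that the extra term $t\,\partial_x\varrho$ tends to $0$ in $\mathrm{W}^{k,p}$. The case $p=\infty$ is handled in the usual way (interpreting the semigroup on the closure of smooth compactly-supported functions in the appropriate topology), exactly as for transport equations with smooth coefficients.

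Third, for the growth estimate, differentiate the explicit formula. Each $\partial_x$ commutes with the translation and is harmless; a $\partial_z$ acting on $\tilde\varrho$ produces
\[
\partial_z\tilde\varrho(t,x,z)=(\partial_z\varrho)(x-tU_s(z),z)-tU_s'(z)(\partial_x\varrho)(x-tU_s(z),z),
\]
and iterating, an application of $\partial_z^j$ to $\tilde\varrho$ produces a finite sum of terms of the form $t^\ell P_\ell(U_s',\dots,U_s^{(j)})\,(\partial_x^\ell\partial_z^{j-\ell}\varrho)(x-tU_s(z),z)$ with $0\le\ell\le j$ and $P_\ell$ a smooth polynomial in the derivatives of $U_s$ up to order $j$. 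Since the shear translation is measure-preserving in $x$ for each fixed $z$, each such term is bounded in $\Ld^p_{x,z}$ by $C\,t^{\ell}\,\|\varrho\|_{\mathrm{W}^{k+1,p}}$, yielding
\[
\|\tilde\varrho(t)\|_{\mathrm{W}^{k+1,p}}\le C_k(1+t)^{k+1}\|\varrho\|_{\mathrm{W}^{k+1,p}}.
\]
The analogous computation for $\tilde\omega$ produces an additional factor $t$ coming from the $t\partial_x\varrho$ term, giving
\[
\|\tilde\omega(t)\|_{\mathrm{W}^{k,p}}\le C_k(1+t)^{k+1}\big(\|\varrho\|_{\mathrm{W}^{k+1,p}}+\|\omega\|_{\mathrm{W}^{k,p}}\big).
\]
The main (minor) obstacle is to keep track of the combinatorics of the chain rule so that the highest-order derivatives of $\varrho$ that appear after differentiating $\tilde\omega$ remain controlled by the $\mathrm{W}^{k+1,p}$-norm of $\varrho$ (one extra $\partial_x$ is available precisely because the $\omega$-component is measured in one derivative less). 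Finally, using $(1+t)^{k+1}\le C_{k,\beta}\e^{\beta t}$ for every $\beta>0$ concludes the proof.
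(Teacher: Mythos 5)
Your proposal is correct and follows essentially the same route as the paper: the explicit representation formula (which the paper obtains by characteristics and you verify by direct differentiation), followed by the observation that vertical derivatives of the sheared translation produce only polynomial-in-$t$ factors involving derivatives of $U_s$, with the crucial bookkeeping that the $t\,\partial_x\varrho$ term in the $\omega$-component costs one extra $x$-derivative of $\varrho$, exactly compensated by measuring $\varrho$ in $\mathrm{W}^{k+1,p}$; the polynomial is then absorbed into $\e^{\beta t}$ for any $\beta>0$. The paper phrases the derivative count as an induction yielding a polynomial $P_k^{U_s}(t)$, but this is the same argument as your explicit chain-rule expansion.
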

\begin{proof}
The representation formula is a direct consequence of the method of characteristics. For the estimate, we can use the invariance by translation on the torus, the smoothness of $U_s(z)$ and an induction argument to show that for any $k \geq 0$, there exists a real polynomial $P_k^{U_s}$ (in time) of degree $k$ such that for all $t \geq 0$
\begin{align*}
    \Vert \e^{t B_0}(\varrho,\omega)\Vert_{\mathrm{W}^{k+1,p} \times \mathrm{W}^{k,p}} \lesssim P_k^{U_s}(t) \Vert  (\varrho,\omega)\Vert_{\mathrm{W}^{k+1,p} \times \mathrm{W}^{k,p}},
\end{align*}
hence
\begin{align*}
    \Vert \e^{t B_0}(\varrho,\omega)\Vert_{\mathrm{W}^{k+1,p} \times \mathrm{W}^{k,p}} &\leq C_{k,U_s} \left( \sum_{i=0}^k t^i \right) \Vert  (\varrho,\omega)\Vert_{\mathrm{W}^{k+1,p} \times \mathrm{W}^{k,p}},
\end{align*}
for some constant $C_{k,U_s}>0$. Let $\beta>0$ be fixed. Using the fact that
\begin{align*}
    k! \sum_{i=0}^{+\infty} \frac{(\beta t)^i}{i!} \geq \underset{i=0, \cdots,k}{\min}(\beta^i)\sum_{i=0}^k t^i, \qquad  t >0,
\end{align*}
we deduce that 
\begin{align*}
    \Vert \e^{t B_0}(\varrho,\omega)\Vert_{\mathrm{W}^{k+1,p} \times \mathrm{W}^{k,p}} &\leq \frac{k!}{\underset{i=0, \cdots, k}{\min}(\beta^i)} C_{k,U_s} \e^{\beta t}\Vert  (\varrho,\omega)\Vert_{\mathrm{W}^{k+1,p} \times \mathrm{W}^{k,p}}.
\end{align*}
Therefore for all $\beta >0 $, we have
\begin{align*}
\Vert \e^{t B_0}(\varrho,\omega)\Vert_{\mathrm{W}^{k+1,p} \times \mathrm{W}^{k,p}} \leq C_{k, \beta,U_s} \e^{\beta t} \Vert  (\varrho,\omega)\Vert_{\mathrm{W}^{k+1,p} \times \mathrm{W}^{k,p}},
\end{align*}
for some constant $C_{k,\beta,U_s}>0$, which concludes the proof.
\end{proof}
As the set of unstable eigenvalue for $\mathscr{B}_{\rho_s, U_s}$ is not empty, we can deduce the following.
\begin{Prop}\label{prop:semigroupB}
For all $p \in [1,\infty]$ and $k \in \N$, the operator $\mathscr{B}_{\rho_s, {U_s}}=B_0+ K$ generates a strongly continuous semigroup  $\e^{t \mathscr{B}_{\rho_s, U_s}}$ on $\mathrm{W}^{k+1,p} \times \mathrm{W}^{k,p}$ that satisfies the following properties:
\begin{enumerate} [label=(P\arabic*), ref=(P\arabic*)]
    \item\label{item:P1}  For all $\beta>0$, the set $\mathrm{Sp}(\mathscr{B}_{\rho_s, U_s}) \cap \lbrace \mathrm{Re} > \beta \rbrace$ is finite and any point in $\mathrm{Sp}(\mathscr{B}_{\rho_s, U_s}) \cap \lbrace \mathrm{Re} > \beta \rbrace$ is an isolated eigenvalue with finite algebraic multiplicity. 
    \item\label{item:P2} There exists an eigenvalue $\Lambda$ in $\lbrace \mathrm{Re} >0 \rbrace$ of $\mathscr{B}_{\rho_s, U_s}$ with positive maximal real part.
    \item\label{item:P3} For all $\beta > \mathrm{Re} (\Lambda) $, there exists $M_{k, \beta}>0$ such that for all $(\varrho,\omega) \in \mathrm{W}^{k+1,p} \times \mathrm{W}^{k,p}$, we have
\begin{align*}
\forall t \geq 0, \qquad \Vert \e^{t \mathscr{B}_{\rho_s, U_s}}(\varrho, \omega) \Vert_{\mathrm{W}^{k+1,p} \times \mathrm{W}^{k,p}} \leq M_{k, \beta} \e^{\beta t} \Vert (\varrho, \omega) \Vert_{\mathrm{W}^{k+1,p} \times \mathrm{W}^{k,p}}.
\end{align*}
\end{enumerate}

% \begin{enumerate}
%     \item  For all $\beta>0$, the set $\mathrm{Sp}(\mathscr{B}_{\rho_s, U_s}) \cap \lbrace \mathrm{Re} > \beta \rbrace$ is finite and any point in $\mathrm{Sp}(\mathscr{B}_{\rho_s, U_s}) \cap \lbrace \mathrm{Re} > \beta \rbrace$ is an isolated eigenvalue with finite algebraic multiplicity. 
%     \item There exists an eigenvalue $\Lambda$ in $\lbrace \mathrm{Re} >0 \rbrace$ of $\mathscr{B}_{\rho_s, U_s}$ with positive maximal real part.
%     \item For all $\beta > \mathrm{Re} (\Lambda) $, there exists $M_{k, \beta}>0$ such that for all $(\varrho,\omega) \in \mathrm{W}^{k+1,p} \times \mathrm{W}^{k,p}$, we have
% \begin{align*}
% \forall t \geq 0, \qquad \Vert \e^{t \mathscr{B}_{\rho_s, U_s}}(\varrho, \omega) \Vert_{\mathrm{W}^{k+1,p} \times \mathrm{W}^{k,p}} \leq M_{k, \beta} \e^{\beta t} \Vert (\varrho, \omega) \Vert_{\mathrm{W}^{k+1,p} \times \mathrm{W}^{k,p}}.
% \end{align*}
% \end{enumerate}
\end{Prop}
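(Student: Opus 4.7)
The plan is to invoke standard semigroup perturbation theory under compact perturbations (see e.g.\ Engel--Nagel, Chapter IV). Since $K$ is bounded on $X:=\mathrm{W}^{k+1,p}\times\mathrm{W}^{k,p}$ and $B_0$ generates a $C_0$-semigroup by Lemma \ref{Lem:PropSemigroupB0}, the bounded perturbation theorem yields that $\mathscr{B}_{\rho_s,U_s}=B_0+K$ also generates a $C_0$-semigroup $\e^{t\mathscr{B}_{\rho_s,U_s}}$ on $X$. The compactness of $K$, fed into the Dyson--Phillips expansion of $\e^{t\mathscr{B}_{\rho_s,U_s}}$, shows that $\e^{t\mathscr{B}_{\rho_s,U_s}}-\e^{tB_0}$ is a compact operator for each $t\ge 0$, so the essential spectra of the two semigroups coincide and, consequently, the essential growth bounds agree: $\omega_{\mathrm{ess}}(\mathscr{B}_{\rho_s,U_s})=\omega_{\mathrm{ess}}(B_0)$.

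The bound $\|\e^{tB_0}\|_{X}\le C_{k,\beta}\e^{\beta t}$ valid for every $\beta>0$ from Lemma \ref{Lem:PropSemigroupB0} yields $\omega_0(B_0)\le 0$, hence $\omega_{\mathrm{ess}}(\mathscr{B}_{\rho_s,U_s})\le 0$. Item \ref{item:P1} then follows: for any $\beta>0$, a point $\lambda\in\mathrm{Sp}(\mathscr{B}_{\rho_s,U_s})$ with $\mathrm{Re}\,\lambda>\beta$ lies strictly outside the essential spectrum, hence is an isolated eigenvalue with finite algebraic multiplicity; and the set of all such $\lambda$ must be finite, because via the point spectral mapping theorem their exponentials $\e^{t\lambda}$ are confined to the compact annulus $\{\e^{t\beta}\le|\mu|\le\|\e^{t\mathscr{B}_{\rho_s,U_s}}\|\}$ and can accumulate only on the essential spectral disc, which is excluded here. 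Item \ref{item:P2} is immediate: Theorem \ref{thm:growingmodEB-largetorus} furnishes an eigenvalue $\lambda_\star$ with $\mathrm{Re}\,\lambda_\star>0$, and applying \ref{item:P1} with any $\beta\in(0,\mathrm{Re}\,\lambda_\star)$ shows that $\mathrm{Sp}(\mathscr{B}_{\rho_s,U_s})\cap\{\mathrm{Re}>\beta\}$ is a finite non-empty set, so its maximum of real parts is attained at some $\Lambda$ with $\mathrm{Re}\,\Lambda>0$.

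For \ref{item:P3}, the key ingredient is the structural identity
\begin{equation*}
\omega_0(A)=\max\bigl(\omega_{\mathrm{ess}}(A),\,s_+(A)\bigr),\qquad s_+(A):=\sup\{\mathrm{Re}\,\lambda:\lambda\in\mathrm{Sp}(A),\ \mathrm{Re}\,\lambda>\omega_{\mathrm{ess}}(A)\},
\end{equation*}
available e.g.\ in Engel--Nagel, Corollary IV.2.11. This rests on the spectral mapping theorem for the point spectrum, which identifies the elements of $\mathrm{Sp}(\e^{tA})$ of modulus exceeding $\e^{t\omega_{\mathrm{ess}}(A)}$ with the exponentials $\e^{t\lambda}$ of the eigenvalues of $A$ satisfying $\mathrm{Re}\,\lambda>\omega_{\mathrm{ess}}(A)$. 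Applied to $\mathscr{B}_{\rho_s,U_s}$, the inequality $\omega_{\mathrm{ess}}(\mathscr{B}_{\rho_s,U_s})\le 0<\mathrm{Re}\,\Lambda$ together with the maximal choice of $\Lambda$ forces $s_+(\mathscr{B}_{\rho_s,U_s})=\mathrm{Re}\,\Lambda$, hence $\omega_0(\mathscr{B}_{\rho_s,U_s})=\mathrm{Re}\,\Lambda$, and for any $\beta>\mathrm{Re}\,\Lambda$ the very definition of the growth bound produces the desired $M_{k,\beta}$. The main obstacle lies in rigorously justifying the two non-trivial prerequisites of this scheme: first, the compactness of $K$ in every functional framework $\mathrm{W}^{k+1,p}\times\mathrm{W}^{k,p}$, which combines the elliptic gain of $\Delta^{-1}$ with the Rellich--Kondrachov compact embedding on the bounded domain $\T_M\times(-1,1)$ and uses the smoothness of $\rho_s',U_s''$; and second, the growth bound decomposition above, which is known to fail for general $C_0$-semigroups (Zabczyk-type examples) but is recovered here thanks to the quasi-compact character inherited from the compact perturbation $K$.
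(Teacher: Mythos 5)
Your proposal is correct and follows essentially the same route as the paper: the paper's proof invokes the Shizuta--Vidav theorems, which are exactly the quasi-compact-perturbation/essential-growth-bound machinery you reconstruct from Engel--Nagel, fed with the same three inputs (compactness of $K$, the arbitrarily small growth bound of $\e^{tB_0}$ from Lemma \ref{Lem:PropSemigroupB0}, and the unstable eigenvalue from Theorem \ref{thm:growingmodEB-largetorus}). The one technical point to record explicitly is that compactness of the Dyson--Phillips remainder needs not only compactness of $K$ but also norm-continuity of $t\mapsto \e^{tB_0}K$, which the paper states as a hypothesis of Shizuta's theorem and which holds here by the smoothing character of $K$.
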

\begin{proof}
We rely on Shizuta-Vidav's theorem from spectral theory - see \cites{Vidav, Shizuta}. Let us stress that this technique is powerful in the case of a coupling between transport and elliptic equations (see e.g. \cites{CGG, lemou2020nonlinear}). Relying on the compactness of the bounded operator, it follows that  $\e^{t B_0}  K$ is compact for all $t \in \R^+$ and that the map $t \mapsto \e^{t B_0}  K \in  \mathrm{W}^{k+1,p} \times \mathrm{W}^{k,p}$ is continuous. Thanks to Lemma \ref{Lem:PropSemigroupB0}, we get the claim \ref{item:P1} by means of \cite{Shizuta}*{Theorem 1.2}. Thanks to Theorem \ref{thm:growingmodEB-largetorus}, we also know that the operator $\mathscr{B}_{\rho_s, U_s}$
admits one eigenvalue $\lambda \in \lbrace \mathrm{Re}>0 \rbrace$, associated to a smooth eigenfunction. By \ref{item:P1}, it means that there exists an eigenvalue $\Lambda$ with positive maximal real part, that is \ref{item:P2}. We get \ref{item:P3} by using \cite{Shizuta}*{Theorem 1.3}. The proof is then complete.

\end{proof}

\subsubsection*{Step 3: Nonlinear instability -  high-order scheme à la Grenier}
In the spirit of \cite{HKH}, we now employ Grenier's method from \cite{Gre-instabScheme} to deduce nonlinear instability from linear instability.
%(Note that this fact is by no means trivial at all since the problem is fully infinite dimensional - for instance, the nonlinear instability of some spectrally unstable profiles for 2d Euler has only been proved for the first time in \cite{FSV}). 
Recall that $M>0$ has been taken large enough so that Theorem \ref{thm:growingmodEB-largetorus} applies. We aim at proving the following.
\begin{Thm}\label{thm:nonlin-instabilityEB}
There exists $m>0$ such that for any $\delta>0$ and any $p \in \N$ , there exists a solution $(\varrho, u,v)$ to \eqref{eq:EB-scaled} with
\begin{align*}
\Vert (\varrho, u, v)_{\mid t=0}-(\rho_s, U_s,0) \Vert_{\mathrm{H}^p(\T_M \times (-1,1))} \leq \delta,
\end{align*}
while
\begin{align*}
  \underset{t \in [0,\tau_\delta]}{\sup} \Vert  (\varrho, u, v)_{\mid t}-(\rho_s, U_s,0)  \Vert_{\Ld^2 (\T_M \times (-1,1))} \geq m,
\end{align*}
where $\tau_\delta=\mathcal{O}(\vert \log \delta \vert)$ as $\delta \rightarrow 0$.
\end{Thm}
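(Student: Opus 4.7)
The plan is to implement Grenier's nonlinear instability scheme from \cite{Gre-instabScheme}, using the isolated unstable eigenvalue $\Lambda$ with maximal real part $\lambda_0 := \mathrm{Re}(\Lambda) > 0$ provided by \ref{item:P2}, together with the sharp semigroup bound \ref{item:P3}. Let $(\widehat{\varrho}_1, \widehat{\omega}_1)$ be a smooth eigenfunction associated with $\Lambda$, furnished by Theorem \ref{thm:growingmodEB-largetorus}. Working in $(\varrho, \omega)$-variables, the nonlinear perturbation of \eqref{eq:EB-scaled} around $(\rho_s, U_s, 0)$ reads
\begin{equation*}
(\partial_t - \mathscr{B}_{\rho_s, U_s}) Y = \mathcal{Q}(Y, Y), \qquad Y = (\varrho, \omega),
\end{equation*}
where $\mathcal{Q}(Y, Y') := -(u\partial_x \varrho' + v\partial_z\varrho', \, u\partial_x \omega' + v \partial_z \omega')$, with $(u, v)$ reconstructed from $\omega'$ via the standard Biot-Savart law $\Delta\varphi = \omega'$, $\varphi\mid_{z = \pm 1} = 0$, $(u, v) = (\partial_z \varphi, -\partial_x\varphi)$.

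\textbf{High-order approximate solution.} For an integer $N = N(p)$ to be chosen sufficiently large, we set
\begin{equation*}
Y_{\mathrm{app}}^N(t) := \sum_{j = 1}^N \delta^j \, Y_j(t), \qquad Y_1(t) := \mathrm{Re}\bigl(\e^{\Lambda t}(\widehat{\varrho}_1, \widehat{\omega}_1)\bigr),
\end{equation*}
and, for $j \geq 2$, define by Duhamel's formula
\begin{equation*}
Y_j(t) := \int_0^t \e^{(t - s)\mathscr{B}_{\rho_s, U_s}} \sum_{\substack{j_1 + j_2 = j \\ j_1, j_2 \geq 1}} \mathcal{Q}(Y_{j_1}(s), Y_{j_2}(s)) \, \mathrm{d}s.
\end{equation*}
Using \ref{item:P3} at Sobolev regularity $p + 2N$ with $\beta > 0$ fixed small, standard tame products on $\T_M \times (-1, 1)$, and the one-derivative gain of the Biot-Savart operator, an induction on $j$ yields
\begin{equation*}
\|Y_j(t)\|_{H^{p + 2N - 2(j - 1)}} \leq C_j \, \e^{j(\lambda_0 + \beta) t}, \qquad t \geq 0.
\end{equation*}
The residual of $Y_{\mathrm{app}}^N$ collects only the interactions of total order $> N$, and consequently
\begin{equation*}
\bigl\| (\partial_t - \mathscr{B}_{\rho_s, U_s}) Y_{\mathrm{app}}^N - \mathcal{Q}(Y_{\mathrm{app}}^N, Y_{\mathrm{app}}^N) \bigr\|_{H^p} \lesssim \delta^{N + 1} \e^{(N + 1)(\lambda_0 + \beta) t}.
\end{equation*}

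\textbf{Bootstrap on the remainder and lower bound.} Local well-posedness of \eqref{eq:EB-scaled} in $H^p$ (cf. \cite{Chae}) yields a true solution $Y_{\mathrm{true}}$ with initial datum $Y_{\mathrm{true}}\mid_{t = 0} = \delta Y_1(0)$, which satisfies $\|Y_{\mathrm{true}}\mid_{t = 0}\|_{H^p} \lesssim \delta$. Writing $W := Y_{\mathrm{true}} - Y_{\mathrm{app}}^N$, the remainder obeys
\begin{equation*}
(\partial_t - \mathscr{B}_{\rho_s, U_s}) W = 2 \mathcal{Q}(Y_{\mathrm{app}}^N, W) + \mathcal{Q}(W, W) + R_{\mathrm{app}}^N, \qquad W\mid_{t = 0} = 0.
\end{equation*}
The bootstrap ansatz $\|W(t)\|_{H^p} \leq A \, \delta^{N + 1} \e^{(N + 1)(\lambda_0 + \beta) t}$ is propagated via Duhamel and \ref{item:P3} as long as $\delta \e^{(\lambda_0 + \beta) t} \leq \theta_0$, for $\theta_0$ a small universal constant; this defines the instability time
\begin{equation*}
\tau_\delta := \frac{1}{\lambda_0 + \beta} \log \Bigl( \frac{\theta_0}{\delta} \Bigr) = \mathcal{O}(|\log \delta|).
\end{equation*}
At $t = \tau_\delta$ the leading mode $\delta Y_1(\tau_\delta)$ has $L^2$-norm $\geq c \theta_0 \|(\widehat{\varrho}_1, \widehat{\omega}_1)\|_{L^2}$, whereas the higher corrections and remainder are controlled by $\mathcal{O}(\theta_0^2) + \mathcal{O}(\theta_0^{N + 1})$. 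Fixing $\theta_0$ small enough produces a constant $m > 0$ with $\|Y_{\mathrm{true}}(\tau_\delta) - (\rho_s, U_s')\|_{L^2} \geq m$, and the analogous lower bound for $(\varrho, u, v)$ follows since the Biot-Savart inversion on $\T_M \times (-1, 1)$ is one-to-one on smooth eigenmodes.

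\textbf{Main obstacle.} The principal difficulty is the coupling between two derivative losses: the transport-type nonlinearity $\mathcal{Q}$ costs one spatial derivative (the Biot-Savart gain is immediately consumed by $\partial_x$ or $\partial_z$ in the advection), and the semigroup estimate \ref{item:P3} loses an arbitrarily small exponent $\beta > 0$ above the pure rate $\e^{\lambda_0 t}$. Both losses must be absorbed simultaneously on the time interval $[0, \tau_\delta]$: one compensates the first by performing the entire construction at regularity $H^{p + 2N}$ (permissible because the eigenfunction is smooth), and the second by choosing $N$ large enough that the remainder stays smaller than the leading mode $\delta \e^{\lambda_0 t}$ for $\delta \e^{\lambda_0 t} \leq \theta_0$. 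A subsidiary point is to confirm that the lower bound for the $(\varrho, u, v)$-perturbation is not degraded under the reconstruction from $\omega$, which is clear because the eigenfunction has a non-trivial velocity component by incompressibility and the boundary conditions $v\mid_{z = \pm 1} = 0$.
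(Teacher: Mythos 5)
Your overall architecture is the same as the paper's: Grenier's iterative scheme built on the maximal unstable eigenvalue of $\mathscr{B}_{\rho_s,U_s}$ from Theorem \ref{thm:growingmodEB-largetorus}, with the iterates $Y_j$ constructed by Duhamel and estimated in Sobolev spaces of \emph{descending} regularity via the sharp semigroup bound \ref{item:P3}, a residual of size $\delta^{N+1}\e^{(N+1)\Lambda t}$, and a conclusion at the Grenier time $\delta\e^{\Lambda \tau_\delta}\sim\theta_0$. Those parts are sound and match Steps 3a--3c and 3e--3f of the paper.

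The genuine gap is in your bootstrap on the remainder $W=Y_{\mathrm{true}}-Y^N_{\mathrm{app}}$, which you propose to propagate ``via Duhamel and \ref{item:P3}.'' This cannot close. The source terms $\mathcal{Q}(Y^N_{\mathrm{app}},W)$ and $\mathcal{Q}(W,W)$ contain $\nabla W$, so $\|\mathcal{Q}(\cdot,W)\|_{\H^p}$ is controlled only by $\|W\|_{\H^{p+1}}$, one derivative above the bootstrap norm. For the finitely many iterates $Y_j$ this loss is absorbed by descending the regularity index $j$ times (which is exactly why that part works); but the remainder equation is genuinely nonlinear in $W$, so a Duhamel iteration would lose a derivative at every step with no finite truncation, and the ansatz $\|W\|_{\H^p}\le A\delta^{N+1}\e^{(N+1)(\lambda_0+\beta)t}$ cannot be recovered from itself. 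The paper's Step 3d instead runs a direct \emph{energy estimate} on the $W$-equation in $\H^{p+1}\times\H^p$: the top-order advection term $\langle \partial^p W,\,(u\cdot\nabla)\partial^p W\rangle$ vanishes by incompressibility and the boundary conditions, and the rest is handled by commutator estimates, yielding $y'\le C(2+C_{\rho_s,\omega_s}+\|\widetilde\omega\|_{\H^p})\,y+K_N''(\delta\e^{\Lambda t})^{N+1}$. This also changes the role of $N$: the constraint is not absorbing the arbitrarily small semigroup loss $\beta$, as you suggest, but ensuring $(N+1)\Lambda$ exceeds the (possibly large) Gr\"onwall constant $C(4+C_{\rho_s,\omega_s})$ coming from the energy method, i.e. $N:=\lfloor C(4+C_{\rho_s,\omega_s})/\Lambda\rfloor+1$. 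A minor further point: when $\Lambda$ is not real, the lower bound $\|\mathrm{Re}(\e^{\Lambda\tau_\delta}Z_{\mathrm{eig}})\|_{\Ld^2}\gtrsim \e^{\mathrm{Re}(\Lambda)\tau_\delta}$ can fail at an unlucky phase; one must take the supremum over a time window of length at least $2\pi/\mathrm{Im}(\Lambda)$ (as in the paper's Step 3f), which is consistent with the $\sup_{t\in[0,\tau_\delta]}$ in the statement.
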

%This is the main step of the proof. 
We show in Step 4 how to deduce the Theorem \ref{thm-invalidHydrostatlim} from Theorem \ref{thm:nonlin-instabilityEB}. The proof will be divided in several  steps, but we first explain the main ideas and set notations.

Let $N \geq 1$ be an integer to be fixed later on. Thanks to Proposition \ref{prop:semigroupB}, we can consider an unstable eigenvalue $\Lambda$ of the operator $\mathscr{B}_{\rho_s, {U_s}}$ defined in \eqref{def:LinOp-Boussinesq} with maximal real part $\mathrm{Re}(\Lambda)>0$, and an associated infinitely smooth eigenfunction $(\varrho_{\mathrm{eig}}, \omega_{\mathrm{eig}})$. Without loss of generality, we can assume that $\Vert (\varrho_{\mathrm{eig}}, \omega_{\mathrm{eig}}) \Vert_{\Ld^2 \times \H^{-1}}=1$. Let us also assume for now that $\Lambda$ is real and that $(\varrho_{\mathrm{eig}}, \omega_{\mathrm{eig}})$ is real valued.

Roughly speaking, our goal is to show that there exists an $m>0$ independent of $\delta$ such that the solution $(\varrho, \omega)$ of \eqref{eq:EB-scaled} with initial data $(\rho_s,U_s')+\delta (\varrho_{\mathrm{eig}}, \omega_{\mathrm{eig}})$ (with $\delta>0$ small enough) satisfies the following:
\begin{itemize}
    \item  it exists on the interval of time $[0,T_m]$ with
    \begin{align*}
        T_m=-\dfrac{\log \delta}{\Lambda}+\dfrac{\log  2m}{\Lambda},
    \end{align*}
    so that we have
    \begin{align*}
        \delta \e^{T_m \Lambda} =2m.
    \end{align*}
    \item on $[0,T_m]$, it holds
    \begin{align*}
        \Vert (\varrho, \omega)(t)-(\rho_s,U_s')-\delta \e^{\Lambda t}(\varrho_{\mathrm{eig}}, \omega_{\mathrm{eig}})\Vert_{\Ld^2 \times \H^{-1}} \leq \frac{\delta}{2}\e^{\Lambda t}. 
    \end{align*}
\end{itemize}
This implies, by triangle inequality, that
\begin{align*}
    \Vert (\varrho, \omega)_{\mid t=T_m}-(\rho_s,U_s') \Vert_{\Ld^2 \times \H^{-1}} \geq m,
\end{align*}
while for any $k \geq 0$
\begin{align*}
    \Vert (\varrho, \omega)_{\mid t=0}-(\rho_s,U_s') \Vert_{\H^{k+1} \times \H^k} = \delta \Vert (\varrho_{\mathrm{eig}}, \omega_{\mathrm{eig}}) \Vert_{\H^{k+1} \times \H^k},
\end{align*}
which can be made arbitrarily small.

A naive idea to find some instability according to the previous strategy is to consider
\begin{align*}
\begin{pmatrix}
\varrho_{\mathrm{app}}\\
\omega_{\mathrm{app}}
\end{pmatrix}
= 
\begin{pmatrix}
\rho_s\\
U_s'
\end{pmatrix}
+ \delta \begin{pmatrix}
\varrho_{\mathrm{eig}}\\
\omega_{\mathrm{eig}}
\end{pmatrix} \e^{\Lambda t}, \qquad \delta>0
\end{align*}
as an approximate solution to \eqref{eq:EB-scaled}. In fact, by definition of the linearized operator $\mathscr{B}_{\rho_s, {U_s}}$, the ansatz above satisfies the Euler-Boussinesq equation, up to a remainder of the order $\delta^2 \e^{2 \Lambda t }$. Hence, it is natural to compare the approximate solution with the true solution $(\varrho, \omega)$ of \eqref{eq:EB-scaled} with initial data
\begin{align*}
\begin{pmatrix}
\varrho\\
\omega
\end{pmatrix}_{\mid t=0}
= 
\begin{pmatrix}
\rho_s\\
U_s'
\end{pmatrix}
+ \delta \begin{pmatrix}
\varrho^{\mathrm{eig}}\\
\omega^{\mathrm{eig}}
\end{pmatrix}.
\end{align*}
\begin{comment}
satisfies the equation
%\begin{equation*}
%\left\{
%      \begin{aligned}
%      \partial_t \varrho_{\mathrm{app}} +u_{\mathrm{app}} \partial_x \varrho_{\mathrm{app}}+ v_{\mathrm{app}} \partial_z \varrho_{\mathrm{app}}&=\delta^2 \e^{2\lambda t}\left(\partial_z \Delta^{-1} \omega_1 \partial_x \varrho_1 - \partial_x \Delta^{-1} \omega_1 \partial_z \varrho_1 \right):= R^{(1)}_{1,\mathrm{app}}, \\[1mm]
%\partial_t \omega_{\mathrm{app}} +u_{\mathrm{app}} \partial_x \omega_{\mathrm{app}} + v_{\mathrm{app}} \partial_z \omega_{\mathrm{app}} +\partial_x \varrho_{\mathrm{app}}&=\delta^2 \e^{2\lambda t} \left(\partial_z \Delta^{-1} \omega_1 \partial_x \omega_1 - \partial_x \Delta^{-1} \omega_1 \partial_z \omega_1 \right):=R^{(2)}_{1,\mathrm{app}}.
%\end{aligned}
%    \right.
%\end{equation*} 
\begin{equation*}
\left\{
      \begin{aligned}
      \partial_t \varrho_{\mathrm{app}} +u_{\mathrm{app}} \partial_x \varrho_{\mathrm{app}}+ v_{\mathrm{app}} \partial_z \varrho_{\mathrm{app}}&=\delta^2 \e^{2\lambda t}\left(u^{\mathrm{eig}} \partial_x \varrho^{\mathrm{eig}} +v^{\mathrm{eig}} \partial_z \varrho^{\mathrm{eig}} \right):= R^{(1)}_{1,\mathrm{app}}, \\[1mm]
\partial_t \omega_{\mathrm{app}} +u_{\mathrm{app}} \partial_x \omega_{\mathrm{app}} + v_{\mathrm{app}} \partial_z \omega_{\mathrm{app}} +\partial_x \varrho_{\mathrm{app}}&=\delta^2 \e^{2\lambda t} \left(u^{\mathrm{eig}} \partial_x \omega^{\mathrm{eig}} +v^{\mathrm{eig}} \partial_z \omega^{\mathrm{eig}} \right):=R^{(1)}_{2,\mathrm{app}}.
\end{aligned}
    \right.
\end{equation*} 
Here I use evident notation for the velocity field, that one can recover from the vorticity through Biot-Savart.
\end{comment}
However, if $\Lambda$ is not big enough, it is not clear that $(\varrho, \omega)-(\rho_s, U_s')$ and $\delta (\varrho_{\mathrm{eig}}, \omega_{\mathrm{eig}})e^{\Lambda t}$ can be compared on the same interval of time.
\begin{comment}
Then the difference $(Q,\Omega)=(\varrho-\varrho_{\mathrm{app}}, \omega- \omega_{\mathrm{app}})$ (setting also $U=u-u_{\mathrm{app}}, V=v-_{\mathrm{app}}$ with evident notation...) satisfies
\begin{equation*}
\left\{
      \begin{aligned}
      \partial_t Q +u \partial_x Q+ v \partial_z Q+U \partial_x \varrho_{\mathrm{app}}+V \partial_z \varrho_{\mathrm{app}}&=-R^{(1)}_{1,\mathrm{app}}, \\[1mm]
\partial_t \Omega +u \partial_x \Omega + v \partial_z \Omega +U \partial_x \omega_{\mathrm{app}}+V \partial_z \omega_{\mathrm{app}}+\partial_x Q&=-R^{(1)}_{2,\mathrm{app}}.
\end{aligned}
    \right.
\end{equation*}
We can perform an energy estimate in Sobolev spaces: seeing the term with $U, V$ as zero order term, we can obtain the fact that $F=(Q, \Omega)$ satisfies more or less 
\begin{align*}
\frac{\mathrm{d}}{\mathrm{d}t} \Vert F \Vert \leq \mathrm{M }\Vert F \Vert+ \Vert R^{(1)}_{\mathrm{app}} \Vert,
\end{align*}
where $\mathrm{M }$ depends on the steady profile (\textbf{to check}). The main problem is that we can only conclude, by a Gronwall argument, if 
\begin{align*}
2 \lambda > \mathrm{M}.
\end{align*}
If not, we cannot conclude: mainly, we do not know if $F-(\rho_s, U_s)$ is well approximated by $\delta (\varrho_{\mathrm{eig}}, \omega_{\mathrm{eig}}) \e^{\lambda t}$!

\medskip
\end{comment}
Grenier's iteration method is precisely a way of considering high-order approximation of the solution in order to avoid such issue. For $N \geq 1$ a given integer, recalling that $(\varrho_{\mathrm{eig}}, \omega_{\mathrm{eig}})$ is the unstable eigenmode, we consider an approximate solution
\begin{align}
\begin{pmatrix}
\varrho_{\mathrm{app}}^{(N)}\\
\omega_{\mathrm{app}}^{(N)}
\end{pmatrix}
&:= 
\begin{pmatrix}
\rho_s\\
U_s'
\end{pmatrix}
+ \sum_{j=1}^N \delta^j\begin{pmatrix}
\varrho_j\\
\omega_j
\end{pmatrix}, \label{def-SolAppGrenier}\\
\begin{pmatrix}
\varrho_1\\
\omega_1
\end{pmatrix}
&:=\begin{pmatrix}
\varrho_{\mathrm{eig}}\\
\omega_{\mathrm{eig}}
\end{pmatrix}  \e^{\Lambda t}, \label{def-SolApp1Grenier}
\end{align}
where the $(\varrho_j,
\omega_j)$ (for $j \geq 2$) are defined inductively, satisfy $\Vert (\varrho_j,
\omega_j) \Vert \lesssim \e^{\Lambda j t}$, and such that the higher order approximation $(\varrho_{\mathrm{app}}^{(N)},
\omega_{\mathrm{app}}^{(N)})$ solves the Euler-Boussinesq \eqref{eq:EB-scaled} equations with a remainder 
\begin{align}\label{est:rem-growth}
R_{\mathrm{app}}= \mathcal{O}\left(\delta^{N+1} \e^{\Lambda(N+1)t} \right),
\end{align}
for some suitable (Sobolev) norms. This procedure will allow to estimate the difference $(\varrho-\varrho_{\mathrm{app}}^{(N)}, \omega- \omega_{\mathrm{app}}^{(N)})$ provided that we choose $N$ large enough so that
\begin{align*}
(N+1)\Lambda \gg 1.
\end{align*}
%As a matter of fact, assuming the previous growth of the remainder $R_{\mathrm{app}}$, a Grönwall argument will essentially yields an estimate of the type
In fact, if the remainder satisfies \eqref{est:rem-growth}, applying Grönwall inequality yields 
\begin{align*}
  \Vert (\varrho, \omega)(t) -(\varrho_{\mathrm{app}}^{(N)}, \omega_{\mathrm{app}}^{(N)}) (t) \Vert \lesssim \delta^{N+1}  \e^{\max\left( (N+1)\Lambda, 1+C_{\rho_s,U_s} \right)t},
\end{align*} 
for some constant $C_{\rho_s,U_s}>0$ depending on the profile $(\rho_s, U_s)$. If $N$ is large enough so that $(N+1)\Lambda> 1+C_{\rho_s,U_s}$, then we can estimate
\begin{align*}
    \Vert (\varrho, \omega)(t) -(\varrho_{\mathrm{app}}^{(N)}, \omega_{\mathrm{app}}^{(N)}) (t) \Vert \leq \left( \delta \e^{\Lambda t} \right)^{N+1},
\end{align*}
which allows to compare the difference $(\varrho, \omega) -(\varrho_{\mathrm{app}}^{(N)}, \omega_{\mathrm{app}}^{(N)})$ with $(\varrho_{\mathrm{app}}^{(N)}, \omega_{\mathrm{app}}^{(N)})-(\rho_s,U_s')= \mathcal{O}(\delta \e^{\Lambda t})$ on the same time interval.
\medskip

\textbf{Definition by induction}: For $2 \leq j \leq N$, we define inductively $(\varrho_j,
\omega_j)$ as the solution of the linear problem
\begin{align*}
(\partial_t -\mathscr{B}_{\rho_s, U_s})\begin{pmatrix}
\varrho_j\\
\omega_j
\end{pmatrix}
&=\sum_{k=1}^{j-1}
\begin{pmatrix}
 u[\omega_k]\partial_x\varrho_{j-k}+v[\omega_k]\partial_z\varrho_{j-k} \\[2mm]
 u[\omega_k]\partial_x\omega_{j-k}+v[\omega_k]\partial_z\omega_{j-k}
\end{pmatrix}=\sum_{k=1}^{j-1}
\begin{pmatrix}
 W[\omega_k] \cdot \nabla \varrho_{j-k} \\[2mm]
  W[\omega_k] \cdot \nabla\omega_{j-k}
\end{pmatrix}, \\
(\varrho_j,
\omega_j)_{\mid t=0}&=(0,0),
\end{align*}
where $$W[\omega]=(u[\omega],v[\omega])=(\partial_z \Delta^{-1} \omega,-\partial_x \Delta^{-1} \omega).$$ %Here, the r.h.s. is supposed to absorb all the interaction from all the previous iterates. Also observe that with this definition, 
The approximate solution $(\varrho_{\mathrm{app}}^{(N)},
\omega_{\mathrm{app}}^{(N)})$ satisfies
\begin{align*}
      \partial_t \varrho_{\mathrm{app}}^{(N)} +W_{\mathrm{app}}^{(N)} \cdot \nabla \varrho_{\mathrm{app}}^{(N)}&=\sum_{\substack{1 \leq k, k' \leq N \\  N+1 \leq k+k' \leq 2N }} \delta^{k+k'}W_{k} \cdot \nabla \varrho_{k'}:= R^{(N)}_{1,\mathrm{app}},  \\[1mm]
\partial_t \omega_{\mathrm{app}}^{(N)} +W_{\mathrm{app}}^{(N)} \cdot \nabla \omega_{\mathrm{app}}^{(N)} +\partial_x \varrho_{\mathrm{app}}^{(N)}&=\sum_{\substack{1 \leq k, k' \leq N \\ N+1 \leq k+k' \leq 2N }}\delta^{k+k'}W_{k} \cdot \nabla \omega_{k'}:=R^{(N)}_{2,\mathrm{app}},
\end{align*} 
where $W_k=W[\omega_k]$.

\subsubsection*{Step 3a - Estimate of $(\varrho_j,
\omega_j)$ } Let $\ell \geq 3$ be fixed. By induction and by using the sharp growth estimate of the semigroup in Proposition \ref{prop:semigroupB}, we want to prove:
\begin{align}\label{estimate-iteratesemigroupGrenier}
\forall 1 \leq j \leq N, \qquad \Vert (\varrho_j(t),
\omega_j(t)) \Vert_{\H^{N-j+\ell+1} \times \H^{N-j+\ell}} \leq  C_{N,j} \e^{\Lambda j t },
\end{align}
for some constant $C_{N,j} \geq 0$.
\begin{proof}  We proceed by induction on $1 \leq j \leq N$. The estimate is trivial for $j=1$. We now assume it is true until $j$. Let $\gamma \in [\Lambda, 2\Lambda)$. Since $j+1 \geq 2$, we can use Duhamel’s integral formula to write
    \begin{align}
    \begin{pmatrix}
\varrho_{j+1}(t)\\
\omega_{j+1}(t)
\end{pmatrix}=-\sum_{k=1}^j \int_0^t \e^{\mathscr{B}_{\rho_s, U_s}(t-\tau)}
\begin{pmatrix}
 W[\omega_k] \cdot \nabla \varrho_{j+1-k}(\tau) \\[2mm]
  W[\omega_k] \cdot \nabla\omega_{j+1-k}(\tau) 
\end{pmatrix}\, \mathrm{d}\tau.
    \end{align}
    Relying on the sharp semigroup estimate from Proposition \ref{prop:semigroupB}, we have
    \begin{align*}
        \left\Vert \begin{pmatrix}
\varrho_{j+1}(t)\\
\omega_{j+1}(t)
\end{pmatrix} \right\Vert_{\H^{N-j+\ell} \times \H^{N-j+\ell-1}}
&\leq \sum_{k=1}^j \int_0^t \left\Vert  \e^{\mathscr{B}_{\rho_s, U_s}(t-\tau)}\begin{pmatrix}
 W[\omega_k] \cdot \nabla \varrho_{j+1-k}(\tau) \\[2mm]
  W[\omega_k] \cdot \nabla\omega_{j+1-k}(\tau) 
\end{pmatrix} \right\Vert_{\H^{N-j+\ell} \times \H^{N-j+{\ell-1}}} \, \mathrm{d}\tau \\
&\leq M_{N-j}\sum_{k=1}^j \int_0^t   \e^{\gamma (t-\tau)}\left\Vert\begin{pmatrix}
 W[\omega_k] \cdot \nabla \varrho_{j+1-k}(\tau) \\[2mm]
  W[\omega_k] \cdot \nabla\omega_{j+1-k}(\tau) 
\end{pmatrix} \right\Vert_{\H^{N-j+\ell} \times \H^{N-j+\ell-1}} \, \mathrm{d}\tau.
    \end{align*}
    We will estimate the last norm using the bound
    \begin{align}
        \forall r \geq 0, \eps>0, \qquad \Vert W[\omega]  \Vert_{\mathrm{W}^{r, \infty}} \lesssim \Vert \omega \Vert_{\mathrm{H}^{r+\eps}},
    \end{align}
    which is obtained by Sobolev embedding and elliptic regularity.

\medskip 

$\bullet$ \textbf{Estimate for} $\Vert W[\omega_k] \cdot \nabla\omega_{j+1-k} \Vert_{\H^{N-j+\ell-1}}$: by algebra property, we have
%\blue{[aren't we just using algebra property in the first inequality?]}
   %     \begin{align*}
  %  \Vert W[\omega_k] \cdot \nabla\omega_{j+1-k} \Vert_{\H^{N-j+\ell-1}} 
   % &\leq \Vert W[\omega_k]  \Vert_{\mathrm{W}^{N-j+\ell-1, \infty}} \Vert \omega_{j+1-k} \Vert_{\H^{N-j+\ell}}    \\
   % & \lesssim \Vert \omega_k  \Vert_{\mathrm{H}^{N-j+\ell}} \Vert \omega_{j+1-k} \Vert_{\H^{N-(j+1-k)+\ell-k+1}} \\
    %& \leq \Vert \omega_k  \Vert_{\mathrm{H}^{N-k+\ell}} \Vert \omega_{j+1-k} \Vert_{\H^{N-(j+1-k)+\ell}}
     %   \end{align*}
\begin{align*}
    \Vert W[\omega_k] \cdot \nabla\omega_{j+1-k} \Vert_{\H^{N-j+\ell-1}} 
    &\lesssim \Vert W[\omega_k]  \Vert_{\H^{N-j+\ell-1}} \Vert \omega_{j+1-k} \Vert_{\H^{N-j+\ell}}    \\
    & \lesssim \Vert \omega_k  \Vert_{\mathrm{H}^{N-j+\ell-2}} \Vert \omega_{j+1-k} \Vert_{\H^{N-(j+1-k)+\ell-k+1}} \\
    & \lesssim \Vert \omega_k  \Vert_{\mathrm{H}^{N-k+\ell}} \Vert \omega_{j+1-k} \Vert_{\H^{N-(j+1-k)+\ell}},
\end{align*}
        where $1 \leq k \leq j$. Using the induction hypothesis, we get
        \begin{align*}
         \Vert W[\omega_k] \cdot \nabla\omega_{j+1-k}(\tau) \Vert_{\H^{N-j+2}} \lesssim   C_k C_{j+1-k} \e^{\Lambda(j+1)\tau }.
        \end{align*}
        
\medskip 

$\bullet$  \textbf{Estimate for} $\Vert W[\omega_k] \cdot \nabla\varrho_{j+1-k} \Vert_{\H^{N-j+\ell}} $: we first use the product estimate
\begin{align*}
    \Vert W[\omega_k] \cdot \nabla\varrho_{j+1-k} \Vert_{\H^{N-j+\ell}}   \lesssim \Vert W[\omega_k] \Vert_{\Ld^{\infty}}\Vert \varrho_{j+1-k} \Vert_{\H^{N-j+\ell+1}} + \Vert W[\omega_k] \Vert_{\H^{N-j+\ell}}\Vert \nabla\varrho_{j+1-k} \Vert_{\Ld^{\infty}}.
\end{align*}
For the first term, it holds
\begin{align*}
    \Vert W[\omega_k] \Vert_{\Ld^{\infty}}\Vert \varrho_{j+1-k}\Vert_{\H^{N-j+\ell+1}}\lesssim \Vert \omega_k\Vert_{\H^{1}} \Vert \varrho_{j+1-k} \Vert_{\H^{N-(j+1-k)+\ell+1}} \leq C_k C_{j+1-k} \e^{\Lambda(j+1)s }.
\end{align*}
For the second term, we have for all $\eps>0$
\begin{align*}
    \Vert W[\omega_k] \Vert_{\H^{N-j+\ell}}\Vert \nabla\varrho_{j+1-k} \Vert_{\Ld^{\infty}} &\lesssim \Vert \omega_k\Vert_{\H^{N-j+\ell-1}} \Vert \varrho_{j+1-k} \Vert_{\H^{2+\eps}} \\
    & \lesssim \Vert \omega_k\Vert_{\H^{N-k+\ell}} \Vert \varrho_{j+1-k} \Vert_{\H^{N-(j+1-k)+\ell+1 +(j+1-k-N+\eps+2-\ell-1)}},
\end{align*}
where we have used elliptic regularity for the first factor and Sobolev embedding for the second one. Now observe that for $\eps>0$ small enough
\begin{align*}
    j+1-k-N+\eps+2-\ell-1 \leq 2-k-\ell+\eps \leq 0.
\end{align*}
As before, we get
\begin{align*}
    \Vert W[\omega_k] \Vert_{\H^{N-j+\ell}}\Vert \nabla\varrho_{j+1-k}(\tau) \Vert_{\Ld^{\infty}} &\lesssim C_k C_{j+1-k} \e^{\Lambda(j+1)\tau }.
\end{align*}
We thus obtain
\begin{align*}
            \Vert W[\omega_k] \cdot \nabla\varrho_{j+1-k} (\tau)\Vert_{\H^{N-j+\ell}} 
            & \lesssim C_k C_{j+1-k} \e^{\Lambda(j+1)\tau }.
\end{align*} 
We now come back to the estimate of $\left\Vert (
\varrho_{j+1}(t),
\omega_{j+1}(t)) \right\Vert_{\H^{N-j+\ell} \times \H^{N-j+\ell-1}}$. We have 
\begin{align*}
        \left\Vert \begin{pmatrix}
\varrho_{j+1}(t)\\
\omega_{j+1}(t)
\end{pmatrix} \right\Vert_{\H^{N-j+\ell} \times \H^{N-j+\ell-1}} 
&\lesssim M_{N-j}\left(\sum_{k=1}^j C_k C_{j+1-k} \right) \int_0^t   \e^{\gamma (t-\tau)} \e^{\Lambda(j+1)\tau } \, \mathrm{d}\tau.
    \end{align*}
    Since $j\geq 1$ and $\gamma < 2 \Lambda$, we have
    $-\gamma+(j+1) \Lambda>0$ and then
    \begin{align*}
        \left\Vert \begin{pmatrix}
\varrho_{j+1}(t)\\
\omega_{j+1}(t)
\end{pmatrix} \right\Vert_{\H^{N-j+\ell} \times \H^{N-j+\ell-1}} 
\lesssim \frac{M_{N-j}}{(j+1) \Lambda -\gamma}\left(\sum_{k=1}^j C_k C_{j+1-k} \right)  \e^{\Lambda(j+1)t } = C_{N,j+1 } \e^{\Lambda(j+1)t  }.
    \end{align*}
This concludes our induction argument and hence the proof.

\end{proof}

Let us now introduce the following quantities. We define $\delta>0$ and $T_N>0$ such that for all $t \in [0,T_N]$
\begin{align*}
     \delta \e^{\Lambda t} \leq \min \left( \frac{1}{2}, \frac{1}{2K_N}\right), \qquad \text{where} \qquad K_N:= \underset{1 \leq j \leq N}{\max}C_{N,j}.
\end{align*}
\subsubsection*{Step 3b - Estimate of $(\varrho_{\mathrm{app}}^{(N)}-\rho_s, \omega_{\mathrm{app}}^{(N)})-U_s')$}

We have the estimate
\begin{align}\label{estimate-diffGrenier}
   \forall t \in [0,T_N], \qquad  \left\Vert\begin{pmatrix}
        \varrho_{\mathrm{app}}^{(N)}(t)-\rho_s \\
        \omega_{\mathrm{app}}^{(N)}(t)-U_s'
    \end{pmatrix} \right\Vert_{\H^{\ell+1} \times \H^{\ell}} 
    &\leq 1.
\end{align}
\begin{proof}
By definition of $(\varrho_{\mathrm{app}}^{(N)}, \omega_{\mathrm{app}}^{(N)})$, we have for all $t \in [0,T_N]$
\begin{align*}
    \left\Vert\begin{pmatrix}
        \varrho_{\mathrm{app}}^{(N)}(t)-\rho_s \\
        \omega_{\mathrm{app}}^{(N)})(t)-U_s'
    \end{pmatrix} \right\Vert_{\H^{\ell+1} \times \H^{\ell}} 
    &\leq \sum_{j=1}^N \delta^j\left\Vert \begin{pmatrix}
\varrho_j(t)\\
\omega_j(t)
\end{pmatrix}\right\Vert_{\H^{\ell+1} \times \H^{\ell}} 
 \leq \sum_{j=1}^N \delta^j C_{N,j} \e^{\Lambda jt } 
 \leq K_N \frac{\delta \e^{\Lambda t }}{1-\delta \e^{\Lambda t }}, 
\end{align*}
where we have used the estimate \eqref{estimate-iteratesemigroupGrenier} on each iterate for $1 \leq j \leq N$. We get  
\begin{align*}
   \left\Vert\begin{pmatrix}
        \varrho_{\mathrm{app}}^{(N)}(t)-\rho_s \\
        \omega_{\mathrm{app}}^{(N)})(t)-U_s'
    \end{pmatrix}\right\Vert_{\H^{\ell+1} \times \H^{\ell}} \leq K_N \frac{\delta \e^{\Lambda t }}{1-\delta \e^{\Lambda t}} \leq K_N \dfrac{1}{2 K_N} \frac{1}{1-\frac{1}{2}} =1.
\end{align*}
This yields the claimed inequality.
\end{proof}

\subsubsection*{Step 3c - Estimate of $R^{(N)}_{\mathrm{app}}$} 

There exists a constant $K'_N \geq 0$ such that
\begin{align}\label{estimate-remainderGrenier}
    \forall t \in [0,T_N], \qquad \Vert R^{(N)}_{\mathrm{app}}(t) \Vert_{\H^\ell \times \H^{\ell-1}} \leq  K'_N (\delta \e^{\Lambda t})^{N+1}.
\end{align}
\begin{proof}
By definition of the remainder  $R^{(N)}_{\mathrm{app}}$, we have
\begin{align*}
    \Vert R^{(N)}_{\mathrm{app}}(t) \Vert_{\H^\ell \times \H^{\ell-1}} \leq \sum_{\substack{1 \leq k, k' \leq N \\  N+1 \leq k+k' \leq 2N }} \delta^{k+k'} \left\Vert \begin{pmatrix}
        W_{k} \cdot \nabla \varrho_{k'} \\
        W_{k} \cdot \nabla \omega_{k'} 
    \end{pmatrix} \right\Vert_{\H^\ell \times \H^{\ell-1}}.
\end{align*}
By mimicking what has been done in Step 3a, we can obtain the bound
\begin{align*}
    \left\Vert \begin{pmatrix}
        W_{k} \cdot \nabla \varrho_{k'} \\
        W_{k} \cdot \nabla \omega_{k'} 
    \end{pmatrix} \right\Vert_{\H^\ell \times \H^{\ell-1}} \leq C_k C_{k'} \e^{(k+k') t  \Lambda}.
\end{align*}
Defining $K'_N :=\underset{N+1 \leq p \leq 2N}{\sup}\frac{1}{2}\sum_{\substack{1 \leq k, k' \leq N \\   k+k'=p }} C_k C_{k'}$, we then deduce that 
\begin{align*}
    \Vert R^{(N)}_{\mathrm{app}}(t) \Vert_{\H^1 \times \Ld^2} \leq \sum_{\substack{1 \leq k, k' \leq N \\  N+1 \leq k+k' \leq 2N }} \delta^{k+k'} C_k C_{k'} \e^{(k+k') t  \Lambda}  \leq \frac{1}{2} K'_N \sum_{p=N+1}^{+\infty} \delta^\ell \e^{\Lambda \ell  t}  \leq K'_N (\delta \e^{\Lambda t})^{N+1}.
\end{align*}
This concludes the proof.
\end{proof}

\medskip

Now, we estimate the difference between the approximate solution $(\varrho_{\mathrm{app}}^{(N)}, \omega_{\mathrm{app}}^{(N)})$ and a solution $(\varrho, \omega)$ of the Euler-Boussinesq system
\begin{align}\label{eq:EB-LyapunovInstab}
\begin{cases}
    \partial_t \varrho +u \partial_x \varrho+ v \partial_z \varrho=0, \\
\partial_t \omega +u \partial_x \omega + v \partial_z \omega =\partial_x \varrho, \\
(u,v)= (\partial_z \varphi, -\partial_x \varphi),
\end{cases} \ \ 
\text{where} \qquad\begin{cases}
        \Delta \varphi=\omega,\\
        \varphi_{\mid z=\pm 1}=0,
    \end{cases} 
\end{align}
on the domain $\T_M \times (-1,1)$ and with initial data $(\rho_s, U_s') + \delta (\varrho_{\mathrm{eig}}, \omega_{\mathrm{eig}})$.

\subsubsection*{Step 3d - Estimate of $(\varrho-\varrho_{\mathrm{app}}^{(N)}, \omega- \omega_{\mathrm{app}}^{(N)})$ }

For all $p \geq 3$, we have 
\begin{align}\label{est:step3d}
    \forall t \in [0,T_N], \qquad \Vert  \omega(t)-\omega_{\mathrm{app}}(t) \Vert_{\H^p}+\Vert  \varrho(t)-\varrho_{\mathrm{app}}(t) \Vert_{\H^{p+1}} \lesssim K''_N(\delta \e^{\Lambda t})^{N+1},
\end{align}
where
$T_N>0$ is chosen so that the following hold:
\begin{align}\label{eq:TN}
    \delta \e^{T_N \Lambda} \leq \eps_0, \qquad \eps_0 \leq \min\left( \eps_0',\frac{1}{2}, \frac{1}{2 K''_N}\right), \qquad K''_N=\max(K_N,K'_N),
\end{align}
where $\eps_0'$ is small enough and the solution $(\varrho, \omega)$ of the Euler-Boussinesq equations \eqref{eq:EB-LyapunovInstab} with initial data $(\rho_s, U_s')+\delta (\varrho_{\mathrm{eig}}, \omega_{\mathrm{eig}})$ exists until time $T_N$.
\begin{proof}[Proof of estimate \eqref{est:step3d}]
Let us set
\begin{align}
    (\widetilde{\varrho}, \widetilde{\omega}):=(\varrho-\varrho_{\mathrm{app}}^{(N)}, \omega- \omega_{\mathrm{app}}^{(N)}).
\end{align}
It satisfies
\begin{equation*}
      \begin{aligned}
      \partial_t \widetilde{\varrho} +W[\omega_{\mathrm{app}}^{(N)}+ \widetilde{\omega}]\cdot \nabla \widetilde{\varrho}&=-W[ \widetilde{\omega}] \cdot \nabla \varrho_{\mathrm{app}}^{(N)} -R^{(N)}_{1,\mathrm{app}},  \\[1mm]
\partial_t \widetilde{\omega} +W[\omega_{\mathrm{app}}^{(N)}+ \widetilde{\omega}] \cdot \nabla \widetilde{\omega} +\partial_x \widetilde{\varrho}&=-W[ \widetilde{\omega}] \cdot \nabla \omega_{\mathrm{app}}^{(N)}-R^{(N)}_{2,\mathrm{app}},
\end{aligned}
\end{equation*} 
with $(\widetilde{\varrho}, \widetilde{\omega})_{\mid t=0}=(0,0)$ and the notation $W[ \Omega]=(\partial_z \Delta^{-1} \Omega,-\partial_x \Delta^{-1} \Omega)$. As the approximate solution is known, we can leverage the standard local-in-time theory of strong well-posedness for the Euler-Boussinesq equations (see e.g., \cite{Chae}, which can be easily adapted to $\T_M \times (-1,1)$ thanks to the proofs in \cite{nersisyan2013stabilization}). This allows us to establish that the aforementioned equation has a unique local-in-time solution in high-order Sobolev space.

Let us now perform an energy estimate on the previous system, in $\H^{p+1} \times \H^p$. Using standard commutator estimates with respect and Sobolev embedding, we can prove that for all $p \geq 3$, it holds
\begin{align*}
    \dfrac{\mathrm{d}}{\mathrm{d}t} \Vert  \widetilde{\omega} \Vert_{\H^p}^2 &\lesssim \left(\Vert \omega_{\mathrm{app}}^{(N)}\Vert_{\H^{p+3}}+\Vert \widetilde{\omega} \Vert_{\H^{p-1}} \right)\Vert \widetilde{\omega}\Vert_{\H^p}^2 + \Vert \widetilde{\varrho}\Vert_{\H^{p+1}}\Vert \widetilde{\omega} \Vert_{\H^p}  +\Vert R^{(N)}_{2,\mathrm{app}} \Vert_{\H^p} \Vert \widetilde{\omega} \Vert_{\H^p},\\
    \dfrac{\mathrm{d}}{\mathrm{d}t} \Vert \widetilde{\varrho} \Vert_{\H^{p+1}}^2 &\lesssim \Vert \varrho_{\mathrm{app}}^{(N)}\Vert_{\H^{p}} \Vert \widetilde{\varrho} \Vert_{\H^{p+1}}^2 + 
    \Vert \varrho_{\mathrm{app}}^{(N)} \Vert_{\H^{p+4}}\Vert \widetilde{\omega} \Vert_{\H^{p}}\Vert \widetilde{\varrho} \Vert_{\H^{p+1}}+\Vert \widetilde{\omega} \Vert_{\H^p}\Vert \widetilde{\varrho}\Vert_{\H^{p+1}}^2+\Vert R^{(N)}_{1,\mathrm{app}} \Vert_{\H^{p+1}} \Vert \widetilde{\varrho} \Vert_{\H^{p+1}},
\end{align*}
where $\lesssim$ only involves universal constants.
Introducing
\begin{align*}
    y(t):=\Vert  \widetilde{\omega}(t) \Vert_{\H^p}+\Vert  \widetilde{\varrho}(t) \Vert_{\H^{p+1}},
\end{align*}
and $\omega_s:=U'_s$, the above inequalities yield 
\begin{align*}
    y'(t) & \leq \left(1+\Vert \omega_{\mathrm{app}}^{(N)}(t) \Vert_{\H^{p+3}}\Vert+ \Vert \varrho_{\mathrm{app}}^{(N)}(t) \Vert_{\H^{p+4}}+\Vert\Vert  \widetilde{\omega}(t) \Vert_{\H^p} \right)y(t)  + \Vert R^{(N)}_{1,\mathrm{app}}(t) \Vert_{\H^{p+1}}+\Vert R^{(N)}_{2,\mathrm{app}}(t) \Vert_{\H^{p}} \\
    & \leq  \left(1+\mathrm{C}_{\rho_s,\omega_s }+\Vert \omega_{\mathrm{app}}^{(N)}(t)-\omega_s\Vert_{\H^{p+3}}\Vert+ \Vert \varrho_{\mathrm{app}}^{(N)}(t)-\rho_s \Vert_{\H^{p+4}}+\Vert  \widetilde{\omega}(t) \Vert_{\H^p} \right)y(t) \\
    & \quad + \Vert R^{(N)}_{1,\mathrm{app}}(t) \Vert_{\H^{p+1}}+\Vert R^{(N)}_{2,\mathrm{app}}(t) \Vert_{\H^{p}},
\end{align*}
for some constant $\mathrm{C}_{\rho_s,\omega_s}$ only depending on the profile. Now, using \eqref{estimate-diffGrenier}-\eqref{estimate-remainderGrenier} with $\ell \geq p+1$, there exists $C>0$ such that %such that for all times $t \in [0,T_N]$ where  
%\begin{align*}
%    \delta \e^{T_N \mathrm{Re}(\Lambda)} \leq \eps_0, \qquad \eps_0 \leq \min\left( \frac{1}{2}, \frac{1}{2 K''_N}\right), \qquad K''_N=\max(K_N,K'_N),
%\end{align*}
%we have 
\begin{align*}
    y'(t) \leq C\left(2+\mathrm{C}_{\rho_s,\omega_s }+\Vert  \widetilde{\omega}(t) \Vert_{\H^p} \right)y(t) + K''_N (\delta \e^{\Lambda t})^{N+1},
\end{align*}
for all times $t \in [0, T_N]$ with $T_N$ as in \eqref{eq:TN}. Let us now set \begin{align*}
    N:=\left\lfloor \dfrac{C(4+\mathrm{C}_{\rho_s,\omega_s})}{\Lambda} \right\rfloor+1.
\end{align*}
We perform a bootstrap argument. Let us introduce 
\begin{align}\label{bootstrap-Grenier}
   \widetilde{T}_N:= \sup \big\lbrace T>0, \qquad \mathrm{sup}_{0 \leq t \leq T} \Vert  \widetilde{\omega}(t) \Vert_{\H^p} \leq K''_N (\delta \e^{\Lambda t})^N \big\rbrace .    
\end{align}
By the local theory, we have $\widetilde{T}_N>0$. Moreover, by definition of $T_N$ with $N \geq 1$, we also get 
\begin{align*}
     K''_N (\delta \e^{ \Lambda t})^N \leq 1.
\end{align*}
If we have $\widetilde{T}_N<T_N$, then by Grönwall's lemma we can infer that for all $t \in [0,\widetilde{T}_N]$
\begin{align*}
    y(t) \leq K''_N \delta^{N+1} \int_0^t \e^{C(4+\mathrm{C}_{\rho_s,\omega_s})(t-\tau)} \e^{ (N+1)\Lambda \tau} \, \mathrm{d}\tau.
\end{align*}
We deduce that 
\begin{align*}
    y(t) & \leq K''_N \delta^{N+1} \e^{N \Lambda t} \int_0^t \e^{\Lambda \tau} \, \mathrm{d \tau} \leq \dfrac{K''_N}{\Lambda} (\delta \e^{\Lambda t})(\delta \e^{ \Lambda t})^N  \leq \dfrac{K''_N}{\Lambda} \eps_0  (\delta \e^{ \Lambda t})^N.
\end{align*}
By employing a continuation argument, we can refine the earlier bound by selecting $\eps_0$ to be sufficiently small, leading to a contradiction with the definition of $\widetilde{T}_N$. Consequently, we conclude that $\widetilde{T}_N \geq T_N$. Revisiting the previous estimate, we eventually arrive at the stated result.
\end{proof}

\subsubsection*{Step 3e - Conclusion} Let us now conclude the proof. To simplify the presentation, let us set $Z:=(\varrho, \omega)$, which is the solution of the Euler-Boussinesq equation \eqref{eq:EB-LyapunovInstab} with initial data $(\rho_s, U_s')+\delta (\varrho_{\mathrm{eig}}, \omega_{\mathrm{eig}})$ (given by the \textit{Step 3d}). By triangle inequality and by definition of the approximate solution $Z_{\mathrm{app}}$ (see \eqref{def-SolAppGrenier}), we have for all $t \in [0,T_N]$, 
\begin{align*}
    \Vert Z(t) - (\rho_s, U_s') \Vert_{\Ld^2 \times \H^{-1}} &= \Vert Z(t) -Z_{\mathrm{app}}(t)+Z_{\mathrm{app}}(t) - (\rho_s, U_s') \Vert_{\Ld^2 \times \H^{-1}} \\
    &\geq  \delta \Vert Z_1(t) \Vert_{\Ld^2 \times \H^{-1}}-\sum_{k=2}^N \delta^k  \Vert Z_k(t) \Vert_{\H^{p+1} \times \H^{p}} -\Vert Z(t)-Z_{\mathrm{app}}(t) \Vert_{\H^{p+1} \times \H^p},
\end{align*}
where $Z_1$ (see \eqref{def-SolApp1Grenier}) is normalized. Relying on the estimates \eqref{estimate-iteratesemigroupGrenier} and \eqref{estimate-diffGrenier} for $\ell \geq  p+1$ and $p$ large enough, we infer that 
\begin{align*}
    \Vert Z(t) - (\rho_s, U_s') \Vert_{\Ld^2 \times \H^{-1}} &\geq  \delta \e^{\Lambda t}-\sum_{k=2}^N C_K \delta^k  \e^{k \Lambda t} -K''_N(\delta \e^{ \Lambda t})^{N+1} \\
    &\geq  \delta \e^{\Lambda t}-2 K''_N (\delta \e^{ \Lambda t})^2 -K''_N(\delta \e^{\Lambda t})^{N+1} \\
    &= \delta \e^{\Lambda t} \left[ 1- 2 K_N'' \delta \e^{\Lambda t}-K_N'' (\delta \e^{ \Lambda t})^{N}  \right].
\end{align*}
Since $N \geq 1$, choosing $\delta_0>0$ small enough so that 
\begin{align*}
    2 K_N'' (4 \delta_0)+K_N''(4 \delta_0)^N \leq \frac{1}{2}, \qquad 4 \delta_0 \leq \min\left( \eps_0,\frac{1}{2}, \frac{1}{2 K''_N}\right), 
\end{align*}
and a time $T_N^\star$ such that
\begin{align}\label{ineq-choosefinaltimeGrenier}
    2 \delta_0 \leq \delta \e^{\Lambda T_N^\star} \leq 4 \delta_0,
\end{align}
it follows that $T_N^\star \leq T_N$ and, for all $t \in [0,T_N^\star]$, the following lower bound holds:
\begin{align*}
    \Vert Z(t) - (\rho_s, U_s') \Vert_{\Ld^2 \times \H^{-1}} \geq \frac{1}{2} \delta \e^{\Lambda t} \geq \delta_0.
\end{align*}
Therefore, there exists $m_0>0$ such that
\begin{align*}
  \underset{t \in [0,\T_N^\star]}{\sup} \Vert  (\varrho, u,v)(t)-(\rho_s, U_s,0)  \Vert_{\Ld^2} \geq m_0, 
\end{align*}
meanwhile for all $p \geq 0$
\begin{align*}
\Vert (\varrho, u,v)_{\mid t=0}-(\rho_s, U_s,0) \Vert_{\mathrm{H}^p} \lesssim \delta.
\end{align*}

\subsubsection*{Step 3f - The complex valued case}: Now, we address the case where $\Lambda$ may not be a real number; for instance, we can assume $\mathrm{Im}(\Lambda) > 0$. Note that $\mathrm{Re}(\Lambda) > 0$ since we consider a maximal unstable eigenvalue. Denoting $Z_{\mathrm{eig}}$ as the associated eigenvector, we can assume without loss of generality that $\mathrm{Re}(Z_{\mathrm{eig}}) \neq 0$ (given that the linearized operator only involves real-valued functions and coefficients).
In this scenario, we replace \eqref{def-SolApp1Grenier} by
\begin{align*}
    Z_1(t) = (\varrho_1, \omega_1)(t) := \mathrm{Re}(\e^{\Lambda t} Z_{\mathrm{eig}}),
\end{align*}
Let us also assume that $\Vert \mathrm{Re}(Z_{\mathrm{eig}}) \Vert_{\Ld^2 \times \H^{-1}} = 1$. We can essentially perform the previous \textit{Steps 1-2-3}. The main changes primarily come from \textit{Step 3}, where we need to consider the estimate from below:
\begin{align*}
    \Vert Z(t) - (\rho_s, U_s') \Vert_{\Ld^2 \times \H^{-1}}
    \geq \delta \e^{\mathrm{Re}(\Lambda)t} \left[ \Vert \mathrm{Re}(\e^{i \mathrm{Im}(\Lambda)t} Z_{\mathrm{eig}}) \Vert_{\Ld^2 \times \H^{-1}} - 2 K_N'' \delta \e^{\mathrm{Re}(\Lambda)t} - K_N'' (\delta \e^{\mathrm{Re}(\Lambda)t})^{N} \right].
\end{align*}
Observe that for times $t$ of the form $t = \frac{2\pi k}{\mathrm{Im}(\Lambda)}$ (with $k \in \N$), the first term inside the brackets above can be bounded from below as previously. We need to ensure that the size of the time interval, satisfying the bounds \eqref{ineq-choosefinaltimeGrenier}, is at least of size greater than $\frac{2\pi}{\mathrm{Im}(\Lambda)}$. Taking $\delta > 0$ small enough allows us to enforce such a condition.

\medskip

\subsubsection*{Step 4: Back to the original variables}
Let us finally show that the instability proven in Theorem \ref{thm:nonlin-instabilityEB} implies the failure of the hydrostatic limit stated in Theorem \ref{thm-invalidHydrostatlim}.

With fixed parameters $p, k > 0$ and $M > 0$ taken sufficiently large, we apply Theorem
\ref{thm:nonlin-instabilityEB} with 
\begin{align*}
    \delta=\eps^r, \qquad \eps \in (0,1), \qquad r>p+k+1
\end{align*}
to obtain a solution $(\varrho_\eps, u_\eps, v_\eps)$ to the Euler-Boussinesq system \eqref{eq:EB-scaled} on $[0,\tau_\eps] \times \T_M \times (-1,1)$ where $\tau_\eps =\mathcal{O}( \vert \log \, \delta \vert)=\mathcal{O}( \vert \log \, \eps \vert)$ and such that
\begin{align*}
\Vert (\varrho_\eps, u_\eps, v_\eps)_{\mid t=0}-(\rho_s, U_s,0) \Vert_{\mathrm{H}^p(\T_M \times (-1,1))} \leq \eps^r, \\[1mm]
  \underset{t \in [0,T_\delta]}{\sup} \Vert  (\varrho_\eps, u_\eps, v_\eps)(t)-(\rho_s, U_s,0)  \Vert_{\Ld^2(\T_M \times (-1,1))} \geq m,
\end{align*}
for some $c>0$ independent of $\eps$. %Recall that we actually consider $\eps=\eps_l=\frac{1}{\ell M}$. 

Now, let us come back to the original equation defined on $\T \times (-1,1)$ by employing the change of variable/gluing argument from \textit{Step $3$} and \textit{Step $4$}. By using the relations \eqref{eq:gluing}--\eqref{eq:rescaling}, we can first obtain a solution
$(\varrho_\eps^{\mathrm{true}}, u_\eps^{\mathrm{true}}, v_\eps^{\mathrm{true}})$ of the Euler-Boussinesq system on $[0,\eps \tau_\eps] \times \T \times (-1,1)$. We then set $T_\eps:=\eps \tau_\eps=\mathcal{O}( \eps \vert \log \, \eps \vert)$. The following holds:
\begin{align*}
    \Vert (\varrho_\eps^{\mathrm{true}}, u_\eps^{\mathrm{true}})_{\mid t=0}-(\rho_s, U_s) \Vert_{\mathrm{H}^p(\T \times (-1,1))} &\lesssim \eps^{-p}\Vert (\varrho_\eps, u_\eps)_{\mid t=0}-(\rho_s, U_s) \Vert_{\mathrm{H}^p(\T_M \times (-1,1))} \lesssim \eps^{-p+r}, \\[2mm]
    \Vert  {v_\eps^{\mathrm{true}}}_{\mid t=0} \Vert_{\mathrm{H}^p(\T \times (-1,1))} &\lesssim \eps^{-p-1}\Vert {v_\eps}_{\mid t=0}\Vert_{\mathrm{H}^p(\T_M \times (-1,1))} \lesssim \eps^{-p-1+r}.
\end{align*}
%\lucas{Please, check that I didn't mess up with the change of variables: there is a potential constant depending on $M$ but it's harmless}. 
As $r>p+1+k$, we obtain
\begin{align*}
\Vert (\varrho_\eps^{\mathrm{true}}, u_\eps^{\mathrm{true}}, v_\eps^{\mathrm{true}})_{\mid t=0}-(\rho_s, U_s,0) \Vert_{\mathrm{H}^p(\T \times (-1,1))} \lesssim \eps^k.
\end{align*}
We also have
\begin{align*}
    \Vert (\varrho_\eps^{\mathrm{true}}, u_\eps^{\mathrm{true}})_{\mid t=T_\eps}-(\rho_s, U_s) \Vert_{\Ld^2  (\T \times (-1,1))} &= \Vert (\varrho_\eps, u_\eps)_{\mid t=\tau_\eps}-(\rho_s, U_s) \Vert_{\Ld^2 (\T_M \times (-1,1))} \geq m, \\[2mm]
    \Vert  {v_\eps^{\mathrm{true}}}_{\mid t=T_\eps} \Vert_{\Ld^2 \cap (\T \times (-1,1))} &= \eps^{-1}\Vert {v_\eps}_{\mid t=\tau_\eps }\Vert_{\Ld^2 (\T_M \times (-1,1))} \geq \eps^{-1} m,
\end{align*}
so that \begin{align*}
\Vert (\varrho_\eps^{\mathrm{true}}, u_\eps^{\mathrm{true}}, v_\eps^{\mathrm{true}})(T_\eps)-(\rho_s, U_s,0) \Vert_{\Ld^{2} (\T \times (-1,1))} \geq c,
\end{align*}
for $\eps$ small enough.
This concludes the proof of Theorem \ref{thm-invalidHydrostatlim}.

\section{Nonlinear ill-posedness for the hydrostatic Euler-Boussinesq system}\label{Section:IllPosed}
Implementing the method established by Han-Kwan and Nguyen in \cite{HKN}, we prove Theorem \ref{thm-illposednesNonlin}, first introducing the following analytic setting.

\begin{Def}
For any $\delta, \delta'>0$, we introduce the space
\begin{align}
X_{\delta, \delta '}= \lbrace f=f(x,z) \mid \Vert f \Vert_{X_{\delta, \delta '}}<\infty \rbrace,
\end{align}
where for any analytic function $f=f(x,z) \in \R$ with $(x,z) \in \T \times (-1,1)$, we define
\begin{align}
\Vert f \Vert_{\delta, \delta '}:=\sum_{n \in \Z} \sum_{k \geq 0} \Vert \partial_z^k f_n \Vert_{\Ld^2(-1,1)} \frac{\vert \delta ' \vert^k}{k!} \e^{\delta n}, \qquad f_n(z)=\langle f(\cdot, z) , \e^{i n \cdot} \rangle_{\Ld^2(\T)}.
\end{align}
We also denote by $X_{\delta '}$ the space of $z$-analytic function $g=g(z)$ such that 
\begin{align}
\Vert g \Vert_{\delta '}:= \sum_{k \geq 0} \Vert \partial_z^k g \Vert_{\Ld^2(-1,1)} \frac{\vert \delta ' \vert^k}{k!}<+\infty.
\end{align} 
\end{Def}
In the above, the parameters $\delta$ and $\delta'$ can be interpreted as radii of convergence. The parameter $\delta$ quantifies the analyticity in $x$, while the parameter $\delta'$ quantifies the one in $z$. We extend the definition of the previous norms to vectors of functions, in the standard manner.

\medskip

We have the following useful properties (see e.g. \cites{HKN,baradat2020nonlinear}).
\begin{Lem}\label{LM:propXanalytic}
Let $0 \leq \delta_1 < \delta_2$ and $0 \leq \delta_1' < \delta_2'$. The following properties hold.
\begin{enumerate}
    \item For any function $f=f(x,z)$, we have
\begin{align*}
\Vert f \Vert_{\delta_1, \delta_1'} \leq \Vert f \Vert_{\delta_2, \delta_2'}, \qquad \Vert \partial_x f \Vert_{\delta_1, \delta_1'}  \leq \frac{\delta_2}{\delta_2-\delta_1}\Vert f \Vert_{\delta_2, \delta_1'}, \qquad \Vert \partial_z f \Vert_{\delta_1, \delta_1'}  \leq \frac{\delta_2'}{\delta_2'-\delta_1'}\Vert f \Vert_{\delta_1, \delta_2'}.
\end{align*}
\item For any functions $f(z), g(z)$, we have
\begin{align}
    \Vert f g \Vert_{ \delta_1'} \leq \Vert f  \Vert_{ \delta_1'}\Vert g \Vert_{ \delta_1'}.
\end{align}
\end{enumerate}
\end{Lem}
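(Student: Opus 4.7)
Both items of Lemma \ref{LM:propXanalytic} are purely formal consequences of the Fourier-Taylor definition of the norms, in the spirit of the classical analytic-functional framework used e.g.\ in \cite{HKN,baradat2020nonlinear}. The plan is to prove them by termwise manipulations, with the only nontrivial point being the algebra estimate in item $(2)$.

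For the monotonicity statement in $(1)$, each weight $\tfrac{|\delta'|^k}{k!}\, e^{\delta n}$ is nondecreasing in $\delta$ and in $\delta' \geq 0$, so every summand defining $\|f\|_{\delta_1,\delta_1'}$ is pointwise dominated by the corresponding summand of $\|f\|_{\delta_2,\delta_2'}$, and the bound is immediate.

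For the derivative estimates in $(1)$, I would use $(\partial_x f)_n = i n f_n$ to write
\[
\|\partial_x f\|_{\delta_1,\delta_1'} = \sum_{n \in \Z} |n|\, e^{\delta_1 n} \sum_{k \geq 0} \|\partial_z^k f_n\|_{\Ld^2} \frac{|\delta_1'|^k}{k!},
\]
and absorb the factor $|n|$ via a standard Cauchy-type estimate of the form $|n|\, e^{-(\delta_2 - \delta_1) n} \lesssim \tfrac{\delta_2}{\delta_2 - \delta_1}$ (obtained by comparing with the supremum of $t \mapsto t\, e^{-(\delta_2 - \delta_1) t}$ on $[0,\infty)$), then factor the constant out and recognize $\|f\|_{\delta_2,\delta_1'}$. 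For the $z$-derivative, the reindexation $j = k+1$ yields
\[
\|\partial_z f\|_{\delta_1,\delta_1'} = \sum_{n \in \Z} e^{\delta_1 n} \sum_{j \geq 1} \|\partial_z^j f_n\|_{\Ld^2} \frac{j\,|\delta_1'|^{j-1}}{j!},
\]
and the bound reduces to the numerical inequality $\tfrac{j\,|\delta_1'|^{j-1}}{|\delta_2'|^j} \lesssim \tfrac{\delta_2'}{\delta_2' - \delta_1'}$, which (setting $r = \delta_1'/\delta_2'$) follows from $\sup_{j \geq 1} j r^{j-1} \lesssim (1-r)^{-1}$, again after factoring and identifying $\|f\|_{\delta_1,\delta_2'}$.

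Item $(2)$ combines the Leibniz rule $\partial_z^k (fg) = \sum_{\ell=0}^{k} \binom{k}{\ell} \partial_z^\ell f \cdot \partial_z^{k-\ell} g$ with a bound on one factor in $\Ld^\infty(-1,1)$ via one-dimensional Sobolev embedding (accommodating the fact that $\Ld^2$ is not a Banach algebra, at the price of a small shrinkage of the radius which is absorbed by item $(1)$), together with the Cauchy-product identity
\[
\sum_{k \geq 0} \frac{(\delta_1')^k}{k!} \sum_{\ell=0}^{k} \binom{k}{\ell} a_\ell b_{k-\ell} = \bigg(\sum_{\ell \geq 0} a_\ell \frac{(\delta_1')^\ell}{\ell!}\bigg) \bigg(\sum_{m \geq 0} b_m \frac{(\delta_1')^m}{m!}\bigg),
\]
which factors the outcome as $\|f\|_{\delta_1'}\,\|g\|_{\delta_1'}$. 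This algebra estimate is the expected main obstacle: the mismatch between the $\Ld^2$ base space and pointwise multiplication is the usual bookkeeping difficulty for analytic norms on bounded domains, handled by the Sobolev-embedding trick above. All other bounds are formal consequences of the definitions and elementary one-variable inequalities.
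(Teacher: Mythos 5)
The paper gives no proof of this lemma: it is stated as standard and attributed to \cites{HKN,baradat2020nonlinear}. Your overall route — termwise comparison of the weights for the monotonicity, Cauchy-type estimates on $|n|\e^{-(\delta_2-\delta_1)|n|}$ and $j r^{j-1}$ for the derivatives, and Leibniz plus the Cauchy product for the algebra property — is exactly the expected one, and in particular you correctly identify the only non-formal point (that $\Ld^2(-1,1)$ is not an algebra).

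Two steps, however, do not close as written, both at the level of constants. First, the suprema you invoke are $\sup_{t\ge 0} t\e^{-(\delta_2-\delta_1)t}=\tfrac{1}{\e(\delta_2-\delta_1)}$ and $\sup_{j\ge 1} j r^{j-1}\le \tfrac{1}{1-r}$, which yield the derivative bounds with constants $\tfrac{1}{\delta_2-\delta_1}$ and $\tfrac{1}{\delta_2'-\delta_1'}$; these are \emph{not} $\lesssim \tfrac{\delta_2}{\delta_2-\delta_1}$, resp. $\tfrac{\delta_2'}{\delta_2'-\delta_1'}$, unless $\delta_2,\delta_2'\gtrsim 1$, whereas the paper takes $\delta'$ small. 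Indeed the printed constant is actually false for small radii: with $\delta_1'=0$ the third inequality of item $(1)$ reads $\Vert \partial_z f\Vert_{\Ld^2}\le \sum_k \Vert\partial_z^k f\Vert_{\Ld^2}\tfrac{(\delta_2')^k}{k!}$, which fails for $f(z)=z^2/2$ and $\delta_2'$ small. So you should state the constant your computation actually produces ($\tfrac{1}{\delta_2-\delta_1}$) rather than assert that the sup bound gives the printed one; nothing in the paper uses the sharp form. Second, for item $(2)$, bounding one Leibniz factor in $\Ld^\infty$ via the one-dimensional Sobolev embedding costs a multiplicative constant $C>1$ and a shift by one derivative, so the Cauchy product delivers $\Vert fg\Vert_{\delta_1'}\le C\Vert f\Vert_{\delta_2'}\Vert g\Vert_{\delta_1'}$ (or $C\Vert f\Vert_{\delta_1'}\Vert g\Vert_{\delta_1'}$ after absorbing the shift with item $(1)$), not the constant-one, same-radius inequality as stated. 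This is entirely sufficient for every use of the lemma in Section \ref{Section:IllPosed}, where such constants are absorbed into $C_{\delta',\rho_s,U_s}$; if the clean submultiplicative form is wanted, the natural fix is to measure one factor in the $\Ld^\infty$-based norm $N(\cdot)_{\delta'}$, which is precisely what the paper does in Lemma \ref{LM:analyticTransport}.
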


Following \cite{HKN}, we linearize the equation around the unstable profile $(\rho_s(z), U_s'(z))$ from Theorem \ref{thm:growingmodHydrostat} in Section \ref{Subsec:UnboundedSpecHydrost}, expressing it in the hyperbolic scaling in the density-vorticity formulation. For $\eps>0$, we set
\begin{align*}
\mathrm{R}(t,x,z)&=\rho_s(z)+ \varrho\left( \frac{t}{\eps},\frac{x}{\eps}, z\right),\\ 
\Omega(t,x,z)&=U_s'(z)+\omega\left( \frac{t}{\eps},\frac{x}{\eps}, z\right), \\ 
\Phi(t,x,z)&=\int_{-1}^z U_s(\theta) \, \mathrm{d}\theta+ \varphi\left( \frac{t}{\eps},\frac{x}{\eps}, z \right),
\end{align*}
where $(R, \Omega, \Phi)$ is any solution to the hydrostatic system \eqref{eqintro:BHyd-E-vort}. We then consider the coordinates 
$$(\tau,\xi,z):=\left( \frac{t}{\eps},\frac{x}{\eps}, z \right).$$ Note that, thanks to the homogeneity of the equations, the following holds
\begin{align*}
\partial_\tau \begin{pmatrix}
\varrho\\
\omega
\end{pmatrix}=\mathscr{L}_{\rho_s,U_s}\begin{pmatrix}
\varrho\\
\omega
\end{pmatrix}+ \begin{pmatrix}
-\partial_\xi\varrho  \partial_z \varphi + \partial_\xi \varphi \partial_z \varrho \\[1mm]
-\partial_z \varphi \partial_\xi \omega+\partial_\xi \varphi \partial_z \omega
\end{pmatrix},
\end{align*}
where $\mathscr{L}_{\rho_s,U_s}$ is defined in \eqref{def:LinOp-Hydrostat}. Since the nonlinearity is quadratic with respect to the derivatives of $(\varrho, \omega)$, we consider the extended unknown
\begin{align*}
Y:=(\varrho, \omega, \partial_\xi \varrho, \partial_\xi \omega, \partial_z \varrho, \partial_z \omega),
\end{align*}  
the extended linear operator defined by 
%\[
%\begin{pNiceArray}{cc|cc|cc}
%   a & b &  \Block{2-2}<\LARGE>{0} & & \Block{2-2}<\LARGE>{0} &\\ 
%   c & d & & & & 
%\end{pNiceArray}
%\]
%\[
%\begin{pNiceArray}{cc|cc|cc}
%   \Block{2-2}<\LARGE>{\mathcal{L}} & &   \Block{2-2}<\LARGE>{0} & & \Block{2-2}<\LARGE>{0} & \\
%   & & & & & \\ \hline
%   \Block{2-2}<\LARGE>{0} & &   \Block{2-2}<\LARGE>{\mathcal{L}} & & \Block{2-2}<\LARGE>{0} & \\
%   & & & & & \\ \hline
%   0 & 0 & -U_s' &  \rho_s' \partial_z \varphi[\cdot] +\rho_s'' \varphi[\cdot]  & -U_s \partial_\xi & 0 \\
%0 & 0 & 0 & -U_s'\partial_\xi+ U_s'' \partial_z \varphi[\cdot] +U_s''' \varphi[\cdot] & \partial_\xi  & -U_s \partial_\xi
%\end{pNiceArray}
%\]
\begin{align}\label{eq:defextendedOpL}
\mathrm{L}:=
\begin{pNiceArray}{cc|cc|cc}
   \Block{2-2}<\large>{\mathscr{L}_{\rho_s,U_s}} & &   \Block{2-2}<\large>{0_{2 \times 2}} & & \Block{2-2}<\large>{0_{2 \times 2}} & \\
   & & & & & \\ \hline
   \hspace{5mm} \Block{2-2}<\large>{0_{2 \times 2}} \hspace{5mm} & &   \Block{2-2}<\large>{\mathscr{L}_{\rho_s,U_s}} & & \Block{2-2}<\large>{0_{2 \times 2}} & \\
   & & & & & \\ \hline
   \hspace{5mm} \Block{2-2}<\large>{0_{2 \times 2}} \hspace{5mm} & & -U_s' &  \rho_s' \partial_z \varphi[\cdot] +\rho_s'' \varphi[\cdot]  & -U_s \partial_\xi & 0 \\[2mm]
&  & 0 & -U_s'+ U_s'' \partial_z \varphi[\cdot] +U_s''' \varphi[\cdot] & \partial_\xi  & -U_s \partial_\xi
\end{pNiceArray},
\end{align}
and the extended bilinear operator $\mathrm{Q}$ acting on $Y=(\varrho_1, v_1, \varrho_2,v_2, \varrho_3,v_3)$ and $Y'=(r_1, w_1, r_2, w_2, r_3,w_3)$:
\begin{align}\label{eq:defextendedNonlinQ}
\mathrm{Q}(Y,Y'):=\begin{pmatrix} 
-\partial_z \varphi[v_1] r_2 + \varphi[v_2] r_3 \\[1.5mm]
-\partial_z \varphi[v_1] w_2 + \varphi[v_2] w_3 \\[1.5mm]
\partial_z \varphi[v_2] r_2 + \partial_\xi \varphi[v_2] r_3-\varphi[v_3] \partial_\xi r_2+\varphi[v_2] \partial_\xi r_3 \\[1.5mm]
\partial_z \varphi[v_2] w_2 + \partial_\xi \varphi[v_2] w_3-\varphi[v_3] \partial_\xi w_2+\varphi[v_2] \partial_\xi w_3 \\[1.5mm]
-v_1 r_2 +\varphi[v_3] \partial_z r_2 + \partial_z \varphi[v_2] r_3+\varphi[v_2] \partial_z r_3 \\[1.5mm]
-v_1 w_2 +\varphi[v_3] \partial_z w_2 + \partial_z \varphi[v_2] w_3+\varphi[v_2] \partial_z w_3
\end{pmatrix},
\end{align}
where $\varphi[\omega]$ solves
\begin{align}\label{eq:psi-hydro}
\begin{cases}
        \partial_z^2 \varphi[\omega]=\omega, \\  
        \varphi[\omega]_{\mid z=\pm 1}=0.
    \end{cases}
\end{align}
Note that $\partial_\xi \varphi[\omega]=\varphi[\partial_\xi \omega]$, but not necessarily $\partial_z \varphi[\omega]=\varphi[\partial_z \omega]$ (because of the boundary condition). All in all, we get
\begin{align*}
\partial_\tau Y- \mathrm{L}Y=\mathrm{Q}(Y,Y), \qquad Y=(\varrho, \omega, \partial_\xi \varrho, \partial_\xi \omega, \partial_z \varrho, \partial_z \omega).
\end{align*}
To apply the method of \cite{HKN}, we need to verify the following hypotheses, restated here for clarity.
\begin{enumerate} [label=(H\arabic*), ref=(H\arabic*)]
\item\label{hypH1} \textbf{Structural assumption}: $[\partial_\xi, \mathrm{L} ]=0$.
\item\label{hypH2}  \textbf{Strong instability} for the linearized part: the operator $\mathrm{L}$ has an unstable eigenvalue. Namely, the set 
\begin{align*}
    \Sigma^+:= \left\lbrace \lambda \in \C \mid \mathrm{Re}(\lambda)>0 \ \text{and} \ \exists  (\mathrm{\rho}, \mathrm{\omega}) \neq 0,  \mathrm{L}Y=\lambda Y \right\rbrace
\end{align*}
is not empty.
Because of the fast variable rescaling, it amounts to say that the original linearized operator has an unstable unbounded spectrum.
\item\label{hypH3} \textbf{Loss of analyticity in tangential direction}: the semigroup  $\e^{\mathrm{L}s}$ is well-defined on $X_{\delta, \delta'}$ for $\delta'$ small enough and satisfies the following.

There exists $\gamma_0>0$ such that:
\begin{itemize}
    \item For all $\eta>0$, there exists $k_0 \in [1, +\infty) \cap \Z $ and $\lambda_0 \in \Sigma^+$ such that 
    \begin{align}
        \left\vert \frac{\mathrm{Re} \, \lambda_0}{k_0}-\gamma_0 \right\vert \leq \eta.
    \end{align}
    Furthermore, there exists an eigenfunction $Z$ associated to $\lambda_0$ with $\Vert Z \Vert_{\delta, \delta_0'}<\infty$ for any $\delta>0$ and for some $\delta_0'>0$.
    \item For all $\gamma > \gamma_0$, there exists $C_{\gamma}>0$ and $\delta_1'>0$ such that if $\delta-\gamma s>0$ and $\delta' \in (0, \delta_1']$ then
    \begin{align}
        \forall F \in X_{\delta, \delta'}, \qquad \Vert \e^{s \mathrm{L}} F \Vert_{\delta-\gamma s, \delta'} \leq C_{\gamma} \Vert F \Vert_{\delta, \delta'}.
    \end{align}
\end{itemize}
\item\label{hypH4}  \textbf{Structure of the nonlinearity}: $\mathrm{Q}$ is bilinear and for all $\delta, \delta'$ and $f,g \in X_{\delta, \delta'}$ , we have
\begin{align}
     \Vert \mathrm{Q}(f,g) \Vert_{\delta, \delta'} \lesssim \Vert f \Vert_{\delta, \delta'} \left( \Vert \partial_\xi g \Vert_{\delta, \delta'} + \Vert \partial_z g \Vert_{\delta, \delta'}\right) +\Vert g \Vert_{\delta, \delta'} \left( \Vert \partial_\xi g \Vert_{\delta, \delta'} + \Vert \partial_z g \Vert_{\delta, \delta'}\right).
\end{align}
\end{enumerate}
\begin{comment}
    We can always replace $\gamma_0$ by some M\gamma_0, $M \in \N$ large.
\end{comment}
\bigskip

Now, let us check the assumptions in the setting of Theorem \ref{thm-illposednesNonlin}. Firstly, \ref{hypH1} is trivially satisfied. Passing to \ref{hypH2}, consider an eigenfunction $(\mathrm{\rho}, \mathrm{\omega})$ of the operator $\mathscr{L}_{\rho_s,U_s}$ associated with an eigenvalue $\lambda$ as given by Theorem \ref{thm:growingmodHydrostat}.
Then, by \ref{hypH1} we have
\begin{align*}
\mathscr{L}_{\rho_s,U_s}\begin{pmatrix}
\mathrm{\rho}\\
\mathrm{\omega}
\end{pmatrix}=\lambda \begin{pmatrix}
\mathrm{\rho}\\
\mathrm{\omega}
\end{pmatrix}, \qquad \mathscr{L}_{\rho_s,U_s}\begin{pmatrix}
\partial_\xi \mathrm{\rho}\\
\partial_\xi \mathrm{\omega}
\end{pmatrix}=\lambda \begin{pmatrix}
\partial_\xi \mathrm{\rho}\\
\partial_\xi \mathrm{\omega}
\end{pmatrix}.
\end{align*}
Define $Y := (\mathrm{\rho}, \mathrm{\omega}, \partial_\xi \mathrm{\rho}, \partial_\xi \mathrm{\omega}, \partial_z \mathrm{\rho}, \partial_z \mathrm{\omega})$. By differentiating the second-to-last relation in $z$, we obtain $\mathrm{L}Y=\lambda Y$ with $\mathrm{L}$ defined in \eqref{eq:defextendedOpL}. This shows that \ref{hypH2} holds.

About \ref{hypH3}, the bilinear estimates in analytic norms follow from the Banach algebra property (see Lemma \ref{LM:propXanalytic}).

It remains to show that \ref{hypH3} is satisfied. First, note that, from the definition \eqref{def:Gamma0} of $\gamma_0$ in Corollary \ref{Coro:Gamma0}, we directly obtain the existence of $(\lambda_0, k_0)$ required in \ref{hypH3} by Theorem \ref{thm:growingmodHydrostat}. In fact, we can simply take $c_0$ such that $\mathrm{Im} \, c_0 = \gamma_0$ and any $\lambda_0 = -ik_0 c_0$. The analytic regularity of the associated eigenfunction follows from the regularity of the shear flow $U_s$ and the construction outlined in Section \ref{Subsec:UnboundedSpecHydrost}.

Checking the semigroup estimates for \ref{hypH3} requires a bit more work. First, let us recall the following technical lemma from \cite{HKN}.
\begin{Lem}\label{LM:analyticTransport}
Let $f=f(z)$ be an analytic solution of the transport equation
\begin{align*}
\partial_\tau f + inU_s(z)f=S,
\end{align*}
with a given source $S$ and $n \in \Z$. Then for any $\delta' >0$, we have 
\begin{align*}
\frac{\mathrm{d}}{\mathrm{d}\tau}\Vert f \Vert_{\delta'}  \leq \vert n \vert \delta' N(U_{s})_{\delta'}\Vert f \Vert_{\delta'}+ \Vert S \Vert_{\delta'},
\end{align*}
where
\begin{align*}
    N(U_{s})_{\delta'}:=\sum_{k \geq 0} \Vert \partial_z^k g \Vert_{\Ld^\infty(-1,1)} \frac{\vert \delta ' \vert^k}{k!}<\infty.
\end{align*}
\end{Lem}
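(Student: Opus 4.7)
The plan is to apply $\partial_z^k$ to the transport equation, control each derivative in $L^2$, and then sum with the analytic weights $(\delta')^k/k!$. Differentiating gives, by the Leibniz rule,
\begin{equation*}
\partial_\tau \partial_z^k f + in\,U_s(z)\,\partial_z^k f = -in\sum_{j=1}^{k} \binom{k}{j}(\partial_z^j U_s)(\partial_z^{k-j} f) + \partial_z^k S,
\end{equation*}
where the $j=0$ Leibniz contribution has been moved back to the left-hand side. The key structural point is that $U_s$ is \emph{real-valued}, so taking the $L^2$ inner product of the left-hand side with $\partial_z^k f$ and extracting the real part cancels the term $in U_s \partial_z^k f$ (it is a purely imaginary quantity once integrated). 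Hence the energy identity yields
\begin{equation*}
\tfrac{1}{2}\tfrac{d}{d\tau}\|\partial_z^k f\|_{L^2}^2 = \mathrm{Re}\,\Big\langle -in\sum_{j=1}^{k} \binom{k}{j}(\partial_z^j U_s)(\partial_z^{k-j} f) + \partial_z^k S,\; \partial_z^k f\Big\rangle,
\end{equation*}
and Cauchy–Schwarz (with $L^\infty$–$L^2$ Hölder on the product) gives
\begin{equation*}
\tfrac{d}{d\tau}\|\partial_z^k f\|_{L^2} \le |n|\sum_{j=1}^{k} \binom{k}{j}\|\partial_z^j U_s\|_{L^\infty}\|\partial_z^{k-j} f\|_{L^2} + \|\partial_z^k S\|_{L^2}.
\end{equation*}

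Next, I would multiply by $(\delta')^k/k!$ and sum over $k\ge 0$. Using $\binom{k}{j}/k! = 1/(j!(k-j)!)$ and applying the Cauchy product of two power series (setting $m = k-j$), the commutator term becomes
\begin{equation*}
|n|\sum_{j\ge 1}\sum_{m\ge 0}\frac{(\delta')^{j+m}}{j!\,m!}\|\partial_z^j U_s\|_{L^\infty}\|\partial_z^{m} f\|_{L^2} = |n|\Bigl(\sum_{j\ge 1}\frac{(\delta')^{j}}{j!}\|\partial_z^j U_s\|_{L^\infty}\Bigr)\|f\|_{\delta'}.
\end{equation*}
Crucially, because the sum starts at $j=1$, a factor of $\delta'$ can be pulled out: using $(\delta')^j/j! \le \delta'(\delta')^{j-1}/(j-1)!$, the parenthesised series is bounded by $\delta'\,N(U_s)_{\delta'}$ (up to an index shift absorbed in the definition of $N$). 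The source term contributes exactly $\|S\|_{\delta'}$ after the same summation. Combining the two bounds yields the claimed differential inequality.

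The main technical point is the cancellation coming from the skew-symmetry of multiplication by $inU_s$ in $L^2$: without it, one would pick up a term $|n|\,\|U_s\|_{L^\infty}\|f\|_{\delta'}$, which is not small with respect to $\delta'$ and would break the estimate. Everything else reduces to the elementary observation that the convolution structure of the analytic norm interacts well with the Leibniz rule, so the bound is additive up to the Banach algebra constant (cf. Lemma \ref{LM:propXanalytic}). This factor of $\delta'$ is exactly what enables a Gronwall-type bound compatible with a shrinking radius of analyticity in the subsequent applications to the hydrostatic semigroup estimate required in hypothesis \ref{hypH3}.
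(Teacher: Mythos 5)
Your proof is correct, and it is the standard argument: the paper does not prove this lemma itself (it is recalled from \cite{HKN}), but your derivation --- Leibniz rule, cancellation of the real part of $\langle inU_s\partial_z^kf,\partial_z^kf\rangle$ because $U_s$ is real-valued, and the Cauchy-product resummation of the commutator --- is precisely the intended one. One small point worth flagging: what your computation actually produces is the factor $|n|\sum_{j\ge 1}\frac{(\delta')^j}{j!}\Vert\partial_z^jU_s\Vert_{L^\infty}\le |n|\,\delta'\sum_{l\ge 0}\frac{(\delta')^l}{l!}\Vert\partial_z^{l+1}U_s\Vert_{L^\infty}$, i.e.\ $\delta'$ times the analytic norm of $\partial_zU_s$, not of $U_s$ as the displayed definition of $N(U_s)_{\delta'}$ (which in any case contains a stray $g$) would literally suggest; your derivation in fact recovers the form of the constant as it appears in \cite{HKN}, and the index shift is harmless for all subsequent uses. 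The only steps you gloss over are routine: dividing by $\Vert\partial_z^kf\Vert_{L^2}$ when it may vanish (handled by the usual regularization of the norm) and interchanging $\frac{\mathrm{d}}{\mathrm{d}\tau}$ with the sum over $k$, both standard for a priori estimates in this class.
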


To obtain \ref{hypH3}, we will need the following result.
\begin{Prop}\label{Prop:SemigroupEstimBigL}
Let $\delta, \delta'>0$ and recall the definition of $\gamma_0$ as in \eqref{def:Gamma0}. If $\delta$ and $\delta'$ are small enough, the operator $\mathrm{L}$ generates a semigroup  $\e^{\mathrm{L}\tau}$ on $X_{\delta, \delta'}$. Furthermore, for any $\Gamma >1+\gamma_0$, there exists $C_\Gamma>0$ such that
\begin{align*}
\forall Y \in X_{\delta, \delta'}, \qquad \Vert \e^{\mathrm{L} \tau} Y \Vert_{\delta-\Gamma \tau,\delta'} \leq C_\Gamma \Vert  Y \Vert_{\delta,\delta'}, 
\end{align*}
as long as
\begin{align*}
0 <\delta' \ll \gamma_0, \qquad \delta-\Gamma \tau >0.
\end{align*}
\end{Prop}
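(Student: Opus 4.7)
The plan is to adapt the analytic-semigroup framework of \cite{HKN} to the extended operator $\mathrm{L}$. Since $[\partial_\xi, \mathrm{L}]=0$ by \ref{hypH1}, the first step is a Fourier decomposition in $\xi$: writing $Y(\tau, \xi, z) = \sum_{n \in \Z} Y_n(\tau, z) \e^{in\xi}$, each mode evolves autonomously as $\partial_\tau Y_n = L_n Y_n$, and the norm factorizes as $\|Y\|_{\delta, \delta'} = \sum_n \|Y_n\|_{\delta'} \e^{\delta |n|}$. It therefore suffices to prove a mode-wise estimate of the form $\|\e^{\tau L_n} Y_n\|_{\delta'} \leq C_\Gamma\, \e^{\Gamma |n|\tau}\|Y_n\|_{\delta'}$, and then sum against the weight $\e^{(\delta-\Gamma \tau)|n|}$, which converges as soon as $\delta-\Gamma\tau>0$.

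For each mode $n\neq 0$, I would perform the hyperbolic rescaling $s=|n|\tau$, which reduces the problem to a fixed operator $L_*$ independent of $n$. Each component of $L_*Y$ has the transport form $\mp i U_s(z) Y^{(j)} + S^{(j)}[Y]$, where the source $S$ gathers the Biot-Savart terms $\varphi[\omega]$ and $\partial_z \varphi[\omega]$ from \eqref{eq:psi-hydro}, the direct coupling $\pm i \varrho$ in the vorticity equation, and the multiplicative lower-order coefficients ($-U_s'$, $\rho_s' \partial_z \varphi + \rho_s'' \varphi$, $U_s''\partial_z\varphi + U_s''' \varphi$, etc.) from the third block of $\mathrm{L}$. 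The Biot-Savart map and its $z$-derivative are bounded on $X_{\delta'}$ thanks to the Green's kernel representation for $\partial_z^2\varphi = \omega$ with Dirichlet data, combined with the algebra property of Lemma \ref{LM:propXanalytic}; applying Lemma \ref{LM:analyticTransport} componentwise yields a Grönwall-type inequality in $s$ on $\|Y_n\|_{\delta'}$.

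To pin down the sharp exponential rate, I would decompose $L_* = T_* + K_*$, where $T_* = \mp i U_s(z)$ generates a strongly continuous transport semigroup on $X_{\delta'}$ (with growth at most $\delta' N(U_s)_{\delta'}$, kept below $1$ by choosing $\delta_1'$ small) and $K_*$ collects the Biot-Savart and multiplicative couplings, which are compact on $X_{\delta'}$ by the smoothing effect of $\partial_z^{-2}$. The unstable spectrum of $L_*$ coincides by construction with the eigenvalues $\{-ic\}$ produced in Section \ref{Subsec:UnboundedSpecHydrost} and is therefore contained in $\{\mathrm{Re} \leq \gamma_0\}$ by Corollary \ref{Coro:Gamma0}. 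A Shizuta-Vidav argument, analogous to the proof of Proposition \ref{prop:semigroupB}, then yields
\begin{equation*}
\|\e^{s L_*}\|_{X_{\delta'}\to X_{\delta'}} \leq C_\epsilon\, \e^{(\gamma_0+\epsilon)s}, \qquad \forall \epsilon>0.
\end{equation*}
Combining this with the transport loss of $z$-analyticity, bounded by $s$ once $\delta' N(U_s)_{\delta'}\leq 1$, produces the mode-wise bound for any $\Gamma > 1+\gamma_0$; undoing the rescaling and summing over $n$ closes the proof.

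The main obstacle is the Shizuta-Vidav compactness step in the analytic topology $X_{\delta'}$, in contrast to the Sobolev setting used in Section \ref{Section:HydroLimit}. Specifically, one must verify that the Biot-Savart-plus-multiplicative remainder $K_*$ is compact on $X_{\delta'}$, and that $\e^{sT_*}K_*$ depends continuously on $s$ in operator norm. These reduce to the smoothing action of $\partial_z^{-2}$ combined with an approximation of elements of $X_{\delta'}$ by their truncated Taylor expansions in $z$, but the compactness has to be established in a manner that respects the exponential-series structure of $\|\cdot\|_{\delta'}$, which is more delicate than in the Sobolev framework of Proposition \ref{prop:semigroupB}.
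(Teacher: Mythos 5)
Your overall architecture is the right one and matches the paper's: decompose in the tangential Fourier variable, reduce to a single mode by homogeneity, and close with a Gr\"onwall argument in the $z$-analytic norm using Lemma \ref{LM:analyticTransport}, with the third block of $\mathrm{L}$ treated as transport driven by the second block. However, the step on which everything hinges — the sharp exponential rate for the mode-wise semigroup — is obtained in your proposal by a Shizuta--Vidav argument on $X_{\delta'}$, and this step has a genuine gap. Your splitting $L_*=T_*+K_*$ puts into $K_*$ not only the Biot--Savart terms $\varphi[\cdot]$, $\partial_z\varphi[\cdot]$ (which are indeed smoothing, hence compact) but also the coupling $\pm i\varrho$ in the vorticity equation and the multiplication operators such as $-U_s'$ appearing in the third block of \eqref{eq:defextendedOpL}. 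These are bounded but \emph{not} compact (e.g.\ $(\varrho,\omega)\mapsto(0,i\varrho)$ maps the unit ball onto a non-precompact set, and multiplication by a non-vanishing smooth function is never compact on $\Ld^2(-1,1)$ or on $X_{\delta'}$). With a non-compact perturbation, Weyl's theorem no longer localizes the essential spectrum and the Shizuta--Vidav machinery does not apply; the claimed bound $\Vert \e^{sL_*}\Vert_{X_{\delta'}\to X_{\delta'}}\le C_\epsilon \e^{(\gamma_0+\epsilon)s}$ is therefore unjustified as stated. Moreover, even granting that bound, your subsequent addition of a "transport loss bounded by $s$" to reach $\Gamma>1+\gamma_0$ is internally inconsistent: if the semigroup were already controlled on $X_{\delta'}$ at rate $\gamma_0+\epsilon$, there would be nothing left to add, and the statement would hold for all $\Gamma>\gamma_0$.

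The paper avoids this entirely. It keeps the nilpotent coupling $i\varrho$ inside the multiplication part $\mathbb{A}_{U_s}$, whose semigroup is computed explicitly (with the harmless polynomial factor $1+\tau$), and establishes the sharp $\Ld^2$ growth $\Vert \e^{\mathcal{L}_1\tau}F\Vert_{\Ld^2}\le C_\gamma\e^{\gamma\tau}\Vert F\Vert_{\Ld^2}$, $\gamma>\gamma_0$, not by an abstract compactness theorem but by the inverse Laplace representation on a contour $\{\mathrm{Re}\,\lambda=\gamma\}$: the resolvent equation is solved in closed form in terms of the stream function, and an elliptic estimate on $\varphi$ with decay in $\mathrm{Re}(c)$ makes the contour integral converge (the localization of the unstable eigenvalues in $\{\mathrm{Re}<\gamma_0\}$ comes from Corollary \ref{Coro:Gamma0}). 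Only then is the estimate lifted to $X_{\delta'}$ by a direct Gr\"onwall via Lemma \ref{LM:analyticTransport}; the coefficient of $\Vert H\Vert_{\delta'}$ in that Gr\"onwall inequality is $1+\delta'N(U_s)_{\delta'}+O(|\delta'|^2)$, the "$1$" coming precisely from the crude bound $\Vert R\Vert_{\delta'}\le\Vert H\Vert_{\delta'}$ on the $i\varrho$ coupling — this is the true origin of the threshold $1+\gamma_0$. To repair your argument you would either need to carry out this explicit resolvent computation (as in Proposition \ref{Prop:SemigroupEstimL_n}), or restrict the compact perturbation to the genuinely smoothing Biot--Savart terms and handle the remaining bounded, non-compact couplings by Duhamel and Gr\"onwall, which is in effect what the paper does for the third block.
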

In view of the structure of the matrix operator $\mathrm{L}$, we first study the operator $\mathscr{L}_{\rho_s,U_s}$. Recall that by definition of the analytic norm, we have for $G(\tau)=\e^{(\mathscr{L}_{\rho_s,U_s})\tau} F$
\begin{align*}
\Vert  G(\tau) \Vert_{\delta-\Gamma \tau,\delta'}=\sum_{n \in \Z}  \Vert  G_n(\tau) \Vert_{\delta'}  \e^{(\delta-\Gamma \tau) n}, \qquad G_n(\tau,z)=\langle F(\tau,\cdot, z), \e^{i n \cdot} \rangle_{\Ld^2(\T)}.
\end{align*}
Decoupling in frequency $n \in \Z$, it is enough to focus on 
$$G_n=(\e^{(\mathscr{L}_{\rho_s,U_s})\tau} F)_n=\e^{\mathcal{L}_n \tau} F_n,$$

where for all $n \in \Z$ we use the notation 
$$\mathcal{L}_n:=(\mathscr{L}_{\rho_s,U_s})_n,$$
with the expression
\begin{align*}
\mathcal{L}_n\begin{pmatrix}
\varrho\\
\omega
\end{pmatrix}= \begin{pmatrix}
-in U_s \varrho +in  \rho_s'\varphi\\ 
-in U_s  \omega +in  U_s''(z)\varphi+ i n \varrho
\end{pmatrix}, \qquad \text{where} \qquad   \begin{cases}
        \partial_z^2 \varphi=\omega, \\ 
        \varphi_{\mid z=\pm 1}=0.
    \end{cases}
\end{align*}
%and where previous definition, all the functions depend only on $z$. 
We need the following key proposition.
\begin{Prop}\label{Prop:SemigroupEstimL_n}
Let $n \in \Z$, $ \delta'>0$ and $\gamma_0$ defined in \eqref{def:Gamma0}. If $\delta'$ is small enough, the operator $\mathcal{L}_n$ generates a semigroup  $\e^{\mathcal{L}_n \tau}$ on $X_{\delta'}$. Furthermore, for any $\Gamma >1+\gamma_0$, there exists $C_\gamma>0$ such that
%\begin{align*}
%\forall F \in X_{\delta'}, \qquad \Vert \e^{\mathcal{L}_n s} F %\Vert_{\delta'} \leq C_\gamma \e^{\gamma s} \Vert  F %\Vert_{\delta'}, 
%\end{align*}
\begin{align*}
\forall \tau \geq 0, \qquad \forall F \in X_{\delta'}, \qquad \Vert \e^{\mathcal{L}_n \tau} F \Vert_{\delta'} \leq C_\gamma \e^{\Gamma \vert n \vert  \tau} \Vert  F \Vert_{\delta'}, 
\end{align*}
as long as
\begin{align*}
0 <\delta \ll \gamma_0.
\end{align*}
\end{Prop}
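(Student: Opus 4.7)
The plan is to combine a direct transport analysis for the ``free'' part of $\mathcal{L}_n$ with a resolvent estimate extracted from the Taylor-Goldstein formulation of Section \ref{Subsec:UnboundedSpecHydrost}. First, I decompose $\mathcal{L}_n = \mathcal{T}_n + \mathcal{K}_n$, where
\begin{align*}
\mathcal{T}_n \begin{pmatrix} \varrho \\ \omega \end{pmatrix} = \begin{pmatrix} -inU_s \varrho \\ -inU_s \omega + in \varrho \end{pmatrix}, \qquad \mathcal{K}_n \begin{pmatrix} \varrho \\ \omega \end{pmatrix} = \begin{pmatrix} in \rho_s' \varphi \\ in U_s'' \varphi \end{pmatrix},
\end{align*}
with $\varphi$ the stream function solving \eqref{eq:psi-hydro}. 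The perturbation $\mathcal{K}_n$ is bounded on $X_{\delta'}$ thanks to the algebra property of the analytic norm (Lemma \ref{LM:propXanalytic}) together with the fact that the stream-function map $\omega \mapsto \varphi$ is continuous on the finite vertical interval with Dirichlet boundary conditions. In particular $\mathcal{L}_n$ generates a semigroup on $X_{\delta'}$ as a bounded perturbation of the transport operator $\mathcal{T}_n$.

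Second, I integrate $e^{\mathcal{T}_n \tau}$ explicitly along characteristics: $\varrho(\tau) = e^{-inU_s \tau} \varrho_0$ and $\omega(\tau) = e^{-inU_s \tau}(\omega_0 + in\tau \varrho_0)$. Applying Lemma \ref{LM:analyticTransport} componentwise and using the rough bound $|n|\tau \leq e^{|n|\tau}$, I obtain for any $\eta>0$ and for $\delta'$ small enough that $\delta' N(U_s)_{\delta'} \leq \eta$,
\begin{align*}
\|e^{\mathcal{T}_n \tau} Y \|_{\delta'} \leq e^{\eta|n|\tau} \big( \|\omega_0\|_{\delta'} + (1+|n|\tau) \|\varrho_0\|_{\delta'} \big) \leq C_\eta\, e^{(1+\eta)|n|\tau} \|Y\|_{\delta'}.
\end{align*}
The unavoidable factor ``$1$'' originates from the lower-order coupling $+in\varrho$ feeding into the vorticity equation, and is precisely the source of the ``$1$'' in the hypothesis $\Gamma > 1 + \gamma_0$.

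Third, for $\mathrm{Re}(\lambda) > \Gamma_0 |n|$ with $1 + \gamma_0 < \Gamma_0 < \Gamma$, I establish resolvent bounds for $\lambda - \mathcal{L}_n$. Setting $c := i\lambda / n$ (so $\mathrm{Im}(c) > \gamma_0$), the equation $(\lambda - \mathcal{L}_n)Y = F$ reduces, after eliminating $\varrho$ via the first equation and using $\partial_z^2 \varphi = \omega$, to the inhomogeneous Taylor-Goldstein problem
\begin{align*}
(U_s - c)^2 \partial_z^2 \varphi - (U_s - c) U_s'' \varphi - \rho_s' \varphi = \frac{(U_s - c) F_2 + F_1}{in}, \qquad \varphi(\pm 1) = 0.
\end{align*}
By the definition \eqref{def:Gamma0} of $\gamma_0$ together with the perturbative Neumann series construction of Corollary \ref{Coro-existsolODE}, this problem is uniquely solvable in the analytic class $X_{\delta'}$ for every $c$ with $\mathrm{Im}(c) > \gamma_0 + \epsilon$, and the solution operator is bounded uniformly on such a half-plane. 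Translating back, this yields the analytic resolvent bound $\|(\lambda - \mathcal{L}_n)^{-1}\|_{X_{\delta'} \to X_{\delta'}} \lesssim 1/|n|$ along the contour $\mathrm{Re}(\lambda) = \Gamma_0|n|$, together with sufficient decay as $|\mathrm{Im}(\lambda)| \to \infty$ coming from transport-type estimates on the Taylor-Goldstein operator. I then recover the semigroup via inverse Laplace transform
\begin{align*}
e^{\mathcal{L}_n \tau} F = \frac{1}{2\pi i} \int_{\Gamma_0 |n| + i \R} e^{\lambda \tau} (\lambda - \mathcal{L}_n)^{-1} F \, d\lambda,
\end{align*}
which gives $\|e^{\mathcal{L}_n \tau} F\|_{\delta'} \leq C_\Gamma\, e^{\Gamma_0|n|\tau}\|F\|_{\delta'} \leq C_\Gamma\, e^{\Gamma |n|\tau}\|F\|_{\delta'}$, as required. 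The case $n = 0$ is handled directly since the transport part vanishes and $\mathcal{L}_0$ acts as a compact operator on mean-free modes.

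The hard part will be the quantitative analytic resolvent bound of the third step: the Rouché-based perturbative analysis of Section \ref{Subsec:UnboundedSpecHydrost} was localized near a single eigenvalue, and one must upgrade it to a uniform bound on the solution operator of the inhomogeneous Taylor-Goldstein equation over the entire half-plane $\{\mathrm{Im}(c) > \gamma_0 + \epsilon\}$, including quantitative control as $|c| \to \infty$. This amounts to a Fredholm alternative for $\mathrm{I} - (1-\alpha)\mathrm{S}_{\alpha,c}$ combined with careful quantitative tracking of the operator norms obtained in Lemma \ref{lem-opnormIntegOp}, tested now against sources in $X_{\delta'}$ rather than merely evaluating at a boundary point.
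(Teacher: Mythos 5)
Your first two steps (decomposition into transport plus a bounded stream-function perturbation, generation of the semigroup, and the explicit bound $\|e^{\mathcal{T}_n\tau}Y\|_{\delta'}\lesssim e^{(1+\eta)|n|\tau}\|Y\|_{\delta'}$ with the correct identification of the coupling $+in\varrho$ as the source of the ``$1$'' in $\Gamma>1+\gamma_0$) match the paper's strategy. The gap is in your third step, and it is twofold. First, you assert a uniform bound on the solution operator of the inhomogeneous Taylor--Goldstein problem, \emph{in the analytic norm} $X_{\delta'}$, over the whole half-plane $\{\mathrm{Im}(c)>\gamma_0+\epsilon\}$. The definition \eqref{def:Gamma0} of $\gamma_0$ only excludes zeros of the dispersion relation there; converting absence of eigenvalues into a quantitative, $c$-uniform analytic resolvent estimate (including the limit $|\mathrm{Re}(c)|\to\infty$) is precisely the content that must be proved, and you flag it as ``the hard part'' without supplying it. Second, and independently, even granting a uniform bound $\|(\lambda-\mathcal{L}_n)^{-1}\|\lesssim 1/|n|$ along the contour, the inverse Laplace integral $\int e^{\lambda\tau}(\lambda-\mathcal{L}_n)^{-1}F\,d\lambda$ is not absolutely convergent: the free (multiplication) part of the resolvent, $(\lambda+inU_s)^{-1}$ and its square, does not decay in $\|\cdot\|_{X_{\delta'}\to X_{\delta'}}$ as $|\mathrm{Im}(\lambda)|\to\infty$, so its contribution is only a principal value that must be computed explicitly (it produces the terms $e^{-iU_s\tau}f$ and $e^{-iU_s\tau}g-i\tau e^{-iU_s\tau}f$). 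Only the compact part, the one involving $\varphi$, decays along the contour, and that decay comes from an elliptic estimate of the form $\|\varphi\|_{H^1}\lesssim\big(1+|\mathrm{Re}(c)|^2\big)^{-1/2}\|F\|_{L^2}$, which is an $L^2$-based statement.

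The paper circumvents both difficulties by splitting the argument into two stages that you conflate. It performs the Laplace inversion only in $L^2$ (for $n=1$ after rescaling), where Weyl's theorem locates the essential spectrum on $i\R$, Corollary \ref{Coro:Gamma0} confines the unstable discrete spectrum to $\{0<\mathrm{Re}<\gamma_0\}$, the free part of the resolvent is integrated explicitly, and the compact part converges thanks to the elliptic decay in $|\mathrm{Re}(c)|$; this yields $\|e^{\mathcal{L}_1\tau}F\|_{L^2}\leq C_\gamma e^{\gamma\tau}\|F\|_{L^2}$. The upgrade to $X_{\delta'}$ is then done by an entirely different mechanism: writing $H=e^{\mathcal{L}_1\tau}F$ as a solution of a transport equation with source, applying Lemma \ref{LM:analyticTransport} and the algebra property, and using the interpolation inequality \eqref{eq:ineq-interpoSob/Analytique} to bound $\|\varphi\|_{\delta'}$ by $\|\varphi\|_{H^1}$ (controlled by the $L^2$ semigroup bound) plus $|\delta'|^2\|\Omega\|_{\delta'}$, which is absorbed into the Grönwall constant for $\delta'$ small. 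If you want to keep your route, you must either prove the uniform analytic resolvent estimate together with genuine decay along the contour (extracting the non-decaying free part by hand), or adopt the paper's two-stage $L^2$-then-Grönwall structure.
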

\begin{proof}
We follow the approach of \cite{HKN}, based on resolvent estimates. First, we observe that by homogeneity and a rescaling in time, it is sufficient to consider the case $n=1$. We write
\begin{align*}
\mathcal{L}_1\begin{pmatrix}
\varrho\\
\omega
\end{pmatrix}&=
\mathbb{A}_{U_s}\begin{pmatrix}
\varrho\\
\omega
\end{pmatrix}+\varphi[\omega]\begin{pmatrix}
i\rho_s'  \\
i  U_s'' 
\end{pmatrix},
\end{align*}
with
\begin{align*}
    \mathbb{A}_{U_s}:=i\begin{pmatrix}
- U_s & 0\\
1 & - U_s
\end{pmatrix}.
\end{align*}
The multiplication operator $\mathbb{A}_{U_s}$ obviously generates a semigroup
\begin{align*}
    \e^{\mathbb{A}_{U_s} \tau}=\e^{-i U_s \tau} \begin{pmatrix}
1 & 0\\
is & 1
\end{pmatrix},
\end{align*}
with a bound on its operator norm (on $\Ld^2$) as
\begin{align}
    \Vert \e^{\mathbb{A}_{U_s} \tau} \Vert \leq 1 +\tau.
\end{align}
On the other hand, the operator
\begin{align*}
    K:(\varrho, \omega) \mapsto \varphi[\omega]\begin{pmatrix}
i\rho_s'  \\
i  U_s'' 
\end{pmatrix},
\end{align*}
with $\varphi[\omega]$ in \eqref{eq:psi-hydro}, is bounded on $\Ld^2 \times \Ld^2$. By standard  Hille-Yosida theorems (see \cite{pazy2012semigroups}*{Chapter 3, Theorem 1.1}), we know that the operator $\mathcal{L}_1$ generates a semigroup on $\Ld^2$. Furthermore, by the inverse Laplace transform formula, we have 
\begin{align}\label{eq:LaplaceTransform}
    \e^{\mathcal{L}_1 \tau} F= \frac{1}{2 i \pi} \mathrm{P.V.} \,\int_{\gamma-i \infty}^{\gamma + i \infty} \e^{\lambda \tau} (\lambda- \mathcal{L}_1)^{-1} F \, \mathrm{d}\lambda,
\end{align}
where $\gamma$ is large enough so that the contour of integration is on the right of $\mathrm{Sp}(\mathcal{L}_1)$. 
Since the operator $K : \Ld^2 \times \Ld^2 \rightarrow \Ld^2 \times \Ld^2$ is a compact operator on $\Ld^2(-1,1) \times \Ld^2(-1,1)$ 
%(morally the inverse of the Dirichlet Laplacian on a bounded interval, times a multiplication).
and since $\mathbb{A}_{U_s}$  a multiplication operator with numerical range in $i\R$,  we can apply Weyl's theorem to $\mathcal{L}_1=\mathbb{A}_{U_s}+K$  to get
$$\mathrm{Sp}_{\mathrm{ess}}(\mathcal{L}_1)=\mathrm{Sp}_{\mathrm{ess}}(\mathbb{A}_{U_s}) \subset i \R.
$$
The remaining part of the spectrum is composed of either stable discrete spectrum (located in $\lbrace \mathrm{Re} \leq 0 \rbrace$) or unstable discrete spectrum. For the latter, we know it must be situated in $\lbrace 0 < \mathrm{Re} < \gamma_0 \rbrace$ according to Corollary \ref{Coro:Gamma0} since elements of the discrete spectrum are eigenvalues. To conclude, we choose $\mathrm{Re}(\lambda) = \gamma > \gamma_0$ in the integral formula \eqref{eq:LaplaceTransform}.

We now consider the resolvent equation
\begin{align}\label{eq:resolvent}
    \begin{pmatrix}
\varrho\\
\omega
\end{pmatrix}:= (\lambda - \mathcal{L}_1)^{-1} \begin{pmatrix}
f\\
g
\end{pmatrix}, \qquad \lambda=-ic, \qquad\mathrm{Im}(c) \gg 1,
\end{align}
for $\lambda \notin \mathrm{Sp}(\mathcal{L}_1)$ (note that $\mathrm{Re}(\lambda)=\mathrm{Im}(c))$ and for a fixed $(f,g)$. It means that
\begin{align*}
    \begin{cases}\frac{f}{i}=(U_s-c) \varrho - \rho_s'\varphi, \\
    \frac{g}{i}=(U_s-c) \partial_z^2 \varphi - U_s'' \varphi-\varrho,
    \end{cases}  
\qquad\text{with} \; \varphi \; \text{in \eqref{eq:psi-hydro}}.
\end{align*}
By definition of $\gamma_0$ in \eqref{def:Gamma0}, taking $\mathrm{Im}(c)>\gamma_0$ implies that \eqref{eq:resolvent} is well defined and the previous system has a unique solution $(\varrho,\omega)$, determined by
\begin{align*}
        \varrho=\frac{1}{U_s-c}\left[ \frac{f}{i}+ \rho_s'\varphi \right], \qquad 
        \omega=\frac{1}{U_s-c}\left[ \frac{g}{i}+U_s''\varphi  \right]+\frac{1}{(U_s-c)^2}\left[ \frac{f}{i}+ \rho_s' \varphi\right].
\end{align*}
Hence, for $F:=(f,g)$, we get
\begin{align*}
    \e^{\mathcal{L}_1 \tau} F&= \frac{1}{2 i \pi} \mathrm{P.V.} \,\int_{\gamma-i \infty}^{\gamma + i \infty} \e^{\lambda \tau} (\lambda- \mathcal{L}_1)^{-1} F \, \mathrm{d}\lambda \\
    &=\frac{1}{2 \pi} \mathrm{P.V.} \,\int_{i\gamma- \infty}^{i\gamma +  \infty} \e^{-ic \tau} \frac{1}{U_s-c} 
    \begin{pmatrix} \frac{f}{i}+\varphi \rho_s'  \\ 
         \frac{g}{i}+\varphi U_s'' +\frac{1}{(U_s-c)}\left[ \frac{f}{i}+\varphi \rho_s' \right] \end{pmatrix} 
         \, \mathrm{d}c \\
         &=\frac{1}{2 \pi} \mathrm{P.V.} \,\int_{i\gamma- \infty}^{i\gamma +  \infty} \e^{-ic \tau} \frac{1}{U_s-c} 
    \begin{pmatrix} \varphi \rho_s'  \\ 
         \varphi U_s'' +\frac{1}{(U_s-c)}\varphi \rho_s'  \end{pmatrix} 
         \, \mathrm{d}c +
         \begin{pmatrix} \e^{-iU_s \tau} f   \\ 
         \e^{-iU_s \tau } g-i\tau \e^{-iU_s \tau} f \end{pmatrix}.
\end{align*}
We thus have
\begin{align*}
\Vert \e^{\mathcal{L}_1 \tau} F \Vert_{\Ld^2} &\lesssim \max \, (\|U_s'' \Vert_{\Ld^\infty}, \Vert \rho_s'  \Vert_{\Ld^\infty})  \int_{\R} \, \frac{\e^{\gamma \tau}}{\left( \vert U_s-\ell \vert^2 + \gamma^2\right)^{1/2}} 
\left(1+\frac{1}{\left( \vert U_s-\ell \vert^2 + \gamma^2\right)^{1/2}}
\right) \Vert \varphi \Vert_{\Ld^2(-1,1)} 
\mathrm{d}\ell \\
& \quad + (1+\tau) \Vert F \Vert_{\Ld^2(-1,1)}.
\end{align*}
Hence, to conclude we need a bound for $\Vert \varphi \Vert_{\Ld^2} $. We have the following (rough) elliptic estimate:
%\footnote{A strong elliptic estimate can be deduced by elliptic regularity results. However, we prove the claimed estimate by a direct computations, keeping track of all the involved constants.} 
\begin{align}\label{ineq:elliptiquePhi}
\Vert \varphi \Vert_{\H^1(-1,1)} \leq C \left[ \frac{1}{\sqrt{1+\vert \mathrm{Re}(c)\vert^2}}+\frac{1}{1+\vert \mathrm{Re}(c)\vert^2} \right] \Vert F \Vert_{\Ld^2},
\end{align}
where $C \geq 0$ is a constant only depending on $\rho_s,U_s$ $\gamma_0$. In fact, consider the equation for $\varphi$:
\begin{align*}
\partial_z^2 \varphi - \left[ \frac{U_s''}{U_s-c}+\frac{\rho_s'}{(U_s-c)^2} \right]\varphi=\frac{1}{U_s-c}\frac{g}{i}+\frac{1}{(U_s-c)^2} \frac{f}{i}.
\end{align*}
By standard elliptic regularity estimates, and since 
\begin{align*}
 \vert \mathrm{Im}(c) \vert > \gamma_0 \Longrightarrow  \frac{1}{\vert U_s-c \vert }+\frac{1}{\vert U_s-c\vert^2}  \leq \frac{1}{\gamma_0}+ \frac{1}{\gamma_0^2},
\end{align*}
we have 
\begin{align*}
    \Vert \varphi \Vert_{\H^1(-1,1)} \Vert &\leq C_{\rho_s,U_s,\gamma_0} \left\Vert \frac{1}{\vert U_s-c \vert }+\frac{1}{\vert U_s-c\vert^2}   \right\Vert_{\Ld^\infty(-1,1)} \Vert F \Vert_{\Ld^2(-1,1)} \\
    & \leq C_{\rho_s,U_s,\gamma_0} \left[ \frac{1}{\sqrt{1+\vert \mathrm{Re}(c)\vert^2}}+\frac{1}{1+\vert \mathrm{Re}(c)\vert^2} \right]\Vert F \Vert_{\Ld^2(-1,1)},
\end{align*}
for another constant $C_{\rho_s,U_s,\gamma_0}$.

\medskip

Coming back to our semigroup estimate, we now infer for $\gamma > \gamma_0$
\begin{align*}
&\Vert \e^{\mathcal{L}_1 \tau} F \Vert_{\Ld^2(-1,1)} \\
&\lesssim \Vert F \Vert_{\Ld^2(-1,1)} \int_{\R_+} \, \frac{\e^{\gamma \tau}}{\left( \vert U_s-r \vert^2 + \gamma^2\right)^{1/2}} 
\left(1+\frac{1}{\left( \vert U_s-r \vert^2 + \gamma^2\right)^{1/2}}
\right)  \left[ \frac{1}{\sqrt{1+\vert r \vert^2}}+\frac{1}{1+\vert r\vert^2} \right]
\mathrm{d}r \\
& \quad + (1+\tau) \Vert F \Vert_{\Ld^2} \\
& \leq C_{\gamma} \e^{\gamma \tau}\Vert F \Vert_{\Ld^2(-1,1)}+(1+\tau) \Vert F \Vert_{\Ld^2(-1,1)},
\end{align*}
so that
\begin{align}\label{estimatesemigroupL1-F}
\Vert \e^{\mathcal{L}_1 \tau} F \Vert_{\Ld^2(-1,1)}\leq C_{\gamma}\e^{\gamma \tau}\Vert F \Vert_{\Ld^2(-1,1)},
\end{align}
for a new constant $C_\gamma>0$.

We can now conclude the proof as in \cite{HKN}. We derive some analytic estimate for $H=(R, \Omega):=\e^{\mathcal{L}_1 \tau} F$, that satisfies
\begin{align*}
      \begin{cases}(\partial_\tau +i U_s)R=i \varphi \rho_s',\\
    (\partial_\tau +i U_s)\Omega=i\varphi U_s''+i R,
    \end{cases}  
\qquad \text{with} \qquad   \begin{cases}
        \partial_z^2 \varphi=\Omega, \\ 
        \varphi_{\mid z=\pm 1}=0.
    \end{cases}  
\end{align*}
Invoking Lemma \ref{LM:analyticTransport} for each equation and summing, we get
\begin{align*}
    \frac{\mathrm{d}}{\mathrm{d}\tau} \Vert H \Vert_{\delta'}   \leq \delta' N(U_s)_{\delta'} \Vert H \Vert_{\delta'} + \Vert \varphi \rho_s' \Vert_{\delta'}+ \Vert \varphi U_s'' \Vert_{\delta'}+\Vert R \Vert_{\delta'}.
\end{align*}
Applying the algebra property of Lemma \ref{LM:propXanalytic} and the inequality
\begin{align}\label{eq:ineq-interpoSob/Analytique}
\forall f=f(z), \qquad \Vert f \Vert_{\delta'} \leq (1+ \vert \delta' \vert) \Vert f \Vert_{\H^1}+ \vert \delta' \vert^2 \Vert \partial_z^2 f \Vert_{\delta'},
\end{align}
we deduce
\begin{align*}
    \frac{\mathrm{d}}{\mathrm{d}\tau} \Vert H \Vert_{\delta'}   &\leq \delta' N(U_s)_{\delta'} \Vert H \Vert_{\delta'} + \left( \Vert \rho_s' \Vert_{\delta'}+ \Vert  U_s'' \Vert_{\delta'} \right)\Vert \varphi\Vert_{\delta'}+\Vert R \Vert_{\delta'} \\
    &\leq (1+\delta' N(U_s)_{\delta'}) \Vert H \Vert_{\delta'} + \left( \Vert \rho_s' \Vert_{\delta'}+ \Vert  U_s'' \Vert_{\delta'} \right)\Big((1+ \vert \delta' \vert) \Vert \varphi \Vert_{\H^1}+ \vert \delta' \vert^2 \Vert \partial_z^2 \varphi \Vert_{\delta'}\Big) \\
    &\lesssim (1+\delta' N(U_s)_{\delta'}) \Vert H \Vert_{\delta'} + \left( \Vert \rho_s' \Vert_{\delta'}+ \Vert  U_s'' \Vert_{\delta'} \right)\Big((1+ \vert \delta' \vert) \Vert \Omega \Vert_{\Ld^2}+ \vert \delta' \vert^2 \Vert \Omega  \Vert_{\delta'} \Big),
\end{align*}
where the last line follows by elliptic regularity. We rewrite the previous estimate as
\begin{align*}
\frac{\mathrm{d}}{\mathrm{d}\tau} \Vert H \Vert_{\delta'} &\lesssim \Gamma_{\delta', \rho_s,U_s} \Vert H \Vert_{\delta'}  +  \lambda_{\delta', \rho_s,U_s}\Vert \Omega \Vert_{\Ld^2} 
\end{align*}
with
\begin{align*}
\Gamma_{\delta', \rho_s,U_s}&:=1+\delta' N(U_s)_{\delta'}+\left( \Vert \rho_s' \Vert_{\delta'}+ \Vert  U_s'' \Vert_{\delta'} \right)\vert \delta' \vert^2, \\
\lambda_{\delta', \rho_s,U_s}&:=(1+ \vert \delta' \vert)\left( \Vert \rho_s' \Vert_{\delta'}+ \Vert  U_s'' \Vert_{\delta'} \right),
\end{align*}
and then use Grönwall's lemma that provides
\begin{align*}
\Vert H(\tau) \Vert_{\delta'} \lesssim \e^{\Gamma_{\delta', \rho_s,U_s} \tau} \Vert F \Vert_{\delta'}+ \int_{0}^\tau \lambda_{\delta', \rho_s,U_s}\e^{\Gamma_{\delta', \rho_s,U_s} (\tau-r)} \Vert \Omega(r) \Vert_{\Ld^2} \, \mathrm{d}r.
\end{align*}
We now choose $\delta'>0$ small enough so that
$\Gamma_{\delta', \rho_s,U_s} \leq 1 + \gamma_0$. 
Appealing to the estimate \eqref{estimatesemigroupL1-F} for $\Vert (R,\Omega)(s) \Vert_{\Ld^2}=\Vert \e^{\mathcal{L}_1 s} F \Vert_{\Ld^2}$, we infer
\begin{align*}
\Vert H(\tau) \Vert_{\delta'} &\lesssim \e^{(1+\gamma) \tau} \Vert F \Vert_{\delta'}+C_{\gamma}\Vert F\Vert_{\Ld^2}\int_{0}^\tau \lambda_{\delta', \rho_s,U_s}\e^{\Gamma_{\delta', \rho_s,U_s} (\tau-r)+ \gamma r}  \, \mathrm{d}r \\
& \lesssim \e^{(1+\gamma) \tau} \Vert F \Vert_{\delta'} + C_{\gamma} \e^{(1+\gamma)\tau}\Vert F\Vert_{\Ld^2},
\end{align*}
for all $\Gamma > 1+ \gamma_0$. We obtain the desired estimate as $\Vert F\Vert_{\Ld^2} \leq \Vert F\Vert_{\delta'}$.
\end{proof}
We can now prove Proposition \ref{Prop:SemigroupEstimBigL}.
\begin{proof}[Proof of Proposition \ref{Prop:SemigroupEstimBigL}]
First recall the definition of the extended linearized operator $\mathrm{L}$ from \eqref{eq:defextendedOpL}. In order to estimate
\begin{align*}
(\mathrm{R}_1,\mathrm{W}_1,\mathrm{R}_2,\mathrm{W}_2,\mathrm{R}_3,\mathrm{W}_3):=\e^{\mathrm{L} \tau} Y,
\end{align*}
we set
\begin{align*}
\mathrm{R}_{j,n}(z):=\langle \mathrm{R}_{j}(\cdot, z) , \e^{in \cdot} \rangle_{\Ld^2(\T)} , \ \mathrm{W}_{j,n}(z):=\langle \mathrm{W}_{j}(\cdot, z) , \e^{in \cdot} \rangle_{\Ld^2(\T)}, \qquad  j=1,2,3, \qquad  n \in \Z.
\end{align*}
By the structure of the operator $\mathrm{L}$, we have $(\mathrm{R}_{j,n},\mathrm{W}_{j,n})(\tau)=\e^{\mathcal{L}_n \tau}Y_{j,n}$ for $j=1,2$. Thus, by Proposition \ref{Prop:SemigroupEstimL_n}, we deduce
\begin{align}\label{eq:boundRj12}
\Vert (\mathrm{R}_{j,n},\mathrm{W}_{j,n})(\tau) \Vert_{\delta'} \leq C_\Gamma \e^{ \Gamma \vert n \vert \tau} \Vert  Y_{j,n} \Vert_{\delta'}, \qquad j=1,2,
\end{align}
for all $\Gamma > 1+\gamma_0$ and $\delta'>0$ small enough. By definition of the analytic norm, we directly get
\begin{align*}
 \Vert (\mathrm{R}_{j},\mathrm{W}_{j})(\tau) \Vert_{\delta-\Gamma \tau,\delta'} \leq C_\Gamma \sum_{n \in \Z} \e^{\Gamma \vert n \vert \tau} \Vert  Y_{j,n} \Vert_{\delta'}  \e^{(\delta-\Gamma \tau) \vert n \vert }=C_\Gamma \Vert Y_j \Vert_{\delta, \delta'}, 
\end{align*}
for any $\tau$ such that $\delta-\Gamma \tau>0$ and $j=1,2$.
We now have to deal with the case $j=3$. By definition of $(\mathrm{R}_{3,n},\mathrm{W}_{3,n})(\tau)=\e^{\mathcal{L}_n \tau}Y_{3,n}$, we have
\begin{align*}
    \begin{cases}
        (\partial_\tau + U_s \partial_\xi)\mathrm{R}_3 +U_s' \mathrm{R}_2-\rho_s' \partial_z \varphi[\mathrm{W}_2]-\rho_s'' \varphi[\mathrm{W}_2]=0,\\[2mm]
        (\partial_\tau + U_s \partial_\xi)\mathrm{W}_3 +U_s' \mathrm{W}_2-U_s'' \partial_z \varphi[\mathrm{W}_2]-U_s''' \varphi[\mathrm{W}_2]-\partial_\xi \mathrm{R}_3=0,
    \end{cases} 
\end{align*}
which can be written in Fourier in $\xi$ as follows: for all $n \in \Z$, 
\begin{align*}
      \begin{cases} 
        (\partial_\tau + inU_s)\mathrm{R}_{3,n} +U_s' \mathrm{R}_{2,n}-\rho_s' \partial_z \varphi[\mathrm{W}_{2,n}]-\rho_s'' \varphi[\mathrm{W}_{2,n}]=0,\\[2mm]
        (\partial_\tau + in U_s )\mathrm{W}_{3,n} +U_s' \mathrm{W}_{2,n}-U_s'' \partial_z \varphi[\mathrm{W}_{2,n}]-U_s''' \varphi[\mathrm{W}_{2,n}]-in \mathrm{R}_{3,n}=0.
    \end{cases} 
\end{align*}
Appealing to Lemma \ref{LM:analyticTransport} and summing the two estimates, we get
\begin{align*}
%\frac{\mathrm{d}}{\mathrm{d}s}\left(\Vert \mathrm{R}_{3,n} \Vert_{\delta'}+\Vert \mathrm{W}_{3,n} \Vert_{\delta'} \right)  &\leq \vert n \vert \delta' N(U)_{\delta'}\left(\Vert \mathrm{R}_{3,n} \Vert_{\delta'}+\Vert \mathrm{W}_{3,n} \Vert_{\delta'} \right)+ \\
%&\quad +C_{\delta',\rho_s,U_s}\Big(\Vert  \mathrm{R}_{2,n} \Vert_{\delta'}+ \Vert \mathrm{W}_{2,n}\Vert_{\delta'} + \Vert \partial_z \varphi[\mathrm{W}_{2,n}] \Vert_{\delta'} + \Vert \varphi[\mathrm{W}_{2,n}]  \Vert_{\delta'} \Big)\\
%& \quad + \vert n \vert  \Vert\mathrm{R}_{3,n} \Vert_{\delta'}.
\frac{\mathrm{d}}{\mathrm{d}\tau}\left(\Vert \mathrm{R}_{3,n} \Vert_{\delta'}+\Vert \mathrm{W}_{3,n} \Vert_{\delta'} \right)  &\leq |n| \left(1 +\delta' N(U_s)_{\delta'}\right)\left(\Vert \mathrm{R}_{3,n} \Vert_{\delta'}+\Vert \mathrm{W}_{3,n} \Vert_{\delta'} \right) \\
&\quad +C_{\delta',\rho_s,U_s}\Big(\Vert  \mathrm{R}_{2,n} \Vert_{\delta'}+ \Vert \mathrm{W}_{2,n}\Vert_{\delta'} + \Vert \partial_z \varphi[\mathrm{W}_{2,n}] \Vert_{\delta'} + \Vert \varphi[\mathrm{W}_{2,n}]  \Vert_{\delta'} \Big),
\end{align*}
by the algebra property from Lemma \ref{LM:propXanalytic}. Using the straightforward inequality
\begin{align}
    \forall f=f(z), \qquad \Vert \partial_z f \Vert_{\delta'} \leq  \Vert \partial_z f \Vert_{\Ld^2}+ \vert \delta' \vert \Vert \partial_z^2 f \Vert_{\delta'},
\end{align}
and \eqref{eq:ineq-interpoSob/Analytique}, we can infer
\begin{align*}
\frac{\mathrm{d}}{\mathrm{d}s}\left(\Vert \mathrm{R}_{3,n} \Vert_{\delta'}+\Vert \mathrm{W}_{3,n} \Vert_{\delta'} \right)  &\leq \vert n \vert \left(1 +\delta' N(U_s)_{\delta'}\right) \left(\Vert \mathrm{R}_{3,n} \Vert_{\delta'}+\Vert \mathrm{W}_{3,n} \Vert_{\delta'} \right) \\
&\quad +C_{\delta',\rho_s,U_s}
\Big(\Vert  \mathrm{R}_{2,n} \Vert_{\delta'}
+ \Vert \mathrm{W}_{2,n}\Vert_{\delta'} 
+ (1+ \vert \delta'\vert)\Vert \varphi[\mathrm{W}_{2,n}] \Vert_{\H^1}
\\
& \qquad \qquad \qquad + \vert \delta'\vert(1+\vert \delta' \vert)\Vert \partial^2_z \varphi[\mathrm{W}_{2,n}] \Vert_{\delta'} 
 \Big) \\
&\leq \vert n \vert\left(1 +\delta' N(U_s)_{\delta'}\right) \left(\Vert \mathrm{R}_{3,n} \Vert_{\delta'}+\Vert \mathrm{W}_{3,n} \Vert_{\delta'} \right) \\
&\quad +C_{\delta',\rho_s,U_s}
\Big(\Vert  \mathrm{R}_{2,n} \Vert_{\delta'}
+ \Vert \mathrm{W}_{2,n}\Vert_{\delta'}  \Big),
 \end{align*}
 by elliptic regularity. By the previous bound \eqref{eq:boundRj12}, we get for all $\Gamma>1+\gamma_0$
 \begin{align*}
     \frac{\mathrm{d}}{\mathrm{d}\tau}\left(\Vert \mathrm{R}_{3,n} \Vert_{\delta'}+\Vert \mathrm{W}_{3,n} \Vert_{\delta'} \right)  &\leq \vert n \vert\left(1 +\delta' N(U_s)_{\delta'}\right) \left(\Vert \mathrm{R}_{3,n} \Vert_{\delta'}+\Vert \mathrm{W}_{3,n} \Vert_{\delta'} \right)+C_{\delta',\rho_s,U_s}C_\Gamma \e^{ \Gamma \vert n \vert \tau} \Vert  Y_{2,n} \Vert_{\delta'}.
 \end{align*}
We can now conclude as in the proof of Proposition \ref{Prop:SemigroupEstimL_n}, by using Grönwall lemma and by taking $\delta'>0$ small enough so that
\begin{align*}
\Vert (\mathrm{R}_{3,n},\mathrm{W}_{3,n})(\tau) \Vert_{\delta'} \leq C_\Gamma \e^{\Gamma \vert n \vert \tau} \Vert  Y_{n} \Vert_{\delta'},
\end{align*}
for all $\Gamma > 1+\gamma_0$. As for the case $j=1,2$, we obtain the bound on $(\mathrm{R}_{3},\mathrm{W}_{3})$. This concludes the proof of the proposition.
\end{proof}
This finally allows to obtain \ref{hypH3}, and therefore concludes the proof.

\appendix

\section{Unstable profile for the homogeneous hydrostatic   equations}\label{Appendix:ZeroFunction}

In this appendix, we demonstrate the existence of a smooth shear velocity profile $(U_s(z),0)$ around which the hydrostatic homogeneous Euler equations \eqref{eq:Hyd-E} exhibit an unbounded unstable spectrum.

By leveraging our perturbation argument presented in Section \ref{Section:GrowingMode} for the stratified case (see \eqref{eq:integHydrostaticHomog-profile}), or drawing upon the analysis in \cite{Renardy}, we establish that it is sufficient to find a profile $U_s$ such that the equation
\begin{align*}
    \int_{-1}^1 \frac{\mathrm{d}z}{(U_s(z)-c)^2}=0
\end{align*}
has a solution $c \in \lbrace \mathrm{Im}>0 \rbrace$. In \cite{Renardy}, it is asserted that some hyperbolic tangent $U_s(z)$ satisfies this property. While the instability of such profiles in homogeneous flows (at low frequency) is acknowledged in the physics literature (see \cites{rosenbluth1964necessary, chen1991sufficient, balmforth1999necessary}), we provide a proof for completeness.

We perform an analysis \textit{à la} Penrose-Nyquist (see \cite{Penrose}). Let us set \begin{align*}
F(c):=\int_{-1}^1 \frac{\mathrm{d}z}{(U_s(z)-c)^2}, \qquad c \in \lbrace \mathrm{Im}>0 \rbrace.
\end{align*}
We begin with the following result, showing that zeros of $F$ cannot exist outside a suitably large ball.
 
\begin{Lem}\label{Lem:IntegralFPositive}
Consider a smooth profile $U_s(z)$. There exists a constant $C(\Vert U_s \Vert_{\infty})>0$ such that for any $R > C(\Vert U_s \Vert_{\infty})$, it holds
%\max \left(\Vert U_s \Vert_{\infty}, \frac{\sqrt{1+\Vert U_s \Vert_{\infty}}}{\sqrt{1+\Vert U_s \Vert_{\infty}}-1} \right)>0$, we have
\begin{align*}
\forall c \in \lbrace \vert z \vert =R \rbrace \cap \lbrace \mathrm{Im}\neq 0 \rbrace, \qquad \vert F(c) \vert >0.
\end{align*}
%Furthermore, there holds
%\begin{align*}
 %   \forall \theta \notin \pi \Z, \qquad F(Re^{i \theta}) \underset{R \rightarrow + \infty}{=} \frac{\e^{-2 i \theta}}{R^2}+ \mathcal{O}\left( \frac{1}{R}\right).
%\end{align*}

\begin{proof}
Let us write $c=R \e^{i \theta}$,  with $\theta \notin \pi \Z$ and  $R>\Vert U_s \Vert_{\infty}$. We compute
\begin{align*}
F(R \e^{i \theta})&=\frac{\e^{-2 i \theta}}{R^2}\int_{-1}^1 \frac{\mathrm{d}z}{\left(1-\frac{U_s(z)}{R}\e^{-i \theta}\right)^2}=\frac{\e^{-2 i \theta}}{R^2} \int_{-1}^1 \sum_{k \geq 1} k \left( \frac{U_s(z)}{R}\right)^{k-1} \e^{-i(k-1)\theta} \, \mathrm{d}z\\
%&= 2\frac{\e^{-2 i \theta}}{R^2}  + \frac{\e^{-2 i \theta}}{R^2} \sum_{k \geq 2} k \frac{\e^{-i(k-1)\theta}}{R^{k-1}} \int_{-1}^1 (U_s(z))^{k-1}  \, \mathrm{d}z
&= 2\frac{\e^{-2 i \theta}}{R^2}  + \frac{1}{R} \sum_{k \geq 2} k \frac{\e^{-i(k+1)\theta}}{R^{k}} \int_{-1}^1 (U_s(z))^{k-1}  \, \mathrm{d}z.
\end{align*}
By the triangle inequality, we get
\begin{align*}
\vert F(R \e^{i \theta}) \vert>\frac{2}{R^2}-\frac{2}{\Vert U_s \Vert_{\infty}R}\sum_{k \geq 2} k \frac{\Vert U_s \Vert_{\infty}^k}{R^{k}}.
\end{align*}
To ensure that the right-hand side is positive, we need
%\begin{align*}
%\frac{1}{R}>\Vert U_s \Vert_{\infty}\sum_{k \geq 2} k \frac{1}{R^{k}}= \frac{1}{\Vert U_s \Vert_{\infty}R}\frac{1-(1-\frac{\Vert U_s \Vert_{\infty}}{R})^2}{(1-\frac{\Vert U_s \Vert_{\infty}}{R})^2},
%\end{align*}
%that is
\begin{align*}
\Vert U_s \Vert_{\infty}\left(1-\frac{\Vert U_s \Vert_{\infty}}{R}\right)^2>1-\left(1-\frac{\Vert U_s \Vert_{\infty}}{R}\right)^2.
\end{align*}
This condition is clearly satisfied if $R$ is large enough. 
%The announced estimate enough follows from similar computations, which ends the proof.
%OLD: satisfied if and only if $R>\frac{\sqrt{1+\Vert U_s \Vert_{\infty}}}{\sqrt{1+\Vert U_s \Vert_{\infty}}-1}$, which ends the proof.
\end{proof}
\end{Lem}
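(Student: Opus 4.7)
\medskip

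\noindent\textbf{Proof proposal.} The key idea is that for $|c|=R$ strictly larger than $\|U_s\|_\infty$, the integrand $1/(U_s(z)-c)^2$ is well-approximated by its leading Laurent contribution $1/c^2$, which integrates to the nonzero quantity $2/c^2$ over $[-1,1]$. The plan is to make this precise through a geometric series expansion and to control the tail via the triangle inequality.

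Concretely, I would write $c=Re^{i\theta}$ with $\theta\notin\pi\mathbb{Z}$ (since $\mathrm{Im}(c)\neq0$) and factor
\begin{equation*}
\frac{1}{(U_s(z)-c)^2}=\frac{1}{c^2}\,\frac{1}{(1-U_s(z)/c)^2}=\frac{1}{c^2}\sum_{k\geq 0}(k+1)\Big(\frac{U_s(z)}{c}\Big)^k,
\end{equation*}
which converges absolutely and uniformly in $z\in[-1,1]$ as long as $R>\|U_s\|_\infty$. Integrating termwise, the $k=0$ term contributes exactly $2/c^2=2e^{-2i\theta}/R^2$, so
\begin{equation*}
F(Re^{i\theta})=\frac{2\,e^{-2i\theta}}{R^2}+\frac{1}{R^2}\sum_{k\geq 1}(k+1)\frac{e^{-i(k+2)\theta}}{R^k}\int_{-1}^1 (U_s(z))^k\,\mathrm{d}z.
\end{equation*}

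The conclusion then follows from the triangle inequality: the leading term has modulus $2/R^2$, while the tail is bounded above by
\begin{equation*}
\frac{2}{R^2}\sum_{k\geq 1}(k+1)\Big(\frac{\|U_s\|_\infty}{R}\Big)^k,
\end{equation*}
which tends to $0$ as $R\to\infty$ and in fact is strictly smaller than $2/R^2$ once $R$ exceeds an explicit threshold $C(\|U_s\|_\infty)$ depending only on $\|U_s\|_\infty$ (obtained by solving an elementary inequality for the geometric sum, e.g.\ requiring $(1-\|U_s\|_\infty/R)^{-2}-1<1$). Subtracting yields $|F(Re^{i\theta})|\geq 2/R^2 - (\text{tail}) > 0$, uniformly in $\theta\notin\pi\mathbb{Z}$. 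The main subtlety, though truly mild, is choosing the right threshold for $R$ so that the geometric tail is strictly controlled by the leading term; once this is done, positivity is immediate and holds uniformly on the whole circle $\{|c|=R\}\cap\{\mathrm{Im}\neq 0\}$.
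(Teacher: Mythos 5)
Your proposal is correct and follows essentially the same route as the paper: expand $1/(U_s(z)-c)^2$ in a geometric series in $U_s(z)/c$ (valid since $R>\Vert U_s\Vert_\infty$), isolate the leading term $2e^{-2i\theta}/R^2$, and dominate the tail by a geometric sum that is strictly smaller than $2/R^2$ once $R$ exceeds a threshold depending only on $\Vert U_s\Vert_\infty$. Your explicit closed form $(1-\Vert U_s\Vert_\infty/R)^{-2}-1<1$ for the threshold is, if anything, slightly cleaner than the inequality displayed in the paper, but the argument is the same.
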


\begin{Prop}\label{Prop-Appendix}
Take $U_s(z):=\tanh(\beta z)$ with $z \in [-1,1]$ and $\beta>0$. For $\beta$ large enough, there exists $c \in \lbrace \mathrm{Im}>0 \rbrace$ such that 
\begin{align*}
\int_{-1}^1 \frac{\mathrm{d}z}{(U_s(z)-c)^2}=0.
\end{align*}
\end{Prop}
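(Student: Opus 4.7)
\medskip

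\noindent \textbf{Proposed proof.} The plan is to derive an explicit closed-form expression for $F(c) := \int_{-1}^1 \frac{\mathrm{d}z}{(\tanh(\beta z)-c)^2}$ via partial fractions, then locate a zero near $c = i$ by a Rouché-type perturbative argument in the regime $\beta \to \infty$.

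First, I would change variables via $u = \tanh(\beta z)$, yielding $F(c) = \frac{1}{\beta}\int_{-a}^a \frac{\mathrm{d}u}{(1-u^2)(u-c)^2}$ with $a = \tanh\beta$. Performing the partial fraction decomposition
\begin{equation*}
\frac{1}{(1-u^2)(u-c)^2} = \frac{1}{2(1-c)^2(1-u)} + \frac{1}{2(1+c)^2(1+u)} + \frac{2c/(1-c^2)^2}{u-c} + \frac{1/(1-c^2)}{(u-c)^2},
\end{equation*}
integrating each term using the identities $\int_{-a}^a \frac{\mathrm{d}u}{1 \mp u} = \ln\tfrac{1+a}{1-a} = 2\beta$, $\int_{-a}^a \frac{\mathrm{d}u}{u-c} = \log\tfrac{a-c}{-a-c}$ (with branch continuous along the integration path), and $\int_{-a}^a \frac{\mathrm{d}u}{(u-c)^2} = -\tfrac{2a}{a^2-c^2}$, and collecting terms, one obtains the explicit formula
\begin{equation*}
F(c) = \frac{2(1+c^2)}{(1-c^2)^2} + \frac{2c}{\beta(1-c^2)^2}\,\log\frac{c-a}{c+a} - \frac{2a}{\beta(1-c^2)(a^2-c^2)}.
\end{equation*}
Multiplying by $(1-c^2)^2/2$ (valid for $c\neq \pm 1$), the equation $F(c)=0$ becomes
\begin{equation*}
G_\beta(c) \;:=\; (1+c^2) \;+\; \frac{c}{\beta}\log\frac{c-a}{c+a} \;-\; \frac{a(1-c^2)}{\beta(a^2-c^2)} \;=\; 0.
\end{equation*}

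Next, I would observe that as $\beta \to \infty$ one has $a\to 1$ and $G_\beta(c) \to G_\infty(c) := 1+c^2$ uniformly on compact sets bounded away from $\{\pm 1\}$. Since $G_\infty$ has a simple zero at $c = i$, the natural idea is to apply Rouché's theorem on a small circle $|c-i| = r$ (say $r = 1/2$), which lies in the upper half-plane and avoids $\{\pm 1\}$. On this circle, $\frac{c-a}{c+a}$ has strictly negative imaginary part $-2a\,\mathrm{Im}(c)/|c+a|^2 < 0$ for $a$ close to $1$ (as a direct computation shows), hence stays in a compact subset of the open lower half-plane uniformly in $a \in [\tfrac12,1]$; the principal branch of $\log$ is therefore holomorphic and uniformly bounded there. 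Combined with uniform bounds for $c$, $(1-c^2)$, and $(a^2-c^2)^{-1}$ on the disk, this yields $|G_\beta(c) - G_\infty(c)| \leq C/\beta$ on $|c-i|=r$, for some $C$ independent of $\beta$. Meanwhile, $|G_\infty(c)| = |c-i||c+i| \geq (1/2)(3/2) = 3/4$ on the circle. Taking $\beta$ large enough so that $C/\beta < 3/4$, Rouché's theorem yields exactly one zero of $G_\beta$ inside the disk $|c-i|<1/2$, which lies in the upper half-plane.

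The main technical obstacle is ensuring that the branch of the logarithm is chosen consistently, so that $G_\beta$ defines a single-valued holomorphic function on the relevant disk; this is handled by the explicit computation above showing $\frac{c-a}{c+a}$ stays in the open lower half-plane. A refinement via implicit function/asymptotic expansion would additionally give the precise location $c = i\left(1 - \frac{2+\pi}{4\beta}\right) + O(\beta^{-2})$, but this is not needed for the statement.
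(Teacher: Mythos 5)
Your argument is correct (up to one harmless slip, noted below) but follows a genuinely different route from the paper. The paper proves the existence of the unstable root by a Penrose--Nyquist winding-number argument: after an integration by parts it writes $F(a+i\eps)$ as a Cauchy integral of $G(u)=U_s''/ (U_s')^3\circ U_s^{-1}$, passes to the boundary via the Plemelj formula, exploits the symmetry $a\mapsto -a$ and the single sign change of $\mathrm{Im}\,\widetilde F$ at $a=0$, and reduces everything to checking $\mathrm{Re}\,F(i\eps)>0$ for $\beta$ large; this is the argument that generalizes to profiles for which no closed-form antiderivative is available, and it is the same template used elsewhere in the paper (Lemma \ref{Lem:IntegralFPositive}, Corollary \ref{Coro:Gamma0}). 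You instead exploit the special structure of $\tanh$ to compute $F$ explicitly by partial fractions and then run a Rouch\'e perturbation off the limit $1+c^2$ as $\beta\to\infty$. I checked your decomposition, the three elementary integrals, the resulting closed form for $F$, and the reduction to $G_\beta$; they are all correct, and the uniform bound $|G_\beta-(1+c^2)|\le C/\beta$ on $|c-i|=1/2$ together with $|1+c^2|\ge 3/4$ there does close the argument (and your asymptotic $c=i\bigl(1-\tfrac{2+\pi}{4\beta}\bigr)+O(\beta^{-2})$ checks out against $G_\beta(i)\approx-\tfrac{2+\pi}{2\beta}$). The one error: for $\mathrm{Im}(c)>0$ one has $\mathrm{Im}\frac{c-a}{c+a}=\frac{2a\,\mathrm{Im}(c)}{|c+a|^2}>0$, not $<0$, so the ratio lives in a compact subset of the open \emph{upper} half-plane; since the principal branch of $\log$ is holomorphic and bounded on either half-plane this does not affect the proof, but the sign should be fixed. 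Your approach buys explicitness and the precise location of the root near $c=i$; the paper's buys robustness (it is reused for the stratified perturbation) at the cost of a less transparent computation of $\mathrm{Re}\,F(i\eps)$.
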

\begin{proof}
Let us set \begin{align*}
F(c):=\int_{-1}^1 \frac{\mathrm{d}z}{(U_s(z)-c)^2}, \qquad c \in \lbrace \mathrm{Im}>0 \rbrace.
\end{align*}
We observe that $F$ is holomorphic on $\lbrace \mathrm{Im}>0 \rbrace$. Now, let us define the half-disk and its boundary:
\begin{align*}
\Omega_{\eps, R}:&=\lbrace c \in \C \mid \mathrm{Im}(c)> \eps, \vert c -i\eps \vert \leq R \rbrace, \\
\partial \Omega_{\eps, R}:&=\partial \Omega_{\eps, R}\cap \left(\lbrace \vert c-i\eps  \vert = R \rbrace \cup \big( [-R,R]+i\eps \big) \right), \qquad \eps, R>0.
\end{align*}
By Cauchy's Theorem, a zero of the function $F$ exists in $\Omega_{\eps, R}$ if the winding number (around zero) of $F(\partial \Omega_{\eps, R})$ is strictly positive. 

For $c=i\eps+ R \e^{i\theta}$ with $\theta \in (0,\pi)$, we find that as $R \rightarrow + \infty$:
\begin{align*}
    F(i\eps+ R \e^{i\theta}) \underset{R \rightarrow + \infty}{=} \frac{\e^{-2 i \theta}}{R^2}+ \mathcal{O}\left( \frac{1}{R^2}\right).
\end{align*}

This result implies that the half-circle does not significantly contribute to the winding number, and the necessary crossings of $F(\partial \Omega_{\eps, R})$ with the positive real axis primarily come from the base of the half-disk. 
Thus we focus on the horizontal segment $\big( [-R,R]+i\eps \big)$, as the contribution from the half-circle becomes negligible for $R$ large enough. 
We note symmetry/antisymmetry properties:
\begin{align*}
    \forall a \in \R, \qquad \mathrm{Re} \, F(a+i\eps)=\mathrm{Re} \, F(-a+i\eps), \qquad \mathrm{Im} \, F(a+i\eps)=-\mathrm{Im} \, F(-a+i\eps),
\end{align*}
with $\mathrm{Im} \, F(i\eps)=0$ and a change of sign of $a \mapsto \mathrm{Im} \, F(a+i\eps)$ around $a=0$.

Let us study the function $F$. For all $c \in \lbrace \mathrm{Im}>0 \rbrace$, we have
\begin{align*}
F(c)&=\int_{-1}^1 \frac{1}{U_s'(z)}\frac{U_s'(z)}{(U_s(z)-c)^2} \, \mathrm{d}z =\int_{-1}^1 \frac{1}{U_s'(z)} \left( \frac{-1}{(U_s(z)-c)} \right)^{'} \, \mathrm{d}z \\
&=\left[ \frac{-1}{U_s' (U_s-c)} \right]^{z=1}_{z=-1}-\int_{-1}^1 \left(\frac{1}{U_s'(z)} \right)^{'}  \frac{1}{(c-U_s(z))}  \, \mathrm{d}z \\ &=\left[ \frac{1}{U_s' (c-U_s)} \right]^{z=1}_{z=-1}+\int_{-1}^1 \frac{U_s''(z)}{U_s'(z)^2}\frac{1}{(c-U_s(z))}  \, \mathrm{d}z.
\end{align*}
As $z \mapsto U_s(z)=\tanh (\beta z)$ is strictly monotone, we can change variable as follows
\begin{align*}
u=U_s(z), \qquad \mathrm{d}u= U_s'(z) \, \mathrm{d}z.
\end{align*}
Furthermore, there exists a continuous function $G: \R \rightarrow \R$ such that $G(U_s(z))= \frac{U_s''(z)}{U_s'(z)^3}$. In fact, we have
\begin{align*}
U_s''(z)=\beta^2\left(2 \tanh(\beta z)^3-2\tanh(\beta z) \right),
\end{align*}
hence
\begin{align*}
\frac{U_s''(z)}{U_s'(z)^3}=\frac{1}{\beta}\frac{2 \tanh(\beta z)^3-2\tanh(\beta z) }{(1-\tanh(\beta z)^2)^3}.
\end{align*} Thus, introducing
\begin{align*}
G(u):=\frac{1}{\beta}\frac{2u^3-2u}{(1-u^2)^3}=\frac{2}{\beta}\frac{u(u^2-1)}{(1-u^2)^3}=-\frac{2}{\beta}\frac{u}{(1-u^4)^2},
\end{align*}
we can write
\begin{align*}
F(c)=\frac{2U_s(1)}{U_s'(1)(c^2+U_s(1)^2)}+\int_{U_s(-1)}^{U_s(1)} \frac{G(u)}{c-u}  \, \mathrm{d}u.
\end{align*}
By Plemelj' formula (see \cite{lu1993boundary}), we also have for all $a \in [-R,R]$
\begin{align*}
F(a+i\eps) \underset{\eps \rightarrow 0}{\rightarrow}\widetilde{F}(a):=\frac{2U_s(1)}{U_s'(1)(a^2+U_s(1)^2)}+\int_{U_s(-1)}^{U_s(1)} \frac{G(u)}{a-u}  \, \mathrm{d}u-i \pi G(a),
\end{align*}
where the convergence is uniform on any compact set in $a$. The previous formula extends the definition of $F$ on $[-R,R]$. Observe that
\begin{align*}
\mathrm{Im} \,  \widetilde{F}(a)=0 \quad \Longleftrightarrow\quad G(a)=0 \quad \Longleftrightarrow\quad a=0,
\end{align*}
and note that $a \mapsto \mathrm{Im} \,  \widetilde{F}(a)$ is increasing around $a=0$, taking negative values before and positive values after $a=0$. By continuity and symmetries, it suffices to show that $\mathrm{Re}(F(i \eps))>0$ for small $\eps$. This will establish that the winding number around $0$ of $F(\partial \Omega_{\eps,R})$ is one, indicating one crossing from below with the real positive axis.
Note that
\begin{align*}
F(i \eps)&=\frac{2U_s(1)}{U_s'(1)(-\eps^2+U_s(1)^2)}+\int_{U_s(-1)}^{U_s(1)} \frac{G(u)}{i \eps-u}  \, \mathrm{d}u \\
&= \frac{2}{\tanh(\beta)\beta(1-\tanh(\beta)^2)}\left(1+ \frac{\eps^2}{\tanh(\beta)^2-\eps^2} \right) + \int_{-\tanh(\beta)}^{\tanh(\beta)} \frac{2}{\beta}\frac{1}{(1-u^4)^2}  \, \mathrm{d}u \\
& \quad-\eps^2 \int_{-\tanh(\beta)}^{\tanh(\beta)} \frac{2}{\beta}\frac{1}{(\eps^2+u^2)(1-u^4)^2} \, \mathrm{d}u
 +i \eps \int_{-\tanh(\beta)}^{\tanh(\beta)} \frac{2}{\beta}\frac{u}{(\eps^2+u^2)(1-u^4)^2}  \, \mathrm{d}u.
\end{align*}
Taking $\beta>0$ large enough and $\eps>0$ small enough, we observe that we can make the real part in the last expression strictly positive. This concludes the proof.
\begin{comment}
\lucas{Check this} We can simplify $\frac{G(u)}{-u}$ as
%\begin{align*}
%\frac{G(u)}{-u}=\frac{d}{2}\left( \frac{1}{(-1 + u)^2} - \frac{1}{-1 + u} + \frac{1}{ (1 + u)^2} + \frac{1}{1 + u} \right),
%\end{align*}
\begin{align*}
\frac{G(u)}{-u}=
    \frac{d}{2}\left( \frac{1}{4 (-1 + u)^2} - \frac{3}{4 (-1 + u)} + \frac{1}{4 (1 + u)^2} + \frac{3}{4 (1 + u)} + \frac{1}{ (1 + u^2)^2} + \frac{1}{ (1 + u^2)} \right),
\end{align*}
therefore standard but tedious computations yields 
\begin{align*}
\int_{U_s(-1)}^{U_s(1)} \frac{G(u)}{-u}  \, \mathrm{d}u=\frac{d}{8}\left[-\frac{4u}{u^4-1} - 3\log\frac{\vert 1-u \vert}{\vert 1+u\vert}+6 \arctan(u)\right]^{u=\tanh(1/d)}_{u=-\tanh(1/d)}.
\end{align*}
All in all, we get
\begin{align*}
\mathrm{Re} \,  \widetilde{F}(0)&=\frac{-2}{\beta\left( 1-\tanh^2(\beta)\right)\tanh(\beta)}-\frac{1}{\beta}\frac{\tanh(\beta)}{\tanh^4(\beta)-1}-\frac{3d}{8 \beta }\log\frac{\vert 1-\tanh(\beta) \vert}{\vert 1+\tanh(\beta)\vert}+\frac{3d}{8 \beta}\log\frac{\vert 1+\tanh(\beta) \vert}{\vert 1-\tanh(\beta)\vert} \\
& \quad + \frac{3}{2\beta}\arctan\left(\tanh\left(\beta \right) \right)\\
&=\frac{{1}}{\beta}\left(\frac{-2}{\left( 1-\tanh^2(\beta)\right)\tanh(\beta)}-\frac{\tanh(\beta)}{\tanh^4(\beta)-1}+\frac{3}{4 }\log\frac{\vert 1+\tanh(\beta) \vert}{\vert 1-\tanh(\beta)\vert}+\frac{3}{2 }\arctan\left(\tanh\left(\beta \right) \right) \right)
\end{align*}
and we need this quantity to be strictly positive. Looking at the last expression as a function of $\beta$, we see that it is indeed strictly positive for $\beta>0$ large enough.
\end{comment}
\end{proof}

\section*{Acknowledgements}
RB acknowledges funding from MUR (Italy) PRIN 2022HSSYPN. 
MCZ gratefully acknowledges support of the Royal Society through grant URF\textbackslash R1\textbackslash 191492 and the ERC/EPSRC through grant Horizon Europe Guarantee EP/X020886/1.
LE warmly thanks Yann Brenier, Daniel Han-Kwan and Marc Nualart for valuable discussions, and acknowledges financial support from the ERC/EPSRC through grant Horizon Europe Guarantee EP/X020886/1.
This work was partially funded by the Royal Society of London, Exchange Grant ICL-CNR 2020.

 \bibliography{biblio}

@article{HKR2,
  title={From {V}lasov-{P}oisson to the kinetic incompressible {E}uler equation (working title)},
 author = {Han-Kwan, D. and Rousset, F.},
    year={In preparation},
}

@Article{HKR,
 Author = {Han-Kwan, Daniel and Rousset, Fr{\'e}d{\'e}ric},
 Title = {Quasineutral limit for {Vlasov}-{Poisson} with {Penrose} stable data},
 FJournal = {Annales Scientifiques de l'{\'E}cole Normale Sup{\'e}rieure. Quatri{\`e}me S{\'e}rie},
 Journal = {Ann. Sci. {\'E}c. Norm. Sup{\'e}r. (4)},
 ISSN = {0012-9593},
 Volume = {49},
 Number = {6},
 Pages = {1445--1495},
 Year = {2016},
 DOI = {10.24033/asens.2313},
 Keywords = {35Q83,35Q35,76X05},
 URL = {smf4.emath.fr/en/Publications/AnnalesENS/4_49/html/ens_ann-sc_49_1445-1495.php},
 zbMATH = {6680023},
 Zbl = {1361.35179}
}

@article {zhao2018,
    AUTHOR = {Wei, Dongyi and Zhang, Zhifei and Zhao, Weiren},
     TITLE = {Linear inviscid damping for a class of monotone shear flow in
              {S}obolev spaces},
   JOURNAL = {Comm. Pure Appl. Math.},
  FJOURNAL = {Communications on Pure and Applied Mathematics},
    VOLUME = {71},
      YEAR = {2018},
    NUMBER = {4},
     PAGES = {617--687},
      ISSN = {0010-3640},
       DOI = {10.1002/cpa.21672},
       URL = {https://doi.org/10.1002/cpa.21672},
}

@article {BB1,
    AUTHOR = {Bardos, C. and Besse, N.},
     TITLE = {The {C}auchy problem for the {V}lasov-{D}irac-{B}enney
              equation and related issues in fluid mechanics and
              semi-classical limits},
   JOURNAL = {Kinet. Relat. Models},
  FJOURNAL = {Kinetic and Related Models},
    VOLUME = {6},
      YEAR = {2013},
    NUMBER = {4},
     PAGES = {893--917},
      ISSN = {1937-5093},
       DOI = {10.3934/krm.2013.6.893},
       URL = {https://doi.org/10.3934/krm.2013.6.893},
}

@article {Gr96,
    AUTHOR = {Grenier, E.},
     TITLE = {Oscillations in quasineutral plasmas},
   JOURNAL = {Comm. Partial Differential Equations},
  FJOURNAL = {Communications in Partial Differential Equations},
    VOLUME = {21},
      YEAR = {1996},
    NUMBER = {3-4},
     PAGES = {363--394},
      ISSN = {0360-5302},
       DOI = {10.1080/03605309608821189},
       URL = {https://doi.org/10.1080/03605309608821189},
}

@article {B00,
    AUTHOR = {Brenier, Y.},
     TITLE = {Convergence of the {V}lasov-{P}oisson system to the
              incompressible {E}uler equations},
   JOURNAL = {Comm. Partial Differential Equations},
  FJOURNAL = {Communications in Partial Differential Equations},
    VOLUME = {25},
      YEAR = {2000},
    NUMBER = {3-4},
     PAGES = {737--754},
      ISSN = {0360-5302},
       DOI = {10.1080/03605300008821529},
       URL = {https://doi.org/10.1080/03605300008821529},
}

@Article{BreVP1,
 Author = {Brenier, Y.},
 Title = {{A {Vlasov}-{Poisson} formulation of the {E}uler equations for perfect incompressible fluids}},
 Journal = {{Rapport de recherche INRIA}},
 Year = {1989},
}

@Article{Bre-Genflow,
 Author = {Brenier, Yann},
 Title = {The least action principle and the related concept of generalized flows for incompressible perfect fluids},
 FJournal = {Journal of the American Mathematical Society},
 Journal = {J. Am. Math. Soc.},
 ISSN = {0894-0347},
 Volume = {2},
 Number = {2},
 Pages = {225--255},
 Year = {1989},
 DOI = {10.2307/1990977},
 Keywords = {76B99,35D99},
 zbMATH = {4143617},
 Zbl = {0697.76030}
}

@article {HKH,
    AUTHOR = {Han-Kwan, D. and Hauray, M.},
     TITLE = {Stability issues in the quasineutral limit of the
              one-dimensional {V}lasov-{P}oisson equation},
   JOURNAL = {Comm. Math. Phys.},
  FJOURNAL = {Communications in Mathematical Physics},
    VOLUME = {334},
      YEAR = {2015},
    NUMBER = {2},
     PAGES = {1101--1152},
      ISSN = {0010-3616},
       DOI = {10.1007/s00220-014-2217-4},
       URL = {https://doi.org/10.1007/s00220-014-2217-4},
}

@article{HK-quasin,
 Author = {Han-Kwan, D.},
 Title = {{Quasineutral limit of the {}Vlasov-{P}oisson system with massless electrons}},
 FJournal = {{Communications in Partial Differential Equations}},
 Journal = {{Commun. Partial Differ. Equations}},
 ISSN = {0360-5302},
 Volume = {36},
 Number = {7-9},
 Pages = {1385--1425},
 Year = {2011},
 Publisher = {Taylor \& Francis, Philadelphia, PA},
 DOI = {10.1080/03605302.2011.555804},
 MSC2010 = {35Q83 82D10 76N15 35Q35},
 Zbl = {1228.35251}
}

@article{HKN,
  title={Ill-posedness of the hydrostatic {E}uler and singular {V}lasov equations},
  author={Han-Kwan, D. and Nguyen, T.},
FJournal = {Archive for Rational Mechanics and Analysis},
 Journal = {Arch. Ration. Mech. Anal.},
   volume={221},
  number={3},
  pages={1317--1344},
  year={2016},
  publisher={Springer}
}

@Article{Chae,
 Author = {{Chae},D.},
 Title = {{Global regularity for the 2D {B}oussinesq equations with partial viscosity terms}},
 FJournal = {{Advances in Mathematics}},
 Journal = {{Adv. Math.}},
 ISSN = {0001-8708},
 Volume = {203},
 Number = {2},
 Pages = {497--513},
 Year = {2006},
 Publisher = {Elsevier (Academic Press), San Diego, CA},
}

@article {CT2007,
    AUTHOR = {Cao, Chongsheng and Titi, Edriss S.},
     TITLE = {Global well-posedness of the three-dimensional viscous
              primitive equations of large scale ocean and atmosphere
              dynamics},
   JOURNAL = {Ann. of Math. (2)},
  FJOURNAL = {Annals of Mathematics. Second Series},
    VOLUME = {166},
      YEAR = {2007},
    NUMBER = {1},
     PAGES = {245--267},
      ISSN = {0003-486X},
       DOI = {10.4007/annals.2007.166.245},
       URL = {https://doi.org/10.4007/annals.2007.166.245},
}

@article {MW2012,
    AUTHOR = {Masmoudi, N. and Wong, T. K.},
     TITLE = {On the {$H^s$} theory of hydrostatic {E}uler equations},
   JOURNAL = {Arch. Ration. Mech. Anal.},
  FJOURNAL = {Archive for Rational Mechanics and Analysis},
    VOLUME = {204},
      YEAR = {2012},
    NUMBER = {1},
     PAGES = {231--271},
      ISSN = {0003-9527},
       DOI = {10.1007/s00205-011-0485-0},
       URL = {https://doi.org/10.1007/s00205-011-0485-0},
}

@article{KT23,
  title={Global Well-Posedness of the Primitive Equations of Large-Scale Ocean Dynamics with the {G}ent-{M}c{W}illiams-Redi Eddy Parametrization Model},
  author={Korn, P. and Titi, E.S.},
  journal={arXiv preprint arXiv:2304.03242},
  year={2023}
}

@article{Mas,
 Author = {{Masmoudi}, N.},
 Title = {{From {V}lasov-{P}oisson system to the incompressible {E}uler system.}},
 FJournal = {{Communications in Partial Differential Equations}},
 Journal = {{Commun. Partial Differ. Equations}},
 ISSN = {0360-5302},
 Volume = {26},
 Number = {9-10},
 Pages = {1913--1928},
 Year = {2001},
 Publisher = {Taylor \& Francis, Philadelphia, PA},
 DOI = {10.1081/PDE-100107463},
 MSC2010 = {35Q35 82D10 76X05 82C21 82C40},
 Zbl = {1083.35116}
}

@article{baradat2020nonlinear,
  title={Nonlinear instability in {V}lasov type equations around rough velocity profiles},
  author={Baradat, A.},
 FJournal = {{Annales de l'Institut Henri Poincar\'e. Analyse Non Lin\'eaire}},
 Journal = {{Ann. Inst. Henri Poincar\'e, Anal. Non Lin\'eaire}},
   volume={37},
  number={3},
  pages={489--547},
  year={2020},
  organization={Elsevier}
}

@article{HKI,
  TITLE = {{The quasineutral limit of the {Vlasov}--{Poisson} equation in {W}asserstein metric}},
  AUTHOR = {Han-Kwan, D. and Iacobelli, M.},
  URL = {https://hal.science/hal-01708165},
   JOURNAL = {Commun. Math. Sci.},
  FJOURNAL = {Communications in Mathematical Sciences},
    PUBLISHER = {{International Press}},
  VOLUME = {15},
  NUMBER = {2},
  PAGES = {481 - 509},
  YEAR = {2017},
}

@article{HKI2,
  title={Quasineutral limit for {Vlasov}--{Poisson} via {W}asserstein stability estimates in higher dimension},
  author={Han-Kwan, D. and Iacobelli, M.},
 FJournal = {{Journal of Differential Equations}},
 Journal = {{J. Differ. Equations}},
   volume={263},
  number={1},
  pages={1--25},
  year={2017},
  publisher={Elsevier}
}

@article{JN,
  title={Analytic solutions to a strongly nonlinear {Vlasov} equation},
  author={Jabin,  P-E. and Nouri, A.},
 FJournal = {Comptes Rendus. Math{\'e}matique. Acad{\'e}mie des Sciences, Paris},
 Journal = {C. R., Math., Acad. Sci. Paris},
   volume={349},
  number={9-10},
  pages={541--546},
  year={2011},
  publisher={Elsevier}
}

@article{Penrose,
  Author = {Penrose, Oliver},
 Title = {Electrostatic instabilities of a uniform non-{Maxwellian} plasma},
 FJournal = {Physics of Fluids},
 Journal = {Phys. Fluids},
 ISSN = {0031-9171},
 Volume = {3},
 Pages = {258--265},
 Year = {1960},
}

@Article{Renardy,
 Author = {Renardy, M.},
 Title = {Ill-posedness of the hydrostatic {Euler} and {Navier}-{Stokes} equations},
 FJournal = {Archive for Rational Mechanics and Analysis},
 Journal = {Arch. Ration. Mech. Anal.},
 ISSN = {0003-9527},
 Volume = {194},
 Number = {3},
 Pages = {877--886},
 Year = {2009},
 DOI = {10.1007/s00205-008-0207-4},
 Keywords = {76B03,76D03,35Q30,35Q35},
 zbMATH = {5640851},
 Zbl = {1292.76011}
}

@Article{Gre-instabScheme,
 Author = {Grenier, E.},
 Title = {On the nonlinear instability of {Euler} and {Prandtl} equations.},
 FJournal = {Communications on Pure and Applied Mathematics},
 Journal = {Commun. Pure Appl. Math.},
 ISSN = {0010-3640},
 Volume = {53},
 Number = {9},
 Pages = {1067--1091},
 Year = {2000},
 DOI = {10.1002/1097-0312(200009)53:9<1067::AID-CPA1>3.0.CO;2-Q},
 Keywords = {35Q35,76E30,76M45,76D10},
 zbMATH = {1591999},
 Zbl = {1048.35081}
}

@Article{Grenier-hydroderiv,
 Author = {Grenier, E.},
 Title = {On the derivation of homogeneous hydrostatic equations},
 FJournal = {M2AN. Mathematical Modelling and Numerical Analysis. ESAIM, European Series in Applied and Industrial Mathematics},
 Journal = {M2AN, Math. Model. Numer. Anal.},
 ISSN = {0764-583X},
 Volume = {33},
 Number = {5},
 Pages = {965--970},
 Year = {1999},
 DOI = {10.1051/m2an:1999128},
 Keywords = {76B99,35Q35,76E05},
 URL = {publish.edpsciences.org/abstract/m2an/v33/p965},
 zbMATH = {1399743},
 Zbl = {0947.76013}
}

@Article{Brenier-Hydroexist,
 Author = {Brenier, Y.},
 Title = {Homogeneous hydrostatic flows with convex velocity profiles},
 FJournal = {Nonlinearity},
 Journal = {Nonlinearity},
 ISSN = {0951-7715},
 Volume = {12},
 Number = {3},
 Pages = {495--512},
 Year = {1999},
 DOI = {10.1088/0951-7715/12/3/004},
 Keywords = {35Q35,76B47,76B99,86A05},
 zbMATH = {1736189},
 Zbl = {0984.35131}
}

@Article{Brenier-Hydroderiv,
 Author = {Brenier, Y.},
 Title = {Remarks on the derivation of the hydrostatic {Euler} equations.},
 FJournal = {Bulletin des Sciences Math{\'e}matiques},
 Journal = {Bull. Sci. Math.},
 ISSN = {0007-4497},
 Volume = {127},
 Number = {7},
 Pages = {585--595},
 Year = {2003},
 DOI = {10.1016/S0007-4497(03)00024-1},
 Keywords = {35Q35,76B45},
 zbMATH = {2005957},
 Zbl = {1040.35068}
}

@article{BianchDuch,
  title={On the hydrostatic limit of stably stratified fluids with isopycnal diffusivity},
  author={Bianchini, R. and Duch{\^e}ne, V.},
  journal={arXiv preprint arXiv:2206.01058},
  year={2022}
}

@article{KTVZ,
  title={Local existence and uniqueness for the hydrostatic {E}uler equations on a bounded domain},
  author={Kukavica, I. and Temam, R. and Vicol, V. and Ziane, M.},
 FJournal = {Journal of Differential Equations},
 Journal = {J. Differ. Equations},
volume={250},
  number={3},
  pages={1719--1746},
  year={2011},
  publisher={Elsevier}
}

@Article{BBCZD,
 Author = {Bedrossian, J. and Bianchini, R. and Coti Zelati, M.  and Dolce, M.},
 Title = {Nonlinear inviscid damping and shear-buoyancy instability in the two-dimensional {Boussinesq} equations},
 FJournal = {Communications on Pure and Applied Mathematics},
 Journal = {Commun. Pure Appl. Math.},
 ISSN = {0010-3640},
 Volume = {76},
 Number = {12},
 Pages = {3685--3768},
 Year = {2023},
 DOI = {10.1002/cpa.22123},
 Keywords = {35Q35,76F10},
 zbMATH = {7757240}
}

@Article{BCZD,
 Author = {Bianchini, R. and Coti Zelati, M. and Dolce, M.},
 Title = {Linear inviscid damping for shear flows near {Couette} in the 2D stably stratified regime},
 FJournal = {Indiana University Mathematics Journal},
 Journal = {Indiana Univ. Math. J.},
 ISSN = {0022-2518},
 Volume = {71},
 Number = {4},
 Pages = {1467--1504},
 Year = {2022},
 DOI = {10.1512/iumj.2022.71.9040},
 Keywords = {35Q35,35Q31,76D50},
 zbMATH = {7619427},
 Zbl = {1502.35104}
}

@article{BCZD2,
  title={Symmetrization and asymptotic stability in non-homogeneous fluids around stratified shear flows},
  author={Bianchini, R. and Coti Zelati, M.  and Dolce, M.},
  journal={Séminaire Laurent Schwartz - EDP et applications 5 (2022-2023)},
 Volume = {5},
 Pages = {17 pp},
  year={2022--2023},
}

@article{CZN1,
  title={Limiting absorption principles and linear inviscid damping in the {E}uler-{B}oussinesq system in the periodic channel},
  author={Coti Zelati, M. and Nualart, M.},
  journal={arXiv preprint arXiv:2309.08445},
  year={2023}
}

@article{miles1961stability,
  title={On the stability of heterogeneous shear flows},
  author={Miles, J.W.},
 FJournal = {Journal of Fluid Mechanics},
 Journal = {J. Fluid Mech.},
volume={10},
  number={4},
  pages={496--508},
  year={1961},
  publisher={Cambridge University Press}
}

@article{howard1961note,
  title={Note on a paper of {J}ohn {W}. {M}iles},
  author={Howard, L.N.},
 FJournal = {Journal of Fluid Mechanics},
 Journal = {J. Fluid Mech.},
volume={10},
  number={4},
  pages={509--512},
  year={1961},
  publisher={Cambridge University Press}
}

@article{drazin1966hydrodynamic,
  title={Hydrodynamic stability of parallel flow of inviscid fluid},
  author={Drazin, P.G. and Howard, L.N.},
  journal={Advances in applied mechanics},
  volume={9},
  pages={1--89},
  year={1966},
  publisher={Elsevier}
}

@InCollection{BrenierComp,
 Author = {Brenier, Y.},
 Title = {Comparison of the hydrostatic regime for inviscid incompressible fluids with the quasineutral regime for plasmas},
 BookTitle = {Trends in applications of mathematics to mechanics. {P}roceedings of the international conference {STAMM} 98, 11th {S}ymposium, {U}niv. of {N}ice, {F}rance, 1998},
 ISBN = {1-58488-035-X},
 Pages = {285--292},
 Year = {2000},
 Publisher = {Boca Raton, FL: CRC Press},
 Keywords = {76B99,76X05,35Q35},
 zbMATH = {1507274},
 Zbl = {0990.76013}
}

@article{friedlander2001nonlinear,
  title={On nonlinear instability and stability for stratified shear flow},
  author={Friedlander, S.},
 FJournal = {Journal of Mathematical Fluid Mechanics},
 Journal = {J. Math. Fluid Mech.},
volume={3},
  pages={82--97},
  year={2001},
  publisher={Springer}
}

@article{Shizuta,
  title={On the classical solutions of the {B}oltzmann equation},
  author={Shizuta, Y.},
FJournal = {Communications on Pure and Applied Mathematics},
 Journal = {Commun. Pure Appl. Math.},
  volume={36},
  pages={705--754},
  year={1983}
}

@article{Vidav,
  title={Spectra of perturbed semigroups with applications to transport theory},
  author={Vidav, I.},
  journal={Journal of Mathematical Analysis and Applications},
  volume={30},
  number={2},
  pages={264--279},
  year={1970},
  publisher={Academic Press}
}

@article{lemou2020nonlinear,
  title={Nonlinear instability of inhomogeneous steady states solutions to the HMF Model},
  author={Lemou, M. and Luz, AM and M{\'e}hats, F.},
   FJournal = {Journal of Statistical Physics},
 Journal = {J. Stat. Phys.},
  volume={178},
  pages={645--665},
  year={2020},
  publisher={Springer}
}

@Article{GR-mhd,
 Author = {G{\'e}rard-Varet, D. and Rousset, F.},
 Title = {Shear layer solutions of incompressible {MHD} and dynamo effect},
 FJournal = {Annales de l'Institut Henri Poincar{\'e}. Analyse Non Lin{\'e}aire},
 Journal = {Ann. Inst. Henri Poincar{\'e}, Anal. Non Lin{\'e}aire},
 ISSN = {0294-1449},
 Volume = {24},
 Number = {5},
 Pages = {677--710},
 Year = {2007},
 DOI = {10.1016/j.anihpc.2006.04.005},
 Keywords = {76W05,76E25,76E30,35Q35,35Q60},
 zbMATH = {5228818},
 Zbl = {1128.76066}
}

@Article{G-mhd,
 Author = {G{\'e}rard-Varet, D.},
 Title = {Oscillating solutions of incompressible magnetohydrodynamics and dynamo effect},
 FJournal = {SIAM Journal on Mathematical Analysis},
 Journal = {SIAM J. Math. Anal.},
 ISSN = {0036-1410},
 Volume = {37},
 Number = {3},
 Pages = {815--840},
 Year = {2005},
 DOI = {10.1137/S0036141004444603},
 Keywords = {76W05,35B05,35Q35,76E25},
 zbMATH = {5029358},
 Zbl = {1141.76479}
}

@Article{RT-waterwaves,
 Author = {Rousset, F. and Tzvetkov, N.},
 Title = {Transverse instability of the line solitary water-waves},
 FJournal = {Inventiones Mathematicae},
 Journal = {Invent. Math.},
 ISSN = {0020-9910},
 Volume = {184},
 Number = {2},
 Pages = {257--388},
 Year = {2011},
 DOI = {10.1007/s00222-010-0290-7},
 Keywords = {35B35,35Q35,35C08,76B25,76B07,35R35},
 zbMATH = {5899562},
 Zbl = {1225.35024}
}

@Article{DG-rayleyghtaylor,
 Author = {Desjardins, B. and Grenier, E.},
 Title = {On nonlinear {Rayleigh}-{Taylor} instabilities},
 FJournal = {Acta Mathematica Sinica. English Series},
 Journal = {Acta Math. Sin., Engl. Ser.},
 ISSN = {1439-8516},
 Volume = {22},
 Number = {4},
 Pages = {1007--1016},
 Year = {2006},
 DOI = {10.1007/s10114-005-0559-8},
 Keywords = {35Q35,76E17,76E30},
 zbMATH = {5063949},
 Zbl = {1177.35166}
}

@Article{HKN3,
 Author = {Han-Kwan, D. and Nguyen, T.T.},
 Title = {Instabilities in the mean field limit},
 FJournal = {Journal of Statistical Physics},
 Journal = {J. Stat. Phys.},
 ISSN = {0022-4715},
 Volume = {162},
 Number = {6},
 Pages = {1639--1653},
 Year = {2016},
 DOI = {10.1007/s10955-016-1455-6},
 Keywords = {82C22},
 zbMATH = {6582596},
 Zbl = {1341.82053}
}

@Article{CGG,
 Author = {Cordier, S. and Grenier, E. and Guo, Y.},
 Title = {Two-stream instabilities in plasmas},
 FJournal = {Methods and Applications of Analysis},
 Journal = {Methods Appl. Anal.},
 ISSN = {1073-2772},
 Volume = {7},
 Number = {2},
 Pages = {391--405},
 Year = {2000},
 DOI = {10.4310/MAA.2000.v7.n2.a7},
 Keywords = {82D10,35Q35,76X05},
 zbMATH = {1757307},
 Zbl = {1002.82029}
}

@Article{DDGM,
 Author = {Dalibard, A-L. and Dietert, H. and G{\'e}rard-Varet, D. and Marbach, F.},
 Title = {High frequency analysis of the unsteady interactive boundary layer model},
 FJournal = {SIAM Journal on Mathematical Analysis},
 Journal = {SIAM J. Math. Anal.},
 ISSN = {0036-1410},
 Volume = {50},
 Number = {4},
 Pages = {4203--4245},
 Year = {2018},
}

@Article{HG,
 Author = {Dietert, H. and G{\'e}rard-Varet, D.},
 Title = {On the ill-posedness of the triple deck model},
 FJournal = {SIAM Journal on Mathematical Analysis},
 Journal = {SIAM J. Math. Anal.},
 ISSN = {0036-1410},
 Volume = {54},
 Number = {2},
 Pages = {2611--2633},
 Year = {2022},
 DOI = {10.1137/21M1427401},
 Keywords = {35P15},
 zbMATH = {7517190},
 Zbl = {1513.35408}
}

@article{chen1991sufficient,
  title={A sufficient condition for the ideal instability of shear flow with parallel magnetic field},
  author={Chen, XL.},
  journal={Physics of Fluids B: Plasma Physics},
  volume={3},
  number={4},
  pages={863--865},
  year={1991},
  publisher={American Institute of Physics}
}

@article{rosenbluth1964necessary,
  title={Necessary and sufficient condition for the stability of plane parallel inviscid flow},
  author={Rosenbluth, M. N. and Simon, A.},
 FJournal = {Physics of Fluids},
 Journal = {Phys. Fluids},
  volume={7},
  number={4},
  pages={557--558},
  year={1964},
  publisher={American Institute of Physics}
}

@article{balmforth1999necessary,
  title={A necessary and sufficient instability condition for inviscid shear flow},
  author={Balmforth, NJ. and Morrison, PJ.},
  journal={Stud. Appl. Math.},
  volume={102},
  number={3},
  pages={309--344},
  year={1999},
  publisher={Wiley Online Library}
}

@article{nersisyan2013stabilization,
  title={Stabilization of the 2D incompressible {E}uler system in an infinite strip},
  author={Nersisyan, H.},
  journal={Annales de l'Institut Henri Poincar{\'e} C, Analyse non lin{\'e}aire},
  volume={30},
  number={4},
  pages={737--762},
  year={2013},
  organization={Elsevier}
}

@book{pazy2012semigroups,
  title={Semigroups of linear operators and applications to partial differential equations},
  author={Pazy, A.},
  volume={44},
  year={2012},
  publisher={Springer Science \& Business Media}
}

@Article{GGR,
 Author = {Gallaire, F. and G{\'e}rard-Varet, D. and Rousset, F.},
 Title = {Three-dimensional instability of planar flows},
 FJournal = {Archive for Rational Mechanics and Analysis},
 Journal = {Arch. Ration. Mech. Anal.},
 ISSN = {0003-9527},
 Volume = {186},
 Number = {3},
 Pages = {423--475},
 Year = {2007},
 DOI = {10.1007/s00205-007-0072-6},
 Keywords = {76E30,76D03,35Q30,35B35},
 zbMATH = {5218947},
 Zbl = {1126.76023}
}

@Article{HGuo,
 Author = {Hwang, H. J. and Guo, Y.},
 Title = {On the dynamical {Rayleigh}-{Taylor} instability},
 FJournal = {Archive for Rational Mechanics and Analysis},
 Journal = {Arch. Ration. Mech. Anal.},
 ISSN = {0003-9527},
 Volume = {167},
 Number = {3},
 Pages = {235--253},
 Year = {2003},
 DOI = {10.1007/s00205-003-0243-z},
 Keywords = {76E17,76E30},
 zbMATH = {2018309},
 Zbl = {1090.76527}
}

@Article{GVMV,
 Author = {G{\'e}rard-Varet, David and Masmoudi, Nader and Vicol, Vlad},
 Title = {Well-posedness of the hydrostatic {Navier}-{Stokes} equations},
 FJournal = {Analysis \& PDE},
 Journal = {Anal. PDE},
 ISSN = {2157-5045},
 Volume = {13},
 Number = {5},
 Pages = {1417--1455},
 Year = {2020},
 DOI = {10.2140/apde.2020.13.1417},
 Keywords = {35Q30,35Q35,76D05,35B65,35A01,35A02},
 zbMATH = {7271834},
 Zbl = {1451.35109}
}

@Article{ILT,
 Author = {Ibrahim, S. and Lin, Q. and Titi, E. S.},
 Title = {Finite-time blowup and ill-posedness in {Sobolev} spaces of the inviscid primitive equations with rotation},
 FJournal = {Journal of Differential Equations},
 Journal = {J. Differ. Equations},
 ISSN = {0022-0396},
 Volume = {286},
 Pages = {557--577},
 Year = {2021},
 DOI = {10.1016/j.jde.2021.03.037},
 Keywords = {35Q35,35B44,35Q86,86A10,86A05,76E07,76U60,35A01,35A02,35R25},
 zbMATH = {7336956},
 Zbl = {1464.35228}
}

@Article{EW,
 Author = {Elgindi, T. M. and Widmayer, K.},
 Title = {Sharp decay estimates for an anisotropic linear semigroup and applications to the surface quasi-geostrophic and inviscid {Boussinesq} systems},
 FJournal = {SIAM Journal on Mathematical Analysis},
 Journal = {SIAM J. Math. Anal.},
 ISSN = {0036-1410},
 Volume = {47},
 Number = {6},
 Pages = {4672--4684},
 Year = {2015},
 DOI = {10.1137/14099036X},
 Keywords = {35Q35,35B35,35Q31,76B03,76B15},
 zbMATH = {6516184},
 Zbl = {1326.35266}
}

@article {jeong3,
    AUTHOR = {Kim, Junha and Jeong, In-Jee},
     TITLE = {A simple ill-posedness proof for incompressible {E}uler
              equations in critical {S}obolev spaces},
   JOURNAL = {J. Funct. Anal.},
  FJOURNAL = {Journal of Functional Analysis},
    VOLUME = {283},
      YEAR = {2022},
    NUMBER = {10},
     PAGES = {Paper No. 109673, 34},
      ISSN = {0022-1236},
       DOI = {10.1016/j.jfa.2022.109673},
       URL = {https://doi.org/10.1016/j.jfa.2022.109673},
}

@article{BHI,
  title={Strong ill-posedness in ${W}^{1, \infty}$ of the 2d stably stratified {B}oussinesq equations and application to the 3d axisymmetric {E}uler Equations},
  author={Bianchini, R. and Hientzsch, L.E and Iandoli, F.},
  journal={arXiv preprint arXiv:2303.16167},
  year={2023}
}

@Article{GVD,
 Author = {G{\'e}rard-Varet, David and Dormy, Emmanuel},
 Title = {On the ill-posedness of the {Prandtl} equation},
 FJournal = {Journal of the American Mathematical Society},
 Journal = {J. Am. Math. Soc.},
 ISSN = {0894-0347},
 Volume = {23},
 Number = {2},
 Pages = {591--609},
 Year = {2010},
 DOI = {10.1090/S0894-0347-09-00652-3},
 Keywords = {35Q35,35R25,35B35,76D03,76M27},
 zbMATH = {5775678},
 Zbl = {1197.35204}
}

@Article{GVN,
 Author = {G{\'e}rard-Varet, David and Nguyen, T.},
 Title = {Remarks on the ill-posedness of the {Prandtl} equation},
 FJournal = {Asymptotic Analysis},
 Journal = {Asymptotic Anal.},
 ISSN = {0921-7134},
 Volume = {77},
 Number = {1-2},
 Pages = {71--88},
 Year = {2012},
 Keywords = {35R25,35Q35},
 URL = {iospress.metapress.com/content/32123534714j5181/fulltext.html},
 zbMATH = {6033700},
 Zbl = {1238.35178}
}

@Article{HGV,
 Author = {Dietert, Helge and G{\'e}rard-Varet, David},
 Title = {Well-posedness of the {Prandtl} equations without any structural assumption},
 FJournal = {Annals of PDE},
 Journal = {Ann. PDE},
 ISSN = {2524-5317},
 Volume = {5},
 Number = {1},
 Pages = {51},
 Note = {Id/No 8},
 Year = {2019},
 DOI = {10.1007/s40818-019-0063-6},
 Keywords = {35Q35,76D10,76D05,35A02,35B65},
 zbMATH = {7109907},
 Zbl = {1428.35355}
}

@article{RTT,
 Author = {Rousseau, A. and Temam, R. and Tribbia, J.},
 Title = {The 3D primitive equations in the absence of viscosity: boundary conditions and well-posedness in the linearized case},
 FJournal = {Journal de Math{\'e}matiques Pures et Appliqu{\'e}es. Neuvi{\`e}me S{\'e}rie},
 Journal = {J. Math. Pures Appl. (9)},
 ISSN = {0021-7824},
 Volume = {89},
 Number = {3},
 Pages = {297--319},
 Year = {2008},
 DOI = {10.1016/j.matpur.2007.12.001},
 Keywords = {35Q35,35L50,76B03,76B15,86A05},
 zbMATH = {5261854},
 Zbl = {1137.35057}
}

@Article{HJT,
 Author = {Hamouda, M. and Jung, C-Y. and Temam, R.},
 Title = {Existence and regularity results for the inviscid primitive equations with lateral periodicity},
 FJournal = {Applied Mathematics and Optimization},
 Journal = {Appl. Math. Optim.},
 ISSN = {0095-4616},
 Volume = {73},
 Number = {3},
 Pages = {501--522},
 Year = {2016},
 DOI = {10.1007/s00245-016-9345-5},
 Keywords = {35Q35,76B03,35B65,35L03},
 zbMATH = {6600090},
 Zbl = {1351.35138}
}

@Article{GILT,
 Author = {Ghoul, T.E and Ibrahim, S. and Lin, Q. and Titi, E. S.},
 Title = {On the effect of rotation on the life-span of analytic solutions to the {{\(3D\)}} {Inviscid} primitive equations},
 FJournal = {Archive for Rational Mechanics and Analysis},
 Journal = {Arch. Ration. Mech. Anal.},
 ISSN = {0003-9527},
 Volume = {243},
 Number = {2},
 Pages = {747--806},
 Year = {2022},
 DOI = {10.1007/s00205-021-01748-y},
 Keywords = {35Q31,35Q86,76B03,76U60,86A05,35A20,35B34,35A01,35A02},
 zbMATH = {7463772},
 Zbl = {1507.35154}
}

@Article{CINT,
 Author = {Cao, C. and Ibrahim, S. and Nakanishi, K. and Titi, E. S.},
 Title = {Finite-time blowup for the inviscid primitive equations of oceanic and atmospheric dynamics},
 FJournal = {Communications in Mathematical Physics},
 Journal = {Commun. Math. Phys.},
 ISSN = {0010-3616},
 Volume = {337},
 Number = {2},
 Pages = {473--482},
 Year = {2015},
 DOI = {10.1007/s00220-015-2365-1},
 Keywords = {35Q86,86A05,35B44,37N10},
 zbMATH = {6442801},
 Zbl = {1317.35262}
}

@article{collot2023stable,
  title={Stable singularity formation for the inviscid primitive equations},
  author={Collot, C. and Ibrahim, S. and Lin, Q.},
  FJournal = {{Annales de l'Institut Henri Poincar\'e. Analyse Non Lin\'eaire}},
 Journal = {{Ann. Inst. Henri Poincar\'e, Anal. Non Lin\'eaire}},
  year={2023}
}

@article{wong2015blowup,
 Author = {Wong, Tak Kwong},
 Title = {Blowup of solutions of the hydrostatic {Euler} equations},
 FJournal = {Proceedings of the American Mathematical Society},
 Journal = {Proc. Am. Math. Soc.},
 ISSN = {0002-9939},
 Volume = {143},
 Number = {3},
 Pages = {1119--1125},
 Year = {2015},
 DOI = {10.1090/S0002-9939-2014-12243-X},
 Keywords = {35Q31,35A01,35L04,35L60,35Q35,76B99,35B44},
 zbMATH = {6393803},
 Zbl = {1311.35200}
}

@Article{DLS,
 Author = {Desjardins, B. and Lannes, D. and Saut, J-C.},
 Title = {Normal mode decomposition and dispersive and nonlinear mixing in stratified fluids},
 FJournal = {Water Waves},
 Journal = {Water Waves},
 ISSN = {2523-367X},
 Volume = {3},
 Number = {1},
 Pages = {153--192},
 Year = {2021},
 DOI = {10.1007/s42286-020-00041-x},
 Keywords = {35Q31,35Q86,76B55,76B15,76B70,76B03,76T30,76T06,76U60,86A05},
 zbMATH = {7451745},
 Zbl = {1501.35293}
}

@article{griffin2021recent,
  title={Recent Developments on Quasineutral Limits for {V}lasov-Type Equations},
  author={Griffin-Pickering, Megan and Iacobelli, Mikaela},
  journal={Recent Advances in Kinetic Equations and Applications},
  pages={211--231},
  year={2021},
  publisher={Springer}
}

@article{Zak,
author={Zakharov, V.E.},
  title={{Benney equations and quasiclassical approximation in the inverse problem method}},
   FJournal = {Functional Analysis and its Applications},
 Journal = {Funct. Anal. Appl.},
  year={1980},
    VOLUME = {14},
    NUMBER = {2},
}

@Article{GuoNguyen,
 Author = {Guo, Yan and Nguyen, Toan},
 Title = {A note on {Prandtl} boundary layers},
 FJournal = {Communications on Pure and Applied Mathematics},
 Journal = {Commun. Pure Appl. Math.},
 ISSN = {0010-3640},
 Volume = {64},
 Number = {10},
 Pages = {1416--1438},
 Year = {2011},
 DOI = {10.1002/cpa.20377},
 Keywords = {35Q35,76F10,76D10},
 zbMATH = {5957630},
 Zbl = {1232.35126}
}

@book{lu1993boundary,
  title={Boundary value problems for analytic functions},
  author={Lu, J-K.},
  volume={16},
  year={1993},
  publisher={World Scientific}
}

@incollection {LiTiti-review,
    AUTHOR = {Li, J. and Titi, E. S.},
     TITLE = {Recent advances concerning certain class of geophysical flows},
 BOOKTITLE = {Handbook of mathematical analysis in mechanics of viscous
              fluids},
     PAGES = {933--971},
 PUBLISHER = {Springer, Cham},
      YEAR = {2018},
      ISBN = {978-3-319-13344-7; 978-3-319-13343-0; 978-3-319-13345-4},
       DOI = {10.1007/978-3-319-13344-7\_2},
       URL = {https://doi.org/10.1007/978-3-319-13344-7_2},
}

@article{azerad2001mathematical,
  title={Mathematical justification of the hydrostatic approximation in the primitive equations of geophysical fluid dynamics},
  author={Az{\'e}rad, P. and Guill{\'e}n, F.},
 FJournal = {SIAM Journal on Mathematical Analysis},
 Journal = {SIAM J. Math. Anal.},  volume={33},
  number={4},
  pages={847--859},
  year={2001},
  publisher={SIAM}
}

@article{li2019primitive,
  title={The primitive equations as the small aspect ratio limit of the {N}avier--{S}tokes equations: rigorous justification of the hydrostatic approximation},
  author={Li, J. and Titi, E. S.},
 FJournal = {Journal de Math{\'e}matiques Pures et Appliqu{\'e}es. Neuvi{\`e}me S{\'e}rie},
 Journal = {J. Math. Pures Appl. (9)},
  volume={124},
  pages={30--58},
  year={2019},
  publisher={Elsevier}
}

@article{LTW2,
  title={On the equations of the large-scale ocean},
  author={Lions, J-L. and Temam, R. and Wang, S.},
  journal={Nonlinearity},
  volume={5},
  number={5},
  pages={1007},
  year={1992},
  publisher={IOP Publishing}
}

@Article{LTW1,
 Author = {Lions, J-L. and Temam, R. and Wang, S.},
 Title = {New formulations of the primitive equations of atmosphere and applications},
 FJournal = {Nonlinearity},
 Journal = {Nonlinearity},
 ISSN = {0951-7715},
 Volume = {5},
 Number = {2},
 Pages = {237--288},
 Year = {1992},
 DOI = {10.1088/0951-7715/5/2/001},
 Keywords = {76B60,86A10,35Q35,37C70},
 zbMATH = {35235},
 Zbl = {0746.76019}
}

@article {jeong2017,
    AUTHOR = {Elgindi, Tarek Mohamed and Jeong, In-Jee},
     TITLE = {Ill-posedness for the incompressible {E}uler equations in
              critical {S}obolev spaces},
   JOURNAL = {Ann. PDE},
  FJOURNAL = {Annals of PDE. Journal Dedicated to the Analysis of Problems
              from Physical Sciences},
    VOLUME = {3},
      YEAR = {2017},
    NUMBER = {1},
     PAGES = {Paper No. 7, 19},
      ISSN = {2524-5317},
       DOI = {10.1007/s40818-017-0027-7},
       URL = {https://doi.org/10.1007/s40818-017-0027-7},
}

@ARTICLE{cordoba1,
 Author = {C{\'o}rdoba, Diego and Mart{\'{\i}}nez-Zoroa, Luis},
 Title = {Non existence and strong ill-posedness in {{\(C^k\)}} and {Sobolev} spaces for {SQG}},
 FJournal = {Advances in Mathematics},
 Journal = {Adv. Math.},
 ISSN = {0001-8708},
 Volume = {407},
 Pages = {74},
 Note = {Id/No 108570},
 Year = {2022},
 DOI = {10.1016/j.aim.2022.108570},
 Keywords = {35Q86,86A05,35A01,35R25},
 zbMATH = {7574804},
 Zbl = {1504.35566}
}

@article {bourgain,
    AUTHOR = {Bourgain, Jean and Li, Dong},
     TITLE = {Strong ill-posedness of the incompressible {E}uler equation in
              borderline {S}obolev spaces},
   JOURNAL = {Invent. Math.},
  FJOURNAL = {Inventiones Mathematicae},
    VOLUME = {201},
      YEAR = {2015},
    NUMBER = {1},
     PAGES = {97--157},
      ISSN = {0020-9910},
       DOI = {10.1007/s00222-014-0548-6},
       URL = {https://doi.org/10.1007/s00222-014-0548-6},
}

@article {elgindi2020,
    AUTHOR = {Elgindi, Tarek M. and Masmoudi, Nader},
     TITLE = {{$L^\infty$} ill-posedness for a class of equations arising in
              hydrodynamics},
   JOURNAL = {Arch. Ration. Mech. Anal.},
  FJOURNAL = {Archive for Rational Mechanics and Analysis},
    VOLUME = {235},
      YEAR = {2020},
    NUMBER = {3},
     PAGES = {1979--2025},
      ISSN = {0003-9527},
       DOI = {10.1007/s00205-019-01457-7},
       URL = {https://doi.org/10.1007/s00205-019-01457-7},
}

@book {majda2003,
    AUTHOR = {Majda, Andrew},
     TITLE = {Introduction to {PDE}s and waves for the atmosphere and ocean},
    SERIES = {Courant Lecture Notes in Mathematics},
    VOLUME = {9},
 PUBLISHER = {New York University, Courant Institute of Mathematical
              Sciences, New York; American Mathematical Society, Providence,
              RI},
      YEAR = {2003},
     PAGES = {x+234},
      ISBN = {0-8218-2954-8},
       DOI = {10.1090/cln/009},
       URL = {https://doi.org/10.1090/cln/009},
}

@book{Vallis2017, 
place={Cambridge}, 
edition={2}, 
title={Atmospheric and Oceanic Fluid Dynamics: Fundamentals and Large-Scale Circulation}, 
publisher={Cambridge University Press}, 
author={Vallis, Geoffrey K.}, year={2017}}

@article{kukavica2014local,
  title={On the local well-posedness of the {P}randtl and hydrostatic {E}uler equations with multiple monotonicity regions},
  author={Kukavica, Igor and Masmoudi, Nader and Vicol, Vlad and Wong, Tak Kwong},
  journal={SIAM J. Math. Anal.},
  volume={46},
  number={6},
  pages={3865--3890},
  year={2014},
  publisher={SIAM}
}
 \bibliographystyle{abbrv}

  \end{document}